\documentclass[a4paper,11pt,twoside]{amsart}
\usepackage[utf8]{inputenc}
\usepackage[english]{babel}
\usepackage{amsmath,amsfonts,amssymb}
\usepackage{graphicx}
\usepackage[top=2.5cm, bottom=2.2cm, left=2.5cm, right=2.5cm]{geometry}
\usepackage{cancel}
\usepackage{float}
\usepackage{tensor}
\usepackage{mathtools}
\usepackage{mathrsfs}
\usepackage{amscd}
\usepackage{graphicx}
\usepackage{keyval}
\usepackage{xcolor}
\usepackage{relsize}
\usepackage{extpfeil}
\usepackage{tikz}
\usepackage{tikz-cd}
\usepackage{verbatim}
\usepackage{nameref}
\usepackage{enumitem}
\usepackage[new]{old-arrows}
\usepackage{url}
\usepackage[hyphenbreaks]{breakurl}
\usetikzlibrary{matrix}
\usepackage{amsthm,lipsum}
\usepackage{dsfont}
\usepackage[backend=biber, bibencoding=utf8]{biblatex}
\usepackage[autostyle,italian=guillemets]{csquotes}
\usepackage[pdfusetitle]{hyperref}
\hypersetup{
    colorlinks=true,
    citecolor=blue,
    linkcolor=blue,
    filecolor=magenta,      
    urlcolor=cyan,
}

\usepackage[mode=multiuser,status=draft,lang=english]{fixme}
\fxsetup{theme=colorsig}
\FXRegisterAuthor{MP}{aP}{MP}

\usepackage{biblatex}
\addbibresource{biblio.bib}

\makeatletter
\renewcommand*\env@matrix[1][*\c@MaxMatrixCols c]{
  \hskip -\arraycolsep
  \let\@ifnextchar\new@ifnextchar
  \array{#1}}
\makeatother

\makeindex 

\theoremstyle{plain}
\newtheorem{theorem}{Theorem}[section]
\newtheorem*{theorem*}{Theorem}
\newcounter{fakecnt}[subsection]

\newtheorem{thm}{Theorem}[fakecnt]

\newtheorem{corollary}[theorem]{Corollary}
\newtheorem{lemma}[theorem]{Lemma}
\newtheorem{proposition}[theorem]{Proposition}
\newtheorem{hypothesis}[theorem]{Hypothesis}
\newtheorem{fact}[theorem]{Fact}

\theoremstyle{definition}
\newtheorem{definition}[theorem]{Definition}
\newtheorem{assumptions}[theorem]{Assumptions}
\newtheorem*{claim*}{Claim}
\newtheorem*{fact*}{Fact}
\newtheorem*{framework*}{Framework}
\newtheorem*{proposition*}{Proposition}

\theoremstyle{remark}
\newtheorem{remark}[theorem]{Remark}

\makeatletter
\def\thm@space@setup{
	\thm@preskip=0.4cm 
	\thm@postskip=\thm@preskip 
}
\makeatother

\newcommand{\Inc}{\mathrm{Inc}}
\newcommand{\Z}{\mathbb{Z}}
\newcommand{\C}{\mathbb{C}}
\newcommand{\Hilb}{\mathrm{Hilb}}
\newcommand{\Spec}{\mathrm{Spec}}
\newcommand{\id}{\mathrm{id}}
\newcommand{\Sym}{\mathrm{Sym}}
\newcommand{\Pic}{\mathrm{Pic}}
\newcommand{\CDiv}{\mathrm{CDiv}}

\usepackage{pdfpages}
\usetikzlibrary{decorations.markings}

\makeatletter
\tikzcdset{
  open/.code     = {\tikzcdset{hook, circled};},
  closed/.code   = {\tikzcdset{hook, slashed};},
  open'/.code    = {\tikzcdset{hook', circled};},
  closed'/.code  = {\tikzcdset{hook', slashed};},
  circled/.code  = {\tikzcdset{markwith = {\draw (0,0) circle (.375ex);}};},
  slashed/.code  = {\tikzcdset{markwith = {\draw[-] (-.4ex,-.4ex) -- (.4ex,.4ex);}};},
  markwith/.code ={
    \pgfutil@ifundefined
    {tikz@library@decorations.markings@loaded}
    {\pgfutil@packageerror{tikz-cd}{You need to say 
      \string\usetikzlibrary{decorations.markings} to use arrows with markings}{}}{}
    \pgfkeysalso{/tikz/postaction = {
      /tikz/decorate,
      /tikz/decoration={markings, mark = at position 0.5 with {#1}}}
    }
  },
}
\makeatother

    \title{\, On the Stable Birationality of Hilbert schemes of points on surfaces\, }
    \author[]{ Morena Porzio$^1$}\thanks{$^1$ Columbia University, 2990 Broadway New York, NY 10027. \texttt{mp3947-at-columbia.edu}. }
    
\begin{document}

\maketitle

\vspace{-5mm}

\begin{abstract}
    The aim of this paper is to study the stable birational type of $\Hilb^n_X$, the Hilbert scheme  
    of degree $n$ points on a surface $X$. 
    More precisely, it addresses the question for which pairs of positive integers $(n,n')$ the variety $\Hilb^n_X$ is stably birational to $\Hilb^{n'}_X$, when $X$ is a surface with irregularity $q(X)=0$.
    After general results for such surfaces, we restrict our attention to geometrically rational surfaces, proving that there are only finitely many stable birational classes among the $\Hilb^n_X$'s.
    As a corollary, we deduce the rationality of the motivic zeta function $\zeta_{\textup{mot}}(X,t)$ in $K_0(Var/k)/([\mathbb{A}^1_k])[[t]]$ over fields of characteristic zero.
\end{abstract}

\begin{center}
    \rule{150mm}{0.2mm}
\end{center}

\setcounter{tocdepth}{1}
\tableofcontents

\section{Introduction and Outline}\label{1}

Let $X$ be a smooth, geometrically connected, projective surface over an arbitrary field $k$. 
For $n \geq 1$, consider the Hilbert scheme $\Hilb^n_X$ of $n$ points on $X$. 
This is a smooth geometrically connected projective scheme of dimension $2n$ over $k$. 
It is birational to the $n$-th symmetric product $\Sym^n_X$ of $X$.

The aim of the present paper is to study for which pairs of integers $n, n'$ the Hilbert schemes $\Hilb^n_X$ and $\Hilb^{n'}_X$ are stably birational.
Notice that the question depends only on the stable birationality type of $X$. 
Indeed:

\begin{fact}[Koll\'ar \cite{Ko}, Cor. 8]\label{factonstablebira}
    Let $X$ and $Y$ be two (positive dimensional) geometrically integral separated schemes of finite type over $k$. 
    If $X$ is stably birational to $Y$, then $\Sym^n_X$ is stably birational to $\Sym^n_Y$ for any $n$.
\end{fact}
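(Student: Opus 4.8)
The plan is to reduce the statement to two independent facts: that $\Sym^n$ carries birationally equivalent schemes to birationally equivalent ones, and that $\Sym^n_{X\times\mathbb{P}^m}$ is birational to $\Sym^n_X\times\mathbb{P}^{nm}$ for every $m\geq 0$. Granting these, the conclusion is immediate: by definition of stable birationality there are integers $a,b$ with $X\times\mathbb{P}^a$ birational to $Y\times\mathbb{P}^b$; applying $\Sym^n$ and the first fact gives that $\Sym^n_{X\times\mathbb{P}^a}$ is birational to $\Sym^n_{Y\times\mathbb{P}^b}$, and the second fact rewrites the two sides as $\Sym^n_X\times\mathbb{P}^{na}$ and $\Sym^n_Y\times\mathbb{P}^{nb}$. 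Hence $\Sym^n_X$ is stably birational to $\Sym^n_Y$, as desired.

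For the first fact I would use that a birational map $X\dashrightarrow Y$ restricts to an isomorphism $U\cong V$ of dense opens. This isomorphism is compatible with the $S_n$-actions on the $n$-fold products, so it descends to an isomorphism $\Sym^n_U\cong\Sym^n_V$. Since $U^n$ is an $S_n$-stable dense open of $X^n$ and the quotient map $X^n\to\Sym^n_X$ is open on $S_n$-saturated opens, $\Sym^n_U$ is dense open in $\Sym^n_X$ (and likewise for $Y$), which yields the birational equivalence $\Sym^n_X\sim\Sym^n_Y$.

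The heart of the argument is the second fact, which I would approach through the projection $f\colon\Sym^n_{X\times\mathbb{P}^m}\to\Sym^n_X$ that forgets the $\mathbb{P}^m$-coordinate. Over the dense open locus $U\subseteq\Sym^n_X$ of configurations of $n$ distinct points, the cover $\mathrm{Conf}^n(X)\to U$ is an $S_n$-torsor, and $f^{-1}(U)$ is the associated bundle $\mathrm{Conf}^n(X)\times^{S_n}(\mathbb{P}^m)^n$ for the permutation action of $S_n$ on $(\mathbb{P}^m)^n$. Passing to the generic point $\Spec K$ of $\Sym^n_X$, this torsor is classified by the rank-$n$ étale $K$-algebra $A$ cut out by the universal degree-$n$ divisor, and I claim the generic fiber of $f$ is exactly the Weil restriction $\mathrm{Res}_{A/K}(\mathbb{P}^m_A)$, since the latter is by construction the twist of $(\mathbb{P}^m)^n$ by precisely this torsor. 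The payoff is that $\mathrm{Res}_{A/K}(\mathbb{P}^m_A)$ is $K$-rational: it contains $\mathrm{Res}_{A/K}(\mathbb{A}^m_A)\cong\mathbb{A}^{nm}_K$ as a dense open. Thus $k(\Sym^n_{X\times\mathbb{P}^m})$ is purely transcendental of degree $nm$ over $k(\Sym^n_X)$, which is exactly the assertion that $\Sym^n_{X\times\mathbb{P}^m}$ is birational to $\Sym^n_X\times\mathbb{P}^{nm}$.

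The main obstacle I anticipate is the clean identification of the generic fiber of $f$ with the Weil restriction $\mathrm{Res}_{A/K}(\mathbb{P}^m_A)$: one must match the permutation $S_n$-action on $(\mathbb{P}^m)^n$ with the Galois action on the geometric fiber of $\mathrm{Res}_{A/K}$ encoded by the torsor $\mathrm{Conf}^n(X)\to U$, and carry this out over the generic point rather than merely geometrically. Everything downstream is formal: the rationality of the Weil restriction, and the passage from a $K$-rational generic fiber to a $\mathbb{P}^{nm}$-factor. The hypotheses that $X$ and $Y$ be positive-dimensional and geometrically integral enter only to ensure that the relevant products and symmetric powers are integral, so that these function-field arguments are valid.
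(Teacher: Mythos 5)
Your proposal is correct, and it reconstructs essentially the argument behind the cited result: the paper itself does not prove this fact but imports it from Koll\'ar (\cite{Ko}, Cor.\ 8), whose proof rests on the same two pillars you identify, namely birational invariance of $\Sym^n$ via saturated dense opens, and the description of $\Sym^n_{X\times \mathbb{P}^m}\longrightarrow \Sym^n_X$ over the configuration locus as the twist of $(\mathbb{P}^m)^n$ by the universal degree-$n$ \'etale cover, which contains the Weil restriction of $\mathbb{A}^m$ (a rank-$nm$ vector bundle, hence Zariski-locally trivial) as a fiberwise dense open. The only cosmetic difference is that you carry out this second step at the generic point, via the \'etale $K$-algebra $A$, rather than over the whole configuration locus; the content is identical.
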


Koll\'ar, Litt and Wood studied the cases when $P$ is a Brauer Severi surface (that is $P_{\overline{k}} \cong \mathbb{P}^2_{\overline{k}}$) and when $X$ is a surface with non-negative Kodaira dimension (over a field of characteristic zero).

\begin{theorem*}[\cite{Ko}, Koll\'ar]
    Let $P$ be a Brauer Severi surface. 
    Let $\textup{ind}(P)$ denote the \emph{index} of $P$ $($that is the $\gcd\{$degrees of closed points on $P\})$.
    Then for any $n\geq 0$ the variety $\Hilb^n_P$ is stably birational to $\Hilb^{\gcd(n,\textup{ind}(P))}_P$.
\end{theorem*}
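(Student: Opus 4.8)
Since $\Hilb^n_P$ is birational to $\Sym^n_P$, it suffices to prove the statement for the symmetric products, and the whole question depends on $P=\mathrm{SB}(A)$ only through the index $\mathrm{ind}(P)=\mathrm{ind}(A)$, which divides $\deg A=3$. If $\mathrm{ind}(P)=1$ then $P\cong\mathbb{P}^2_k$, every $\Sym^n_{\mathbb{P}^2}$ is rational by the classical rationality of symmetric products of rational surfaces, and $\gcd(n,1)=1$, so there is nothing more to prove. Assume therefore $\mathrm{ind}(P)=3$, so that $\gcd(n,3)\in\{1,3\}$ and the goal becomes: $\Sym^n_P\sim_{\mathrm{sb}}P$ when $3\nmid n$, and $\Sym^n_P\sim_{\mathrm{sb}}\Sym^3_P$ when $3\mid n$.

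The computation I would treat as the model is $n=2$. Sending a degree-$2$ cycle to the line it spans defines a $k$-rational map $\Sym^2_P\to\check P$ to the dual Severi--Brauer surface $\check P=\mathrm{SB}(A^{\mathrm{op}})$ parametrizing lines in $P$. Over the generic point its fibre is $\Sym^2$ of the generic line, a smooth conic $C$ over $K=k(\check P)$; since $\omega_C^{-1}$ is a $K$-rational divisor class of degree $2$ one has $\Sym^2 C\cong|\omega_C^{-1}|\cong\mathbb{P}^2_K$, so $\Sym^2_P$ is birational to $\check P\times\mathbb{P}^2$. By Amitsur's theorem $[A]$ and $[A^{\mathrm{op}}]=-[A]$ generate the same cyclic subgroup of $\mathrm{Br}(k)$ and have equal degree, hence $k(P)\cong k(\check P)$ and $\check P\sim_{\mathrm{bir}}P$; therefore $\Sym^2_P\sim_{\mathrm{sb}}P=\Sym^1_P$. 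This settles the residue-$2$ base case and shows that passing from $\Sym^1$ to $\Sym^2$ negates the Brauer class.

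The plan for general $n$ is to establish the reduction $\Sym^n_P\sim_{\mathrm{sb}}\Sym^{n-3}_P$ for $n\ge 4$; together with $\Sym^0_P=\mathrm{pt}$, $\Sym^1_P=P$ and the case above, this forces $\Sym^n_P\sim_{\mathrm{sb}}\Sym^{\gcd(n,3)}_P$. Concretely, I expect $\Sym^n_P$ to be stably birational to the Severi--Brauer surface of Brauer class $n[A]$; by Amitsur this birational type depends only on $\langle n[A]\rangle=\langle\gcd(n,3)[A]\rangle$, i.e. only on $\gcd(n,3)$, with $\mathrm{SB}(3[A])=\mathrm{SB}(0)\cong\mathbb{P}^2$ rational, so that $\Sym^3_P$ comes out stably rational. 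To prove this I would fibre $\Sym^n_P$ over a parameter space of lines, exactly as for $n=2$, peeling off three units of degree per step so that the generic fibre is again a symmetric power of a conic --- hence a projective space --- and read the class $n[A]$ off the base; an alternative is the ideal-theoretic model of $P$ as rank-one right ideals of $A$, whose $n$-fold symmetric configurations should produce a Severi--Brauer variety of class $n[A]$ up to a rational factor.

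The main obstacle is precisely the construction of such a fibration for every $n$. Structurally, $\mathcal{O}_P(1)$ does not descend to $k$ --- only $\mathcal{O}_P(3)=-K_P$ does --- so $P$ carries no $k$-linear system with rational general member, and every genus-$0$ fibre has to be extracted from lines, which are non-split conics over their (positive-dimensional) fields of definition. The sharp point is that $\Sym^2$ of a conic is always $\mathbb{P}^2$, whereas $\Sym^3$ of a conic is a projective space only when that conic is split: reducing by three therefore forces one to produce a generic rational point on the relevant conic, equivalently to trivialize a Brauer class, and this is exactly where the index-$3$ arithmetic genuinely enters and must be controlled. The $n=2$ case shows that the method works and fixes the class as $n[A]$ modulo $3$ in the first nontrivial instance; the assertion of the theorem is that this persists for all $n$, collapsing the family $\{\Sym^n_P\}$ onto the two stable birational classes indexed by $\gcd(n,3)$.
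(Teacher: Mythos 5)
First, a point of order: this paper does not prove the statement you were asked about. It is quoted in the introduction as Koll\'ar's theorem and cited to \cite{Ko}; the machinery the paper develops (the incidence correspondences of Sections \ref{3}--\ref{6}) produces statements of the form ``$\Hilb^n_X$ is stably birational to $\Hilb^{n+d}_X$ for $n$ in explicit intervals, or for $n\gg 0$,'' not the clean statement for every $n\geq 0$. So your proposal can only be judged on its own terms and against that related machinery.

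On its own terms it is incomplete, and you say so yourself. What you actually prove is the split case and $n=2$: the span map $\Sym^2_P\dashrightarrow P^{\vee}$ has generic fibre $\Sym^2$ of a genus-zero curve $C$, which is $|\omega_C^{-1}|\cong \mathbb{P}^2$, so $\Sym^2_P$ is birational to $P^{\vee}\times\mathbb{P}^2$. Even here there is a citation problem: ``same cyclic subgroup of $\mathrm{Br}(k)$ and same dimension implies birational'' is the \emph{converse} of Amitsur's theorem, i.e.\ Amitsur's conjecture, which is open in general; what you need (stable birationality of $P$ and $P^{\vee}$) can instead be obtained from the flag variety $F=\{(x,\ell):x\in\ell\}$, which is simultaneously $\mathbb{P}(T_P)$ over $P$ and $\mathbb{P}(T_{P^{\vee}})$ over $P^{\vee}$, whence $P\times\mathbb{P}^1 \sim_{\mathrm{bir}} F \sim_{\mathrm{bir}} P^{\vee}\times\mathbb{P}^1$. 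That issue is fixable. The genuine gap is that everything beyond $n=2$ --- the reduction $\Sym^n_P\sim_{\mathrm{sb}}\Sym^{n-3}_P$, equivalently your claim $\Sym^n_P\sim_{\mathrm{sb}}\mathrm{SB}(n[A])$ --- is announced as a plan, not proved, and it is the entire content of the theorem.

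Moreover, the route you propose for that step is structurally obstructed, not merely unfinished. When $\mathrm{ind}(P)=3$ one has $\Pic(P)=\mathbb{Z}\cdot\omega_P^{-1}$, so $P$ carries no $k$-rational system of lines or conics; a generic degree-$n$ cycle has no three collinear geometric points, so there is no span construction that peels off three points along a genus-zero curve; and, as you yourself note, $\Sym^3$ of a non-split conic is a nontrivial Brauer--Severi threefold rather than $\mathbb{P}^3$, so even granting such a fibration its generic fibre would not be a projective space. Identifying this obstruction is not the same as overcoming it. The way it is actually overcome --- in Koll\'ar's proof, of which this paper's Sections \ref{3}--\ref{6} are a generalization --- is to abandon genus-zero curves entirely: one fixes a geometrically reduced cycle $Q$ of degree $d$ and compares $\Hilb^n_X$ with $\Hilb^{n+d}_X$ through the correspondence of triples $(Z,Z',C)$ with $Z,Z',Q$ contained in a curve $C$ of a fixed ample linear system and $Z'\sim Z+Q$ on $C$. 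Both projections become projective bundles over dense opens by cohomology-and-base-change together with Riemann--Roch and Brill--Noether on $C$ (this is where $p_a(C)\leq n$ enters), so rationality or splitness of $C$ is never needed; linear equivalence on a positive-genus curve replaces the projective-space structure of symmetric powers of genus-zero curves. That is the idea your sketch is missing, and without it the induction step does not exist.
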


\begin{theorem*}[\cite{Li} Thm. 19, Litt]
    Let $X$ be a smooth, geometrically connected, projective surface over a field of characteristic zero.
    If $H^0(X,\omega_X)\neq 0$ then for any $n\geq 0$ there exists a sufficiently large integer $m(n)$ such that $\Hilb^n_X$ is \emph{never} stably birational to $\Hilb^{n'}_X$ for any $n'\geq m(n)$.
\end{theorem*}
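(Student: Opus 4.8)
The plan is to isolate a \emph{stable birational invariant} that detects the dimension growth, and to run a dimension count. The invariant I would use is the Hodge number $h^{p,0}(Y)=\dim_k H^0(Y,\Omega^p_Y)$ for $p\geq 1$, which over a field of characteristic zero is a stable birational invariant of smooth projective varieties. Birational invariance of $H^0(\Omega^p)$ is classical: for a birational morphism $Z\to Y$ of smooth projective varieties the pullback $H^0(Y,\Omega^p_Y)\to H^0(Z,\Omega^p_Z)$ is an isomorphism, and one compares two smooth models through a common resolution. Invariance under multiplying by a projective space follows from the Künneth decomposition together with $H^0(\mathbb{P}^N,\Omega^j_{\mathbb{P}^N})=0$ for every $j\geq 1$, so that only the $j=0$ summand survives and $H^0(Y\times\mathbb{P}^N,\Omega^p)\cong H^0(Y,\Omega^p_Y)$. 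After base change to $\overline{k}$, which preserves stable birationality and, by flat base change, each $h^{p,0}$, I may assume $k$ algebraically closed.

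Next I would produce, for \emph{every} $m\geq 1$, a nonzero global holomorphic form of top degree on $\Hilb^m_X$, that is, show $h^{2m,0}(\Hilb^m_X)=p_g(\Hilb^m_X)>0$. Fix $0\neq\omega\in H^0(X,\omega_X)$ and form the external product $\omega^{\boxtimes m}=\mathrm{pr}_1^*\omega\wedge\cdots\wedge\mathrm{pr}_m^*\omega$, a nonzero section of $\omega_{X^m}=\Omega^{2m}_{X^m}$. Because $\omega$ has \emph{even} degree $2$, transposing two blocks in the wedge introduces the sign $(-1)^{2\cdot 2}=+1$, so $\omega^{\boxtimes m}$ is $S_m$-invariant and descends to a regular $2m$-form on the smooth locus of $\Sym^m_X=X^m/S_m$. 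Pulling back along the Hilbert--Chow morphism $\pi\colon\Hilb^m_X\to\Sym^m_X$ gives an a priori rational section of $\Omega^{2m}_{\Hilb^m_X}$; since $\pi$ is a \emph{crepant} resolution for a smooth surface, this section has no pole along the exceptional locus and extends to a nonzero global form. Hence $h^{2m,0}(\Hilb^m_X)\geq 1$ for all $m$.

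The conclusion is then immediate. Suppose $\Hilb^n_X$ were stably birational to $\Hilb^{n'}_X$ with $n'>n$. Applying the first step with $p=2n'\geq 1$ gives $h^{2n',0}(\Hilb^n_X)=h^{2n',0}(\Hilb^{n'}_X)$. The right-hand side is $\geq 1$ by the second step, while the left-hand side vanishes because $2n'>2n=\dim\Hilb^n_X$ forces $\Omega^{2n'}_{\Hilb^n_X}=0$. This contradiction shows that $\Hilb^n_X$ is not stably birational to $\Hilb^{n'}_X$ for \emph{every} $n'>n$, so the statement holds with the explicit threshold $m(n)=n+1$. In fact, since this argument is symmetric in the roles of $n$ and $n'$, it yields the stronger conclusion that the $\Hilb^n_X$ are pairwise non stably birational whenever $p_g(X)>0$.

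The main obstacle, and the only place where characteristic zero is genuinely used, is the extension step: one must know that the invariant canonical form $\omega^{\boxtimes m}$ acquires no pole when pulled back across the Hilbert--Chow exceptional divisor, equivalently that $\pi$ is crepant (that the quotient singularities of $\Sym^m_X$ are canonical and that forms extend to the resolution). In positive characteristic this can fail, as both the birational invariance of $H^0(\Omega^p)$ and the extension of forms across quotient singularities rely on resolution of singularities and on $|S_m|$ being invertible. I would also remark that carrying out the same computation in every degree $p$, rather than only in top degree, recovers through the Göttsche--Soergel description of the Hodge numbers of $\Hilb^n_X$ the full list $h^{p,0}(\Hilb^n_X)=\dim_k\big(H^0(X^n,\Omega^p)\big)^{S_n}$, which makes the invariant and hence the sharpening above completely explicit.
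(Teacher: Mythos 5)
The paper does not actually prove this statement: it is quoted in the introduction as a known result of Litt, cited as \cite{Li}, Thm.~19, and used only as motivation for restricting attention to surfaces of negative Kodaira dimension. So there is no internal proof to compare against; your proposal has to stand on its own, and it does. Your argument is correct, and in fact proves more than the statement asks: it gives the explicit threshold $m(n)=n+1$, i.e.\ pairwise non-stable-birationality of the $\Hilb^n_X$ whenever $p_g(X)>0$, which is consistent with (and under the hypothesis $p_g>0$ recovers) Wood's theorem quoted immediately after in the paper. The two load-bearing ingredients are exactly the ones you isolate: (i) stable birational invariance of $h^{p,0}$ for smooth projective varieties in characteristic zero, where the K\"unneth step correctly handles all $p$, including $p$ exceeding the dimension of one side --- this is what makes the dimension count bite; and (ii) the extension of the $S_m$-invariant form $\omega^{\boxtimes m}$ across the Hilbert--Chow exceptional divisor, which is codimension one, so Hartogs does not apply and one genuinely needs that $\Sym^m_X$ has canonical singularities (the stabilizers act through $\mathrm{SL}$ on tangent spaces, so Reid--Tai applies) or, equivalently for surfaces, that Hilbert--Chow is crepant (Beauville/Fogarty). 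You flag this as the crux, which is the right place to put the weight. One minor inaccuracy in your closing remarks, not affecting the proof: birational invariance of $H^0(Y,\Omega^p_Y)$ between smooth projective varieties does not rely on resolution of singularities --- a birational map is defined outside a closed set of codimension at least two, and forms extend by reflexivity of $\Omega^p$ on a smooth variety; characteristic zero is genuinely needed only in the quotient-singularity extension step (and in the G\"ottsche-type formula you mention as a refinement).
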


\begin{theorem*}[\cite{Li} Prop. 35, Wood]
    Let $X$ be a smooth, geometrically connected, projective surface over a field of characteristic zero. 
    If the Kodaira dimension of $X$ is non-negative, then for any $n$ such that either $H^0(X,\omega_X^n)$ or $H^0(X,\omega_X^{2n})$ is non-zero, then $\Hilb^n_X$ is stably birational to $\Hilb^{n'}_X$ if and only if $n=n'$.
\end{theorem*}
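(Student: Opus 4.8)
The implication $n=n'\Rightarrow$ stable birationality is trivial, so the content is the converse, and my plan is to manufacture a stable birational invariant of a smooth projective variety $Y$ over the (characteristic zero) base field $k$ that, evaluated on $Y=\Hilb^n_X$, reads off the integer $2n=\dim\Hilb^n_X$. The invariants I would use are the spaces of global tensor differential forms $T_{p,m}(Y):=H^0\bigl(Y,(\Omega^p_Y)^{\otimes m}\bigr)$, together with the derived invariant $G(Y):=\sup\{\,p:\ \dim T_{p,m}(Y)\neq 0\ \text{for some } m\ge 1\,\}$. The whole argument reduces to two claims: that $\dim T_{p,m}(Y)$ (hence $G(Y)$) is a stable birational invariant, and that $G(\Hilb^n_X)=2n$.

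For the first claim I would separate the two defining moves of stable birationality. Birational invariance: for a birational map of smooth projective varieties, pullback of forms identifies the $T_{p,m}$, because global tensor differential forms extend across the exceptional loci of a resolution in characteristic zero; this is classical for $p=1$ (symmetric and tensor differentials) and for $p=\dim Y$ (plurigenera), and in general it is precisely the statement that $\dim T_{p,m}$ is a birational invariant. Stability under $Y\mapsto Y\times\mathbb P^N$: decomposing $\Omega^p_{Y\times\mathbb P^N}=\bigoplus_{a+b=p}\mathrm{pr}_1^*\Omega^a_Y\otimes\mathrm{pr}_2^*\Omega^b_{\mathbb P^N}$, expanding the $m$-th tensor power and applying Künneth, every summand that involves a factor $\Omega^b_{\mathbb P^N}$ with $b\ge 1$ contributes a factor $H^0$ of a positive-degree tensor form on $\mathbb P^N$, which vanishes since $\mathbb P^N$ is rationally connected. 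Hence only the purely-$Y$ summand survives and $T_{p,m}(Y\times\mathbb P^N)=T_{p,m}(Y)$ for each fixed $(p,m)$. I emphasize that this holds even for $p=\dim Y$, so the plurigenus of $Y$ persists in the fixed form-degree $\dim Y$ although it is no longer the canonical degree of the product; this is exactly what lets $G$ detect the dimension of the $\kappa\ge 0$ ``core'' of a stable birational class.

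For the second claim I would evaluate on $Y=\Hilb^n_X$. Since $X$ is a surface, the Hilbert–Chow morphism $\Hilb^n_X\to\Sym^n_X$ is crepant and $\Sym^n_X=X^n/S_n$ has canonical singularities, so at $p=2n=\dim\Hilb^n_X$ the invariant computes plurigenera: $T_{2n,m}(\Hilb^n_X)=H^0(X^n,\omega_{X^n}^{\otimes m})^{S_n}=\Sym^n H^0(X,\omega_X^{\otimes m})$, of dimension $\binom{P_m(X)+n-1}{n}$, where $P_m(X)=\dim H^0(X,\omega_X^{\otimes m})$ and the $S_n$-action carries no sign because $\dim X$ is even. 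As $\Omega^p=0$ for $p>2n$ we get $G(\Hilb^n_X)\le 2n$, with equality as soon as $P_m(X)>0$ for a single $m\ge 1$. The hypothesis furnishes such an $m$ explicitly, namely $m=n$ or $m=2n$, so $G(\Hilb^n_X)=2n$; the same hypothesis forces $\kappa(X)\ge 0$, whence $G(\Hilb^{n'}_X)=2n'$ for every $n'$. Therefore a stable birational equivalence $\Hilb^n_X\sim\Hilb^{n'}_X$ yields $2n=G(\Hilb^n_X)=G(\Hilb^{n'}_X)=2n'$, i.e. $n=n'$.

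The step I expect to be the main obstacle is the birational invariance of $\dim T_{p,m}$ for intermediate degrees $0<p<\dim Y$: one must know that sections of $(\Omega^p)^{\otimes m}$ are insensitive to blow-ups, which in characteristic zero rests on resolution of singularities together with an extension argument for tensor forms across the exceptional divisor. For the present application the load-bearing case is $p=\dim Y$, where $T_{\dim Y,m}=P_m$ is a genuine plurigenus and the extension is the crepancy of Hilbert–Chow plus the canonicity of the singularities of $\Sym^n_X$; I would take care to check that this crepant identification is compatible with the stabilization of the previous paragraph, so that the value $\binom{P_m(X)+n-1}{n}$ truly sits in fixed form-degree $2n$ independently of the number of $\mathbb P^N$-factors. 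A secondary, purely bookkeeping point is the sign analysis of the $S_n$-representation on $\omega_{X^n}^{\otimes m}$, which is trivial exactly because $X$ is a surface. The two-part hypothesis enters only to exhibit a concrete exponent $m\in\{n,2n\}$ with $P_m(X)\neq 0$; any condition guaranteeing $\kappa(X)\ge 0$ would suffice for the same conclusion.
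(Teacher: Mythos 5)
The paper you were given does not prove this statement at all: it is quoted as background from Litt's paper (\cite{Li}, Prop.\ 35, attributed to Wood), so there is no in-paper proof to compare yours against, and your proposal must be judged on its own. So judged, it is correct, and complete modulo three standard facts that you identify accurately: (i) birational invariance of $\dim H^0\bigl(Y,(\Omega^p_Y)^{\otimes m}\bigr)$ for smooth projective varieties --- and here you overestimate the difficulty: no resolution of singularities or extension-over-exceptional-divisors argument is needed, since a birational map between smooth projective varieties is defined outside a closed subset of codimension $\geq 2$, tensor forms pull back functorially, and sections of a locally free sheaf on a smooth variety extend across codimension $\geq 2$; the two composites are the identity because they are so on a dense open; (ii) the K\"unneth decomposition plus the vanishing $H^0(\mathbb{P}^N,\Omega^{b_1}\otimes\cdots\otimes\Omega^{b_m})=0$ when some $b_i\geq 1$ (restrict to lines, where $\Omega^1_{\mathbb{P}^N}$ is a sum of negative line bundles), which gives $T_{p,m}(Y\times\mathbb{P}^N)=T_{p,m}(Y)$; and (iii) the plurigenus formula $P_m(\Hilb^n_X)=\binom{P_m(X)+n-1}{n}$, resting on crepancy of the Hilbert--Chow morphism, the canonical Gorenstein singularities of $\Sym^n_X$ (the $S_n$-action on $X^n$ is free in codimension $1$ because $\dim X=2$), and your correct observation that the permutation action on $\omega_{X^n}$ carries no sign since $\dim X$ is even. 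Your worry about ``compatibility of the crepant identification with stabilization'' is vacuous: once (i) and (ii) make each $T_{p,m}$ a stable birational invariant, its value may be computed on any smooth projective representative of the class, in particular on $\Hilb^n_X$ itself.

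One point you should make explicit: your argument proves a strictly stronger statement than the one quoted. You never use the hypothesis that $H^0(X,\omega_X^n)$ or $H^0(X,\omega_X^{2n})$ is nonzero; all you need is a single $m\geq 1$ with $P_m(X)\neq 0$, i.e.\ $\kappa(X)\geq 0$, and that one $m$ works for every $n$ and $n'$ simultaneously, giving $G(\Hilb^{n'}_X)=2n'$ for all $n'$. For instance, your proof covers $n=2$ on a bielliptic surface whose canonical bundle has order $3$ (so $P_2=P_4=0$), a case the quoted proposition does not reach. The strengthening appears sound --- the interval hypothesis in Wood's statement reflects the particular exponents her argument pins down, whereas your invariant $G$ quantifies over all $m$ --- but since you are proving more than the cited result, present it as a strengthening and check it accordingly, rather than as a reproof of the reference.
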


Given the results, it makes sense to focus on the case when the Kodaira dimension of $X$ is negative, or equivalently $\kappa(X_{\overline{k}})=-\infty$. 
Since Kodaira dimension is a birational invariant for $X_{\underline{k}}$, by the classification of surfaces over algebraically closed fields of arbitrary characteristic (see \cite{Lu}), we have two cases left:
\begin{enumerate}
    \item the surface $X$ is geometrically rational, namely $X_{\overline{k}}$ is birational to $\mathbb{P}^2_{\overline{k}}$, or
    \item the surface $X_{\overline{k}}$ is birational to $\mathbb{P}^1_{\overline{k}}\times_{\overline{k}} C$, where $C$ a smooth curve of positive genus.
\end{enumerate}

In this paper ultimately we want to focus on the first family: geometrically rational surfaces over arbitrary fields.
Since the cohomology group $H^1(X_{\underline{k}},\mathcal{O}_{X_{\underline{k}}})$ is a birational invariant for $X_{\underline{k}}$ (see \cite[Prop. V.3.4]{Harty}), this is equivalent to say that we focus on surfaces of negative Kodaira dimension whose $H^1(X,\mathcal{O}_X)$ is trivial.

Actually, in the first part of this paper, having trivial $H^1(X,\mathcal{O}_X)$ is the only assumption that we put on our surface $X$: only later we specialize to geometrically rational ones.

\subsection{Results}
The input for our study is a triple $(X,\mathcal{L},Q)$ where
\begin{itemize}
    \item $X$ is a smooth, geometrically connected, projective surface over a field $k$, whose $H^1(X,\mathcal{O}_X)$ is trivial, namely a surface of irregularity $q(X)=\dim_k H^1(X,\mathcal{O}_X)=0$,
    \item $\mathcal{L}$ is an ample line bundle on $X$, and 
    \item $Q$ is a geometrically reduced, effective $0$-cycle of $X$ of degree $d$.
\end{itemize}  
We call these $(X,\mathcal{L},Q)$'s \emph{suitable triples}: by extension, if $X$, $\mathcal{L}$ or $Q$ have the above properties we call them \emph{suitable}.

\begin{remark}
    As said above, we do not assume the vanishing of $H^2(X,\mathcal{O}_X)$ from the start.
    We will see that $H^2(X,\mathcal{O}_X)=0$ becomes a necessary condition only when we look at result for $\Hilb^n_X$ with $n>>0$ (cf. theorem \ref{theoremgap1} with proposition \ref{propositionc1(L)KXnegative}).
\end{remark}

\begin{remark}
    Notice that smooth, geometrically connected, projective surfaces $X$ have many suitable $\mathcal{L}$ and $Q$ on them. 
    
    Indeed, if the field $k$ is infinite, we can embed $X$ into a projective space and use smooth hyperplane sections to define reduced effective $0$-cycles on $X$: this is classical Bertini's theorem \cite[Thm. II.8.18]{Harty}.
    If the field $k$ is finite, we can use Poonen's version of Bertini's theorem \cite[Thm. 1.1]{Poonen} and intersect $X$ with hypersurfaces of sufficiently big degrees.

    Therefore, there are many suitable triples $(X,\mathcal{L},Q)$ - the ample line bundles come from $X$ being projective.
\end{remark}

It turns out that if a suitable triple $(X,\mathcal{L},Q)$ satisfies certain assumptions with respect to an integer $n$ - the assumptions are listed at the beginning of Section \ref{5}; note that they depend on the integer $n$ - then we have the following theorem.

\begin{thm}\label{theorem1}
Let $X$ be a suitable surface. 
Then $\Hilb^n_X$ is stably birational to $\Hilb^{n+d}_X$ anytime there exist $(\mathcal{L},Q)$ suitable, $Q$ of degree $d$, such that the following \hyperlink{assumptions}{assumptions} hold true:  
\begin{enumerate}
    \item the restriction map $H^0(X,\mathcal{L}) \longrightarrow H^0(X, \mathcal{L}|_{Q})$ is surjective,
    
    \item  $\dim_k H^0(X, \mathcal{L})\geq n+1+2d$,
    
    \item the sheaf $\mathcal{L} \otimes \mathcal{I}_Q$ is generated by global sections,
    
    \item for any $\overline{k}$-point $x\in X_{\overline{k}}\setminus Q_{\overline{k}}$, we have that
    \[
    H^0(X,\mathcal{L}\otimes \mathcal{I}_Q)_{\overline{k}} \longrightarrow \mathcal{L}_x/m^2_x\mathcal{L}_x
    \]
    is surjective,
    \item for any curve $C$ in the linear system $|\mathcal{L}|$, the arithmetic genus $p_a(C)\leq n$.
\end{enumerate}
\end{thm}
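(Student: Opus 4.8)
\emph{The plan} is to realize both $\Hilb^n_X$ and $\Hilb^{n+d}_X$ as stably birational to relative symmetric products of the universal curve over the linear subsystem of curves through $Q$, and then to compare these two families curve by curve via an Abel--Jacobi argument. Write $N=\dim_k H^0(X,\mathcal{L})-1$. By assumption (1) the cycle $Q$ imposes $d$ independent conditions, so $|\mathcal{L}\otimes\mathcal{I}_Q|$ is a linear subsystem of $|\mathcal{L}|$ of dimension $N-d$; let $\mathcal{C}\to B:=|\mathcal{L}\otimes\mathcal{I}_Q|$ be the corresponding universal curve. For $m\in\{n,n+d\}$ I would introduce the incidence variety
\[
\mathcal{P}_m=\bigl\{(C,Z)\ :\ C\in B,\ Z\subset C,\ \deg Z=m,\ Z\cap Q=\varnothing\bigr\},
\]
with its two projections $\pi\colon\mathcal{P}_m\to\Hilb^m_X$, $(C,Z)\mapsto Z$, and $\rho\colon\mathcal{P}_m\to B$, $(C,Z)\mapsto C$.

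First I would show that $\pi$ exhibits $\mathcal{P}_m$ as a projective bundle over a dense open of $\Hilb^m_X$. The fibre of $\pi$ over $Z$ is the linear system $\mathbb{P}\bigl(H^0(X,\mathcal{L}\otimes\mathcal{I}_{Q\cup Z})\bigr)$ of curves through $Q$ and $Z$. For general $Z$ disjoint from $Q$, assumptions (3)--(4) guarantee that $Q\cup Z$ imposes $d+m$ independent conditions on $|\mathcal{L}|$, so this $H^0$ has constant dimension and is the fibre of the locally free pushforward of $\mathcal{L}\otimes\mathcal{I}_{Q\cup\mathcal{Z}}$ (with $\mathcal{Z}$ the universal subscheme) along the projection to $\Hilb^m_X$; hence $\pi$ is the projectivisation of a genuine vector bundle and $\mathcal{P}_m$ is stably birational to $\Hilb^m_X$. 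The crucial point is dominance of $\pi$: a general $Z$ must lie on some curve through $Q$, i.e. $H^0(X,\mathcal{L}\otimes\mathcal{I}_{Q\cup Z})\neq 0$, which for $m=n+d$ requires exactly
\[
\dim_k H^0(X,\mathcal{L})-\bigl(d+(n+d)\bigr)\ \geq\ 1,
\]
and this is precisely assumption (2). Thus both $\Hilb^n_X$ and $\Hilb^{n+d}_X$ are stably birational to $\mathcal{P}_n$ and $\mathcal{P}_{n+d}$ respectively.

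Next I would analyse the other projection $\rho\colon\mathcal{P}_m\to B$, whose fibre over $C$ is the symmetric product $\Sym^m(C)$ of the smooth locus: assumptions (3)--(4) give very ampleness of $\mathcal{L}\otimes\mathcal{I}_Q$ away from $Q$, so by Bertini a general $C\in B$ is smooth and integral. Here assumption (5), $p_a(C)\le n\le m$, is exactly what forces a general effective divisor of degree $m$ on $C$ to be nonspecial; consequently the relative Abel--Jacobi map $\Sym^m(\mathcal{C}/B)\dashrightarrow \underline{\Pic}^{\,m}(\mathcal{C}/B)$ is dominant with general fibre a projective space $\mathbb{P}^{m-g}$, $g=p_a(C)$, so that $\mathcal{P}_m$ is stably birational to $\underline{\Pic}^{\,m}(\mathcal{C}/B)$. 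Finally, because $Q\subset C$ for every $C\in B$, the class $[\mathcal{O}_C(Q)]$ is a degree-$d$ section of the relative Picard over $B$, and translation by it gives an isomorphism over $B$, defined over $k$ since $Q$ is a $k$-cycle,
\[
\underline{\Pic}^{\,n}(\mathcal{C}/B)\ \xrightarrow{\ +[Q]\ }\ \underline{\Pic}^{\,n+d}(\mathcal{C}/B),
\]
which chained with the previous stable birationalities would yield $\Hilb^n_X$ stably birational to $\Hilb^{n+d}_X$.

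The main obstacle is that over a non-closed field the Abel--Jacobi fibration $\Sym^m(\mathcal{C}/B)\to\underline{\Pic}^{\,m}(\mathcal{C}/B)$ need not be an honest projective bundle: a relative Poincar\'e bundle may fail to exist, obstructed by a class in $\mathrm{Br}\bigl(\underline{\Pic}^{\,m}(\mathcal{C}/B)\bigr)$, so a priori $\mathcal{P}_m$ is only a \emph{twisted} projective bundle, and such bundles need not be stably birational to their base. The key to exploit is that this Brauer class agrees for $m=n$ and $m=n+d$ after the two Picard schemes are identified by the translation above, since twisting the relevant pushforward by the line bundle $\mathcal{O}(-Q)$ does not alter it; one is then reduced to showing that two twisted projective bundles of the same class over isomorphic bases are stably birational to one another, which I would establish by a linear-projection argument using the subbundle inclusion induced by $\mathcal{O}(-Q)\hookrightarrow\mathcal{O}$. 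This is where $q(X)=0$ enters: since $\Pic^0(X)=0$, the restrictions $\mathcal{L}|_C$ and $Q|_C$ furnish canonical sections of the relative Picard in degrees $\mathcal{L}^2$ and $d$, giving enough rational multisections to match, and descend to $k$, the relevant twisting classes. Carefully setting up these bundles over $k$ rather than over $\overline{k}$ and tracking the Brauer class through the translation is the technical heart of the argument; the remaining verifications---independence of conditions, Bertini smoothness, nonspeciality---are the routine consequences of assumptions (1)--(5) indicated above.
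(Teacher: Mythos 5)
Your strategy runs structurally parallel to the paper's, just rewritten in the language of twisted sheaves. The first stage (the incidence variety being a projective bundle over the open of $\Hilb^m_X$ where $Z\sqcup Q$ imposes independent conditions, with assumption (2) giving non-emptiness) is exactly the paper's $\mathbb{P}(\mathcal{E}_m)\to U_m$. For the second stage the paper never mentions Brauer classes: instead of comparing the two Abel--Jacobi fibrations abstractly, it forms the three-term correspondence $\Inc^Q_{X,\mathcal{L},n,n+d}$ of triples $(Z,Z',C)$ with $Z+Q\sim Z'$ on $C$, which is precisely the fiber product of your two twisted bundles over the translated Picard scheme, and it proves directly that over suitable opens this fiber product is an honest, Zariski-locally trivial projective bundle over each factor: the sheaves $\mathcal{F}_n=(g_*\mathcal{O}_{\mathcal{C}}(\mathcal{Z}+Q))^{\vee}$ and $\mathcal{F}'_{n+d}$ exist as genuine locally free sheaves because the universal divisor on each total space trivializes the twisting. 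That is the concrete incarnation of the standard fact that the Brauer class of a Brauer--Severi scheme pulls back to zero on its own total space; so your reformulation is viable, but it buys conceptual packaging rather than a shortcut, and the paper's construction is exactly the solution to what you call the technical heart.

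Two steps of your sketch are genuinely off. First, the comparison mechanism: the inclusion $\mathcal{O}(-Q)\hookrightarrow\mathcal{O}$ induces an inclusion of twisted sheaves $g_*(\mathcal{M}(-Q))\hookrightarrow g_*\mathcal{M}$, and on projectivizations this gives the fiberwise \emph{closed embedding} $|\mathcal{M}(-Q)|+Q\subset|\mathcal{M}|$, not a linear projection onto the degree-$n$ bundle. A projection onto the smaller bundle requires a complementary center, i.e.\ a splitting of $0\to g_*(\mathcal{M}(-Q))\to g_*\mathcal{M}\to g_*(\mathcal{M}|_Q)\to 0$, which exists only Zariski-locally on the base (that would still suffice for stable birationality, but it is a different argument); alternatively one uses the fiber-product trick, which is what the paper does.

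Second, and more seriously, your chain of stable birationalities treats the Abel--Jacobi fibrations as (twisted) bundles over all of $\Pic^m(\mathcal{C}/B)$, when they are bundles only over opens (smooth curve, nonspecial class, cohomological flatness); likewise the incidence variety $\mathcal{P}_m$ need not be irreducible, so ``general point'' is not well defined on it, and the open good for the Hilbert-scheme side (lying over $U_m$: independent conditions on $H^0(X,\mathcal{L})$) is a priori unrelated to the open good for the Picard side. Producing a single point of the Picard scheme above which \emph{both} degree-$n$ and degree-$(n+d)$ fibrations carry Hilbert-side-good divisors is the actual heart of the proof; it is the paper's Section 6 (together with the Bertini-with-fixed-closed and Brill--Noether non-emptiness statements of Section 5, which are real propositions, not routine remarks), and it is the one place where $q(X)=0$ is used: $H^1(X,\mathcal{I}_C\otimes\mathcal{L})\cong H^1(X,\mathcal{O}_X)=0$ makes $H^0(X,\mathcal{L})\to H^0(C,\mathcal{L}|_C)$ surjective, which transfers the independent-conditions property from $Z'$ to a divisor $Z$ with $Z+Q\sim Z'$ on the same curve. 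Your stated use of $q(X)=0$ --- producing sections of the relative Picard scheme in order to descend the twisting classes --- is both inaccurate ($q=0$ gives $\dim\Pic^0_{X/k}=0$, not $\Pic^0(X)=0$) and unnecessary for the class-matching, since translation by the $k$-rational cycle $Q$ already identifies the classes; meanwhile the step that actually needs an argument is left unaddressed in your proposal.
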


After establishing this general theorem for surfaces of irregularity zero, we focus on finding sufficient and necessary conditions on $n$ so that we have triples satisfying the \hyperlink{assumptions}{assumptions} with respect to $n$. 

\begin{thm}\label{theorem2}
    Let $(X,\mathcal{L},Q)$ be a suitable triple. 
    There exists an integer $e_0=e_0(X,\mathcal{L},Q)$ such that for any $e\geq e_0$ the triple $(X,\mathcal{L}^{\otimes e},Q)$ satisfies the \hyperlink{assumptions}{assumptions} \ref{thefiveassumptions} with respect to $n$ if and only if $n$ is in the interval
    \[
        I_e\coloneqq \left[\frac{e^2 c_1(\mathcal{L})^2 +  e\, c_1(\mathcal{L})\cdot K_X}{2}+1, \frac{ e^2 c_1(\mathcal{L})^2-e\, c_1(\mathcal{L}) \cdot K_X }{2} + \dim_k H^2(X, \mathcal{O}_X) -2\deg(Q) \right] ,
    \]
    in which case the variety $\Hilb^n_X$ is stably birational to $\Hilb^{n+\deg(Q)}_X$.
\end{thm}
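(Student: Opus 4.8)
The plan is to dispose separately of the two kinds of conditions among Assumptions (1)--(5): those that do not involve $n$, namely (1), (3) and (4), and those that do, namely (2) and (5). First I would observe that (1), (3) and (4) are all positivity statements for $\mathcal{L}^{\otimes e}\otimes\mathcal{I}_Q$ that hold for every sufficiently large $e$, uniformly in $n$. Indeed, since $\mathcal{L}$ is ample, Serre vanishing gives $H^1(X,\mathcal{L}^{\otimes e}\otimes\mathcal{I}_Q)=0$ for $e\gg 0$, and the long exact sequence attached to $0\to\mathcal{I}_Q\otimes\mathcal{L}^{\otimes e}\to\mathcal{L}^{\otimes e}\to\mathcal{L}^{\otimes e}|_Q\to 0$ turns this into the surjectivity in (1); global generation of $\mathcal{L}^{\otimes e}\otimes\mathcal{I}_Q$ in (3) is the Cartan--Serre--Grothendieck theorem applied to the coherent sheaf $\mathcal{I}_Q$ twisted by a large power of the ample $\mathcal{L}$; and (4), the separation of $2$-jets away from $Q$, follows once $\mathcal{L}^{\otimes e}$ is made regular enough, e.g.\ via Castelnuovo--Mumford regularity with respect to $\mathcal{L}$, which yields a single threshold valid over all geometric points $x\in X_{\overline{k}}\setminus Q_{\overline{k}}$. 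All of these conditions may be checked after the flat base change to $\overline{k}$, so working over a general field $k$ costs nothing. This produces an integer $e_0$ beyond which (1), (3) and (4) hold regardless of $n$.

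Next I would read off the two endpoints of $I_e$ from (5) and (2). By the adjunction formula every curve $C\in|\mathcal{L}^{\otimes e}|$ has arithmetic genus
\[
p_a(C)=1+\tfrac{1}{2}\bigl(C^2+C\cdot K_X\bigr)=\frac{e^2 c_1(\mathcal{L})^2+e\,c_1(\mathcal{L})\cdot K_X}{2}+1 ,
\]
which depends only on the class $e\,c_1(\mathcal{L})$ and hence is constant on the linear system; thus (5) is precisely the requirement that $n$ be at least the left endpoint of $I_e$. For (2), after enlarging $e_0$ so that also $H^1(X,\mathcal{L}^{\otimes e})=H^2(X,\mathcal{L}^{\otimes e})=0$, Riemann--Roch on the surface gives
\[
\dim_k H^0(X,\mathcal{L}^{\otimes e})=\chi(\mathcal{L}^{\otimes e})=\frac{e^2 c_1(\mathcal{L})^2-e\,c_1(\mathcal{L})\cdot K_X}{2}+\chi(\mathcal{O}_X),
\]
and since $q(X)=0$ one has $\chi(\mathcal{O}_X)=1+\dim_k H^2(X,\mathcal{O}_X)$. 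Substituting into (2), i.e.\ into $n\le \dim_k H^0(X,\mathcal{L}^{\otimes e})-1-2\deg(Q)$, recovers exactly the right endpoint of $I_e$.

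Putting the two steps together, for every $e\ge e_0$ the conditions (1), (3) and (4) are automatic, while (2) and (5) hold if and only if $n$ lies between the two computed endpoints, that is, if and only if $n\in I_e$. In that range Theorem \ref{theorem1}, applied to the suitable pair $(\mathcal{L}^{\otimes e},Q)$ with $d=\deg(Q)$, yields the stable birationality of $\Hilb^n_X$ and $\Hilb^{n+\deg(Q)}_X$, completing the argument.

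I expect the genuine difficulty to be concentrated in the first paragraph: choosing a single $e_0$ that forces Assumption (4) \emph{uniformly} over all geometric points $x$ not lying on $Q$. Pointwise separation of $2$-jets is routine from ampleness, but the uniformity---together with the need to preserve the vanishing along $Q$ while imposing a double point at $x$, i.e.\ to control $H^1\bigl(X,\mathcal{L}^{\otimes e}\otimes\mathcal{I}_Q\otimes\mathfrak{m}_x^2\bigr)$---is what requires an effective regularity bound rather than a bare application of Serre vanishing; everything else is bookkeeping with Riemann--Roch and adjunction.
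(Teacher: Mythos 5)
Your proof is correct and follows essentially the same route as the paper's: assumptions (1), (3), (4) are disposed of uniformly in $n$ for all $e\gg 0$, assumptions (5) and (2) are converted into exactly the left and right endpoints of $I_e$ via adjunction and Riemann--Roch (using $q(X)=0$ and Serre vanishing), and theorem \ref{theorem1} then yields the stable birationality. The single point of divergence is assumption (4): you propose controlling $H^1(X,\mathcal{L}^{\otimes e}\otimes \mathcal{I}_Q\otimes m_x^2)$ uniformly in $x$ through a Castelnuovo--Mumford regularity bound --- which does work, but requires the additional observation that the sheaves $\mathcal{I}_Q\otimes m_x^2$ form a bounded flat family over $X$ (e.g.\ as fibers of $pr_1^*\mathcal{I}_Q\otimes \mathcal{I}_{\Delta}^2$ on $X\times X$) so that one threshold serves all $x$ --- whereas the paper sidesteps the uniformity question entirely: it takes $e\geq e_g+e_{va}$ with $\mathcal{L}^{\otimes e_g}\otimes \mathcal{I}_Q$ globally generated and $\mathcal{L}^{\otimes e_{va}}$ very ample, so that the sections of $\mathcal{L}^{\otimes e}\otimes \mathcal{I}_Q$ define a locally closed immersion of $X\setminus Q$ into projective space, and separation of points and tangent vectors at every $x\notin Q$ is then automatic by \cite[Prop. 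II.7.3]{Harty}; this buys the uniform threshold with no regularity machinery at all.
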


Given the nature of theorem \ref{theorem2}, it is natural to wonder about the non-emptiness and the growth of $I_e$ (as $e$ grows).

We already expect that for a surface $X$ of non-negative Kodaira dimension the intervals $I_e$ must be empty for $e>>0$ (Wood's theorem above).
This is confirmed by proposition \ref{propositionc1(L)KXnegative} where we will see that a necessary conditions for having non-empty intervals $I_e$ for $e>>0$ is for $X$ to be geometrically rational.

However, being geometrically rational turns out to be a \emph{sufficient} condition as well.
Actually, it is a sufficient condition for having enough suitable triples $(X,\mathcal{L},Q)$ so that the intervals $I_e$ cover an interval of $+\infty$.

\begin{thm}\label{theoremrationalsurfintro}
    Let $X$ be a geometrically rational suitable surface.
    Let $\textup{ind}(X)$ be the index of $X$.
    Then there exists an integer $n_0$ such that for any $n \geq n_0$ the variety $\Hilb^n_X$ is stably birational to $\Hilb^{n+\textup{ind}(X)}_X$.
\end{thm}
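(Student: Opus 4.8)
The plan is to combine Theorem~\ref{theorem2} with a careful analysis of the growth and overlap of the intervals $I_e$, engineering a family of suitable triples whose associated intervals tile a half-line $[n_0,+\infty)$ while all producing the \emph{same} stable birationality jump $\deg(Q)=\textup{ind}(X)$.

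\textbf{Step 1: Realize the index geometrically.}
First I would produce a suitable $0$-cycle $Q$ of degree exactly $\textup{ind}(X)$. By definition of the index, there are closed points on $X$ whose degrees have gcd equal to $\textup{ind}(X)$; taking a $\Z$-linear combination of such points realizes $\textup{ind}(X)$ as the degree of an \emph{effective} $0$-cycle once one clears denominators appropriately (here one must be slightly careful, since effectivity and the gcd interact—one may need to add a large effective cycle and then exploit that the intervals $I_e$ are eventually translation-compatible). To keep $Q$ \emph{geometrically reduced} I would invoke the Bertini-type arguments already cited in the excerpt (classical Bertini or Poonen's theorem) to guarantee existence of reduced cycles in the relevant linear systems. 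The upshot should be a suitable triple $(X,\mathcal{L},Q)$ with $\deg(Q)=\textup{ind}(X)$.

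\textbf{Step 2: Make the intervals grow and eventually overlap.}
Next I would feed this triple into Theorem~\ref{theorem2} and study $I_e$ as $e\to\infty$. Writing $a=c_1(\mathcal{L})^2>0$ (ample, so $a>0$ by Hodge index / Nakai) and $b=c_1(\mathcal{L})\cdot K_X$, the interval has left endpoint $\tfrac{1}{2}(e^2 a + e b)+1$ and length approximately $-e\,b + \dim_k H^2(X,\mathcal{O}_X) - 2\deg(Q) - 1$. The key quantitative point is that for a geometrically rational surface we have $H^2(X,\mathcal{O}_X)=0$, and—crucially—$b=c_1(\mathcal{L})\cdot K_X<0$ can be arranged (this is exactly where geometric rationality enters, via proposition~\ref{propositionc1(L)KXnegative}); then the length of $I_e$ grows \emph{linearly} in $e$. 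Meanwhile the left endpoint of $I_{e+1}$ exceeds the right endpoint of $I_e$ by a gap that grows only like $e\cdot a$ plus lower order terms. Comparing the two, consecutive intervals $I_e$ and $I_{e+1}$ will overlap once $e$ is large, provided the length growth dominates the endpoint shift—this is the heart of the computation and I expect it to be the \textbf{main obstacle}: one must check that the $-eb$ growth of the length beats the endpoint gap, which may require replacing $\mathcal{L}$ by a fixed high power, or passing to an auxiliary $\mathcal{L}'$ with $c_1(\mathcal{L}')\cdot K_X$ sufficiently negative relative to $c_1(\mathcal{L}')^2$, i.e. choosing $\mathcal{L}$ adapted to $K_X$ rather than arbitrary.

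\textbf{Step 3: Chain the stable birationalities.}
Once I know that $\bigcup_{e\geq e_1} I_e \supseteq [n_0,+\infty)$ for suitable $n_0,e_1$, every integer $n\geq n_0$ lies in some $I_e$, and Theorem~\ref{theorem2} gives a stable birational equivalence $\Hilb^n_X \sim \Hilb^{n+\deg(Q)}_X = \Hilb^{n+\textup{ind}(X)}_X$. Because the \emph{same} cycle $Q$ (hence the same jump $\textup{ind}(X)$) is used for all $e$, no rescaling of the step size occurs across the chain, and the individual equivalences glue to the uniform statement. The one loose end to tidy is whether a \emph{single} $Q$ suffices for all large $e$ or whether the value $e_0(X,\mathcal{L}^{\otimes e},Q)$ forces $Q$ to change; since $Q$ is held fixed while only the line bundle is raised to powers, the assumptions~\ref{thefiveassumptions} (surjectivity of restriction, global generation of $\mathcal{L}^{\otimes e}\otimes\mathcal{I}_Q$, separation of points, and the genus bound) all \emph{improve} with $e$ by standard Serre vanishing and very-ampleness, so fixing $Q$ is legitimate. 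Collecting these gives the desired $n_0$ and completes the proof.
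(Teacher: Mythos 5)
Your proposal has two genuine gaps, and they sit exactly at the points where the paper has to work hardest.

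First, Step 1 cannot be carried out as stated: $\textup{ind}(X)$ is a gcd of degrees of closed points, and in general it is \emph{not} the degree of any effective (let alone geometrically reduced) $0$-cycle --- a surface whose closed points all have degree $2$ or $3$ has index $1$ but carries no effective cycle of degree $1$. Your hedge (``add a large effective cycle and exploit translation-compatibility'') is the right instinct but is never turned into an argument. The paper sidesteps the problem entirely: it picks geometrically reduced closed points $Q_1,\dots,Q_r$ with $\gcd\{\deg Q_i\}=\textup{ind}(X)$, writes $\textup{ind}(X)=\sum_{i\leq s}a_id_i-\sum_{i>s}a_id_i$ with $a_i>0$, applies the interval machinery to each triple $(X,\mathcal{L},Q_i)$ separately to get $\Hilb^n_X$ stably birational to $\Hilb^{n+d_i}_X$ for all $n\gg 0$, and then chains these equivalences so that both $\Hilb^n_X$ and $\Hilb^{n+\textup{ind}(X)}_X$ are stably birational to the \emph{same} variety $\Hilb^{n+\sum_{i\leq s}a_id_i}_X$. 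No single cycle of degree $\textup{ind}(X)$ is ever required.

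Second, the quantitative heart of Step 2 fails. By equation \ref{widthofintervalIe}, the gap between $I_e$ and $I_{e+1}$ has width $(2e+1)\frac{c_1(\mathcal{L})^2+c_1(\mathcal{L})\cdot K_X}{2}+2\deg(Q)$, so consecutive intervals eventually overlap only if $c_1(\mathcal{L})^2+c_1(\mathcal{L})\cdot K_X<0$; the condition $c_1(\mathcal{L})\cdot K_X<0$ that you focus on only controls the \emph{length} of each interval and is not sufficient, and replacing $\mathcal{L}$ by a high power makes matters worse since the quadratic term $c_1(\mathcal{L})^2>0$ dominates. Worse, your fallback --- choosing $\mathcal{L}'$ with $c_1(\mathcal{L}')^2+c_1(\mathcal{L}')\cdot K_X$ negative --- is impossible on some geometrically rational surfaces: on a minimal rational conic bundle with $r>8$ singular fibers one has $K_X^2=8-r<0$ and $\Pic(X)=\Z F\oplus\Z[-K_X]$, and a direct computation shows no ample class $-mK_X+aF$ satisfies $c_1(\mathcal{L})^2+c_1(\mathcal{L})\cdot K_X\leq 0$; such surfaces are, moreover, not birational to any del Pezzo surface, so this case cannot be dodged by changing models. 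This is precisely why the paper first reduces to minimal models and runs through Iskovskih's classification: for del Pezzo surfaces it takes $\mathcal{L}=\omega_X^{\vee}$, where the sum is exactly $0$, so gaps of constant width $2d$ remain and must be filled by the blow-up intervals $\widetilde{I}_e$ of theorem \ref{theoremgap2}; for minimal conic bundles it introduces the two-parameter family $\mathcal{L}^{\otimes e}\otimes\mathcal{O}_X(bF)$ and uses twisting by the (nef, non-ample) fiber class $F$ (proposition \ref{propinductionona}) to tile a half-line. Finally, note that you quote proposition \ref{propositionc1(L)KXnegative} backwards: it asserts that non-emptiness of the $I_e$ for infinitely many $e$ \emph{forces} $X$ to be geometrically rational, not that geometric rationality supplies an ample $\mathcal{L}$ with $c_1(\mathcal{L})\cdot K_X<0$.
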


\subsection{Application to the rationality of the motivic zeta function}
We conclude the paper working out the implications of theorem \ref{theoremrationalsurfintro} on the rationality of the \emph{motivic zeta function}.
First introduced by Kapranov \cite{Ka} as a generalization of the Hasse-Weil zeta function $Z(X,t)$, the motivic zeta function is defined as the power series
\[
    \zeta_{\textup{mot}}(X,t) = 1 + \sum_{ n=1}^{\infty} [ \Sym^n_X ] t^n \in K_0(Var/k)[[t]]
\] 
over the Grothendieck ring of $k$-varieties. 

The question about the rationality of the motivic zeta function rose naturally from the theorem about the rationality of the \emph{Hasse-Weil} zeta function (conjectured by Weil in \cite{We2} and proved in general by Dwork in \cite{Dwo}).

However, in general the answer is negative: indeed, as soon as $H^2(X,\mathcal{O}_X)\neq 0$ then Larsen and Lunts in \cite{LL} proved that $\zeta_{\textup{mot}}(X,t)$ is \emph{not} rational.
On the other hand, Kapranov proved that the answer is positive for curves (\cite[Thm. 1.1.9]{Ka}): the same is true for rational surfaces over algebraically closed fields $k=\overline{k}$ of characteristic zero (see for instance Musta\c t\u a \cite[Prop. 4.3]{Mu}).

Modding out by the ideal generated by $[\mathbb{A}^1_k]$, we extend the latter result to geometrically rational surfaces over any field of characteristic zero.

\begin{thm}\label{rationalityofstablmotiviczeta}
    If $k$ is a field of characteristic zero, 
    then the motivic zeta function $\zeta_{\textup{mot}}(X,t)$ of a geometrically rational surface $X$ is rational in the quotient $K_0(Var/k)/([\mathbb{A}^1_k])[[t]]$. 
\end{thm}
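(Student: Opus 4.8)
The plan is to deduce rationality of the stable motivic zeta function directly from the finiteness of stable birational classes established in Theorem \ref{theoremrationalsurfintro}. The key fact I would invoke is the structure of the Grothendieck ring modulo $[\mathbb{A}^1_k]$: the quotient $K_0(Var/k)/([\mathbb{A}^1_k])$ is isomorphic to the free abelian group $\mathbb{Z}[SB]$ generated by stable birational equivalence classes of smooth projective (geometrically integral) varieties, a result of Larsen–Lunts. Under this isomorphism, the class $[Y]$ of a smooth projective variety maps to the generator $\langle Y \rangle$ indexed by its stable birationality class. Since $\Sym^n_X$ is birational — hence stably birational — to $\Hilb^n_X$ for the smooth surface $X$, in the quotient ring we have $[\Sym^n_X] = \langle \Hilb^n_X \rangle$.

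With this reduction, the image of $\zeta_{\textup{mot}}(X,t)$ in $K_0(Var/k)/([\mathbb{A}^1_k])[[t]]$ becomes the power series $1 + \sum_{n\geq 1} \langle \Hilb^n_X \rangle\, t^n$ with coefficients in $\mathbb{Z}[SB]$. The next step is to exploit Theorem \ref{theoremrationalsurfintro}: setting $r = \textup{ind}(X)$, there is an integer $n_0$ so that for all $n \geq n_0$ we have $\langle \Hilb^n_X \rangle = \langle \Hilb^{n+r}_X \rangle$. This says the sequence of coefficients is \emph{eventually periodic} with period $r$ once $n \geq n_0$. The elementary observation is that a power series over any commutative ring whose coefficient sequence is eventually periodic is rational: one splits off the finitely many initial terms $\sum_{n < n_0} \langle \Hilb^n_X\rangle t^n$ (adding the constant term $1$) as a polynomial, and the remaining tail $\sum_{n \geq n_0} \langle \Hilb^n_X \rangle t^n$ is, by periodicity, a geometric-type series that sums to $\frac{t^{n_0} P(t)}{1 - t^r}$ for an explicit polynomial $P(t)$ of degree $< r$ recording one full period of coefficients.

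Concretely, I would write
\[
    \zeta_{\textup{mot}}(X,t) \equiv \sum_{n=0}^{n_0-1} \langle \Hilb^n_X \rangle\, t^n \; + \; \frac{t^{n_0}}{1-t^{r}}\sum_{j=0}^{r-1} \langle \Hilb^{\,n_0+j}_X \rangle\, t^{j} \pmod{([\mathbb{A}^1_k])},
\]
with the convention $\langle \Hilb^0_X\rangle = 1$, which exhibits the series as a quotient of two elements of $\mathbb{Z}[SB][t]$ with denominator $1-t^r$. This is rational in the required sense. The main conceptual point, rather than an obstacle, is justifying that rationality in the localized/quotient power series ring is witnessed by a denominator that is a genuine polynomial; since $1 - t^r$ is a unit in the formal power series ring (its constant term $1$ is invertible) and has a polynomial inverse only up to the series expansion, one should state precisely the notion of rationality being used — namely lying in the image of $K_0(Var/k)/([\mathbb{A}^1_k])(t)$, the field/ring of fractions of polynomials, inside the power series ring. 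I expect the only delicate point is bookkeeping: ensuring the periodicity statement of Theorem \ref{theoremrationalsurfintro} is applied uniformly (it gives $\langle\Hilb^n_X\rangle = \langle \Hilb^{n+r}_X\rangle$ for \emph{all} $n \geq n_0$, which immediately yields full periodicity of the tail, not merely periodicity along a single residue class), and correctly handling the finitely many exceptional low-degree terms. No characteristic-zero hypothesis is needed for this final rationality argument beyond what Theorem \ref{theoremrationalsurfintro} already assumes; the characteristic-zero assumption enters only through the geometric rationality input feeding that theorem.
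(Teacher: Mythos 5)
Your overall skeleton (eventual periodicity of the coefficients via Theorem \ref{theoremrationalsurfintro}, then summing the periodic tail against $1-t^{\textup{ind}(X)}$) matches the paper's proof of Proposition \ref{proprationalityformotivicHilb}, and that part is correct. The genuine gap is the step where you pass from $[\Sym^n_X]$ to $\langle \Hilb^n_X\rangle$: you claim that since $\Sym^n_X$ is birational to $\Hilb^n_X$, its class in $K_0(Var/k)/([\mathbb{A}^1_k])$ equals the Larsen--Lunts generator of that stable birational class. But $\Sym^n_X$ is \emph{singular} for $n\geq 2$, and the Larsen--Lunts isomorphism $K_0(Var/k)/(\mathbb{L})\cong \Z[SB]$ only sends the class of a \emph{smooth projective} variety to the generator indexed by its stable birational class; the class of a singular projective variety is some $\Z$-linear combination of generators that is not determined by its birational type. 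Concretely, a nodal plane cubic $C$ is projective and birational to $\mathbb{P}^1_k$, yet $[C]=[\mathbb{P}^1_k]-1=\mathbb{L}\equiv 0 \pmod{\mathbb{L}}$, while $\langle \mathbb{P}^1_k\rangle = 1$. So ``birational to a smooth projective variety'' does not control a singular variety's class modulo $\mathbb{L}$, and your identification $[\Sym^n_X]=\langle\Hilb^n_X\rangle$ is unjustified as stated.

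The paper closes exactly this gap with G\"ottsche's formula \cite[Thm. 1.1]{Go}: in characteristic zero, $[\Hilb^n_X]=\sum_{\alpha \dashv n} [\Sym^{a_1}_X\times\cdots\times \Sym^{a_n}_X\times \mathbb{A}^{n-|\alpha|}_k]$ in $K_0(Var/k)$, and modulo $\mathbb{L}$ every summand with $|\alpha|<n$ vanishes, leaving $\mu_{\mathbb{L}}([\Sym^n_X])=\mu_{\mathbb{L}}([\Hilb^n_X])$. Some such geometric input on the Hilbert--Chow morphism is unavoidable, and it is also where the characteristic-zero hypothesis genuinely enters (together with weak factorization, which underlies the Larsen--Lunts theorem you invoke). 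Note that Theorem \ref{theoremrationalsurfintro} itself holds over arbitrary fields, so your closing remark that characteristic zero is needed only through the input feeding that theorem misplaces where the assumption is actually used: it is needed precisely for the two steps your argument elides.
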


\subsection{Outline}
    In Section \ref{2} we recall the definitions and properties of objects that will be later used: results about the Abel-Jacobi map and about Brill-Noether theory.
    In Section \ref{3}  we introduce a geometrical framework in order to compare $\Hilb^n_X$ with $\Hilb^{n'}_X$.
    The idea is to define an appropriate \emph{incidence correspondence} $\Inc^Q_{X,\mathcal{L},n,n'}$ (definition \ref{definitionincidencevariety}) together with 
    two morphisms
    \[
    \begin{tikzcd}
        &
        \Inc^Q_{X,\mathcal{L},n,n'}
        \arrow[dl, "{\pi_n}"']
        \arrow[dr, "{\pi_{n'}}"]
        &
        \\
        \Hilb^n_X 
        &
        &
        \Hilb^{n'}_X.
    \end{tikzcd}
    \]
    The rest of the section is devoted to check that the functor $\Inc^Q_{X,\mathcal{L},n,n'}$ is representable by a quasi-projective $k$-scheme.
    
    In Section \ref{4} we describe the general fiber of $\pi_n$ and $\pi_{n'}$ respectively.
    We will see (in proposition \ref{definitionofUn} and proposition \ref{existenceopenforPE}) that after restricting to appropriate opens of $\Inc^Q_{X,\mathcal{L},n,n'}$ the two maps are projective bundles.

    At this point, it remains to verify that the intersection of those opens in $\Inc^Q_{X,\mathcal{L},n,n'}$ is non-empty.
    Therefore, in Section \ref{5} we introduce the \hyperlink{assumptions}{assumptions} \ref{thefiveassumptions} and in Sections \ref{5} and \ref{6} we prove that under those assumptions the existence of a suitable triple $(X,\mathcal{L},Q)$ implies that $\Hilb^n_X$ is stably birational to $\Hilb^{n+\deg(Q)}_X$ (theorem \ref{theorem1}).
    While doing so, we prove a Bertini's-like theorem for surfaces with a fixed closed (see proposition \ref{bertiniwithclosedfixed}).
    
    In Section \ref{7} we first find sufficient and necessary conditions on $n$ (theorem \ref{theorem2}) for which we can apply theorem \ref{theorem1}.
    Then we study the stable birational class of $\Hilb^n_X$ for $n>>0$ and find a sufficient criterion on $X$ in order to have only finitely many different classes (corollary \ref{corollarydefinitestable}).

    Using Iskovshih classification of geometrically rational surfaces minimal over $k$ (see \cite{Isk}), in Section \ref{8} we prove that for geometrically rational surfaces there are always only finitely many different stable birational classes of $\Hilb^n_X$ (theorem \ref{theoremrationalsurfintro}).
    
    Finally, in Section \ref{9} we exploit the result of Section \ref{8} in order to study the rationality of the motivic Hilbert zeta function (proposition \ref{proprationalityformotivicHilb}) and the motivic zeta function of geometrically rational surfaces (theorem \ref{rationalityofstablmotiviczeta}).

\subsection{Notation}

In what follows, unadorned products are meant to be over $k$. 
By a variety over $k$ we mean a reduced separated scheme over $k$ of finite type.

Let $T$ be a scheme. 
Let $\mathcal{E}$ be a quasi-coherent sheaf of $\mathcal{O}_T$-modules.
By $\mathbb{P}(\mathcal{E})$ we mean the $T$-scheme parametrizing the $\mathcal{O}_T$-sheaf quotients of $g^*\mathcal{E}$ which are
locally free of rank 1:
\[
    \mathbb{P}(\mathcal{E})(T'\overset{g}{\rightarrow} T) = \{ (\mathcal{M},\alpha): \mathcal{M} \textup{ is invertible on } T' \textup{ and a quotient of } g^*\mathcal{E} \textup{ via } \alpha:g^*\mathcal{E} \xtwoheadrightarrow{} \mathcal{M} \}.
\]
Given a scheme $Y$, the unadorned tensor product $\mathcal{F}\otimes \mathcal{G}$ of $\mathcal{O}_Y$-sheaves is meant over $\mathcal{O}_Y$.

\subsection{Acknowledgments} 
I want to deeply thank my advisor, Professor Aise Johan de Jong, for his invaluable support, guidance throughout all the stages of this paper and for giving me feedbacks on the draft.

I also wish to thank all my friends at the Columbia Math Department, for the countless hours spent typing side by side, and to my friend Guglielmo Nocera for supports and for double-checking the legibility of the introduction.

\section{Preliminaries}\label{2}

\noindent As stated in the introduction, we start with a triple $(X,\mathcal{L},Q)$ where
\begin{itemize}
    \item $X$ is a smooth, geometrically connected, projective surface over a field $k$, whose $H^1(X,\mathcal{O}_X)$ is trivial,
    \item $\mathcal{L}$ is an ample line bundle on $X$, and 
    \item $Q$ is a $0$-dimensional, geometrically reduced, closed subscheme of $X$ of degree $d$.
\end{itemize}  
\begin{remark}\label{remarkgeomconnected}
    Notice that any curve $C$ in the linear system defined by $\mathcal{L}$ is geometrically connected.
    
    Indeed, since $\mathcal{L}$ is ample, there is an integer $m$ for which $H^1(X,\mathcal{L}^{\otimes m}\otimes \omega_X)=0$. 
    By Serre duality, $H^1(X,(\mathcal{L}^{\vee})^{\otimes m})=0$, which implies $H^0(mC,\mathcal{O}_{mC})=k$. 
    Hence, $C$ is a geometrically connected curve (see \cite[Tag 0FD1]{deJ}). 
\end{remark}

In the next section, we will define the \emph{incidence correspondences} introduced for comparing $\Hilb^n_X$ and $\Hilb^{n'}_X$.
So let us first recall the objects used in their definitions.

\subsection{Relative effective Cartier divisors}

An \emph{effective Cartier divisor} $D\subset Y$ of a scheme $Y$ is a closed subscheme whose ideal sheaf $\mathcal{I}_D$ (also denoted by $\mathcal{O}_Y(-D)$) is an invertible $\mathcal{O}_Y$-module. 

Given a morphism of schemes $Y\longrightarrow S$, a \emph{relative effective Cartier divisor} on $Y/S$ is an effective Cartier divisor $D\subset Y$ such that $D$ is flat over $S$. 
This notion is stable under base-change: in particular, the restrictions to the fibers $\{D_s\subset Y_s\}_{s\in S}$ are all effective Cartier divisor of $Y_s$ (see \cite[Lemma 056Q]{deJ}).

We can define the functor 
\[
    \textup{CDiv}_{X}: \textup{Sch}_k^{\textup{op}} \longrightarrow \textup{Set},\quad \quad T\mapsto \{ \textup{relative effective Cartier divisors } D \textup{ on } X_T/T \}.
\] 
Since $X$ is projective and smooth, $\CDiv_{X}$ is representable by the disjoint union of connected projective schemes over $k$ (\cite[\S 13, Prop.\ 2]{Bej}).

We have a natural transformation of functors into the relative Picard functor 
\[
    AJ_X : \CDiv_X \longrightarrow \Pic_X, \quad D \subset X_T \mapsto [\mathcal{O}_{X_T}(D)]
\]
called the \emph{Abel-Jacobi morphism} (see \cite[\S 14, Prop.\ 1]{Bej}).

We refer to \cite[\S 14]{Bej} for a discussion of the relative Picard functor $\Pic_{X/k}$ and the Abel-Jacobi map (see also \cite[Thm.\ 3.1]{Gro}). 
However, in the next subsection we recall some properties about its fibers.

\subsection{Fibers of Abel-Jacobi map}

Let $f:Y\longrightarrow S$ be a flat projective scheme over a quasi-compact scheme $S$. 
Assume that $f$ has geometrically integral fibers.

Let $T$ be an $S$-scheme and let $\mathcal{L}$ be a line bundle on the fiber product $Y_T \overset{f_T}{\longrightarrow} T$.
The class $[\mathcal{L}]$ in $\Pic_{Y/S}$ corresponds to a morphism $T\overset{[\mathcal{L}]}{\longrightarrow} \Pic_{Y/S}$. 

If $\mathcal{L}$ is \emph{cohomologically flat of dimension zero} (namely, the formation of its direct image commutes with base change), then ${f_T}_*\mathcal{L}$ is locally free and the bundle projection $ \mathbb{P}( ({f_T}_*\mathcal{L})^{\vee} )  \longrightarrow T$ fits in the cartesian diagram
\[
\begin{tikzcd}
    \mathbb{P}( ({f_T}_*\mathcal{L})^{\vee} )\arrow[r] \arrow[d] 
    & 
    \CDiv_{Y/S} 
    \arrow[d, "{AJ_{Y/S}}"] 
    \\
    T 
    \arrow[r, "{[\mathcal{L}]}"]
    &
    \Pic_{Y/S}
\end{tikzcd}
\] (see \cite[Prop.\ 8.2.7]{BLR90}).
\begin{definition}
    In general, let us denote by $\CDiv_{Y/S}^\mathcal{L}$ the fiber product $\CDiv_{Y/S}\times_{\Pic_{Y/S}} T$ induced by $[\mathcal{L}]$.
\end{definition}

In Section \ref{4} we will need the description of the above mentioned isomorphism between the fiber product $\CDiv_{Y/S}^{\mathcal{L}}$ and $\mathbb{P}( ({f_T}_*\mathcal{L})^{\vee} )$. 
Therefore let us recall how it is defined (for a complete proof see \cite[\S 8]{BLR90}).

\begin{proposition}\label{propisoquotientlinebundlesandcartierdiv}
    Let $f:Y\longrightarrow S$ and $\mathcal{L}$ as above. 
    The isomorphism between $\CDiv_{Y/S}^{\mathcal{L}}$ and $\mathbb{P}( ({f_T}_*\mathcal{L})^{\vee} )$ is given by
    \[
        (D\subset Y_T) \mapsto (\mathcal{M}, (f_{T\, *} i_D )^{\vee} )
    \]
    where $i_D:\mathcal{O}_{Y_T}\xhookrightarrow{} \mathcal{O}_{Y_T}(D)$ and $\mathcal{M}$ is such that $\mathcal{O}_{X_T}(D)\cong \mathcal{L}\otimes f_T^*(\mathcal{M})$.
\end{proposition}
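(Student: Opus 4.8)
The plan is to unwind the construction in \cite[Prop.\ 8.2.7]{BLR90} and check that the stated assignment is the isomorphism witnessing the cartesian square above the statement. The geometric content is that a relative effective Cartier divisor $D$ on $Y_T$ whose associated line bundle is (the pullback of) $\mathcal{L}$ is the same datum as a global section of $\mathcal{L}$ up to scalar, and a global section of $\mathcal{L}$ (up to scalar, on each fiber) is exactly a rank-one locally free quotient of $(f_{T\,*}\mathcal{L})^{\vee}$, i.e.\ a $T$-point of $\mathbb{P}((f_{T\,*}\mathcal{L})^{\vee})$.

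First I would set up the section side. Given $D\subset Y_T$ in $\CDiv^{\mathcal{L}}_{Y/S}(T)$, the defining inclusion of the ideal sheaf gives the canonical map $i_D:\mathcal{O}_{Y_T}\hookrightarrow \mathcal{O}_{Y_T}(D)$. By definition of $\CDiv^{\mathcal{L}}_{Y/S}$ there is an isomorphism $\mathcal{O}_{Y_T}(D)\cong \mathcal{L}\otimes f_T^*(\mathcal{M})$ for some line bundle $\mathcal{M}$ on $T$; twisting $i_D$ by $\mathcal{L}^{\vee}$ and pushing forward along $f_T$ yields a map $f_{T\,*}(\mathcal{L}^{\vee})\to \mathcal{M}$, equivalently a section of $\mathcal{L}$ with values in $\mathcal{M}$. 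The key technical input is that since $f$ has geometrically integral fibers and $\mathcal{L}$ is cohomologically flat of dimension zero, $f_{T\,*}\mathcal{L}$ is locally free and its formation commutes with base change; dualizing, $(f_{T\,*}i_D)^{\vee}$ is then a surjection $(f_{T\,*}\mathcal{L})^{\vee}\twoheadrightarrow \mathcal{M}^{\vee}$, which is precisely a $T$-point of $\mathbb{P}((f_{T\,*}\mathcal{L})^{\vee})$ in the Grothendieck quotient convention fixed in the Notation. I would verify surjectivity fiberwise: on a geometric fiber $D$ is effective Cartier, so the section $i_D$ is nonzero, hence the induced map on the one-dimensional $H^0$ is nonzero, which is exactly the nonvanishing needed for the quotient to be locally free of rank one.

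Next I would produce the inverse and check the two composites are the identity, to upgrade the bijection on $T$-points into an isomorphism of functors (and hence of the representing schemes). Given a quotient $\alpha:(f_{T\,*}\mathcal{L})^{\vee}\twoheadrightarrow \mathcal{N}$ with $\mathcal{N}$ invertible, dualizing and using $f_{T\,*}\mathcal{L}$ locally free gives an inclusion $\mathcal{N}^{\vee}\hookrightarrow f_{T\,*}\mathcal{L}$, i.e.\ a section of $\mathcal{L}\otimes f_T^*\mathcal{N}$ that is fiberwise nonzero; its zero locus is a relative effective Cartier divisor $D$ on $Y_T$ with $\mathcal{O}_{Y_T}(D)\cong \mathcal{L}\otimes f_T^*\mathcal{N}$, and one checks this $D$ lands in $\CDiv^{\mathcal{L}}_{Y/S}$. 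Naturality in $T$ is automatic because every operation used — pullback of the universal inclusion, the chosen isomorphism with $\mathcal{L}\otimes f_T^*\mathcal{M}$, pushforward, and dualization — commutes with base change, the crucial point again being cohomological flatness so that $f_{T\,*}\mathcal{L}$ and the vanishing-locus construction are compatible with arbitrary $T'\to T$.

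I expect the main obstacle to be the bookkeeping around cohomological flatness and base change: one must ensure that forming $f_{T\,*}$ of the twisted inclusion genuinely commutes with changing $T$, and that the fiberwise nonvanishing of $i_D$ translates into the quotient being \emph{locally free} of rank one rather than merely generically rank one. Concretely, the delicate step is that $\mathcal{M}$ (and the isomorphism $\mathcal{O}_{Y_T}(D)\cong\mathcal{L}\otimes f_T^*\mathcal{M}$) is only well defined up to the pullback of a line bundle from $T$, so I would phrase the correspondence in terms of the pair $(\mathcal{M},\alpha)$ exactly as in the statement, where this ambiguity is absorbed into the target $\mathbb{P}((f_{T\,*}\mathcal{L})^{\vee})$; with that framing the verification reduces to citing \cite[Prop.\ 8.2.7]{BLR90} for representability of the cartesian square and checking that the explicit formula $(D\subset Y_T)\mapsto(\mathcal{M},(f_{T\,*}i_D)^{\vee})$ realizes the abstract isomorphism constructed there.
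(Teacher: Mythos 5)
Your plan follows the same route as the paper's sketch (which itself defers to \cite[Prop.\ 8.2.7]{BLR90}): produce the rank-one quotient by pushing forward the canonical section $i_D$, get surjectivity from fiberwise nonvanishing via cohomological flatness, and invert by sending a quotient to the vanishing locus of the corresponding section. Your inverse direction and your fiberwise-nonvanishing argument are fine. However, the forward construction as you wrote it fails. Twisting $i_D$ by $\mathcal{L}^{\vee}$ and then pushing forward produces a map $f_{T*}(\mathcal{L}^{\vee})\to \mathcal{M}$, and $f_{T*}(\mathcal{L}^{\vee})$ is not $(f_{T*}\mathcal{L})^{\vee}$: the projection formula only allows you to extract sheaves pulled back from $T$ (such as $f_T^{*}\mathcal{M}$), never $\mathcal{L}^{\vee}$ itself. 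Indeed $f_{T*}(\mathcal{L}^{\vee})$ is typically zero (e.g.\ for a family of curves with $\deg \mathcal{L}_t>0$), so this map carries no information and cannot be ``dualized'' into the desired surjection. The correct manipulation --- the one in the paper --- is to push forward $i_D$ itself, obtaining $\mathcal{O}_T\hookrightarrow f_{T*}(\mathcal{O}_{Y_T}(D))\cong f_{T*}\mathcal{L}\otimes \mathcal{M}$ by the projection formula applied to $f_T^{*}\mathcal{M}$, and then tensor by $\mathcal{M}^{\vee}$ on $T$ and dualize.

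Carried out correctly, this yields a surjection $(f_{T*}\mathcal{L})^{\vee}\twoheadrightarrow \mathcal{M}$, not onto $\mathcal{M}^{\vee}$ as you claim. The discrepancy is not cosmetic: your own inverse sends a quotient $\alpha:(f_{T*}\mathcal{L})^{\vee}\twoheadrightarrow \mathcal{N}$ to the divisor of a section of $\mathcal{L}\otimes f_T^{*}\mathcal{N}$, so composing your two maps takes $D$ with $\mathcal{O}_{Y_T}(D)\cong \mathcal{L}\otimes f_T^{*}\mathcal{M}$ to a divisor $D'$ with $\mathcal{O}_{Y_T}(D')\cong \mathcal{L}\otimes f_T^{*}(\mathcal{M}^{\vee})$, which differs from $D$ whenever $\mathcal{M}^{\otimes 2}$ is nontrivial; hence the two composites are not identities and the isomorphism of functors is not established. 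A smaller omission: the existence (and uniqueness) of $\mathcal{M}$ with $\mathcal{O}_{Y_T}(D)\cong\mathcal{L}\otimes f_T^{*}\mathcal{M}$ is not ``by definition'' of $\CDiv^{\mathcal{L}}_{Y/S}$; the fiber product only gives equality of the classes in $\Pic_{Y/S}(T)$, and upgrading this to an isomorphism of line bundles on $Y_T$ uses the exact sequence $0\to \Pic(T)\to \Pic(Y_T)\to \Pic_{Y/S}(T)$ of \cite[Prop.\ 8.1.4]{BLR90}, valid because $f$ is proper with geometrically integral fibers --- this is precisely the first step of the paper's proof.
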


\begin{proof}[Sketch of the proof]
Since $f$ is proper and has geometrically integral fibers, then the formation of the direct image $f_*\mathcal{O}_Y$ commutes with base change. 
Then we have the exact sequence
\[
    0\longrightarrow \Pic(T) \overset{f_T^*}{\longrightarrow} \Pic(Y_T) \longrightarrow \Pic_{Y/S}(T)
\]
(see \cite[Prop.\ 8.1.4]{BLR90}).

Consider an element $D\subset Y_T$ in $\CDiv_{Y/S}^{\mathcal{L}}$: by definition of fiber product, the class $[\mathcal{O}_{Y_T}(D)]$ coincides with  $[\mathcal{L}]$ in $\Pic_{Y/S}(T)$.
Therefore, by the previous exact sequence this happens if and only if there exists a (unique) line bundle $\mathcal{M}$ on $T$ such that 
\[
    \mathcal{O}_{Y_T}(D)\cong \mathcal{L}\otimes f_T^*(\mathcal{M}).
\]
Taking the pushforward of the inclusion $\mathcal{O}_{Y_T} \xhookrightarrow{i_D} \mathcal{O}_{X_T}(D)$ we still get an inclusion
\[
    f_{T\, *}\mathcal{O}_{Y_T} \cong \mathcal{O}_T \xhookrightarrow{f_{T\, *} i_D} f_{T\, *} \mathcal{O}_{Y_T}(D) \cong f_{T\, *}\mathcal{L}\otimes \mathcal{M},
\]
where the last isomorphism is due to the projection formula.

Tensoring both sides by $\mathcal{M}^{\vee}$ and then taking the dual, we get a surjection
\[
    (f_{T\, *} i_D )^{\vee}:({f_T}_*\mathcal{L})^{\vee}\xtwoheadrightarrow{ } \mathcal{M},
\] 
that is an element of $\mathbb{P}( ({f_T}_*\mathcal{L})^{\vee} )$.
Conversely, given a surjective map 
\[
    \alpha:({f_T}_*\mathcal{L})^{\vee} \xtwoheadrightarrow{} \mathcal{M}
\]
onto a line bundle $\mathcal{M}$ on $T$, we can work backwards till we get
\[
    f_{T\, *} \mathcal{O}_{Y_T} \cong \mathcal{O}_{T} \xhookrightarrow{} f_{T\, *} ( \mathcal{L}\otimes f_T^*\mathcal{M} ).
\]
Then thanks to the pullback-pushforward adjunction we get a unique map
\[
    \mathcal{O}_{Y_T}\longrightarrow \mathcal{L}\otimes f_T^*\mathcal{M}.
\] 
This map is injective on each fiber $Y_t$ (with $t\in T$) since its dual fits inside the factorization
\[
    \begin{tikzcd}
        f_t^*f_{t\, *} ( \mathcal{L}  \otimes  f_t^*\mathcal{M} )^{\vee} 
        \arrow[rr, two heads]
        \arrow[dr, dashed]
        &
        &
        \mathcal{O}_{Y_t}.
        \\
        &
        (\mathcal{L}_t \otimes f_t^*\mathcal{M})^{\vee}
        \arrow[ru, two heads]
        &
    \end{tikzcd}
\]
This realises $(\mathcal{L} \otimes f_T^*\mathcal{M})^{\vee}$ as the ideal sheaf of a relative Cartier divisor.
\end{proof}

Let us apply the previous construction to the case $[\mathcal{L}]\in \Pic(X)$.
The surface $X$ is geometrically connected, smooth (hence flat and geometrically integral) and projective over $k$.
By flat base change (\cite[Tag 02KH]{deJ}), any line bundle on a quasi-compact and quasi-separated scheme over a field is cohomologically flat of dimension zero.
Therefore we have the following cartesian diagram
\[
    \begin{tikzcd}
    \mathbb{P}(H^0(X,\mathcal{L})^{\vee})
    \arrow[d] 
    \arrow[r] 
    & 
    \CDiv_{X/k} 
    \arrow[d, "AJ_X"] 
    \\
    \textup{Spec}(k) 
    \arrow[r, "{[\mathcal{L}]}"]                  
    & 
    \Pic_{X/k}.              
    \end{tikzcd}
\]

\subsection{Brill-Noether theory}

In this subsection the field $k$ is assumed to be algebraically closed, but starting from the next one $k$ will denote again any field.\\

For any positive $m$, consider the Abel-Jacobi map for a smooth, geometrically connected, projective $k$-curve $C$:
\begin{equation*}
    \Hilb^m_{C/k}\longrightarrow \Pic^m_{C/k}.
\end{equation*}
Since $C$ is geometrically connected, it is cohomologically flat in dimension zero (\cite[Cor. 1.3]{Con}).
Moreover since $k$ is algebraically closed the map $C(k)\neq \varnothing$. 
Therefore for any $k$-schemes $T$ we have the short exact sequence
\[
    0\longrightarrow 
    \Pic(T)
    \longrightarrow
    \Pic(C\times T)
    \longrightarrow
    \Pic_{C/k}(T)
    \longrightarrow 0
\]
(see \cite[Prop. 8.1.4]{BLR90}).
In particular, all elements in $\Pic_{C/k}$ are represented by the class of some line bundle.

Given an integer $r\geq 0$ there is a natural subfunctor of $\Pic^m_{\mathcal{C}_y/k(y)}$ whose support is the set of linear systems $|D|$ of degree $m$ and dimension at least $r$.
Precisely, we denote by $W^r_m( C/k )$ the functor 
\[
    W^r_m( C/k ): \textup{Sch}^{\textup{op}}_k \longrightarrow \textup{Set}, 
\]
\[
    T \mapsto \{ [\mathcal{M}]\in \Pic^m_{C/k}(T): \dim_{k(t)} H^0(C_t,\mathcal{M}_t)\geq r+1\ \forall \, t\in T\}.
\] 
By upper semicontinuity \cite[Thm III.12.8]{Harty}, the subfunctor $W^r_m( C/k )$ is a closed subscheme of $\Pic^m_{C/k}$
(see \cite[\S IV.3]{ACGH} for a detailed description). 
Furthermore, we have the following fundamental property. 

\begin{fact}[Lemma IV.3.5 of \cite{ACGH}]\label{factBrillNoether}
    If $p_a(C) + r - m \geq 0$, then no irreducible component of $W^r_m( C/k )$ is entirely contained in $W^{r+1}_m( C/k )$. 
    In particular, $W^r_m(C/k)\setminus W^{r+1}_m( C/k )$ is non-empty.
\end{fact}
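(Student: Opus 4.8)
The plan is to exhibit $W^r_m$ and $W^{r+1}_m$ as two \emph{consecutive degeneracy loci} of a single morphism of vector bundles on $\Pic^m_{C/k}$, and then to read the statement off the structure theory of such loci (\cite[Ch.~II]{ACGH}). Throughout write $g=p_a(C)$. First I would set up the determinantal description. As recalled above, since $k=\overline{k}$ we have $C(k)\neq\varnothing$, so there is a Poincar\'e line bundle $\mathcal{P}$ on $C\times\Pic^m_{C/k}$. Fix an auxiliary effective divisor $E$ on $C$ of degree $\gg 0$ and push the evaluation sequence $0\to \mathcal{P}\to \mathcal{P}(E)\to \mathcal{P}(E)|_{E\times \Pic^m_{C/k}}\to 0$ forward along the second projection $C\times\Pic^m_{C/k}\to \Pic^m_{C/k}$. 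For $\deg E$ large the higher direct image of $\mathcal{P}(E)$ vanishes, so one obtains a map of vector bundles
\[
    \gamma:\mathcal{E}\longrightarrow \mathcal{F}, \qquad \mathrm{rank}\,\mathcal{E}=a:=m+\deg E-g+1,\quad \mathrm{rank}\,\mathcal{F}=b:=\deg E,
\]
with the property that $h^0(C,\mathcal{L})=a-\mathrm{rank}\,\gamma([\mathcal{L}])$ for every $[\mathcal{L}]\in\Pic^m_{C/k}$. Writing $D_t(\gamma)$ for the locus where $\mathrm{rank}\,\gamma\leq t$, I would identify $W^r_m=D_{a-r-1}(\gamma)$ and $W^{r+1}_m=D_{a-r-2}(\gamma)$, so that $W^{r+1}_m$ is exactly the next degeneracy locus sitting inside $W^r_m$. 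A short computation gives the expected codimensions $(r+1)(g-m+r)$ and $(r+2)(g-m+r+1)$ in $\Pic^m_{C/k}$, and the Brill--Noether number $\rho=g-(r+1)(g-m+r)$.

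Next I would invoke two structural facts about degeneracy loci. The first is the dimension bound: \emph{every} nonempty irreducible component of $D_t(\gamma)$ has codimension at most $(a-t)(b-t)$ in the smooth variety $\Pic^m_{C/k}$; for $t=a-r-1$ this says that every component of $W^r_m$ has dimension $\geq \rho$. The second is the local model: the generic determinantal variety $M_t\subset \mathrm{Hom}(k^a,k^b)$ of matrices of rank $\leq t$ is irreducible with $\mathrm{Sing}(M_t)=M_{t-1}$, and $M_{t-1}$ has codimension $a+b-2t+1=g-m+2r+2$ inside $M_t$. Here the hypothesis $g-m+r\geq 0$ enters: it makes this codimension \emph{strictly positive}, so that $W^{r+1}_m$, which is cut out of $W^r_m$ by the equations defining $M_{t-1}$ inside $M_t$, cannot swallow a component of $W^r_m$ along which $\gamma$ has generic rank exactly $a-r-1$.

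The heart of the matter, and the step I expect to be the main obstacle, is therefore to show that along each component $Z$ of $W^r_m$ the generic rank of $\gamma$ is \emph{exactly} $a-r-1$, equivalently that the generic member of $Z$ has $h^0=r+1$; this is what forbids $Z\subseteq W^{r+1}_m$. The difficulty is that the purely local determinantal structure does permit, a priori, components of excess dimension ($>\rho$) lying entirely in the deeper stratum, so a dimension count by itself is insufficient and one must use the global geometry of $\Pic^m_{C/k}$ together with the Abel--Jacobi map. The model case $r=0$ is the guiding example: there $W^0_m\setminus W^1_m\neq\varnothing$ amounts, via the geometric Riemann--Roch theorem, to the statement that $m\leq g$ general points of $C$ impose independent conditions on the canonical system $|K_C|$, which is precisely where $g-m+r\geq 0$ is used. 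I would then carry out the general case by the corresponding analysis of determinantal loci in \cite[Ch.~II and \S IV.3]{ACGH}, whose upshot is that $W^{r+1}_m=\mathrm{Sing}(W^r_m)$ has positive codimension in each component of $W^r_m$.

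Finally, for the ``in particular'' clause, once $W^r_m$ is known to be nonempty (which holds whenever $\rho\geq 0$ by the Existence Theorem, and is the situation in which the Fact is applied), any component $Z$ of $W^r_m$ meets the dense open complement $W^r_m\setminus W^{r+1}_m$ by the previous paragraph, so this set is nonempty.
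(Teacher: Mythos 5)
The paper never proves this Fact at all --- it imports it verbatim as \cite[Lemma IV.3.5]{ACGH} --- so your attempt has to be measured against ACGH's actual argument, and against that standard there is a genuine gap, one you yourself flag as ``the main obstacle'' but then do not close. Your determinantal setup is correct (the identification $W^r_m=D_{a-r-1}(\gamma)$, $W^{r+1}_m=D_{a-r-2}(\gamma)$, the codimension bookkeeping, and $\rho$), but no local structure theory of determinantal loci can rule out a component of $D_{a-r-1}(\gamma)$ lying entirely inside $D_{a-r-2}(\gamma)$: if, for instance, $\gamma$ happened to factor through a subbundle of $\mathcal{F}$ of corank forcing rank $\leq a-r-2$ everywhere, the two loci would coincide as sets. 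The statement is irreducibly global. Worse, the statement you invoke to finish --- that $W^{r+1}_m=\mathrm{Sing}(W^r_m)$ has positive codimension in every component of $W^r_m$ --- is logically \emph{downstream} of the Fact in ACGH's own development: the tangent-space computation gives $T_L W^r_m = T_L\Pic^m_{C/k}$ at points $L\in W^{r+1}_m$, but converting that into ``the singular locus is nowhere dense in each component'' requires knowing that the generic point of each component has $h^0$ exactly $r+1$, which is precisely the Lemma being proven. So the proposed route is circular.

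The missing idea (ACGH's actual proof) is a short translation argument using the group structure of the Picard variety. Suppose $X$ is a component of $W^r_m$ with $X\subseteq W^{r+1}_m$. For $L\in X$ and $p,q\in C$ one has $h^0(L(p-q))\geq h^0(L(-q))\geq h^0(L)-1\geq r+1$, so the irreducible set swept out by $(L,p,q)\mapsto L(p-q)$ lies in $W^r_m$ and contains $X$; maximality of $X$ as a component forces this set to equal $X$, i.e.\ $X$ is stable under twisting by $\mathcal{O}_C(p-q)$. Iterating, and using that differences of points generate $\Pic^0_{C/k}$, one gets $X=\Pic^m_{C/k}$; but then \emph{every} degree-$m$ line bundle would have $h^0\geq r+2$, while a general one has $h^0=\max(0,\,m-p_a(C)+1)$, forcing $p_a(C)-m+r<0$ and contradicting the hypothesis. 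Note where the hypothesis truly enters: not through positivity of the codimension $p_a(C)-m+2r+2$ of the deeper stratum in the local model (which holds under much weaker assumptions), but through Riemann--Roch applied to a general line bundle. A minor final point: your ``in particular'' step leans on the Existence Theorem (a much deeper input, valid for $\rho\geq 0$), whereas in the paper's only application one has $r=m-p_a(C)$, so $W^r_m=\Pic^m_{C/k}$ by Riemann--Roch and non-emptiness is automatic.
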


\section{The Incidence Correspondences}\label{3}

\noindent Fix an integer $n \geq 1$ and set $n' = n + d = n + \deg(Q)$. 

\begin{definition}\label{definitionincidencevariety}
Let us define the functor
\[
    \Inc^Q_{X, \mathcal{L}, n, n'} : \textup{Sch}_k^{\textup{op}} \longrightarrow \textup{Set},
\]
sending $T$ to the set of triples $(Z, Z', C) \in \Hilb^n_X(T)\times \Hilb^{n+d}_X(T)\times \CDiv_{X}^\mathcal{L}(T)$, such that:
\begin{enumerate}
\item[\hypertarget{condition1}{(1)}] the subschemes $Q_T$, $Z$, and $Z'$ are all contained in $C \subset X_T$,
\item[\hypertarget{condition3}{(2)}] the scheme $C$ is smooth over $T$,
\item[\hypertarget{condition4}{(3)}] $Q_T$, $Z$, and $Z'$ are relative effective Cartier divisors on $C$ over $T$,
\item[\hypertarget{condition5}{(4)}] the classes $[\mathcal{O}_C(Z + Q_T)]$ and  $[\mathcal{O}_C(Z')]$ agree in $\Pic_{C/T}(T)$. 
\end{enumerate}
\end{definition}

\begin{definition}
Analogously, one can define the functor
\[
    \Inc^Q_{X, \mathcal{L}, n} : \textup{Sch}_k^{\textup{op}} \longrightarrow \textup{Set}, \quad
    T \longmapsto \{ (Z,C) \in  \Hilb^{n}_X(T)\times \CDiv_{X}^\mathcal{L}(T) 
    : Z, Q_T\subset C \subset X_T \}.
\]
\end{definition}

\noindent The functors $\Inc^Q_{X, \mathcal{L}, n}$ and $\Inc^{Q}_{X, \mathcal{L}, n, n'}$ are the \emph{incidence correspondences} we are interested in.\\

These functors fit in the following diagram
\[
\begin{tikzcd}[column sep={14mm}, row sep={10mm}]
    \Inc^Q_{X,\mathcal{L},n,n'}(T)
    \arrow[dr, hook] 
    \arrow[drr, bend left=5mm, dashed, "{(Z,Z',C)\mapsto (Z',C)}" description]
    \arrow[ddr, dashed, bend right=11mm, "{(Z,Z',C)\mapsto (Z,C)}" description]
    &
    &
    &
    \\
    &
    \Inc^Q_{X,\mathcal{L},n}\times_{\CDiv_X^{\mathcal{L}}}\Inc^Q_{X,\mathcal{L},n'} (T)
    \arrow[r] \arrow[d]
    & 
    \Inc^Q_{X,\mathcal{L},n'} (T)
    \arrow[r, "{(Z',C)\mapsto Z'}"] \arrow[d, "{(Z',C)    \mapsto C}" description]
    &
    \Hilb_X^{n+d}(T)
    &
    \\
    &
    \Inc^Q_{X,\mathcal{L},n}(T)
    \arrow[r, "{(Z,C)\mapsto C}" description] \arrow[d, "{(Z,C)\mapsto Z}" description]
    &
    \CDiv_X^{\mathcal{L}}(T)
    &
    &
    \\
    &
    \Hilb_X^n(T),
    &
    &
    &
\end{tikzcd}
\]
where all the maps are the forgetful functors.
In particular we get two morphisms
\[
    \begin{tikzcd}
        &
        \Inc^Q_{X,\mathcal{L},n,n'}
        \arrow[dl, "{\pi_n}"']
        \arrow[dr, "{\pi_{n'}}"]
        &
        \\
        \Hilb^n_X 
        &
        &
        \Hilb^{n'}_X.
    \end{tikzcd}
    \]

\begin{remark}
    Consider $C\in \CDiv^{\mathcal{L}}_X(T)$.
    Since the pullback of a relative effective Cartier divisor is still a relative effective Cartier divisor, then for any point $t$ of $T$ the pullback $C_t$ is a relative effective Cartier divisor as well.
    In particular the map $C\longrightarrow T$ is surjective.
\end{remark}

\begin{remark}
    In general, if $Y\longrightarrow S$ is flat and $D\subset Y$ is a closed subscheme flat over $S$, then $D$ is a relative effective Cartier divisor if and only if for any point $s$ of $S$ the subscheme $D_s\subset Y_s$ is (see \cite[\S 13, Cor.\ 2]{Bej}).

    Therefore, since $X_T$ is flat over $T$, if $C\subset X_T$ is a closed subscheme flat over $T$, then we can check whether $C\in \CDiv_X(T)$ on fibers.\\

    Analogously, given a triple $(Z,Z',C)\in \Hilb^n(T)\times \Hilb^{n+d}(T)\times \CDiv_X(T)$, then since $Q_T$, $Z$, $Z'$ and $C$ are all flat over $T$, we can check whether $Q_T$, $Z$, $Z'$ are relative effective Cartier divisor on $C/T$ on fibers.
\end{remark}

\begin{remark}\label{remarkaboutbeingrelativecartiersincepoints}
    Since any $0$-dimensional closed subscheme of a smooth curve over a field is an effective Cartier divisor, any triple $(Z,Z',C)\in \Hilb^n(T)\times \Hilb^{n+d}(T)\times \CDiv_X(T)$ satisfying \hyperlink{condition1}{(1)} and \hyperlink{condition1}{(2)} it satisfies \hyperlink{condition1}{(3)} as well.
\end{remark}

\begin{remark}\label{remarkaboutgeomintegral}
    By remark \ref{remarkgeomconnected}, for any $C\in \CDiv^{\mathcal{L}}_X(T)$ we know that $C_t$ is geometrically connected.
    If $C$ satisfies \hyperlink{condition1}{(2)} then $C_t$ is smooth and therefore it is geometrically integral.
    
    This means that for any smooth element $C\in \CDiv^{\mathcal{L}}_X(T)$, the fibers of $C\longrightarrow T$ are geometrically integral.\\

    Therefore since $C\longrightarrow T$ is projective, flat, surjective and has integral geometric fibers, the functor $\Pic_{C/T}$ is representable by a separated $T$-scheme (see \cite[Thm 8.2.1]{BLR90}).
\end{remark}

\subsection{Representability of the Incidence Correspondences} 
In the remaining part of this section, we want to check the representability of the functors just described. 

We first focus on the representability of $\Inc^Q_{X,\mathcal{L},n}$.  
In order to do so, we need first a general lemma.

\begin{lemma}\label{Lemmaclosedsubschemesfunctor}
Let $Y$ be a proper and flat scheme over a noetherian scheme $S$ and let $Z_1,Z_2$ two closed subschemes of $Y$, both flat over $S$. 
Then the functor
\[
    \delta_{Z}(T\overset{f}{\longrightarrow} S)=
    \left\{ 
    \begin{matrix}
    \{ f \} 
    & 
    \textup{if } Z_1\times_f T\subseteq Z_2\times_f T, 
    \\
    \varnothing 
    & 
    \quad \quad \textup{otherwise.} \quad \quad \quad \quad \quad \quad \quad
    \end{matrix}
    \right.
\]
is representable by a closed subscheme of $S$.
\end{lemma}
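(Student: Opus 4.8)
The plan is to reinterpret the containment condition as the vanishing of a single homomorphism of coherent sheaves, and then to represent that vanishing locus by a closed subscheme. First I would translate $Z_1\times_f T\subseteq Z_2\times_f T$ into a statement about ideal sheaves: a closed subscheme $A$ is contained in a closed subscheme $B$ precisely when $\mathcal{I}_B\subseteq \mathcal{I}_A$, equivalently when the composite $\mathcal{I}_B\hookrightarrow \mathcal{O}_Y\twoheadrightarrow \mathcal{O}_A$ vanishes. Applying this on $Y$ produces a canonical homomorphism $c:\mathcal{I}_{Z_2}\to \mathcal{O}_{Z_1}$ whose vanishing is equivalent to $Z_1\subseteq Z_2$; the goal becomes showing that $c_T=0$ is equivalent to $Z_1\times_f T\subseteq Z_2\times_f T$ for every $T\to S$, and that the locus of such $T$ is cut out by a closed subscheme.

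The key point is that $c$ is compatible with arbitrary base change. Since $Z_2$ is flat over $S$, the short exact sequence $0\to \mathcal{I}_{Z_2}\to \mathcal{O}_Y\to \mathcal{O}_{Z_2}\to 0$ stays exact after pulling back along any $T\to S$ (the relevant $\mathrm{Tor}_1$ vanishes), so the pullback of $\mathcal{I}_{Z_2}$ is canonically $\mathcal{I}_{(Z_2)_T}$, while the pullback of $\mathcal{O}_{Z_1}$ is always $\mathcal{O}_{(Z_1)_T}$. Hence the pullback of $c$ is exactly the analogous composite $c_T:\mathcal{I}_{(Z_2)_T}\to \mathcal{O}_{(Z_1)_T}$, and by the same ideal-sheaf criterion $c_T=0$ if and only if $(Z_1)_T\subseteq (Z_2)_T$, which is the defining condition of $\delta_Z$. (Note that flatness of $Y$ over $S$ together with flatness of $Z_2$ even makes $\mathcal{I}_{Z_2}$ itself $S$-flat; this is reassuring, though only the exactness above is strictly needed.)

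It remains to represent the functor $T\mapsto\{\,f : c_T=0\,\}$ by a closed subscheme of $S$. For this I would invoke the representability of the relative $\underline{\mathrm{Hom}}$ functor: since $f:Y\to S$ is proper with $S$ noetherian and the target $\mathcal{O}_{Z_1}$ is flat over $S$, the functor $T\mapsto \mathrm{Hom}_{\mathcal{O}_{Y_T}}(\mathcal{I}_{Z_2,T},\mathcal{O}_{Z_1,T})$ is representable by a linear scheme $\mathbf{V}\to S$, affine over $S$, whose formation commutes with base change. The homomorphism $c$ determines a section $\sigma_c:S\to \mathbf{V}$ and the zero homomorphism determines the zero section $\sigma_0:S\to \mathbf{V}$; because $\mathbf{V}\to S$ is separated, the locus where these two sections agree is a closed subscheme $S_0\hookrightarrow S$ (the pullback of the diagonal). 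Unwinding the universal property, a morphism $g:T\to S$ factors through $S_0$ exactly when $g^*\sigma_c=g^*\sigma_0$ in $\mathbf{V}(T)=\mathrm{Hom}(\mathcal{I}_{Z_2,T},\mathcal{O}_{Z_1,T})$, that is, exactly when $c_T=0$. Combined with the previous step, this identifies $S_0$ as a representing object for $\delta_Z$.

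The main obstacle is bookkeeping rather than anything conceptual: one must make sure the homomorphism $c$ and its vanishing are genuinely compatible with arbitrary base change, which is precisely where the flatness of $Z_1$ (to apply Hom-representability with a flat target) and of $Z_2$ (to identify the pullback of $\mathcal{I}_{Z_2}$) are used. One should also verify that $\mathbf{V}$ and its formation commute with base change, so that $\sigma_c$ pulls back to the section attached to $c_T$; this is part of the standard statement but is the step most easily taken for granted.
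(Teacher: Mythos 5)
Your proposal is correct and takes essentially the same route as the paper's proof: both reduce the containment to the vanishing of the canonical composite $\mathcal{I}_{Z_2}\hookrightarrow \mathcal{O}_Y\twoheadrightarrow \mathcal{O}_{Z_1}$, use flatness of $Z_2$ to identify the pullback of $\mathcal{I}_{Z_2}$ with the ideal sheaf of $(Z_2)_T$ and flatness of $\mathcal{O}_{Z_1}$ to invoke representability of the relative $\mathrm{Hom}$-functor by a scheme affine (hence separated) over $S$, and then realize $\delta_Z$ as the closed locus where the section defined by this homomorphism agrees with the zero section.
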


\begin{proof}
Let $f_T$ be the morphism $Y_T\longrightarrow T$ induced by $f$.
Let $\mathcal{I}_i$ denote the ideal sheaf of $Z_i\xhookrightarrow{j_i} Y$: since $\mathcal{O}_Y$ and $j_{i\, *}\mathcal{O}_{Z_i}$ are all flat over $S$ then $\mathcal{I}_i$ are flat over $S$ as well. 
Therefore the kernels are stable under base-change:
    \[
        \ker( 
        \mathcal{O}_{Y_T}\longrightarrow j_{i\,*}\mathcal{O}_{Z_{i\, T}} 
        )=
        \mathcal{I}_{i,T} 
        =
        f_T^*\mathcal{I}_{i}.
    \] 
Therefore we have a map $\sigma_T: \mathcal{I}_{2,T}\longrightarrow {j_{1\,*}\mathcal{O}_{Z_{1\,T}}}$ induced by
    \[
        \begin{tikzcd}
        {\mathcal{I}_{2,T}} 
        \arrow[rd, hook] 
        \arrow[dd, "\sigma_T"', dashed] 
        &
        & 
        {\mathcal{I}_{1,T}} 
        \arrow[ld, hook] 
        \\
        & 
        \mathcal{O}_{Y_T} 
        \arrow[ld, two heads] 
        \arrow[rd, two heads] 
        &                                      
        \\
        {j_{1\,*}\mathcal{O}_{Z_{1\,T}}} 
        &
        & 
        {j_{2\,*}\mathcal{O}_{Z_{2\,T}}}.             
        \end{tikzcd}
    \]
Since the subscheme $Z_{1\,T}$ is contained in $Z_{2\,T}$ if and only if $\mathcal{I}_{2,T}\subseteq \mathcal{I}_{1,T}$, then $Z_{1\,T}\subseteq Z_{2\,T}$ if and only if $\sigma_T=0_T$.
Let $\sigma$ be the one associated to $f=\id_S$.\\

\noindent Consider the $Hom$-functor
    \[
        Hom_{\mathcal{O}_Y}(\mathcal{I}_2, j_{1\,*}\mathcal{O}_{Z_1}):\textup{Sch}_S^{\textup{op}} \longrightarrow \textup{Set},\ \ \  
        T\mapsto \textup{Hom}_{\mathcal{O}_{Y_T}}(\mathcal{I}_{2,T}, j_{1\,*}\mathcal{O}_{Z_{1\,T}}).
    \] 
Since $Y$ is proper over the noetherian scheme $S$ and $\mathcal{O}_{Z_{1}}$ is flat over $S$, then by \cite[Tag 08JY]{deJ} $Hom_{\mathcal{O}_Y}(\mathcal{I}_2, j_{1,*}\mathcal{O}_{Z_1})$ is an algebraic space affine and of finite presentation over $S$. 
Since affine morphisms of algebraic spaces are representable (see \cite[Tag. 03WG]{deJ}) then $Hom_{\mathcal{O}_Y}(\mathcal{I}_2, j_{1,*}\mathcal{O}_{Z_1})$ is a scheme affine and of finite presentation over $S$.
In particular it is separated over $S$. 

Therefore the global sections $\sigma$ and $0$ are both closed embedding of $S$ into the functor.
    The fiber product of $\sigma$ and $0$ represents the $S$-functor 
    \[
    \begin{tikzcd}
    & 
    S 
    \arrow[rd, "\sigma", hook] 
    &
    &
    \\
    \delta_{Z} 
    \arrow[ru, hook] 
    \arrow[rd, hook] 
    &                      
    &
    {Hom_{\mathcal{O}_Y}(\mathcal{I}_2, j_{1,*}\mathcal{O}_{Z_1})} 
    \arrow[r] 
    &
    S
    \\
    &
    S
    \arrow[ru, "0"', hook]
    &                     
    &  
    \end{tikzcd}
    \] 
    which is then a closed subscheme of $S$.

\end{proof}

\begin{proposition}\label{propositionincQclosedsubscheme}
    $\Inc^Q_{X, \mathcal{L}, n}$ is a closed subscheme of $\Hilb^n_X\times_k \CDiv_X^{\mathcal{L}}$, so it is projective over $k$. 
\end{proposition}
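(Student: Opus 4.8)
The plan is to realize $\Inc^Q_{X,\mathcal{L},n}$ as the locus inside the product $S \coloneqq \Hilb^n_X \times_k \CDiv_X^{\mathcal{L}}$ where two incidence conditions hold, and to cut out each of them with Lemma \ref{Lemmaclosedsubschemesfunctor}. First I would record that $S$ is projective over $k$: indeed $\Hilb^n_X$ is projective, and by the cartesian square recalled above one has $\CDiv_X^{\mathcal{L}} \cong \mathbb{P}(H^0(X,\mathcal{L})^{\vee})$, a projective space over $k$. In particular $S$ is noetherian, so that $Y \coloneqq X_S = X \times_k S$ is proper and flat over $S$ (base change of the projective, flat morphism $X \to \Spec(k)$), which is exactly the setup required to invoke the lemma.

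Next I would identify the three closed subschemes of $Y$ that enter the defining conditions, all of them flat over $S$. Let $\mathcal{Z} \subset X_S$ be the pullback of the universal subscheme on $\Hilb^n_X$ (flat over $S$ since the universal subscheme is flat over $\Hilb^n_X$), let $\mathcal{C} \subset X_S$ be the pullback of the universal relative effective Cartier divisor on $\CDiv_X^{\mathcal{L}}$ (flat over $S$ because relative effective Cartier divisors are flat and flatness is stable under base change), and let $Q_S \subset X_S$ be the pullback of $Q$ along $S \to \Spec(k)$ (flat over $S$ because $Q$, being a $0$-dimensional scheme over the field $k$, is flat over $k$). I would then apply Lemma \ref{Lemmaclosedsubschemesfunctor} twice over the base $S$: once to the pair $(Z_1,Z_2) = (\mathcal{Z},\mathcal{C})$, producing a closed subscheme $S_1 \hookrightarrow S$ representing the condition $\mathcal{Z}_T \subseteq \mathcal{C}_T$; and once to the pair $(Z_1,Z_2) = (Q_S,\mathcal{C})$, producing a closed subscheme $S_2 \hookrightarrow S$ representing the condition $(Q_S)_T \subseteq \mathcal{C}_T$.

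The claim is then that $\Inc^Q_{X,\mathcal{L},n}$ is represented by the scheme-theoretic intersection $S_1 \times_S S_2$, which is a closed subscheme of $S$. To verify this I would unwind the universal properties: a morphism $g\colon T \to S$ corresponds to a pair $(Z,C) \in \Hilb^n_X(T) \times \CDiv_X^{\mathcal{L}}(T)$, and under $g$ the universal objects $\mathcal{Z}$, $\mathcal{C}$, $Q_S$ pull back to $Z$, $C$, $Q_T$ respectively (the last by transitivity of base change for $Q$ along $T \to S \to \Spec(k)$). By the lemma, $g$ factors through $S_1$ exactly when $Z \subseteq C$ and through $S_2$ exactly when $Q_T \subseteq C$ inside $X_T$, so $g$ factors through $S_1 \times_S S_2$ precisely when $(Z,C) \in \Inc^Q_{X,\mathcal{L},n}(T)$. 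Since the forgetful map $\Inc^Q_{X,\mathcal{L},n} \to S$ is a monomorphism (the containments are a property of $(Z,C)$, not extra data), this identification is an isomorphism of functors. Finally, $S_1 \times_S S_2$ being a closed subscheme of the projective $k$-scheme $S$ is itself projective over $k$, which gives the proposition.

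I expect the flatness verifications to be harmless; the step that requires genuine care is the bookkeeping in the last paragraph, namely checking that the subfunctor cut out by Lemma \ref{Lemmaclosedsubschemesfunctor} agrees on the nose with the containment conditions defining $\Inc^Q_{X,\mathcal{L},n}$, once the pullbacks of the universal families are correctly matched with the data $(Z,C,Q_T)$ of a $T$-point. Everything else is formal once the lemma is in place.
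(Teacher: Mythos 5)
Your proposal is correct and follows essentially the same route as the paper: both apply Lemma \ref{Lemmaclosedsubschemesfunctor} over the base $S = \Hilb^n_X \times_k \CDiv_X^{\mathcal{L}}$ to the two pairs (universal subscheme inside universal curve, and $Q_S$ inside universal curve), identify $\Inc^Q_{X,\mathcal{L},n}$ with the intersection of the two resulting closed subschemes, and conclude projectivity from that of $\Hilb^n_X$ and $\CDiv_X^{\mathcal{L}} \cong \mathbb{P}(H^0(X,\mathcal{L})^{\vee})$. Your write-up is somewhat more explicit about the flatness checks and the functorial identification, but the content is the same.
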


\begin{proof}
Let $\mathcal{Z}$, $\mathcal{C}$ be the universal family associated to $\Hilb^n_X$ and $\CDiv_X^{\mathcal{L}}$ respectively.
Therefore we have three closed subschemes
\[
    \begin{tikzcd}[column sep={7mm}]
         \mathcal{Z}_{ \CDiv_X^{\mathcal{L}} },\,
         Q_{ \Hilb^n_X \times \CDiv_X^{\mathcal{L}} },\,
         \mathcal{C}_{\Hilb^n_X }
         \arrow[r, "\subset" description, phantom]
         &
         X\times \Hilb^n_X \times \CDiv^{\mathcal{L}}_X
         \arrow[d]
         \\
         &
         \Hilb^n_X \times \CDiv^{\mathcal{L}}_X
    \end{tikzcd}
\]
all flat over $\Hilb^n_X \times \CDiv^{\mathcal{L}}_X$. 
Since $X$ is proper and flat over $k$, lemma \ref{Lemmaclosedsubschemesfunctor} applies to the pairs
\[
    \mathcal{Z}_{ \CDiv_X^{\mathcal{L}} }\subset \mathcal{C}_{\Hilb^n_X }
    \quad \textup{ and } \quad
    Q_{ \Hilb^n_X \times \CDiv_X^{\mathcal{L}} } \subset \mathcal{C}_{\Hilb^n_X }
\]
producing two closed representable subfunctors of $\Hilb^n_X\times \CDiv_X^{\mathcal{L}}$, whose intersection is $\Inc^Q_{X, \mathcal{L}, n}$. 
Hence $\Inc^Q_{X, \mathcal{L}, n}$ is a closed subscheme of $\Hilb^n_X\times \CDiv_X^{\mathcal{L}}$.  \\ 

Finally both $\Hilb^n_X$ and $\CDiv_X^{\mathcal{L}}$ (which is isomorphic to $ \mathbb{P}(H^0(X,\mathcal{L})^{\vee})$) are projective over $k$.
So $\Inc^Q_{X, \mathcal{L}, n}$ is projective as well.
\end{proof} 

\begin{corollary}
    The functor $\Inc^Q_{X, \mathcal{L}, n}\times_{\CDiv_X^{\mathcal{L}}} \Inc^Q_{X, \mathcal{L}, n'}$ is representable by a projective scheme over $k$.
\end{corollary}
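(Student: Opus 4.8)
The plan is to reduce everything to Proposition \ref{propositionincQclosedsubscheme}. First I would observe that the statement of that proposition holds verbatim with $n'$ in place of $n$: its proof never used anything special about the integer $n$, so $\Inc^Q_{X,\mathcal{L},n'}$ is likewise a closed subscheme of $\Hilb^{n'}_X\times_k \CDiv_X^{\mathcal{L}}$, hence a projective $k$-scheme. Thus both factors of the fibre product are representable by projective schemes over $k$, while the base $\CDiv_X^{\mathcal{L}}\cong \mathbb{P}(H^0(X,\mathcal{L})^{\vee})$ is itself a projective $k$-scheme and in particular separated over $k$.

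Next I would invoke the existence of fibre products in the category of schemes. Writing $A=\Inc^Q_{X,\mathcal{L},n}$, $A'=\Inc^Q_{X,\mathcal{L},n'}$ and $B=\CDiv_X^{\mathcal{L}}$, with the two forgetful morphisms $f\colon A\to B$ and $f'\colon A'\to B$, the fibre product $A\times_B A'$ is automatically represented by a scheme. The only thing left to establish is that it is projective over $k$. Here I would exploit the separatedness of $B$: the diagonal $\Delta_B\colon B\to B\times_k B$ is then a closed immersion, and one has the standard identification $A\times_B A'\cong (A\times_k A')\times_{B\times_k B} B$, realizing $A\times_B A'$ as the pullback of $\Delta_B$ along $(f,f')\colon A\times_k A'\to B\times_k B$. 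Being the base change of a closed immersion, the induced morphism $A\times_B A'\to A\times_k A'$ is therefore itself a closed immersion.

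Finally, $A\times_k A'$ is the product of two projective $k$-schemes, hence projective over $k$ by the Segre embedding, and a closed subscheme of a projective $k$-scheme is again projective. Consequently $\Inc^Q_{X,\mathcal{L},n}\times_{\CDiv_X^{\mathcal{L}}}\Inc^Q_{X,\mathcal{L},n'}$ is representable by a projective $k$-scheme, as claimed. None of these steps presents a genuine difficulty once Proposition \ref{propositionincQclosedsubscheme} is in hand; the single point that has to be handled with care is the separatedness of $\CDiv_X^{\mathcal{L}}$, since it is precisely this that guarantees the fibre product embeds as a \emph{closed} subscheme of the product, rather than merely a locally closed one, and thereby inherits projectivity.
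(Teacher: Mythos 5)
Your proof is correct, but it follows a genuinely different route from the paper's. The paper argues \emph{relatively}: from Proposition \ref{propositionincQclosedsubscheme} it notes that the composite $\Inc^Q_{X,\mathcal{L},n}\longhookrightarrow \Hilb^n_X\times \CDiv_X^{\mathcal{L}}\longrightarrow \CDiv_X^{\mathcal{L}}$ is a \emph{projective morphism}, so by stability of projective morphisms under base change and composition the fiber product is projective over $\CDiv_X^{\mathcal{L}}$, and hence over $k$ because $\CDiv_X^{\mathcal{L}}\cong \mathbb{P}(H^0(X,\mathcal{L})^{\vee})$ is projective over $k$. You argue \emph{absolutely}: both factors are projective $k$-schemes (applying the proposition also with $n'$ in place of $n$, which the paper uses implicitly as well), the base is separated, so the canonical identification $A\times_B A'\cong (A\times_k A')\times_{B\times_k B}B$ realizes the fiber product as a closed subscheme of $A\times_k A'$, which is projective via the Segre embedding. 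What each buys: the paper's argument is shorter once one accepts the standard permanence properties of projective morphisms, and it keeps the relative structure over $\CDiv_X^{\mathcal{L}}$ in view, which is the structure actually exploited in the following sections; your argument avoids relative projectivity altogether, using only the diagonal trick and the Segre embedding, and it isolates precisely where separatedness of the base enters -- namely in guaranteeing that the embedding into the product is closed rather than merely locally closed. Both proofs rest on the same key input, Proposition \ref{propositionincQclosedsubscheme}, and both are complete.
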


\begin{proof}
Since the composite
\[
    \Inc^Q_{X, \mathcal{L}, n} \longhookrightarrow \Hilb^n_X\times \CDiv_X^{\mathcal{L}} \longrightarrow \CDiv_X^{\mathcal{L}}, 
\] 
is projective, the fiber product $\Inc^Q_{X, \mathcal{L}, n}\times_{\CDiv_X^{\mathcal{L}}} \Inc^Q_{X, \mathcal{L}, n'}$ is projective over $k$ as well ($\CDiv_X^{\mathcal{L}}$ is projective over $k$).
\end{proof}

Notice that the projective scheme $\Inc^Q_{X, \mathcal{L}, n}\times_{\CDiv_X^{\mathcal{L}}} \Inc^Q_{X, \mathcal{L}, n'}$ already parametrizes triples $(Z,Z',C)$ which satisfy condition \hyperlink{condition1}{(1)} of definition \ref{definitionincidencevariety}.

\begin{proposition}
    The functor $\Inc^Q_{X,\mathcal{L},n,n'}$ is representable by a locally closed subscheme of $\Inc^Q_{X,\mathcal{L},n}\times_{\CDiv_X^{\mathcal{L}}} \Inc^Q_{X, \mathcal{L}, n'}$.
    In particular it is a quasi-projective scheme over $k$.
\end{proposition}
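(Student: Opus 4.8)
The plan is to cut out successively, inside the projective scheme $P \coloneqq \Inc^Q_{X,\mathcal{L},n}\times_{\CDiv_X^{\mathcal{L}}} \Inc^Q_{X,\mathcal{L},n'}$, the loci where conditions \hyperlink{condition3}{(2)}, \hyperlink{condition4}{(3)} and \hyperlink{condition5}{(4)} of Definition \ref{definitionincidencevariety} hold, exploiting the fact (just observed) that $P$ already enforces \hyperlink{condition1}{(1)}. The target is to realise $\Inc^Q_{X,\mathcal{L},n,n'}$ as a closed subscheme of an open subscheme of $P$, hence as a locally closed subscheme of $P$, and therefore as a quasi-projective $k$-scheme.

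First I would handle condition \hyperlink{condition3}{(2)}. Let $\mathcal{C} \subseteq X_P$ be the universal Cartier divisor pulled back from $\CDiv_X^{\mathcal{L}}$. Since $\mathcal{C} \to P$ is proper and flat, the non-smooth locus of $\mathcal{C} \to P$ is closed in $\mathcal{C}$, and its image in $P$ is closed by properness; its complement is the open subscheme $U \subseteq P$ representing exactly those triples satisfying \hyperlink{condition1}{(1)} and \hyperlink{condition3}{(2)}. Over $U$, condition \hyperlink{condition4}{(3)} then comes for free: by Remark \ref{remarkaboutbeingrelativecartiersincepoints} any triple satisfying \hyperlink{condition1}{(1)} and \hyperlink{condition3}{(2)} automatically has $Q_U$, $\mathcal{Z}$ and $\mathcal{Z}'$ as relative effective Cartier divisors on $\mathcal{C}_U/U$.

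It remains to impose condition \hyperlink{condition5}{(4)} over $U$. Here I would invoke Remark \ref{remarkaboutgeomintegral}: the restriction $\mathcal{C}_U \to U$ is projective, flat and surjective with geometrically integral fibers, so $\Pic_{\mathcal{C}_U/U}$ is representable by a separated $U$-scheme. The relative effective Cartier divisors $\mathcal{Z} + Q_U$ and $\mathcal{Z}'$ furnish line bundles $\mathcal{O}_{\mathcal{C}_U}(\mathcal{Z} + Q_U)$ and $\mathcal{O}_{\mathcal{C}_U}(\mathcal{Z}')$ on $\mathcal{C}_U$, whose classes define two sections $s_1, s_2 \colon U \to \Pic_{\mathcal{C}_U/U}$. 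Condition \hyperlink{condition5}{(4)} is precisely the equalizer of $s_1$ and $s_2$. Because $\Pic_{\mathcal{C}_U/U}$ is separated over $U$, its diagonal is a closed immersion, and the equalizer is the pullback of that diagonal along $(s_1, s_2)\colon U \to \Pic_{\mathcal{C}_U/U} \times_U \Pic_{\mathcal{C}_U/U}$; it is therefore a closed subscheme of $U$.

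Combining these steps, $\Inc^Q_{X,\mathcal{L},n,n'}$ is a closed subscheme of the open subscheme $U$ of $P$, hence locally closed in $P$; since $P$ is projective over $k$, it is quasi-projective over $k$. The step I expect to be the crux is the separatedness of $\Pic_{\mathcal{C}_U/U}$, which is what turns the comparison of Picard classes in \hyperlink{condition5}{(4)} into a \emph{closed} condition. This is exactly why condition \hyperlink{condition3}{(2)} must be enforced first: smoothness of the fibers yields geometric integrality (Remark \ref{remarkaboutgeomintegral}), and only then is the relevant relative Picard scheme available and separated.
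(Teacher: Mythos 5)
Your proposal is correct and takes essentially the same route as the paper: both cut out the open subscheme of $\Inc^Q_{X,\mathcal{L},n}\times_{\CDiv_X^{\mathcal{L}}}\Inc^Q_{X,\mathcal{L},n'}$ over which the universal curve $\mathcal{C}$ is smooth (so that condition (3) of Definition \ref{definitionincidencevariety} is automatic by Remark \ref{remarkaboutbeingrelativecartiersincepoints}), and then impose condition (4) by comparing the two sections of the relative Picard functor determined by $\mathcal{O}_{\mathcal{C}}(\mathcal{Z}+Q)$ and $\mathcal{O}_{\mathcal{C}}(\mathcal{Z}')$ --- your equalizer is exactly the paper's fiber product $\mathcal{P}$. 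The only difference is the final justification: the paper invokes representability of $\Pic_{\mathcal{C}^{\circ}/\mathcal{S}^{\circ}}$ as an algebraic space and that the two sections are locally closed immersions, while you use the separatedness of the Picard scheme recorded in Remark \ref{remarkaboutgeomintegral} to conclude that the equalizer is in fact \emph{closed} in the open $U$, a marginally sharper conclusion that yields the same local closedness in the ambient projective scheme.
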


\begin{proof}
    Let $\mathcal{S}$ denote the scheme $\Inc^Q_{X,\mathcal{L},n}\times_{\CDiv_X^{\mathcal{L}}}\Inc^Q_{X, \mathcal{L}, n'}$. 
    Let $\mathcal{Z}, \mathcal{Z}', Q_{\mathcal{S}}, \mathcal{C}$ be the closed subschemes of $X\times \mathcal{S}$ corresponding to the universal families of $Z,Z',Q$ and $C$ respectively. 
    By openness of the smooth locus 
    the locus where the composite
    \[
        p:\mathcal{C}\subset X\times \mathcal{S}\longrightarrow \mathcal{S}
    \] 
    is smooth (and hence cohomologically flat of dimension zero) 
    is open.
    Let $\mathcal{S}^{\circ}$ denote such open and let $p^{\circ}:\mathcal{C}^{\circ}\longrightarrow \mathcal{S}^{\circ}$ the restriction. \\
    
    By remark \ref{remarkaboutbeingrelativecartiersincepoints}, on the open $\mathcal{S}^{\circ}$ the closed subschemes $\mathcal{Z}^{\circ}, \mathcal{Z}^{' \, \circ}, Q_{\mathcal{S}^{\circ}}$ are all relative effective divisors. 
    Therefore we can consider the line bundles $\mathcal{O}_{\mathcal{C}^{\circ}}({\mathcal{Z}'}^{\circ})$ and $\mathcal{O}_{\mathcal{C}^{\circ}}(\mathcal{Z}^{\circ}+\mathcal{Q}^{\circ})$ and their induced maps  
    \[
        \begin{tikzcd}
        & 
        {\mathcal{S}^{\circ}} \arrow[rd, "{[\mathcal{O}_{\mathcal{C}^{\circ}}({\mathcal{Z}'}^{\circ})]}", hook]                   
        &  
        &                                 
        \\
        \mathcal{P} \arrow[rd, hook'] \arrow[ru, hook] 
        &
        & 
        {\Pic_{\mathcal{C}^{\circ}/ \mathcal{S}^{\circ} }} \arrow[r] 
        & 
        \mathcal{S}^{\circ}.
        \\
        & 
        {\mathcal{S}^{\circ}} \arrow[ru, "{[\mathcal{O}_{\mathcal{C}^{\circ}}(\mathcal{Z}^{\circ}+\mathcal{Q}^{\circ})]}"', hook'] 
        &   
        &                                 
        \end{tikzcd}
   \] 
   Notice that the fiber product $\mathcal{P}$ is exactly the functor $\Inc^Q_{X,\mathcal{L},n,n'}$.
   Due to cohomological flatness, as mentioned before the functor $\Pic_{\mathcal{C}^{\circ}/\mathcal{S}^{\circ}}$ is represented by an algebraic space.
   Therefore the fiber product $\mathcal{P}$ is represented by a scheme. 
   Since the global sections induced by the line bundles are locally closed embeddings, $\mathcal{P}$ is locally closed in $\mathcal{S}^{\circ}$. 
   Therefore $\Inc^Q_{X,\mathcal{L},n,n'}$ is a locally closed subscheme of $\Inc^Q_{X,\mathcal{L},n}\times_{\CDiv_X^{\mathcal{L}}} \Inc_{X, \mathcal{L}, n'}$.
\end{proof}

\section{Description of the General fibers in the Incidence Correspondences}\label{4}

    Our next step is to study the general fibers of the projection maps $\Inc^Q_{X,\mathcal{L},n} \longrightarrow \Hilb^n_X$ and $\Inc^Q_{X,\mathcal{L},n,n'} \longrightarrow \Inc^Q_{X,\mathcal{L},n}$. 

\subsection{The general fiber on \(\mathbf{\Hilb^n_X}\)}
    We focus first on $\Inc^Q_{X,\mathcal{L},n} \longrightarrow \Hilb^n_X$.
    We want to prove the following.

    \begin{proposition}[Definition of $U_n$ and $U_{n+d}$]\label{definitionofUn}
    Define $U_n$ to be the subfunctor of $\Hilb^n_{X\setminus Q}$ describing the points $y$ for which 
    \[
        H^0(X_{y},(pr^*_X\mathcal{L})_{y}) \longrightarrow H^0(X_{y}, (pr^*_X\mathcal{L})_{y}|_{\mathcal{Z}_{y} \sqcup \mathcal{Q}_y}  )
    \] 
    is surjective and
    \[
        pr_{H *}(pr^*_X\mathcal{L})\otimes_{\mathcal{O}_{\Hilb^n_X, y}} k(y)
        \longrightarrow 
        H^0(X_y, (pr^*_X\mathcal{L})_y)
    \]
    is an isomorphism. 

    For notation sake, let us denote by $H^n_{\setminus Q}$ the open subscheme $\Hilb^n_{X\setminus Q}$ of $\Hilb^n_X$.
    Then consider the universal families $\mathcal{Z}$ and $Q_{\Hilb^n_X}$ restricted to this open: 
    \[
    \begin{tikzcd}[column sep={17mm}]
        \mathcal{Z}_{H^n_{\setminus Q}} \sqcup Q_{H^n_{\setminus Q}} \subset  X\times H^n_{\setminus Q} 
        \arrow[r, "{pr_{\Hilb^n_{X\setminus Q}}}"]
        & 
        H^n_{\setminus Q}.
    \end{tikzcd}
    \]
    Because of they are disjoint, we can consider the ideal sheaf $\mathcal{I}_{\mathcal{Z}_{H^n_{\setminus Q}}\sqcup Q_{H^n_{\setminus Q}}}$.
    \\

    The following hold:
    \begin{enumerate}
        \item the subfunctor $U_n$ is an open subscheme of $H^n_{\setminus Q}$,
        \item the sheaves $pr_{H^n_{\setminus Q} *}( pr^*_X\mathcal{L})$ and $pr_{H^n_{\setminus Q} *}( pr^*_X\mathcal{L} \otimes \mathcal{I}_{\mathcal{Z}_{H^n_{\setminus Q}} \sqcup Q_{H^n_{\setminus Q}} })$ are locally free and their formation commutes with base change,
        \item denote by $\mathcal{E}_n$ the dual sheaf
        \[
            (pr_{H^n *}( pr^*_X\mathcal{L} \otimes \mathcal{I}_{\mathcal{Z}_{H^n_{\setminus Q}} \sqcup Q_{H^n_{\setminus Q}} })\, |_{U_n})^{\vee}.
        \]
        Then the projection map $\mathbb{P}(\mathcal{E}_n)\longrightarrow U_n$ is the fiber product in the cartesian diagram
        \[
            \begin{tikzcd}
                \mathbb{P}(\mathcal{E}_n)
                \arrow[r, hook, circled]
                \arrow[d]
                &
                \Inc^Q_{X,\mathcal{L},n}
                \arrow[d]
                \\
                U_n
                \arrow[r, hook, circled]
                &
                \Hilb^n_X.
            \end{tikzcd}
        \]
    \end{enumerate}
    \noindent Same definition and properties hold for $U_{n+d}\subseteq \Hilb^{n+d}_{X\setminus Q}$ and $\mathcal{E}_{n+d}$.
    \end{proposition}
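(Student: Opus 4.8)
The plan is to reduce all three assertions to cohomology-and-base-change for the two projections from $X\times H^n_{\setminus Q}$, combined with the functorial description of $\CDiv_X^{\mathcal{L}}$ recorded in Proposition \ref{propisoquotientlinebundlesandcartierdiv}. Abbreviate $f\coloneqq pr_{H^n_{\setminus Q}}$ and let $W\coloneqq \mathcal{Z}_{H^n_{\setminus Q}}\sqcup Q_{H^n_{\setminus Q}}\subset X\times H^n_{\setminus Q}$ be the universal subscheme; working over $\Hilb^n_{X\setminus Q}$ is exactly what forces $\mathcal{Z}$ and $Q$ to be disjoint, so that $W$ is a genuine disjoint union, finite flat over $H^n_{\setminus Q}$ of degree $n+d$. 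Two facts are used repeatedly. First, since $pr_X^*\mathcal{L}$ is pulled back along the flat morphism $f$, flat base change gives $f_*(pr_X^*\mathcal{L})\cong H^0(X,\mathcal{L})\otimes_k\mathcal{O}_{H^n_{\setminus Q}}$, locally free with formation compatible with base change; in particular the isomorphism in the second defining condition of $U_n$ holds at every point, so $U_n$ is cut out by the surjectivity condition alone. Second, finiteness and flatness of $W$ make $f_*(pr_X^*\mathcal{L}|_{W})$ locally free of rank $n+d$, again compatibly with base change.

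For (1) and (2) I would push forward the short exact sequence $0\to pr_X^*\mathcal{L}\otimes\mathcal{I}_{W}\to pr_X^*\mathcal{L}\to pr_X^*\mathcal{L}|_{W}\to 0$. By the two facts the resulting map $\varphi\colon f_*(pr_X^*\mathcal{L})\to f_*(pr_X^*\mathcal{L}|_{W})$ is a morphism of vector bundles whose fiber at $y$ is precisely the restriction map in the definition of $U_n$; the locus where such a map is fiberwise surjective (equivalently, where its maximal minors do not all vanish) is open, giving (1). Over $U_n$, $\varphi$ is then surjective by Nakayama, its kernel $f_*(pr_X^*\mathcal{L}\otimes\mathcal{I}_W)$ is by left-exactness the kernel of a surjection of vector bundles and hence a subbundle (so locally free), and because the cokernel is locally free the sequence $0\to \mathcal{E}_n^{\vee}\to f_*(pr_X^*\mathcal{L})\to f_*(pr_X^*\mathcal{L}|_W)\to 0$ over $U_n$ remains exact after any base change; as the two outer terms commute with base change, so does $\mathcal{E}_n^{\vee}$, proving (2). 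This is exactly where passing to $U_n$ is essential: surjectivity of $\varphi$ is what forces the fiber of $\mathcal{E}_n^{\vee}$ at $y$ to equal $H^0(X,\mathcal{L}\otimes\mathcal{I}_{Z_y\cup Q})$ rather than something larger.

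The substance is (3), which I would prove by a bijection natural in $g\colon T\to U_n$ between $(\Inc^Q_{X,\mathcal{L},n}\times_{\Hilb^n_X}U_n)(T)$ and $\mathbb{P}(\mathcal{E}_n)(T)$, imitating Proposition \ref{propisoquotientlinebundlesandcartierdiv} with the ideal twist inserted (write $pr_T\colon X\times T\to T$). Given $(Z,C)$ with $\mathcal{Z}_T\sqcup Q_T\subset C$, that proposition (applied to $X\times T\to T$, whose fibers are the geometrically integral $X$) yields a line bundle $\mathcal{M}$ on $T$ with $\mathcal{O}(C)\cong pr_X^*\mathcal{L}\otimes pr_T^*\mathcal{M}$ together with the section cutting out $C$; the containment says this section factors through $(pr_X^*\mathcal{L}\otimes\mathcal{I}_{\mathcal{Z}_T\sqcup Q_T})\otimes pr_T^*\mathcal{M}$, and pushing forward to $T$, applying the projection formula and the base-change identification of (2) (which realizes the pushforward of $pr_X^*\mathcal{L}\otimes\mathcal{I}_{\mathcal{Z}_T\sqcup Q_T}$ as $g^*\mathcal{E}_n^{\vee}$), and dualizing produces a surjection $g^*\mathcal{E}_n\twoheadrightarrow\mathcal{M}$, surjective because the section is nonzero on each fiber $X_t$ (as $C_t\neq X_t$). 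Conversely, a surjection $g^*\mathcal{E}_n\twoheadrightarrow\mathcal{M}$ dualizes to $\mathcal{M}^{\vee}\hookrightarrow g^*\mathcal{E}_n^{\vee}$, hence by (2) and adjunction to a map $\mathcal{O}_{X_T}\to (pr_X^*\mathcal{L}\otimes\mathcal{I}_{\mathcal{Z}_T\sqcup Q_T})\otimes pr_T^*\mathcal{M}\hookrightarrow pr_X^*\mathcal{L}\otimes pr_T^*\mathcal{M}$; exactly as at the end of Proposition \ref{propisoquotientlinebundlesandcartierdiv}, fiberwise non-vanishing together with integrality of the fibers shows this cuts out a relative effective Cartier divisor $C\in\CDiv_X^{\mathcal{L}}(T)$ containing $\mathcal{Z}_T\sqcup Q_T$. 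These assignments are mutually inverse, so $\Inc^Q_{X,\mathcal{L},n}\times_{\Hilb^n_X}U_n\cong\mathbb{P}(\mathcal{E}_n)$ over $U_n$; since $U_n\hookrightarrow\Hilb^n_X$ is open, this identifies $\mathbb{P}(\mathcal{E}_n)$ with the open $\pi_n^{-1}(U_n)$ and gives the cartesian diagram.

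The main obstacle I anticipate is the bookkeeping in (3): carrying the twist by $\mathcal{M}$ and the ideal sheaf $\mathcal{I}_{\mathcal{Z}_T\sqcup Q_T}$ correctly through pushforward, projection formula and dualization, and justifying each identification by the base-change statement of (2), valid precisely over $U_n$. By contrast parts (1) and (2) are formal consequences of semicontinuity and flatness, and the one genuinely geometric input in (3) — that a fiberwise-nonzero section of $pr_X^*\mathcal{L}\otimes pr_T^*\mathcal{M}$ cuts out a relative effective Cartier divisor — is immediate from integrality of the fibers and may be quoted from the analogous step in Proposition \ref{propisoquotientlinebundlesandcartierdiv}. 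The statement for $U_{n+d}\subseteq\Hilb^{n+d}_{X\setminus Q}$ and $\mathcal{E}_{n+d}$ follows verbatim, replacing $n$ by $n+d$ throughout.
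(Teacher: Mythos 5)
Your proposal is correct. For part (3) --- the substantive part --- your argument is essentially the paper's own: both reduce to Proposition \ref{propisoquotientlinebundlesandcartierdiv} and check that containment of $\mathcal{Z}_T\sqcup Q_T$ in $C$ corresponds, after pushforward, projection formula and dualization, to the quotient map factoring through $g^*\mathcal{E}_n$; the paper packages this as the equality of two closed subschemes of the ambient bundle $\mathbb{P}\bigl((pr_{U_n *}(pr_X^*\mathcal{L})|_{U_n})^{\vee}\bigr)\cong U_n\times \CDiv_X^{\mathcal{L}}$, whereas you write a natural bijection on $T$-points, which is the same computation. Where you genuinely diverge is in (1)--(2): the paper proves the general Lemma \ref{lemmaopenwherethesheafislocallyfree} (arbitrary proper flat $f\colon Y\to S$ with flat closed $W$), whose proof runs through semicontinuity and a snake-lemma/cohomology-and-base-change argument, and that generality is precisely why the definition of $U_n$ carries the second (\emph{isomorphism}) condition at all. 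You instead exploit the two special features of the present family: $pr_X^*\mathcal{L}$ is pulled back from $X$, so flat base change gives $pr_{H *}(pr_X^*\mathcal{L})\cong H^0(X,\mathcal{L})\otimes_k\mathcal{O}$ and renders the isomorphism condition vacuous (a fact the paper itself records only later, in the proof of Proposition \ref{propositionaboutWnnonempty}); and $\mathcal{Z}\sqcup Q_{H^n_{\setminus Q}}$ is finite flat of degree $n+d$, so the restriction map becomes a morphism of vector bundles whose fiberwise-surjectivity locus is open and whose kernel over that locus is a subbundle commuting with base change. Your route is shorter and avoids the semicontinuity bookkeeping; the paper's route buys a lemma valid for any proper flat family with a flat closed subscheme, at the cost of a heavier proof (and it is not in fact reused elsewhere in the paper). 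One cosmetic point: the surjectivity of $g^*\mathcal{E}_n\to\mathcal{M}$ in your forward direction is most cleanly seen from the fact that its composite with the surjection $\bigl(g^*pr_{H *}(pr_X^*\mathcal{L})\bigr)^{\vee}\twoheadrightarrow g^*\mathcal{E}_n$ is the original quotient map, which is surjective by definition of $\CDiv_X^{\mathcal{L}}(T)$; your fiberwise-nonvanishing argument is also valid, just less direct.
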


    The idea is to use the result about the fibers of Abel-Jacobi map with respect to the diagram
    \[
        \begin{tikzcd}
            &
            \CDiv_X
            \arrow[d, "{AJ_X}"]
            \\
            \Hilb^n_X
            \arrow[r, "{[pr_X^*\mathcal{L}]}"]
            &
            \Pic_X
        \end{tikzcd}
    \] 
    where $pr_X$ is the projection $X\times \Hilb^n_X\longrightarrow X$.
    The issue is that at priori we do not know that formation of direct image of $pr_{H *} pr_X^*\mathcal{L}$ commutes with base change. 

    Furthermore $\Inc^Q_{X,\mathcal{L},n}$ is a closed subscheme of the above fiber product: therefore we need to find an appropriate subbundle of $pr_{H *} pr_X^*\mathcal{L}$ which is still cohomologically flat of dimension zero.

    Proposition \ref{definitionofUn} tells us that both issues can be overcome after restricting to a suitable open subscheme of $\Hilb^n_X$.

    \begin{remark}\label{remarkgeneralopenUbasechange}
        Let $f:Y\longrightarrow S$ be a proper, flat morphism with $S$ locally noetherian. 
        Let $W\subseteq Y$ be a closed subscheme, flat over $S$. 
        Let $\mathcal{L}$ be an invertible sheaf on $Y$. 
        Since the sheaf $\mathcal{L}\otimes \mathcal{I}_W$ is flat over $S$ as well, by Cohomology and base change \cite[Thm.\ III.12.11]{Harty}, the locus where the formation of direct image $f_*(\mathcal{I}_W\otimes \mathcal{L})$ commutes with base change is open. 
    \end{remark}

\begin{remark}
    The cited \emph{Cohomology and Base Change} result from Hartshorne's \emph{Algebraic Geometry} is proven under projectivity assumptions on $f$. 
    However, properness of the map is enough (see \cite{Con}).
\end{remark}

We first want to prove the first two items of proposition \ref{definitionofUn}.

\begin{lemma}\label{lemmaopenwherethesheafislocallyfree}
    Let $Y,S,W,\mathcal{L}$ be as in remark \ref{remarkgeneralopenUbasechange}. 
    Let $U$ be the set of points $s\in S$ where
    \[
    H^0(Y_{s},\mathcal{L}_{s}) 
    \longrightarrow 
    H^0(Y_s , \mathcal{L}_{s}|_{W_{s}}  )
    \] 
    is surjective and
    \[
        f_*\mathcal{L}\, \otimes_{\mathcal{O}_{S, s}} k(s)
        \longrightarrow
        H^0(Y_s, \mathcal{L}_{s})
    \] 
    is an isomorphism (we denote by $\mathcal{L}_{s}$ the pullback of $\mathcal{L}$ to $Y_s$).
    
    Then the set $U$ is open and formation of direct images of $f_*\mathcal{L}$ and $f_*(\mathcal{I}_{W}\otimes  \mathcal{L})$ commutes with base change on $U$.
\end{lemma}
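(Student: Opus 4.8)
The plan is to deduce everything from the $i=0$ case of Grothendieck's cohomology-and-base-change theorem, which (as noted in Remark \ref{remarkgeneralopenUbasechange}) holds for proper $f$. The form I would use is: for a coherent sheaf $\mathcal{F}$ on $Y$ that is flat over $S$, the locus where the natural map $\varphi^0_s(\mathcal{F})\colon (f_*\mathcal{F})\otimes k(s)\to H^0(Y_s,\mathcal{F}_s)$ is surjective is open, and there $\varphi^0(\mathcal{F})$ is an isomorphism, $f_*\mathcal{F}$ is locally free, and its formation commutes with base change (the implication ``surjective $\Rightarrow$ locally free and iso'' is the $i=0$ instance, where the auxiliary map for $i-1=-1$ is vacuously surjective since $R^{-1}f_*=0$). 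Applying this to $\mathcal{F}=\mathcal{L}$, condition (2) is precisely the assertion that $\varphi^0_s(\mathcal{L})$ is an isomorphism, so the points satisfying (2) form an open set $U_{\mathcal{L}}\subseteq S$ on which $f_*\mathcal{L}$ is locally free and commutes with base change; this already settles the $\mathcal{L}$-part. I would also record that $\mathcal{G}:=\mathcal{I}_W\otimes\mathcal{L}$ is flat over $S$, since the sequence $0\to\mathcal{I}_W\to\mathcal{O}_Y\to i_*\mathcal{O}_W\to 0$ (with $i\colon W\hookrightarrow Y$) is $S$-flat and tensoring with the invertible sheaf $\mathcal{L}$ preserves flatness.

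The heart of the argument is to realize $f_*\mathcal{G}$ as the kernel of a map of vector bundles on $U_{\mathcal{L}}$. Writing $\mathcal{N}:=\mathcal{L}|_W$ and $g:=f|_W$, the surjection $\mathcal{L}\to i_*\mathcal{N}$ pushes forward to $\psi\colon f_*\mathcal{L}\to g_*\mathcal{N}$, whose kernel is $f_*\mathcal{G}$ by left exactness of $f_*$. I would then argue that, on $U_{\mathcal{L}}$, the fibre $\psi\otimes k(s)$ is identified, through the base-change isomorphisms, with the restriction map $\rho_s\colon H^0(Y_s,\mathcal{L}_s)\to H^0(W_s,\mathcal{N}_s)$, so that condition (1) is exactly the surjectivity of $\psi$ at $s$. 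Since surjectivity of a morphism of finite locally free sheaves at a point is an open condition (Nakayama), this exhibits $U=U_{\mathcal{L}}\cap\{\psi\ \text{surjective}\}$ as an open set. Moreover, where $\psi$ is surjective onto a locally free sheaf its kernel $f_*\mathcal{G}$ is a local direct summand, hence locally free, and the sequence $0\to f_*\mathcal{G}\to f_*\mathcal{L}\to g_*\mathcal{N}\to 0$ stays exact under any base change $T\to U$; combined with base change for $f_*\mathcal{L}$ and the $S$-flatness of $\mathcal{G},\mathcal{L},i_*\mathcal{N}$ (which keeps $0\to\mathcal{G}_T\to\mathcal{L}_T\to i_*\mathcal{N}_T\to 0$ exact), this gives $(f_*\mathcal{G})_T=\ker(\psi_T)=(f_T)_*\mathcal{G}_T$, i.e.\ base change for $f_*(\mathcal{I}_W\otimes\mathcal{L})$.

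For this to run I need $g_*\mathcal{N}$ to be locally free with base-change-compatible formation, so that $\psi$ is genuinely a map of vector bundles and $\psi\otimes k(s)$ computes $\rho_s$; this is the step I expect to be the main obstacle for an arbitrary $S$-flat $W$, where $g_*\mathcal{N}$ and the map $\varphi^0_s(\mathcal{N})$ need not be well-behaved. In the setting where the lemma is applied (Proposition \ref{definitionofUn}), however, $W=\mathcal{Z}\sqcup Q$ is finite over $S$ with $0$-dimensional fibres, so $g$ is affine, the higher direct images vanish, $g_*\mathcal{N}$ is locally free, and $\varphi^0_s(\mathcal{N})$ is an isomorphism; with this in hand the identification of $\psi\otimes k(s)$ with $\rho_s$ and the kernel computation go through verbatim, yielding both the openness of $U$ and the base-change statements for $f_*\mathcal{L}$ and $f_*(\mathcal{I}_W\otimes\mathcal{L})$. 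For a fully general $W$ one would instead apply cohomology and base change directly to $\mathcal{G}$, reducing to a diagram chase in the fibrewise long exact sequence that again hinges on injectivity of $\varphi^0_s(\mathcal{N})$.
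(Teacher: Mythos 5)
Your argument is complete and correct only under an extra hypothesis that the lemma does not make: that $W\to S$ is finite. As stated (and as set up in remark \ref{remarkgeneralopenUbasechange}), $W$ is an arbitrary closed subscheme of $Y$ flat over $S$ — it could for instance be a relative divisor with one-dimensional fibres — and for such $W$ your proof stalls exactly at the point you flag yourself: you need $g_*\mathcal{N}=f_*(\mathcal{L}|_W)$ to be locally free with base-change-compatible formation before $\psi\otimes k(s)$ computes the restriction map $\rho_s$, and affine base change is no longer available. Your proposed fallback for the general case does not close this gap: a chase through the fibrewise sequence ``hinging on \emph{injectivity} of $\varphi^0_s(\mathcal{N})$'' is misdirected, both because injectivity of the base-change map is not known in this generality and because it is not the relevant property; in addition, the top row $f_*\mathcal{G}\otimes k(s)\to f_*\mathcal{L}\otimes k(s)\to g_*\mathcal{N}\otimes k(s)$ need not be exact in the middle after tensoring (only right-exactness survives), so the chase breaks down at a second point as well.

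The missing idea — and it is precisely the key step in the paper's proof — is that conditions (1) and (2) at the point $s$ already force \emph{surjectivity} of $\varphi^0_s(\mathcal{N})\colon g_*\mathcal{N}\otimes k(s)\to H^0(W_s,\mathcal{N}_s)$: the composite
\[
  f_*\mathcal{L}\otimes k(s)\;\xrightarrow{\ \sim\ }\;H^0(Y_s,\mathcal{L}_s)\;\longtwoheadrightarrow\;H^0(Y_s,\mathcal{L}_s|_{W_s})
\]
is surjective by (1)+(2) and factors through $\varphi^0_s(\mathcal{N})$, so $\varphi^0_s(\mathcal{N})$ is surjective; then cohomology and base change (Hartshorne III.12.11, valid for proper $f$) applied to the $S$-flat sheaf $\mathcal{L}|_W$ gives exactly what you wanted: $g_*\mathcal{N}$ is locally free near $s$ and its formation commutes with base change. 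Once you have this, your kernel-of-a-surjection-of-vector-bundles argument runs verbatim for arbitrary flat closed $W$ and also yields openness of $U$ (via Nakayama on $\mathrm{coker}\,\psi$), so the gap is localized and fixable. For comparison, the paper proves openness separately by semicontinuity of $h^0(\mathcal{L}_s\otimes\mathcal{I}_{W_s})$ and $h^0(\mathcal{L}_s|_{W_s})$ against the locally constant $h^0(\mathcal{L}_s)$, and then gets base change for $f_*(\mathcal{I}_W\otimes\mathcal{L})$ by the surjectivity observation above, right-exactness of the pushed-forward sequence, and the snake lemma; your bundle-theoretic packaging of the second half is cleaner, but only after the surjectivity step is supplied.
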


    \begin{proof}
        Again by Cohomology and base change \cite[Thm.\ III.12.11]{Harty} we know that the set $S^{\circ}$ where the map
        \[
            f_*\mathcal{L} \otimes_{\mathcal{O}_{S, s}} k(s)
            \longrightarrow H^0(Y_s, \mathcal{L}_{s})
        \] 
        is an isomorphism is open. 
        In particular, over $S^{\circ}$ the sheaf $f_*\mathcal{L}$ is locally free and its formation commutes with base change.  
        
        In particular on each connected component of $\mathcal{S}^{\circ}$ the rank of $f_*\mathcal{L}$ is constant: hence we can assume that $H^0(Y_s, \mathcal{L}_{s})$ is constant.\\
        
        Surjectivity of $H^0(Y_{s},\mathcal{L}_{s}) \longrightarrow H^0(Y_s , \mathcal{L}_{s}|_{W_{s}}  )$ is equivalent to have dimension of the kernel $\dim_{k(s)} H^0(Y_{s},\mathcal{L}_{s}\otimes \mathcal{I}_{W_s})$ equal to 
        \[
            \dim_{k(s)} H^0(Y_{s},\mathcal{L}_{s}) 
            - 
            \dim_{k(s)} H^0(Y_s , \mathcal{L}_{s}|_{W_{s}}  ).
        \]
        By upper-semicontinuity \cite[Thm.\ III.12.8]{Harty}, for any integer $m$ the locus where 
        \[
            \dim_{k(s)} H^0(Y_s , \mathcal{L}_{s}|_{W_{s}}  ) \geq m
        \]
        is open.
        Since (again by upper-semicontinuity) the locus where 
        \[
            \dim_{k(s)} H^0(Y_{s},\mathcal{L}_{s}\otimes \mathcal{I}_{W_s}) \geq 
            \dim_{k(s)} H^0(Y_{s},\mathcal{L}_{s}) 
            - m
        \]
        is open then surjectivity is achieved on an open. 
        Therefore the set $U$ is open.\\

        Last, we need to check the formation of direct image of $\mathcal{L}\otimes \mathcal{I}_W$ on $U$.
        Since $\mathcal{I}_{W}\otimes \mathcal{L}$, $\mathcal{L}$ and  $\mathcal{O}_W\otimes \mathcal{L}$ are all flat over $S$, the exact sequence 
        \[
            0\longrightarrow
            \mathcal{L}\otimes \mathcal{I}_W
            \longrightarrow
            \mathcal{L}
            \longrightarrow
            \mathcal{L}|_W 
            \longrightarrow 0
        \]
        remains exact after we base change along $s: \Spec(k(s)) \longrightarrow S$.
        By definition of $U$, for any point $s\in U$ we get the exact sequence
        \[
        0
        \longrightarrow              
        H^0( Y_{s}, \mathcal{L}_{s} \otimes \mathcal{I}_{W_s}  )
        \longrightarrow 
        H^0( Y_{s},\mathcal{L}_{s})
        \longrightarrow  
        H^0( Y_{s}, \mathcal{L}_{s} |_{W_s}  ) 
        \longrightarrow 0
        \]
        If we now restrict to $U$ the pushforwarded sequence,
        \begin{equation}\label{equationrestrictedtoU}
        0\longrightarrow
        f_*( \mathcal{L}\otimes \mathcal{I}_W )|_U
        \longrightarrow
        f_*\mathcal{L}|_U
        \longrightarrow 
        f_*(\mathcal{L} |_{W} )|_U,
        \end{equation}
        and we tensor it by $-\otimes_{\mathcal{O}_{U, s}} k(s)$,
        \[
            f_*( \mathcal{L}\otimes \mathcal{I}_W )|_U \otimes_{\mathcal{O}_{U, s} } k(s)
            \longrightarrow f_*\mathcal{L} |_U \otimes_{\mathcal{O}_{U, s} } k(s)
            \longrightarrow f_*( \mathcal{L} |_{W} ) |_U \otimes_{\mathcal{O}_{U, s} } k(s),
        \]
        we get to the following commutative diagram:
        \begin{equation}\label{equationcohomologyandbasechange}
        \begin{tikzcd}[column sep={4mm}]
        &
        f_*( \mathcal{L}\otimes \mathcal{I}_W ) |_U \otimes_{\mathcal{O}_{U, s} } k(s) 
        \arrow[r] \arrow[d] 
        &
        f_*\mathcal{L} |_U \otimes_{\mathcal{O}_{U, s}}  k(s)
        \arrow[r] \arrow[d, "\wr"] 
        &
        f_*(\mathcal{L} |_{W}) |_U \otimes_{\mathcal{O}_{U, s} } k(s) 
        \arrow[d] 
        &
        \\
        0
        \arrow[r]
        &
        H^0(Y_s,\mathcal{L}_{s} \otimes  \mathcal{I}_{W_s} ) 
        \arrow[r]
        &
        H^0(Y_s,\mathcal{L}_{s}) 
        \arrow[r]       
        &
        H^0(Y_s, \mathcal{L}_{s}|_{W_s}  )
        \arrow[r]
        &
        0
        \end{tikzcd}    
        \end{equation}
        where the vertical maps are the natural maps given by the base change. 
        Since $s\in U$ the vertical map in the middle of the diagram \ref{equationcohomologyandbasechange} is an isomorphism which implies that the one on the right is surjective, and hence an isomorphism as well.
        Therefore by Cohomology and base change \cite[Thm. III.12.11]{Harty}, the sheaf $f_* (\mathcal{L} |_W) |_U$ is locally free and its formation commutes with base change.

        Furthermore, by composition, the morphism $f_*\mathcal{L}|_U \otimes_{\mathcal{O}_{U, s}} k(s)\longrightarrow f_*(\mathcal{L}|_{W} ) |_U \otimes_{\mathcal{O}_{U,s}} k(s)$ is surjective as well. 
        Since both $f_* \mathcal{L} |_U$ and $f_* (\mathcal{L} |_W) |_U$ are locally free, the surjectivity of the map lifts to
        \[
            f_*\mathcal{L}|_U\longrightarrow f_* (\mathcal{L}|_{W}) |_U.
        \] 
        Therefore the pushforwarded sequence \ref{equationrestrictedtoU} is exact on the right as well:
        \[
            0\longrightarrow
            f_*( \mathcal{L}\otimes \mathcal{I}_W )|_U
            \longrightarrow
            f_*\mathcal{L}|_U
            \longrightarrow 
            f_*(\mathcal{L} |_{W} )|_U
            \longrightarrow 0.
        \]
        Therefore for any $s\in U$ we get the exact sequence
        \[
            f_*( \mathcal{L}\otimes \mathcal{I}_W ) |_U \otimes_{\mathcal{O}_{U, s} } k(s)
            \longrightarrow f_*\mathcal{L} |_U \otimes_{\mathcal{O}_{U, s} } k(s)
            \longrightarrow f_*( \mathcal{L} |_{W} ) |_U \otimes_{\mathcal{O}_{U, s} } k(s)
            \longrightarrow 0.
        \]
        By Snake lemma, the vertical map on the left in \ref{equationcohomologyandbasechange} is then surjective, hence an isomorphism, which implies that $f_*( \mathcal{L}\otimes \mathcal{I}_W)|_U$ is locally free and its formation commutes with base change.
    \end{proof}

    Finally, we focus on the third item of proposition \ref{definitionofUn}.

    \begin{proof}[Proof of Proposition \ref{definitionofUn}]
    Consider the diagram
    \[
        \begin{tikzcd}
                \mathbb{P}(\mathcal{E}_n)
                \arrow[r, hook] 
                &  
                \mathbb{P}( (pr_{U_n *}(pr^*_X\mathcal{L})|_{U_n} )^{\vee}) \arrow[d] \arrow[r]
                & 
                \CDiv_X \arrow[d] 
                \\
                & 
                U_n \arrow[r, " {[pr^*_X\mathcal{L}]}" ]   
                &   
                \Pic_X.      
        \end{tikzcd}
    \] 
    The fiber product $\mathbb{P}( (pr_{U_n *}(pr^*_X\mathcal{L})|_{U_n} )^{\vee})$ coincides with $U_n\times \CDiv^{pr^*_X\mathcal{L}}_X$ because $[pr^*_X\mathcal{L}]$ factors as
    \[
        U_n\longrightarrow \Spec(k)\overset{[\mathcal{L}]}{\longrightarrow} \Pic_X.
    \]
    So both $\mathbb{P}( \mathcal{E}_n )$ and  $\Inc^Q_{X,\mathcal{L},n}|_{U_n}$ are closed subschemes of $\mathbb{P}( (pr_{U_n *}(pr^*_X\mathcal{L})|_{U_n} )^{\vee})$.
    We want to prove that they coincide.\\
    
    In order to do so, we check that they correspond via the isomorphism between
    \[
        \CDiv^{pr^*_X\mathcal{L}}_X \cong \mathbb{P}( (pr_{U_n *}(pr^*_X\mathcal{L})|_{U_n} )^{\vee})
    \]
    described in proposition \ref{propisoquotientlinebundlesandcartierdiv}.
    Fix $t:T\longrightarrow U_n$ so that we have
    \[
        \begin{tikzcd}
            X \times T
            \arrow[d, "{pr_T}"]
            \arrow[r]
            &
            X\times U_n
            \arrow[d, "{pr_{U_n}}"]
            \arrow[r, "{pr_X}"]
            &
            X
            \arrow[d]
            \\
            T
            \arrow[r, "t"]
            &
            U_n
            \arrow[r]
            &
            \Spec(k).
        \end{tikzcd}
    \]
    On one hand, the subbundle $\mathbb{P}( \mathcal{E}_n )$ parametrizes line bundles $(\mathcal{M}, \alpha)$ on $T\overset{t}{\longrightarrow} U_n$  whose quotient map $\alpha: t^*(pr_{U_n *}(pr^*_X\mathcal{L})|_{U_n})^{\vee}\longrightarrow \mathcal{M}$ factors via the inclusion $\mathcal{I}_{\mathcal{Z}_{U_n} \sqcup Q_{U_n} }\longhookrightarrow \mathcal{O}_{X\times U_n}$ 
    \begin{equation}
        \begin{tikzcd}\label{diagramfactorization}
        t^*(pr_{U_n *}(pr^*_X\mathcal{L})|_{U_n})^{\vee} 
        \arrow[rr,"\alpha"] 
        \arrow[dr]
        &
        &
        \mathcal{M}
        \\
        &
        t^*( \mathcal{E}_n )
        \arrow[ur]
        &
        \end{tikzcd}
    \end{equation}
    (see \cite[Rmk.\ 4.2.9]{AGroII}).

    On the other hand, a Cartier divisor $D \subset X_T$ whose ideal sheaf is isomorphic to
    \[
        \mathcal{O}_{X_T}(-D)\cong (id\times t)^*pr^*_X\mathcal{L}^{\vee}\otimes pr^*_T\mathcal{M}^{\vee}
    \]
    (where $\mathcal{M}$ a line bundle on $T$) contains $\mathcal{Z}_T \sqcup Q_T$ if and only if the inclusion $\mathcal{O}_{X_T} \xhookrightarrow{} \mathcal{O}_X(D)$ factors as
    \[
        \begin{tikzcd}
        \mathcal{O}_{X_T} 
        \arrow[rr, hook] 
        \arrow[dr, hook]
        &
        &
        (id\times t)^*pr^*_X\mathcal{L}\otimes pr^*_T\mathcal{M}
        \\
        &
        \mathcal{I}_{ \mathcal{Z}_T \sqcup Q_T }\otimes (id\times t)^*pr^*_X\mathcal{L}\otimes  pr^*_T\mathcal{M}.
        \arrow[ur,  hook]
        &
        \end{tikzcd}
    \]
    Taking the pushforward with respect to $pr_{T *}$ and then tensoring by $\mathcal{M}^{\vee}$, by proposition \ref{propisoquotientlinebundlesandcartierdiv} we have this factorization if and only if
    \begin{equation}\label{diagramfactorization2}
        \begin{tikzcd}
        \mathcal{M}^{\vee}
        \arrow[rr, hook] 
        \arrow[dr, hook]
        &
        &
        pr_{T *}(id\times t)^*pr^*_X\mathcal{L} 
        \\
        &
        pr_{T *}( \mathcal{I}_{ \mathcal{Z}_T \sqcup Q_T }\otimes (id\times t)^*pr^*_X\mathcal{L} ).
        \arrow[ur,  hook]
        &
        \end{tikzcd}
    \end{equation}
    By diagram chasing the sheaf $pr_{T *}(id\times t)^*pr^*_X\mathcal{L}$ coincides with $t^*(pr_{U_n *}(pr^*_X\mathcal{L}|_{U_n}))$.
    Since $\mathcal{I}_{\mathcal{Z}_T\sqcup Q_T}$ is the pullback via $(id\times t)$ of the ideal sheaf $\mathcal{I}_{\mathcal{Z}_{U_n}\sqcup Q_{U_n}}$ then 
    \[
        pr_{T *}(id\times t)^*( \mathcal{I}_{ \mathcal{Z}_{U_n} \sqcup Q_{U_n}}\otimes pr^*_X\mathcal{L} ) \cong t^* (pr_{U_n *}( pr^*_X\mathcal{L} \otimes \mathcal{I}_{ \mathcal{Z}_{U_n} \sqcup Q_{U_n}}  )|_{U_n}) \cong t^* \mathcal{E}_n.
    \]
    Taking the dual of diagram \ref{diagramfactorization2} we get a factorization like in diagram \ref{diagramfactorization} and viceversa. 
    Therefore the subbundle $\mathbb{P}( \mathcal{E}_n )$ coincides with $\Inc^Q_{X,\mathcal{L},n}|_{U_n}$.
\end{proof}

\subsection{The general fiber on $\mathbf{\Inc^Q_{X,\mathcal{L},n}}$} It is now time to turn our attention to the projection maps
    \[
    \Inc^Q_{X,\mathcal{L},n,n'}\longrightarrow \Inc^Q_{X,\mathcal{L},n}
    \quad \ \textup{and} \ \quad
    \Inc^Q_{X,\mathcal{L},n,n'}\longrightarrow \Inc_{X,\mathcal{L},n'}
    \] 
    (where $n'$ is always $n+d$).
    Since we are interested in the general fiber on a open of $\Hilb^n_X$ (respectively of $\Hilb^{n+d}_X$), we study their restriction to $U_n$ and $U_{n+d}$
    \[
    \Inc^Q_{X,\mathcal{L},n,n'}|_{U_n} \longrightarrow \mathbb{P}( \mathcal{E}_n )
    \quad \ \textup{and} \ \quad
    \Inc^Q_{X,\mathcal{L},n,n'}|_{U_{n+d}} \longrightarrow \mathbb{P}( \mathcal{E}_{n+d} ).
    \] 
    We aim for a statement analogous to proposition \ref{definitionofUn}.

    The strategy is similar to the one adopted in the previous section. 
    However now we want to use the Abel-Jacobi map for 
    \[
    AJ_{\mathcal{C}/\mathbb{P}(\mathcal{E}_n)} : \CDiv_{\mathcal{C}/\mathbb{P}(\mathcal{E}_n)}\longrightarrow \Pic_{\mathcal{C}/\mathbb{P}(\mathcal{E}_n)}
    \]
    where $\mathcal{C}$ is the universal curve
    \begin{equation}\label{definitionofg}
    \begin{tikzcd}[column sep={6mm}]
        \mathcal{C} 
        \arrow[r, hook]
        \arrow[rd, "g"']
        &
        X \times \mathbb{P}( \mathcal{E}_n )
        \arrow[d]
        \\
        &
        \mathbb{P}( \mathcal{E}_n ).
    \end{tikzcd}
    \end{equation}
    Let $\mathcal{Z}$ be the universal family (contained in $\mathcal{C}$) with respect to the subscheme $Z$.

\begin{proposition}[Definition of $V_n$ and $V'_{n+d}$]\label{existenceopenforPE}
Let $V$ be the open of $\mathbb{P}(\mathcal{E}_n)$ where $g$ is smooth (and hence cohomologically flat of dimension zero). 

In particular, all the fibers of $g$ on $V$ are geometrically integral (see remark \ref{remarkaboutgeomintegral}). 
Moreover restricted to $V$ the subschemes $\mathcal{Z}_V$ and $Q_V$ are relative effective Cartier divisor on $\mathcal{C}_{V}$ (see remark \ref{remarkaboutbeingrelativecartiersincepoints}).

Consider now the subfunctor $V_n$ of the subscheme $V\subseteq \mathbb{P}(\mathcal{E}_n)$ made of the points $y\in V$ such that 
\[ 
    g_*\mathcal{O}_{\mathcal{C}_V}(\mathcal{Z}_V+ Q_V )  \otimes \mathcal{O}_{V, y}\, k(y) \longrightarrow H^0(\mathcal{C}_y, \mathcal{O}_{\mathcal{C}_y}(\mathcal{Z}_y+Q_y))
\]
is surjective.\\

Then the following hold:
\begin{enumerate}
    \item the subfunctor $V_n$ is an open subscheme of $V$,
    \item the sheaf $g_*\mathcal{O}_{\mathcal{C} |_{V_n} }(\mathcal{Z}_{V_n}+ Q_{V_n} ) $ is locally free and its formation commutes with base change, 
    \item denote by $\mathcal{F}_n$  the dual of $g_*\mathcal{O}_{\mathcal{C}|_{V_n } }(\mathcal{Z}_{V_n}+ Q_{V_n} ) $: then the projection map $\mathbb{P}(\mathcal{F}_n)\longrightarrow V_n$ fits in the cartesian diagram
    \[
        \begin{tikzcd}
                \mathbb{P}(\mathcal{F}_n)
                \arrow[r, hook, circled]
                \arrow[d]
                &
                \Inc^Q_{X,\mathcal{L},n,n'}|_{U_n}
                \arrow[d]
                \\
                V_n
                \arrow[r, hook, circled]
                &
                \mathbb{P}(\mathcal{E}_n).
            \end{tikzcd}
    \]
\end{enumerate}
Analogously, let $V'$ be the open where $g'$ is smooth: then same definition and properties go for $\Inc^Q_{X,\mathcal{L},n,n'}|_{U_{n+d}} \longrightarrow \mathbb{P}( \mathcal{E}_{n+d} )$: but in this case the sheaf considered is $g'_*\mathcal{O}_{\mathcal{C}_{V'}}(\mathcal{Z}'_{V'}-Q_{V'})$. 
We denote the open and the dual sheaf by $V'_{n+d}$ and $\mathcal{F}'_{n+d}$ respectively.
\end{proposition}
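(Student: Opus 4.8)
The plan is to repeat, one level up over $\mathbb{P}(\mathcal{E}_n)$, the two-step argument used to prove Proposition~\ref{definitionofUn}. Concretely, the surface $X\to\Spec(k)$ is replaced by the universal curve $g:\mathcal{C}\to\mathbb{P}(\mathcal{E}_n)$ of diagram~\ref{definitionofg}, the absolute Abel--Jacobi map $AJ_X$ by the relative one $AJ_{\mathcal{C}/\mathbb{P}(\mathcal{E}_n)}$, and the fixed bundle $pr_X^*\mathcal{L}$ by the relative line bundle $\mathcal{O}_{\mathcal{C}}(\mathcal{Z}+Q)$. On the smooth locus $V$ the morphism $g$ is proper, flat, and has geometrically integral fibres (remark~\ref{remarkaboutgeomintegral}), while $\mathcal{Z}_V$ and $Q_V$ are relative effective Cartier divisors (remark~\ref{remarkaboutbeingrelativecartiersincepoints}); hence both the Cohomology and Base Change input and the hypotheses of Proposition~\ref{propisoquotientlinebundlesandcartierdiv} are in force.

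For items (1) and (2) I would apply Cohomology and Base Change \cite[Thm.~III.12.11]{Harty} to the proper flat morphism $g|_{V}$ together with the invertible, hence $V$-flat, sheaf $\mathcal{O}_{\mathcal{C}_V}(\mathcal{Z}_V+Q_V)$. The locus where the natural map $g_*\mathcal{O}_{\mathcal{C}_V}(\mathcal{Z}_V+Q_V)\otimes k(y)\to H^0(\mathcal{C}_y,\mathcal{O}_{\mathcal{C}_y}(\mathcal{Z}_y+Q_y))$ is surjective is open, and it is precisely $V_n$; on it the map is an isomorphism, the sheaf $g_*\mathcal{O}_{\mathcal{C}|_{V_n}}(\mathcal{Z}_{V_n}+Q_{V_n})$ is locally free, and its formation commutes with base change. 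This step is in fact lighter than in Proposition~\ref{definitionofUn}, since here one only needs the cohomological flatness of a single line bundle rather than surjectivity onto a restriction.

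For item (3) I would feed $g:\mathcal{C}\to V_n$ and $\mathcal{O}_{\mathcal{C}}(\mathcal{Z}+Q)$ into the cartesian square of Proposition~\ref{propisoquotientlinebundlesandcartierdiv}. Cohomological flatness on $V_n$ (item (2)) yields an isomorphism $\mathbb{P}(\mathcal{F}_n)\cong\CDiv_{\mathcal{C}/V_n}^{\mathcal{O}(\mathcal{Z}+Q)}$, where the right-hand side is the fibre of $AJ_{\mathcal{C}/V_n}$ over the section of $\Pic_{\mathcal{C}/V_n}$ determined by $[\mathcal{O}_{\mathcal{C}}(\mathcal{Z}+Q)]$. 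It then remains to identify this fibre with the restriction of $\Inc^Q_{X,\mathcal{L},n,n'}|_{U_n}$ to $V_n$, which I would carry out on $T$-points: a $T$-point of $\Inc^Q_{X,\mathcal{L},n,n'}|_{U_n}$ lying over $V_n$ is a subscheme $Z'$ that, by conditions \hyperlink{condition4}{(3)} and \hyperlink{condition5}{(4)} of Definition~\ref{definitionincidencevariety}, is a relative effective Cartier divisor on $\mathcal{C}_T/T$ with $[\mathcal{O}_{\mathcal{C}_T}(Z')]=[\mathcal{O}_{\mathcal{C}_T}(\mathcal{Z}_T+Q_T)]$, i.e.\ a $T$-point of $\CDiv_{\mathcal{C}/V_n}^{\mathcal{O}(\mathcal{Z}+Q)}$; conversely, such a relative divisor, viewed inside $\mathcal{C}_T\subset X\times T$, is finite, flat, and of relative degree $n+d$ over $T$, hence a $T$-point of $\Hilb^{n+d}_X$ contained in $C$, recovering a point of $\Inc^Q_{X,\mathcal{L},n,n'}$.

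The main obstacle will be the last identification at the level of \emph{scheme} structures rather than sets: one must check that the tautological quotient cutting out $\mathbb{P}(\mathcal{F}_n)$ corresponds, under the dictionary of Proposition~\ref{propisoquotientlinebundlesandcartierdiv}, to the factorization of $\mathcal{O}_{\mathcal{C}_T}\hookrightarrow\mathcal{O}_{\mathcal{C}_T}(Z')$ through $\mathcal{O}_{\mathcal{C}_T}(\mathcal{Z}_T+Q_T)$, with all base-change maps compatible. This is the exact analogue, carried out over $\mathbb{P}(\mathcal{E}_n)$, of the diagram chase with diagram~\ref{diagramfactorization} and diagram~\ref{diagramfactorization2} that closes the proof of Proposition~\ref{definitionofUn}. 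Finally, the assertions for $V'_{n+d}$ and $\mathcal{F}'_{n+d}$ follow verbatim by running the argument for $g':\mathcal{C}\to\mathbb{P}(\mathcal{E}_{n+d})$ with $\mathcal{O}_{\mathcal{C}}(\mathcal{Z}+Q)$ replaced by $\mathcal{O}_{\mathcal{C}}(\mathcal{Z}'-Q)$, since condition \hyperlink{condition5}{(4)} rewrites $[\mathcal{O}_C(Z)]$ as $[\mathcal{O}_C(Z'-Q)]$.
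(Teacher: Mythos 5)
Your proposal is correct and follows essentially the same route as the paper: Cohomology and Base Change applied to $g|_V$ and the invertible sheaf $\mathcal{O}_{\mathcal{C}_V}(\mathcal{Z}_V+Q_V)$ gives the open $V_n$ with locally free, base-change-compatible pushforward, and then Proposition~\ref{propisoquotientlinebundlesandcartierdiv} identifies $\mathbb{P}(\mathcal{F}_n)$ with the fibre $\CDiv^{\mathcal{O}(\mathcal{Z}+Q)}_{\mathcal{C}_{V_n}/V_n}$ of the relative Abel--Jacobi map, which coincides with $\Inc^Q_{X,\mathcal{L},n,n'}|_{U_n}$ over $V_n$. If anything, your $T$-point verification of that last identification (via conditions \hyperlink{condition4}{(3)} and \hyperlink{condition5}{(4)} of Definition~\ref{definitionincidencevariety}) spells out a step the paper simply asserts, so no gap remains.
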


\begin{proof}
    Since $g$ is flat and projective over a noetherian scheme, by \cite[\S 13 Prop.\ 2]{Bej} the functor $\CDiv_{\mathcal{C} / \mathbb{P}(\mathcal{E}_n) }$ is representable by a scheme.
    As remarked in \ref{remarkaboutgeomintegral}, by \cite[\S Thm. 8.2.1]{BLR90} the functor $\Pic_{\mathcal{C}_{V}/V }$ is representable by a scheme.
    
    Consider the diagram induced by the line bundle $\mathcal{O}_{\mathcal{C}_V}(\mathcal{Z}_V + Q_V)$:
    \[
        \begin{tikzcd}[column sep={17mm}]
            \CDiv^{\mathcal{O}_{\mathcal{C}_{V}}(\mathcal{Z}+Q_V)}_{\mathcal{C}_{V } / V }
            \arrow[r] 
            \arrow[d]
            &
            \CDiv_{\mathcal{C}_{V } / V }
            \arrow[d, "{ AJ_{ \mathcal{C}_{ V}  / V } }"]
            \\
            V
            \arrow[r, "{ [\mathcal{O}_{\mathcal{C}_{V}}(\mathcal{Z}_V+Q_V)] }"]
            &
            \Pic_{\mathcal{C}_{V} / V }.
            \end{tikzcd}
    \] 
    By Cohomology and base change \cite[Thm.\ III.12.11]{Harty}, the set where
    \[ 
    g_*\mathcal{O}_{\mathcal{C}_V}(\mathcal{Z}_V+ Q_V )  \otimes \mathcal{O}_{V, y}\, k(y) \longrightarrow H^0(\mathcal{C}_y, \mathcal{O}_{\mathcal{C}_y}(\mathcal{Z}_y+Q_y))
    \]
    is surjective is an open $V_n$ of $V$ over which the direct image $g_*(\mathcal{O}_{\mathcal{C}_{V}}(\mathcal{Z}_V+Q_V))$ is locally free and its formation commutes with base-change.

    Then the projection $\mathbb{P}(\mathcal{F}_n)\longrightarrow V_n$ is the fiber product in
    \[
        \begin{tikzcd}[column sep={30mm}]
            \mathbb{P}( \mathcal{F}_n )
            \arrow[r] 
            \arrow[d]
            &
            \CDiv_{\mathcal{C}_{V_n}/V_n}
            \arrow[d, "{AJ_{\mathcal{C}_{V_n}/V_n}}"]
            \\ 
            V_n
            \arrow[r, "{ [\mathcal{O}_{\mathcal{C}_{V_n}}(\mathcal{Z}_{V_n}+Q_{V_n})] }"]
            &
            \Pic_{\mathcal{C}_{V_n}/V_n}.
        \end{tikzcd}
    \] 
    Since the fiber product $\CDiv^{\mathcal{O}_{\mathcal{C}_{V_n}}(\mathcal{Z}_{V_n}+Q_{V_n})}_{\mathcal{C}_{V_n}/V_n}$ is the same as $\Inc^Q_{X,\mathcal{L},n,n'}$ we get the wanted cartesian diagram:
    \[
        \begin{tikzcd}
            \mathbb{P}( \mathcal{F}_n)
            \arrow[r, hook, circled] 
            \arrow[d]
            &
            \Inc^Q_{X,\mathcal{L},n,n'}|_{ U_n }
            \arrow[d]
            \\
            V_n
            \arrow[r, hook, circled]
            &
            \mathbb{P}( \mathcal{E}_n).
        \end{tikzcd}
    \]
    The proof for $V'_{n+d}$ goes in the same way.
\end{proof}

\section{The General fiber is non-empty}\label{5}

The aim of this section is to give sufficient conditions on the triple $(X,\mathcal{L},Q)$ so that we can ensure that the opens defined in the previous section are all non-empty.

\begin{assumptions}\label{thefiveassumptions}
The five \hypertarget{assumptions}{assumptions} \emph{with respect to} $n$ are the following:
\begin{enumerate}\label{assumptions}
    \item[\hypertarget{assumptions}{(1)}] the restriction map $H^0(X,\mathcal{L}) \longrightarrow H^0(X, \mathcal{L}|_{Q})$ is surjective,
    
    \item[\hypertarget{assumptions}{(2)}]  $\dim_k H^0(X, \mathcal{L})\geq n+1+2d$,
    
    \item[\hypertarget{assumptions}{(3)}] the sheaf $\mathcal{L} \otimes \mathcal{I}_Q$ is generated by global sections,
    
    \item[\hypertarget{assumptions}{(4)}] for any $\overline{k}$-point $x\in X_{\overline{k}}\setminus Q_{\overline{k}}$, we have that
    \[
    H^0(X,\mathcal{L}\otimes \mathcal{I}_Q)_{\overline{k}} \longrightarrow \mathcal{L}_x/m^2_x\mathcal{L}_x
    \]
    is surjective,
    
    \item[\hypertarget{assumptions}{(5)}] for any curve $C$ in the linear system $|\mathcal{L}|$, the arithmetic genus $p_a(C)\leq n$.
\end{enumerate}
\end{assumptions}

\begin{remark}
    Notice that assumption \hyperlink{assumptions}{(5)} makes sense since $C$ is a geometrically connected curve (see \ref{remarkgeomconnected}).
\end{remark}

\begin{lemma}
    Assumptions \hyperlink{assumptions}{(1)} and \hyperlink{assumptions}{(3)} imply that $\mathcal{L}$ is generated by global sections. 
\end{lemma}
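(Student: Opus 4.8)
The plan is to prove global generation pointwise. By Nakayama's lemma, $\mathcal{L}$ is generated by global sections precisely when, for every closed point $x \in X$, the evaluation map $H^0(X,\mathcal{L}) \to \mathcal{L}\otimes k(x)$ is surjective. I would therefore fix such a point $x$ and split into two cases according to whether $x$ lies on the support of $Q$. The common tool in both cases is the restriction short exact sequence
\[
0 \longrightarrow \mathcal{L}\otimes\mathcal{I}_Q \longrightarrow \mathcal{L} \longrightarrow \mathcal{L}|_Q \longrightarrow 0,
\]
together with the naturality of the evaluation maps: for each sheaf in this sequence the global sections map to the fibre at $x$, and these form commuting squares with the sheaf maps above.

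For a point $x \notin Q$, the inclusion $\mathcal{I}_Q \hookrightarrow \mathcal{O}_X$ is an isomorphism on a neighbourhood of $x$, so on fibres the induced map $(\mathcal{L}\otimes\mathcal{I}_Q)\otimes k(x) \to \mathcal{L}\otimes k(x)$ is an isomorphism. Assumption \hyperlink{assumptions}{(3)} says that $\mathcal{L}\otimes\mathcal{I}_Q$ is globally generated, hence its evaluation at $x$ is surjective. Chasing the commuting square formed by the evaluation maps for $\mathcal{L}\otimes\mathcal{I}_Q$ and for $\mathcal{L}$ then shows that $H^0(X,\mathcal{L}) \to \mathcal{L}\otimes k(x)$ is surjective as well. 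This settles every point off $Q$ using only assumption \hyperlink{assumptions}{(3)}.

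For a point $x \in Q$, I would use assumption \hyperlink{assumptions}{(1)}. Since $Q$ is reduced, the stalk $\mathcal{O}_{Q,x}$ is the field $k(x)$, so $\mathcal{L}|_Q$ is supported on the finitely many points of $Q$, its fibre at $x$ is naturally $\mathcal{L}\otimes k(x)$, and $H^0(X,\mathcal{L}|_Q)\cong \bigoplus_{x_i\in Q}\mathcal{L}\otimes k(x_i)$, with evaluation at $x$ being the corresponding projection. In the commuting square relating the global restriction $H^0(X,\mathcal{L}) \to H^0(X,\mathcal{L}|_Q)$ to the fibrewise one, the top arrow is surjective by assumption \hyperlink{assumptions}{(1)}, the right-hand projection is surjective, and the bottom arrow $\mathcal{L}\otimes k(x)\to (\mathcal{L}|_Q)\otimes k(x)$ is an isomorphism. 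A short diagram chase then forces the left-hand evaluation $H^0(X,\mathcal{L})\to\mathcal{L}\otimes k(x)$ to be surjective.

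Combining the two cases, the evaluation map is surjective at every closed point, so $\mathcal{L}$ is globally generated. The only step that requires genuine care is the case $x \in Q$: one must correctly identify the fibre $(\mathcal{L}|_Q)\otimes k(x)$ with $\mathcal{L}\otimes k(x)$ and verify that the fibrewise restriction is an isomorphism there, which is exactly where the reducedness of $Q$ is used, and then run the diagram chase. Everything else is formal, and I would avoid any reduction to $\overline{k}$ by arguing directly at closed points of $X$.
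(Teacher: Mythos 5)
Your proof is correct and follows essentially the same route as the paper: both reduce to closed points, handle $x \notin Q$ via assumption (3) using that $\mathcal{I}_Q$ is trivial near $x$, and handle $x \in Q$ via assumption (1) by lifting a section of $\mathcal{L}|_Q$ not vanishing at $x$ (your reducedness-of-$Q$ identification of the fibre is the same point the paper uses implicitly). The only cosmetic difference is that you phrase global generation via surjectivity of evaluation maps on fibres, while the paper speaks of sections not vanishing at $x$; for a line bundle these are the same statement.
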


\begin{proof}
    A line bundle $\mathcal{L}$ on $X$ is generated by global sections if and only if for any point $x\in X$ there is a section of $\mathcal{L}$ not vanishing at $x$. 
    Therefore, it is enough to check this property on closed points. 

    Let $x$ be a closed point.
    If $x\notin Q$ then $\mathcal{L}_x\cong \mathcal{L}_x\otimes \mathcal{I}_{Q,x}$ and it is enough to pick a right section in $H^0(X,\mathcal{L}\otimes \mathcal{I}_Q)$, which exists by \hyperlink{assumptions}{(3)}. 
    If $x\in Q$ then by \hyperlink{assumptions}{(1)} it is enough to pick a lift of a section of $H^0(X,\mathcal{L}|_{Q})$ not vanishing at $x$.  
\end{proof}

\begin{remark}\label{remarksurjectivity}
    By assumption \hyperlink{assumptions}{(3)}, \hyperlink{assumptions}{(4)} is equivalent to 
    the fact that for each $\overline{k}$-point $x\in X_{\overline{k}}\setminus Q_{\overline{k}}$ the sections $H^0(X,\mathcal{L}\otimes \mathcal{I}_Q)_{\overline{k}}$ separate tangent vectors in $X_{\overline{k}}\setminus Q_{\overline{k}}$. 
    Namely, for each $\overline{k}$-point $x\in X_{\overline{k}}\setminus Q_{\overline{k}}$, the set 
    \[
        \{ s\in H^0(X,\mathcal{L}\otimes \mathcal{I}_Q)_{\overline{k}}: s_x\in m_x\mathcal{L}_x\}
    \] spans the $\overline{k}$-vector space $m_x\mathcal{L}_x/m^2_x\mathcal{L}_x$,
\end{remark}

\begin{remark}\label{rmkaboutitem5}
    By Riemann-Roch for surfaces, for any element $C$ of the linear system $|\mathcal{L}|$ we have
    \[
        p_a(C) = \frac{1}{2} C \cdot (C+K_X) +1
               = \frac{1}{2} c_1(\mathcal{L}) \cdot (c_1(\mathcal{L})+K_X) +1.
    \] 
    Therefore assumption \hyperlink{assumptions}{(5)} is equivalent to
    \[
       \frac{ c_1(\mathcal{L})^2 + c_1(\mathcal{L}) \cdot K_X }{2}+1 \leq n.
    \]
\end{remark}

\subsection{Non-emptiness of $\mathbf{U_n}$ and $\mathbf{U_{n+d}}$}

\begin{proposition}\label{propositionaboutWnnonempty}
    The opens $U_n\subseteq \Hilb^n_X$ and $U_{n+d}\subseteq \Hilb^{n+d}_X$ are non-empty.
\end{proposition}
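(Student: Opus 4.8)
The plan is to exhibit a single closed point in each of $U_n$ and $U_{n+d}$. Since both are \emph{open} subschemes of $\Hilb^n_X$ (resp.\ $\Hilb^{n+d}_X$) by proposition \ref{definitionofUn}, and a scheme is non-empty as soon as it has a $\overline{k}$-point, I may freely base change to $\overline{k}$ and search for a reduced length-$n$ (resp.\ length-$(n+d)$) subscheme supported away from $Q$.

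First I would dispose of the second defining condition of $U_n$. Because $pr_X^*\mathcal{L}$ is pulled back from $X$ along the base change $X\times\Hilb^n_X\to \Hilb^n_X$ of the structure map $X\to\Spec(k)$, flat base change identifies $pr_{H*}(pr_X^*\mathcal{L})$ with $H^0(X,\mathcal{L})\otimes_k\mathcal{O}_{\Hilb^n_X}$, whose formation trivially commutes with base change; hence the isomorphism condition of proposition \ref{definitionofUn} holds at every point, and only the surjectivity condition carries content. So the task reduces to producing a reduced $Z$, disjoint from $Q$, for which $H^0(X_{\overline{k}},\mathcal{L}_{\overline{k}})\to H^0(X_{\overline{k}},\mathcal{L}_{\overline{k}}|_{Z\sqcup Q})$ is surjective, that is, for which $Z\sqcup Q$ imposes independent conditions on $|\mathcal{L}|$.

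The core is an inductive point-adding argument over $\overline{k}$. The base case is assumption \hyperlink{assumptions}{(1)}, which says $Q$ itself imposes independent conditions. Suppose $W=\{p_1,\dots,p_i\}\sqcup Q$ already imposes independent conditions, so that $\dim_{\overline{k}}H^0(\mathcal{L}\otimes\mathcal{I}_W)=h^0(\mathcal{L})-(i+d)$. By assumption \hyperlink{assumptions}{(2)} this dimension is at least $n+1+d-i>0$ throughout the range $0\le i< n+d$ that I will use, so $\mathcal{L}\otimes\mathcal{I}_W$ has a nonzero section; as a section of $\mathcal{L}$ its zero scheme is a curve, so the base locus of $|\mathcal{L}\otimes\mathcal{I}_W|$ is contained in a proper closed subset of the irreducible surface $X_{\overline{k}}$. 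I would then pick $p_{i+1}$ to be a general point avoiding this base locus and avoiding the finite set $Q_{\overline{k}}\cup\{p_1,\dots,p_i\}$; by construction there is a section of $\mathcal{L}$ vanishing on $W$ but not at $p_{i+1}$, which together with the inductive surjectivity onto $\mathcal{L}|_W$ upgrades to surjectivity onto $\mathcal{L}|_{W\sqcup\{p_{i+1}\}}$, the new reduced disjoint point splitting off a one-dimensional summand. Iterating $n$ times produces $Z$ for $U_n$, and iterating $n+d$ times produces $Z'$ for $U_{n+d}$.

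The only place where something must be checked carefully is the dimension bookkeeping, and this is exactly where the precise shape of assumption \hyperlink{assumptions}{(2)} enters: for $U_{n+d}$ the scheme $Z'\sqcup Q$ has length $n+2d$, so maintaining independent conditions to the very end requires $h^0(\mathcal{L})\ge (n+2d)+1=n+1+2d$, which is assumption \hyperlink{assumptions}{(2)} on the nose; the same bound comfortably covers the shorter scheme $Z\sqcup Q$ of length $n+d$ needed for $U_n$. I expect the main (mild) obstacle to be verifying that the base locus never swallows the whole surface during the induction, handled above by noting that a single nonzero section already cuts out only a curve, together with confirming that each added reduced point genuinely contributes an independent condition. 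Beyond \hyperlink{assumptions}{(1)} and \hyperlink{assumptions}{(2)}, no further assumption is needed for this proposition.
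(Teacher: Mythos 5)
Your proposal is correct and follows essentially the same route as the paper: reduce to $\overline{k}$, observe via flat base change that only the surjectivity condition has content, and then add general points one at a time, each step using a nonzero section of $\mathcal{L}\otimes\mathcal{I}_W$ (guaranteed by the dimension count from assumption \hyperlink{assumptions}{(2)}) that vanishes on the current locus but not at the new point. The paper packages this induction as a standalone lemma (lemma \ref{Lemmasections}, phrased as extending a basis of restricted sections and arguing by contradiction on the maximal length), but the mechanism and the role of assumptions \hyperlink{assumptions}{(1)} and \hyperlink{assumptions}{(2)} are identical to yours.
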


    Let us focus on $U_n$.
    In order to show that it is non-empty, we can assume that the field $k$ is algebraically closed. 
    Hence, by definition of $U_n$ \ref{definitionofUn}, it is enough to find a closed $0$-dimensional subscheme $Z$ of length $n$ (defined over $\overline{k}$ such that 
    \[
    H^0(X, \mathcal{L}) \longrightarrow H^0(X, \mathcal{L}|_{Z \sqcup Q}  )
    \] 
    is surjective and
    \[
    pr_{H *}(pr^*_X\mathcal{L})\otimes_{\mathcal{O}_{\Hilb^n_X, Z}} k
    \longrightarrow 
    H^0(X, \mathcal{L})
    \]
    is an isomorphism.\\ 
    The latter condition is true for any point $Z$, due to flat base change (see \cite[Tag 02KH]{deJ}).
    So we need to check only the former.
    In order to do so, we prove a general lemma.

\begin{lemma}\label{Lemmasections}
Let $k$ be an algebraically closed field.
Assume that a triple $(X,\mathcal{L}, Q)$ satisfies assumption \hyperlink{assumptions}{(1)}.
Let $N$ denote $\dim_k H^0(X, \mathcal{L})$.
Then for any $0\leq m\leq N-d$ there exists a degree $m$ effective $0$-cycle $Z_{m}$ such that 
\begin{enumerate}
    \item it is reduced,
    \item it is disjoint from $Q$, and
    \item the map
    \[
    H^0(X, \mathcal{L})\longrightarrow H^0(X, \mathcal{L}|_{Z_{m}\sqcup Q})
    \]
    is surjective.
\end{enumerate}
\end{lemma}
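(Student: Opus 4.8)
The plan is to prove Lemma \ref{Lemmasections} by induction on $m$, building up the reduced $0$-cycle $Z_m$ one point at a time while maintaining the surjectivity of the restriction map. For the base case $m=0$ the cycle is $Z_0=\varnothing$ and the required surjectivity of $H^0(X,\mathcal{L})\to H^0(X,\mathcal{L}|_Q)$ is exactly assumption \hyperlink{assumptions}{(1)}. For the inductive step, I would assume I have a reduced, $Q$-disjoint cycle $Z_m$ of degree $m$ with $H^0(X,\mathcal{L})\to H^0(X,\mathcal{L}|_{Z_m\sqcup Q})$ surjective, and then find a new reduced point $x\notin Z_m\cup Q$ such that adjoining it to $Z_m$ preserves surjectivity up to degree $m+1$.

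The key idea for the step is that preserving surjectivity after adding a point $x$ amounts to finding a section of $\mathcal{L}$ that does not vanish at $x$ but lies in the kernel of the restriction to $Z_m\sqcup Q$; equivalently, the section must vanish on $Z_m\sqcup Q$ but not at $x$. Concretely, surjectivity onto $H^0(X,\mathcal{L}|_{(Z_m+x)\sqcup Q})$ holds if and only if the space $H^0(X,\mathcal{L}\otimes\mathcal{I}_{Z_m\sqcup Q})$ of sections vanishing on $Z_m\sqcup Q$ contains a section not vanishing at $x$. So I must verify that there exists a point $x\notin Z_m\cup Q$ that is \emph{not} a base point of the linear system $|\mathcal{L}\otimes\mathcal{I}_{Z_m\sqcup Q}|$, i.e.\ $\mathcal{L}\otimes\mathcal{I}_{Z_m\sqcup Q}$ is not contained in $\mathcal{I}_x\otimes\mathcal{L}$ at some reduced point. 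The dimension count is the crucial bookkeeping: by the inductive surjectivity, $\dim_k H^0(X,\mathcal{L}\otimes\mathcal{I}_{Z_m\sqcup Q})=N-d-m$, and as long as $m<N-d$ this space is strictly positive; a positive-dimensional space of sections cannot have every point of the positive-dimensional surface $X$ as a base point, so a suitable $x$ exists.

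The main obstacle I anticipate is ensuring that the new point $x$ can be chosen \emph{reduced} and genuinely \emph{distinct} from $Z_m$ and $Q$, and that adjoining it does not accidentally create a non-reduced scheme at a point where a section is forced to vanish to higher order. To handle this cleanly I would argue over the algebraically closed field $k$ (as the proposition reduces to this case), where closed points are $\overline{k}$-rational, and show that the base locus of $\mathcal{L}\otimes\mathcal{I}_{Z_m\sqcup Q}$, being cut out by finitely many sections, is a proper closed subset of $X$; since $X$ is an irreducible surface and $Z_m\cup Q$ is finite, there are infinitely many closed points $x$ outside the base locus and outside $Z_m\cup Q$. Picking any such $x$ yields a reduced point at which some section is nonzero, and $Z_{m+1}\coloneqq Z_m+x$ is reduced and disjoint from $Q$ by construction.

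One subtlety worth isolating: I should confirm that the restriction map onto $H^0(X,\mathcal{L}|_{Z_{m+1}\sqcup Q})$ really becomes surjective, not merely that its image grows. Because $x$ is a reduced point disjoint from $Z_m\sqcup Q$, the evaluation map at $x$ splits off a one-dimensional summand $\mathcal{L}|_x$, and the exact sequence
\[
0\longrightarrow \mathcal{L}\otimes\mathcal{I}_{Z_{m+1}\sqcup Q}\longrightarrow \mathcal{L}\otimes\mathcal{I}_{Z_m\sqcup Q}\longrightarrow \mathcal{L}|_x\longrightarrow 0
\]
shows that surjectivity onto $H^0(X,\mathcal{L}|_{Z_{m+1}\sqcup Q})$ follows once the chosen section surjects onto $\mathcal{L}|_x$, which is precisely the non-vanishing condition I arranged. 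This closes the induction and produces $Z_m$ for every $0\le m\le N-d$, completing the proof.
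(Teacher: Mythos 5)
Your proposal is correct and follows essentially the same route as the paper's proof: the paper also runs an induction (phrased as a maximal-$m$ contradiction), produces a section vanishing on $Z_m\sqcup Q$ but not at a freshly chosen point $z$ of the irreducible surface, and verifies the enlarged restriction map is surjective (via an explicit full-rank matrix, which is the same content as your split exact sequence argument). Your dimension count $\dim_k H^0(X,\mathcal{L}\otimes\mathcal{I}_{Z_m\sqcup Q})=N-d-m$ is just a cleaner way of saying what the paper does by picking a section linearly independent of the lifted basis and correcting it to vanish on $Z_m\sqcup Q$, so the two arguments match step for step.
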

    
\begin{proof}
    By assumption \hyperlink{assumptions}{(1)} we have $d$-global sections $s_1,\dots , s_d$ of $H^0(X, \mathcal{L})$ such that they form a basis of 
    $H^0(Q, \mathcal{L}|_Q)$.
    So the statement is true for $m=0$.
    
    Consider now the greatest non-negative integer $m$ such that for any $m'\leq m$ we have
    \begin{itemize}
        \item a degree $m'$ effective reduced $0$-cycle $Z_{m'}\subset X$, disjoint from $Q$, and
        \item $m'$-sections $s_{d+1},\dots , s_{d+m'}$ of $H^0(X, \mathcal{L})$ such that $\{s_d,\dots,s_d,s_{d+1},\dots , s_{d+m'} \}$ is a basis of $H^0(X, \mathcal{L}|_{Z_{m'}\sqcup Q} )$.
    \end{itemize}
    Assume by contradiction that $ m < N-d$. 
    Then there exists a section $t\in H^0(X, \mathcal{L})$ linearly independent of $s_1,\dots , s_{d+m}$.
    Since $\{ s_1,\dots , s_{d+m} \}$ is a basis of $H^0(Z_m\sqcup Q, \mathcal{L}|_{Z_m\sqcup Q }  )$, we can write 
    \[
        t|_{Z_m\sqcup Q }=\sum_{i=1}^{d+m} a_is_i|_{Z_m\sqcup Q} \quad \textup{ with } a_i\in k.
    \] 
    Hence, changing $t$ with $t-\sum_{i=1}^{d+m} a_is_i$ we can assume that $t|_{Z_m\sqcup Q}=0$.
    
    Consider the open  $X \setminus \{ x\in X : t(x)=0 \in k(x) \}$: it is non-empty, and therefore it has a $k$-point $z$. 
    Then set $Z_{m+1}=Z_m\sqcup z$: since $Z_m\sqcup Q\subseteq \{ x\in X : t(x)=0 \in k(x) \}$, $Z_{m+1}$ is a well-defined effective $0$-cycle of degree $m+1$, disjoint from $Q$.

    Consider the restriction map
    \begin{equation}\label{equationtoZmandz}
        H^0(X, \mathcal{L}) 
        \longrightarrow 
        H^0(Z_{m+1}\sqcup Q, \mathcal{L}|_{Z_{m+1}\sqcup Q}  ).
    \end{equation}
    Then the elements $s_1|_{Z_{m+1}\sqcup Q},\dots , s_{d+m}|_{Z_{m+1}\sqcup Q}, t|_{Z_{m+1}\sqcup Q}$ form a basis of $H^0(Z_{m+1}\sqcup Q, \mathcal{L}|_{Z_{m+1}\sqcup Q}  )$.
    Indeed the matrix corresponding to \ref{equationtoZmandz} looks like
    \[
        \begin{bmatrix}[ccc|cccc]
        &   &   & a_1 & * & \dots & * \\
        \quad & Id_{d+m} &   & \vdots & \vdots & \ddots & \vdots \\
        &   &   & a_{d+m} & * & \dots & *\\
        \hline
        0 & \dots & 0 & b & * & \dots & *
        \end{bmatrix}.
    \]
    However since the element
    \[
        t|_{Z_m\sqcup Q}=\sum_{i=1}^{d+m} a_is_i|_{Z_m \sqcup Q}
    \] 
    is trivial and the elements $s_i|_{Z_m \sqcup Q}$'s are linearly independent, then $a_i=0$ for any $i=1,\dots, d+m$.
    Since $t|_{z}=t(z)\neq 0$ then $b\neq 0$ and the above matrix has full rank. 
    Therefore we can increase $m$ by $1$ and hence $m\geq N-d$.
\end{proof}

\begin{proof}[Proof of Proposition \ref{propositionaboutWnnonempty}]
    Let us recall that we are still working under the assumption that $k$ is algebraically closed.
    By assumption \hyperlink{assumptions}{(2)} the dimension $N$ is greater than $n+2d$. 
    Then by lemma \ref{Lemmasections} we can find a closed subscheme $Z$ for which the restriction map is surjective.
    Same proof goes for $U_{n+d}$: so $U_n$ and $U_{n+d}$ are both non-empty.
\end{proof}

\subsection{Non-emptiness of $\mathbf{P(\mathcal{E}_n)}$ and $\mathbf{P(\mathcal{E}_{n+d})}$}

\begin{lemma}\label{lemmaPEnintegralanddimoffibers}
    The scheme $\mathbb{P}(\mathcal{E}_n)$ (resp. $\mathbb{P}(\mathcal{E}_{n+d})$) is integral of dimension at least 
    \[
        n + \dim_{k} H^0(X,\mathcal{L}) - (d+1).
    \]
    (resp. $(n+d) + \dim_{k} H^0(X, \mathcal{L})-(d+1)$).
    In particular, it is non-empty (same holds for $\mathbb{P}(\mathcal{E}_{n+d})$).
\end{lemma}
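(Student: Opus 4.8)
The plan is to realise $\mathbb{P}(\mathcal{E}_n)$ as a projective bundle over the integral base $U_n$ and then read off both integrality and the dimension from the rank of $\mathcal{E}_n$. First I would establish that $U_n$ is integral of dimension $2n$: recall from the introduction that $\Hilb^n_X$ is smooth and geometrically connected of dimension $2n$, hence irreducible and integral. The locus $H^n_{\setminus Q}=\Hilb^n_{X\setminus Q}$ is a non-empty open of it, and $U_n$ is in turn a non-empty open of $H^n_{\setminus Q}$ by proposition \ref{propositionaboutWnnonempty}; being a non-empty open of a smooth irreducible scheme, $U_n$ is itself integral of dimension $2n$.

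The key step is the rank computation. By proposition \ref{definitionofUn} the sheaf $\mathcal{E}_n$ is locally free with formation commuting with base change, and since $U_n$ is connected it has constant rank $r$. For a point $y=[Z]\in U_n$ the fibre of $\mathcal{E}_n$ is $H^0(X,\mathcal{L}\otimes\mathcal{I}_{Z\sqcup Q})^{\vee}$; by the very definition of $U_n$ the restriction $H^0(X,\mathcal{L})\longrightarrow H^0(X,\mathcal{L}|_{Z\sqcup Q})$ is surjective, and the target has dimension $\deg(Z\sqcup Q)=n+d$ because $Z$ and $Q$ are disjoint of lengths $n$ and $d$. Hence
\[
    r=\dim_k H^0(X,\mathcal{L}\otimes\mathcal{I}_{Z\sqcup Q})=\dim_k H^0(X,\mathcal{L})-(n+d).
\]
Assumption \hyperlink{assumptions}{(2)} gives $\dim_k H^0(X,\mathcal{L})\geq n+1+2d$, so $r\geq d+1\geq 1$.

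To conclude, I would use the cartesian diagram of proposition \ref{definitionofUn}: since $r\geq 1$, the projection $\mathbb{P}(\mathcal{E}_n)\longrightarrow U_n$ is a Zariski-locally trivial $\mathbb{P}^{r-1}$-bundle, in particular smooth with geometrically integral fibres over the integral scheme $U_n$, whence $\mathbb{P}(\mathcal{E}_n)$ is integral and non-empty. Its dimension is
\[
    \dim U_n+(r-1)=2n+\dim_k H^0(X,\mathcal{L})-(n+d)-1=n+\dim_k H^0(X,\mathcal{L})-(d+1),
\]
so the stated lower bound is in fact an equality. The claim for $\mathbb{P}(\mathcal{E}_{n+d})$ follows verbatim after replacing $n$ by $n+d$ in the length count: there $\dim U_{n+d}=2(n+d)$ and the relevant rank is $\dim_k H^0(X,\mathcal{L})-(n+2d)$, again positive by assumption \hyperlink{assumptions}{(2)}, giving dimension $(n+d)+\dim_k H^0(X,\mathcal{L})-(d+1)$. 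There is no serious obstacle; the only point demanding care is the rank computation, which rests on correctly invoking the surjectivity built into the definition of $U_n$ together with the disjointness $Z\sqcup Q$, and on checking that assumption \hyperlink{assumptions}{(2)} forces $r\geq 1$ so that the bundle, and hence $\mathbb{P}(\mathcal{E}_n)$, is non-empty.
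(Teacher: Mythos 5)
Your proof is correct and takes essentially the same approach as the paper: realize $\mathbb{P}(\mathcal{E}_n)$ as a projective bundle over the integral open $U_n\subseteq \Hilb^n_X$, control the rank of $\mathcal{E}_n$ via the length of $\mathcal{Z}_y\sqcup Q_y$, and invoke assumption (2) to get positivity, concluding integrality from the bundle structure over an integral base. The only (harmless) refinement is that you use the surjectivity built into the definition of $U_n$ to pin the rank down exactly, whereas the paper only uses the inequality $\dim_{k(y)} H^0(X_y,\mathcal{L}_y\otimes\mathcal{I}_{\mathcal{Z}_y\sqcup Q_y})\geq \dim_{k(y)}H^0(X_y,\mathcal{L}_y)-(n+d)$, which already suffices for the stated lower bound.
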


\begin{proof}
Consider the projective-bundle $\mathbb{P}(\mathcal{E}_n) \longrightarrow U_n$ (for $\mathbb{P}(\mathcal{E}_{n+d})$ it is analogous).
For any point $y\in U_n$ we have
\[
    \dim_{k(y)} H^0(X_y, \mathcal{L}_y \otimes \mathcal{I}_{\mathcal{Z}_y\sqcup Q_y} ) \geq \dim_{k(y)} H^0(X_y, \mathcal{L}_y)-(n+d) 
\] 
Therefore, the dimension of $\mathbb{P}(\mathcal{E}_n)$ is at least 
\[
    \dim U_n + \min_{y\in U_n} 
    \dim_{k(y)} H^0(X_y, \mathcal{L}_y \otimes \mathcal{I}_{\mathcal{Z}_y\sqcup Q_y} ) -1 \geq 2n + \dim_{k} H^0(X, \mathcal{L})-(n+d+1).
\] 
By assumption \hyperlink{assumptions}{(2)}, $\dim_{k} H^0(X, \mathcal{L})\geq  n+1+2d$: so we get 
\[
    \dim \mathbb{P}(\mathcal{E}_n) \geq 2n +d 
\]
In particular it is non-empty.\\

For what concerns being integral, let us notice first that $U_n$ is integral, because it is an open of the integral scheme $\Hilb^n_X$.
Therefore, since $\mathcal{E}_n$ is locally free, $\Sym^*(\mathcal{E}_n)$ is a sheaf of integral domains on $U_n$: so $\mathbb{P}(\mathcal{E}_n)=\underline{\textup{Proj}}(\Sym^*(\mathcal{E}_n))$ is integral as well.
\end{proof}

\subsection{Non-emptiness of $\mathbf{V_n}$ and $\mathbf{V'_{n+d}}$}\, \\

It is now time to turn our attention to the opens $V_n\subseteq \mathbb{P}(\mathcal{E}_n)$ and $V'_{n+d}\subseteq \mathbb{P}(\mathcal{E}_{n+d})$. 
Recall that by definition, $V_n$ is the open made of points $y\in \mathbb{P}(\mathcal{E}_n)$ for which 
\begin{enumerate}
    \item the map $g$ in \ref{definitionofg} is smooth 
    (call the open where this happens $V$), and
    \item Cohomology and base change holds for the map
    \begin{equation}\label{eqcohombasechange1}
        g_*\mathcal{O}_{\mathcal{C}_V}(\mathcal{Z}_V+Q_{V}) \otimes_{\mathcal{O}_{V, y}} k(y) \longrightarrow H^0(\mathcal{C}_y, \mathcal{O}_{\mathcal{C}_y}(\mathcal{Z}_y+Q_y)).
    \end{equation}
\end{enumerate}

Similarly, $V'_{n+d}$ is the open made of points $y\in \mathbb{P}(\mathcal{E}_{n+d})$ for which 
\begin{enumerate}
    \item the map $g'$ is smooth (call the open where this happens $V'$), and
    \item Cohomology and base change holds for the map
    \begin{equation}\label{eqcohombasechange2}
        g'_*\mathcal{O}_{\mathcal{C}_{V'}}(\mathcal{Z}'_{V'}-Q_{V'}) \otimes_{\mathcal{O}_{V', y}} k(y) \longrightarrow H^0(\mathcal{C}_y, \mathcal{O}_{\mathcal{C}_y}(\mathcal{Z}'_y - Q_y)).
    \end{equation}
\end{enumerate}

We proceed to first check that the opens $V$ and $V'$ are non-empty, and only subsequently we focus on $V_n$ and $V'_{n+d}$.

\begin{proposition}[Bertini's theorem with a fixed closed]\label{bertiniwithclosedfixed}
    The open locus $V$ (resp. $V'$) where $g$ (resp. $g'$) is smooth are non-empty (and integral).
\end{proposition}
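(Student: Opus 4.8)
The plan is to reduce the statement to producing a single geometric point of $V$, namely a smooth member of $|\mathcal{L}|$ through $Q$ carrying a suitable $0$-cycle $Z$. Since $V$ is open in the integral scheme $\mathbb{P}(\mathcal{E}_n)$ (lemma \ref{lemmaPEnintegralanddimoffibers}), it is automatically integral as soon as it is non-empty; and as the smooth locus of $g$ commutes with base change, $V\neq\varnothing$ if and only if $V_{\overline{k}}\neq\varnothing$. The \hyperlink{assumptions}{assumptions} and the formation of $U_n$, $\mathcal{E}_n$, $\mathbb{P}(\mathcal{E}_n)$ all commute with the extension $k\subseteq\overline{k}$, so I may assume $k=\overline{k}$ and it suffices to exhibit one pair $(Z,C)\in\mathbb{P}(\mathcal{E}_n)(\overline{k})$ for which the curve $C$ is smooth. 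Recall from proposition \ref{definitionofUn} that such a $\overline{k}$-point is exactly a pair $(Z,C)$ with $Z\in U_n$ a length-$n$ subscheme disjoint from $Q$ and $C\in|\mathcal{L}|$ a curve containing $Z\sqcup Q$.

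Next I would produce the smooth curve by a Bertini argument for the sublinear system $|\mathcal{L}\otimes\mathcal{I}_Q|$ of members of $|\mathcal{L}|$ passing through $Q$. By assumption \hyperlink{assumptions}{(3)} the sheaf $\mathcal{L}\otimes\mathcal{I}_Q$ is globally generated, so this system is base-point free away from $Q$ and its base locus is exactly $Q$; by assumption \hyperlink{assumptions}{(4)} (equivalently remark \ref{remarksurjectivity}) it separates tangent vectors on $X\setminus Q$, i.e.\ the associated morphism is unramified there. Hence the characteristic-free form of Bertini's theorem (the version for unramified morphisms) shows that the general member is smooth on $X\setminus Q$. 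Smoothness \emph{at} the points of $Q$ is again controlled by assumption \hyperlink{assumptions}{(3)}: for a reduced point $q\in Q$ global generation means that $H^0(X,\mathcal{L}\otimes\mathcal{I}_Q)\to m_q\mathcal{L}_q/m_q^2\mathcal{L}_q\cong m_q/m_q^2$ is surjective, so the general member has non-zero linear term at $q$ and is therefore smooth there. Intersecting these finitely many dense open conditions, I obtain a smooth curve $C\in|\mathcal{L}|$ containing $Q$.

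Finally I would place a good $Z$ on this fixed $C$. Writing $N=\dim_k H^0(X,\mathcal{L})$ and using $C\in|\mathcal{L}|$, the twisted sequence $0\to\mathcal{O}_X\to\mathcal{L}\to\mathcal{L}|_C\to 0$ together with $q(X)=0$ (so that $H^1(X,\mathcal{O}_X)=0$) gives a surjection $H^0(X,\mathcal{L})\twoheadrightarrow H^0(C,\mathcal{L}|_C)$ with one-dimensional kernel, whence $\dim_k H^0(C,\mathcal{L}|_C)=N-1\geq n+2d$ by assumption \hyperlink{assumptions}{(2)}. Since $Q\subset C$, assumption \hyperlink{assumptions}{(1)} factors through $C$ and yields that $H^0(C,\mathcal{L}|_C)\to H^0(Q,\mathcal{L}|_Q)$ is surjective, so $(C,\mathcal{L}|_C,Q)$ satisfies the analogue of assumption \hyperlink{assumptions}{(1)} on the integral curve $C$. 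Running the greedy construction of lemma \ref{Lemmasections} on $C$ instead of $X$ --- at each stage a non-zero section of $\mathcal{L}|_C$ vanishing on the points chosen so far exists as long as the number of imposed conditions is $<N-1$, and a new reduced point is taken in the non-empty open complement of its zero locus in $C$ --- produces a reduced $Z\subset C\setminus Q$ of length $n$ with $H^0(C,\mathcal{L}|_C)\twoheadrightarrow H^0(Z\sqcup Q,\mathcal{L}|_{Z\sqcup Q})$; the bound $N\geq n+d+1$ needed to reach length $n$ follows from assumption \hyperlink{assumptions}{(2)}. Composing with the surjection onto $H^0(C,\mathcal{L}|_C)$ shows $Z\in U_n$, so $(Z,C)$ is a point of $\mathbb{P}(\mathcal{E}_n)$ lying over a smooth curve, i.e.\ a point of $V$. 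The verbatim same argument, with $n$ replaced by $n+d$, gives $V'\neq\varnothing$. The main obstacle I anticipate is the Bertini step: over a field of positive characteristic base-point freeness alone does not force the general member to be smooth, so it is essential to use the separation of tangent vectors from assumption \hyperlink{assumptions}{(4)} away from $Q$ and the global generation from assumption \hyperlink{assumptions}{(3)} to control smoothness exactly along the fixed locus $Q$.
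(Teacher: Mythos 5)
Your proof is correct, but it takes a genuinely different route from the paper's. The paper never fixes a curve first: it studies the singular locus $\Sigma\subset\mathcal{C}$ of the universal curve $g\colon\mathcal{C}\longrightarrow\mathbb{P}(\mathcal{E}_n)$, fibers $\Sigma$ over $X$ rather than over $\mathbb{P}(\mathcal{E}_n)$, and bounds the fiber dimensions: $\dim\Sigma_x\leq\dim\mathbb{P}(\mathcal{E}_n)-3$ for $x\notin Q$, because assumptions (3) and (4) make $\phi_x\colon H^0(X,\mathcal{L}\otimes\mathcal{I}_Q)\longrightarrow\mathcal{L}_x/m^2_x\mathcal{L}_x$ surjective, and $\dim\Sigma_x\leq\dim\mathbb{P}(\mathcal{E}_n)-1$ for the finitely many $x\in Q$, where $\phi_x\neq 0$ by global generation; hence $\dim g(\Sigma)<\dim\mathbb{P}(\mathcal{E}_n)$. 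In effect the paper re-proves the characteristic-free Bertini statement from scratch, carrying the Hilbert-scheme factor along inside the incidence count; this is self-contained and yields more than non-emptiness, namely that $V$ is \emph{dense} in $\mathbb{P}(\mathcal{E}_n)$. You instead black-box exactly that dimension count into the unramified-morphism form of Bertini (Jouanolou's characteristic-free version, which you correctly identify as the necessary substitute for \cite[Thm. II.8.18]{Harty} in positive characteristic), applied to $|\mathcal{L}\otimes\mathcal{I}_Q|$ on $X\setminus Q$; you handle the points of $Q$ by the same global-generation computation the paper uses there, and only afterwards place $Z$ on the chosen smooth curve by re-running lemma \ref{Lemmasections} on $C$. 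That last step is the real point of divergence: it needs the surjection $H^0(X,\mathcal{L})\twoheadrightarrow H^0(C,\mathcal{L}|_C)$, hence the irregularity hypothesis $q(X)=0$ --- an input the paper's proof of this proposition never invokes (the paper first uses it in Section \ref{6}) --- together with assumption (2) to give the greedy induction room to reach length $n$ (and exactly length $n+d$ for $V'$), and it implicitly uses that $C$ is irreducible (smoothness plus remark \ref{remarkgeomconnected}) so that the zero locus of a non-zero section of $\mathcal{L}|_C$ is finite. In exchange, your argument is more modular --- it reuses lemma \ref{Lemmasections} and a standard reference instead of redoing the incidence count --- it makes transparent where each assumption enters, and it re-derives proposition \ref{propositionaboutWnnonempty} in passing. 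Both proofs reduce to $k=\overline{k}$ and both deduce integrality of $V$ from openness in the integral scheme $\mathbb{P}(\mathcal{E}_n)$ (lemma \ref{lemmaPEnintegralanddimoffibers}); yours is a valid alternative provided you cite a precise characteristic-free Bertini statement (e.g.\ Jouanolou, \emph{Th\'eor\`emes de Bertini et applications}), since that external result is doing the work that the paper's Claim does internally.
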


\begin{proof}
    The proof is a modification of the proof of classical Bertini's theorem (\cite[Thm. II.8.18]{Harty}) and de Jong's proof (\cite[Tag. 0FD6]{deJ}).
    Let $\Sigma$ be the singular locus in $\mathcal{C}$, namely the closed subscheme of points in $\mathcal{C}$ where $g$ is not smooth. 
    We need to check that $\Sigma$ is not the whole $\mathcal{C}$.
    
    The image $g(\Sigma)$ is a closed subset of the integral scheme $\mathbb{P}( \mathcal{E}_n )$.
    Since $g$ is surjective, in order to show that $\Sigma\neq \mathcal{C}$ it is enough to prove that
    \[
        \dim g(\Sigma) < \dim \mathbb{P}( \mathcal{E}_n ).
    \] 
    Dimensions are unaltered by passing to the algebraic closure of $k$: so we can assume that $k=\overline{k}$.

    \begin{claim*} 
    Let $x$ be a $k$-point of $X$. 
    Consider the fiber $\Sigma_x$ via the map
    \[
        \Sigma \subseteq \mathcal{C} \subset X \times \mathbb{P}(\mathcal{E}_n)\overset{pr_X}{\longrightarrow} X.
    \]
    Then
    \begin{enumerate}
        \item for all but finitely many $k$-points $x\in X$ we have
        $\dim \Sigma_x \leq \dim  \mathbb{P}( \mathcal{E}_n)  -3$, and
        \item for the remaining $k$-points we have $\dim \Sigma_x \leq \dim  \mathbb{P}( \mathcal{E}_n ) - 1$:
    \end{enumerate}
    \end{claim*}

    \begin{proof}[Proof of the Claim]
    By definition of $\Sigma$ the set $\Sigma_x(k)$ coincides with the pairs 
    \[
        \{ (Z,C) \in U_n(k) \times \mathbb{P}(H^0(X,\mathcal{L})^{\vee})(k): Z\sqcup Q \subset C, \textup{ and } x\in C \textup{ is singular}  \}.
    \] 
    In terms of sections $f\in H^0(X,\mathcal{L}\otimes \mathcal{I}_{Q})^*$, $\Sigma_x(k)$ can be described as 
    \[
        \{ (Z,f):  Z \sqcup Q \subset V(f)\subset X,\textup{ and } x \in V(f) \textup{  is singular} \}/ f\sim \lambda f'.
    \] 
    Now $x\in V(f)$ if and only if $f_x\in m_x\mathcal{L}_x$, and $x\in V(f)$ is singular if and only if $f_x\in m^2_x\mathcal{L}_x$. 
    Indeed being singular at $x$ means 
    \[
        \dim_k m_x/(m^2_x,f_x)  > 1 = \dim \mathcal{O}_{V(f), x}.
    \] 
    If $f_x\in m_x^2$, then the dimension on the right is 2 and if $f_x\in m_x\setminus m^2_x$ the dimension drops to $1$. 
    So the sections $f$ for which $x$ is singular (together with the zero-section) correspond to the kernel
    \[
        \phi_x: H^0(X,\mathcal{L}\otimes \mathcal{I}_Q) 
        \longrightarrow \mathcal{L}_x/m^2_x\mathcal{L}_x.
    \] 
    Therefore the set $\Sigma_x(k)$ coincides with
    \[
        \{(Z,f)\in U_n(k) 
        \times 
        \mathbb{P}(\ker(\phi_x)^{\vee})(k): Z\in \Hilb^n_{V(f)} \}.
    \] 
    This implies that the dimension of $\Sigma_x$ is smaller or equal to (we use assumption \hyperlink{assumptions}{(1)})
    \begin{align*}
       \dim \mathbb{P}( \ker(\phi_x)^{\vee} )  + \max_{f\neq 0} \dim \Hilb^n_{V(f)} = \dim H^0(X,\mathcal{L}\otimes \mathcal{I}_Q) - 1 - \dim \textup{im}(\phi_x) + n
        = 
        \\
        = 
        n+\dim_k H^0(X,\mathcal{L}) - (d + 1) - \dim \textup{im}(\phi_x) 
        \leq
        \dim \mathbb{P}( \mathcal{E}_n ) - \dim \textup{im}(\phi_x).
    \end{align*}
    If $x$ does not belong to $Q$ then by assumptions \hyperlink{assumptions}{(3)} and \hyperlink{assumptions}{(4)} we have that $\phi_x$ is surjective (see remark \ref{remarksurjectivity}). 
    So in this case 
    \[
        \dim \Sigma_x \leq \dim \mathbb{P}( \mathcal{E}_n) - 3.
    \] 
    If $x\in Q$ then $\mathcal{L}_x\otimes \mathcal{I}_{Q,x} = \mathcal{L}_x\otimes m_x$ ($Q$ is reduced at $x$). 
    Since $\mathcal{L}\otimes \mathcal{I}_Q$ is generated by global sections, the map $\phi_x$ is not the zero-map, and so the image has dimension at least 1.
    Therefore
    \[
        \dim \Sigma_x \leq \dim \mathbb{P}( \mathcal{E}_n )-1.
    \]   
    \end{proof}
    
    From the bounds of the claim it follows directly that $\dim g(\Sigma)$  is smaller that $\dim \mathbb{P}(\mathcal{E}_n)$ (see for instance \cite[Tag 0B2L]{deJ}). 
    The proof for $V'$ is analogous.
\end{proof}

We can finally focus on the opens $V_n$ and $V'_{n+d}$.
In order to prove that they are non-empty it remains to check that there is a  point $y$ (resp. $y'$) for which equation \ref{eqcohombasechange1} (resp. \ref{eqcohombasechange2}) holds.
In order to do so, we prove a sufficient condition first.

\begin{lemma}\label{lemmasufficientconditiontostayinVn}
Let $y\in V$ : if $H^1(\mathcal{C}_y,\mathcal{O}_{\mathcal{C}_y}(\mathcal{Z}_y+Q_y))$ is trivial, then equation \ref{eqcohombasechange1} holds.

Analogously, let $y'\in V'$: if $H^1(\mathcal{C}_{y'}, \mathcal{O}_{\mathcal{C}_{y'}}(\mathcal{Z}'_{y'}-Q_{y'}))=0$ then equation \ref{eqcohombasechange2} holds.
\end{lemma}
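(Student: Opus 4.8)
The plan is to deduce both statements from Grothendieck's Cohomology and Base Change theorem \cite[Thm. III.12.11]{Harty}, applied to the smooth (hence proper and flat) morphism $g\colon \mathcal{C}_V\to V$ of \ref{definitionofg} and to the line bundle $\mathcal{F}\coloneqq \mathcal{O}_{\mathcal{C}_V}(\mathcal{Z}_V+Q_V)$. First I would check the hypotheses needed to invoke the theorem: the base $V$ is noetherian (it is an open of $\mathbb{P}(\mathcal{E}_n)$, which is of finite type over $k$); the morphism $g$ is projective, being the restriction to $V$ of the projective morphism $\mathcal{C}\to \mathbb{P}(\mathcal{E}_n)$; and $\mathcal{F}$ is flat over $V$, since it is invertible on $\mathcal{C}_V$ and $g$ is flat. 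Moreover, as $\mathcal{Z}_V$ and $Q_V$ are relative effective Cartier divisors on $\mathcal{C}_V/V$ on the open $V$, the formation of $\mathcal{O}(\mathcal{Z}_V+Q_V)$ commutes with base change, so the fibre $\mathcal{F}_y$ is exactly $\mathcal{O}_{\mathcal{C}_y}(\mathcal{Z}_y+Q_y)$, matching the $H^1$ appearing in the hypothesis.

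The heart of the argument is then a two-step application of the theorem at the point $y$. Writing $\phi^i(y)\colon R^i g_*\mathcal{F}\otimes k(y)\to H^i(\mathcal{C}_y,\mathcal{F}_y)$ for the natural base-change maps, the hypothesis $H^1(\mathcal{C}_y,\mathcal{F}_y)=0$ makes $\phi^1(y)$ surjective for the trivial reason that its target vanishes. By part (a) of \cite[Thm. III.12.11]{Harty} the map $\phi^1(y)$ is then an isomorphism in a neighbourhood of $y$, whence $R^1 g_*\mathcal{F}\otimes k(y)=0$; by Nakayama the coherent sheaf $R^1 g_*\mathcal{F}$ vanishes on a neighbourhood of $y$, and in particular it is locally free there. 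Feeding this back into part (b) of the same theorem with $i=1$ — where $\phi^1(y)$ is surjective and $R^1 g_*\mathcal{F}$ is locally free near $y$ — yields that $\phi^0(y)$ is surjective. Since $\phi^0(y)$ is precisely the map in \ref{eqcohombasechange1}, this establishes the first claim.

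Finally, the statement for $V'$ follows by repeating the argument verbatim with $g'$ in place of $g$ and with $\mathcal{O}_{\mathcal{C}_{V'}}(\mathcal{Z}'_{V'}-Q_{V'})$ in place of $\mathcal{F}$: this is again invertible and flat over $V'$ because on $V'$ both $\mathcal{Z}'_{V'}$ and $Q_{V'}$ are relative effective Cartier divisors, so their difference is a genuine line bundle whose fibre over $y'$ is $\mathcal{O}_{\mathcal{C}_{y'}}(\mathcal{Z}'_{y'}-Q_{y'})$, and one concludes surjectivity of the map in \ref{eqcohombasechange2} exactly as before. I do not expect a genuine obstacle here: the only points requiring care are the bookkeeping verifications that $g$ (resp. $g'$) is projective, that $V$ (resp. $V'$) is noetherian, and that the relevant sheaves are flat over the base with the expected fibres — once these are in place, the vanishing of $H^1$ on the fibre forces the desired surjectivity formally.
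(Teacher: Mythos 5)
Your proof is correct and takes essentially the same approach as the paper: a standard cohomology-and-base-change argument whose final step is Hartshorne's Theorem III.12.11(b) applied with $i=1$. The only (immaterial) difference is how the local vanishing of $R^1g_*\mathcal{F}$ near $y$ is obtained — you use III.12.11(a) plus Nakayama, while the paper uses upper semicontinuity (Thm.\ III.12.8) together with Grauert's criterion (Cor.\ III.12.9) on an integral neighbourhood; your variant even avoids needing integrality of the base.
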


\begin{proof}
    This is a standard Cohomology and base change argument.

    By upper semi-continuity in $y$ of $\dim_{k(y)} H^1(\mathcal{C}_y, \mathcal{O}_{\mathcal{C}_y}(\mathcal{Z}_y+Q_y))$, the set where the $H^1$ is trivial is open (\cite[Thm. III.12.8]{Harty}). 
    Then on a (connected) open neighbourhood $U_y$ of $y$ (which is integral because open of $V$, integral scheme) the sheaf $R^1g_*\mathcal{O}_{\mathcal{C}_y}(\mathcal{Z}_y+Q_y)$ is identically zero (see \cite[Cor. III.12.9]{Harty}).

    By Cohomology and base change \cite[Thm. III.12.11(b)]{Harty}, on $U_y$ the base change map at the level of $H^0$ is an isomorphism.
\end{proof}

\begin{proposition}\label{propositionVnnonempty}
    The opens $V_n$ and $V'_{n+d}$ are non-empty.
\end{proposition}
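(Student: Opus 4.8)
The plan is to produce, after extending scalars to $\overline{k}$, a single geometric point $y\in V$ at which $H^1(\mathcal{C}_y,\mathcal{O}_{\mathcal{C}_y}(\mathcal{Z}_y+Q_y))$ vanishes; by Lemma \ref{lemmasufficientconditiontostayinVn} such a $y$ automatically lies in $V_n$, so this suffices. Non-emptiness is insensitive to base-field extension, so I may assume $k=\overline{k}$. The guiding observation is that membership in $V$ already encodes almost everything: a point of $V\subseteq \mathbb{P}(\mathcal{E}_n)$ lies over $U_n$, so its $Z$ is automatically in $U_n$, and its fibre $\mathcal{C}_y$ is smooth (hence geometrically integral, Remark \ref{remarkaboutgeomintegral}) and contains $Z\sqcup Q$. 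Thus the only condition left to arrange is the non-speciality of the divisor $\mathcal{Z}_y+Q_y$ on a \emph{fixed} smooth curve.

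Concretely, I would first invoke Proposition \ref{bertiniwithclosedfixed} to fix a point $(Z_0,C_0)\in V$. Then $C_0\in|\mathcal{L}|$ is a smooth, geometrically integral projective curve of genus $g=p_a(C_0)\leq n$ (assumption \hyperlink{assumptions}{(5)}), it contains $Q$, and $Z_0\subset C_0\setminus Q$ witnesses that
\[
    S_{C_0}=\{\,Z\in \Sym^n C_0 : (Z,C_0)\in V\,\}
\]
is a non-empty open subscheme of the irreducible scheme $\Sym^n C_0$ (which, $C_0$ being a smooth curve, equals $\Hilb^n_{C_0}$); indeed $S_{C_0}$ is exactly the preimage of the open $U_n$ under the natural morphism $\Hilb^n_{C_0}\to\Hilb^n_X$, since for $Z\subset C_0$ the containment $C_0\supseteq Z\sqcup Q$ and the smoothness of $C_0$ are automatic. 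Hence every $Z\in S_{C_0}$ yields a point $(Z,C_0)\in V$.

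It remains to intersect $S_{C_0}$ with the non-speciality locus. The set $\{\,Z\in\Sym^n C_0 : H^1(C_0,\mathcal{O}_{C_0}(Z+Q))=0\,\}$ is open by upper semicontinuity of $h^1$ along the universal divisor. It is non-empty: since $n\geq g$, the Abel map $\Sym^n C_0\to \Pic^{n+d}C_0$, $Z\mapsto[\mathcal{O}_{C_0}(Z+Q)]$, is surjective, while by Fact \ref{factBrillNoether} applied with $m=n+d$ and $r=n+d-g$ (for which $p_a(C_0)+r-m=0\geq 0$) the non-special degree-$(n+d)$ line bundles $W^{r}_{m}\setminus W^{r+1}_{m}$ form a non-empty set, whose preimage in $\Sym^n C_0$ is then a non-empty open. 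Two non-empty opens of the irreducible $\Sym^n C_0$ necessarily meet, so I may choose $Z\in S_{C_0}$ with $\mathcal{O}_{C_0}(Z+Q)$ non-special; the resulting point $(Z,C_0)\in V$ lies in $V_n$ by Lemma \ref{lemmasufficientconditiontostayinVn}.

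The case of $V'_{n+d}$ is identical after replacing $\Hilb^n_X$ by $\Hilb^{n+d}_X$ and the degree-$(n+d)$ divisor $Z+Q$ by the degree-$n$ divisor $Z'-Q$: one uses that $\Sym^{n+d}C_0\to \Pic^{n}C_0$, $Z'\mapsto[\mathcal{O}_{C_0}(Z'-Q)]$, is surjective because $n+d\geq g$, and that non-special degree-$n$ line bundles exist by Fact \ref{factBrillNoether} (now with $m=n$, $r=n-g$, again giving $p_a(C_0)+r-m=0$). I expect the main point to be conceptual rather than computational: one must recognise that the $U_n$–condition is free on $V$, so that the two genuinely relevant open conditions — lying in $V$ over the chosen curve, and non-speciality of the fibre divisor — both carve out non-empty opens of the \emph{irreducible} symmetric product $\Sym^n C_0$, which therefore intersect.
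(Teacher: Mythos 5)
Your proof is correct and follows essentially the same route as the paper: both arguments fix a smooth curve $C_0\supseteq Q$ coming from a point of $V$ (Proposition \ref{bertiniwithclosedfixed}), identify the fibre of $V$ over that curve with the preimage of $U_n$ in the irreducible scheme $\Hilb^n_{C_0}=\Sym^n C_0$, use assumption \hyperlink{assumptions}{(5)} plus Riemann--Roch to make the twisted Abel--Jacobi map $\Sym^n C_0\to\Pic^{n+d}_{C_0}$ surjective, invoke Fact \ref{factBrillNoether} to produce the non-empty open of non-special classes, intersect inside the irreducible $\Sym^n C_0$, and conclude via Lemma \ref{lemmasufficientconditiontostayinVn} (and symmetrically for $V'_{n+d}$ with the twist by $-Q$). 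The only difference is presentational: the paper phrases the final intersection step as dominance of $V_y\hookrightarrow\Hilb^n_{C_0}\to\Pic^{n+d}_{C_0}$ rather than as two non-empty opens meeting, which is the same argument.
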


\begin{proof}
    Denote by $\CDiv_X^{Q\subset \mathcal{L}}$ the subfunctor of $\CDiv_X^{\mathcal{L}}$ of the divisors in $\CDiv_X^{\mathcal{L}}$ containing $Q$. 
    By lemma \ref{Lemmaclosedsubschemesfunctor} it is closed subscheme of $\CDiv_X^{\mathcal{L}}$.

    Denote by $\textup{Sm}\CDiv_X^{Q\subset \mathcal{L}}$ the open subscheme of $\CDiv_X^{Q\subset \mathcal{L}}$ consisting of divisors containing $Q$ and smooth over the base. 
    It is non empty because $V$ is non-empty.
    Then we have the following diagram (all the squares are cartesian):
    \[
    \begin{tikzcd}
    V
    \arrow[rr]
    \arrow[d, open]
    &
    &
    \textup{Sm}\CDiv^{Q\subset \mathcal{L}}_X
    \arrow[d, open]
    &
    \\
    \mathbb{P}(\mathcal{E}_n)
    \arrow[r, closed]
    &
    \mathcal{P}
    \arrow[r]
    \arrow[d, closed]
    &
    \CDiv^{Q\subset \mathcal{L}}_X
    \arrow[d, closed]
    &
    \\
    &
    \mathbb{P}((pr_{H *}(pr^*_X\mathcal{L})|_{U_n})^{\vee})
    \arrow[d]
    \arrow[r]
    &
    \CDiv^{\mathcal{L}}_X
    \arrow[d]
    \arrow[r]
    &
    \CDiv_X
    \arrow[d, "{AJ_X}"]
    \\
    &
    U_n
    \arrow[r]
    &
    \Spec(k)
    \arrow[r, "{[\mathcal{L}]}"]
    &
    \Pic_X
    \end{tikzcd}
    \]
    Let $y\in \textup{Sm}\CDiv_X^{Q\subset \mathcal{L}}$ be a geometric point such that the fiber $V_y$ is non-empty.
    Namely, $y$ corresponds to a smooth curve $C\subset X_{k(y)}$ containing $Q_{k(y)}$ (and $k(y)=\overline{k(y)}$).
    
    The $k(y)$-points of $V_y$ are exactly those closed subscheme $Z\subset C$ of degree $n$ such that $Z\in U_n(k(y))$. 
    More generally, the $T$-points of $V_y$ are the closed subschemes $Z\subset C_T$, flat and finite of degree $n$ over $T$, such that $Z\in U_n(T)$.
    In other words, the fiber $V_y$ coincides with $U_n\times_{\Hilb^n_X} \Hilb^n_{C/k(y)}$, an open subscheme of $\Hilb^n_{C/k(y)}$.\\

    Consider now the Abel-Jacobi map for the curve $C$
    \begin{equation}\label{AbelJAcobimapforcurves}
        AJ_C:\Hilb^n_{C/k(y)}\longrightarrow \Pic^n_{C/k(y)}.
    \end{equation}
    By assumption \hyperlink{assumptions}{(5)} the genus of $C$ is less or equal than the degree $n$: by Riemann-Roch, this means that the Abel-Jacobi map \ref{AbelJAcobimapforcurves} is surjective.
    Therefore, since $\Hilb^n_{C/k(y)}$ is integral,
    the composite
    \[
        V_y \xhookrightarrow{} \Hilb^n_{C/k(y)} \xrightarrow[]{AJ_{C}}  \Pic^n_{C/k(y)}
    \]
    is dominant. 
    Further composing with the isomorphism
    \[
        \Pic^{n}_{C/k(y)} \overset{\sim}{\longrightarrow} \Pic^{n+d}_{C/k(y)}, \quad  \mathcal{M}\mapsto \mathcal{M}\otimes \mathcal{O}_{C}(Q_{k(y)}),
    \]
    we get another dominant map
    \[
        V_y \xhookrightarrow{} \Hilb^n_{C/k(y)} \longrightarrow \Pic^n_{C/k(y)} \longrightarrow \Pic^{n+d}_{C/k(y)}.
    \]
    Consider the closed subschemes $W^{n+d-p_a(C)}_{n+d}(C/k(y))$ and $W^{n+d-p_a(C) +1}_{n+d}(C/k(y))$ inside $\Pic^{n+d}_{C/k(y)}$.
    By Riemann-Roch $W^{n+d-p_a(C)}_{n+d}(C/k(y))$ coincides with $\Pic_{C/k(y)}^{n+d}$.
    By assumption \hyperlink{assumptions}{(5)}, $p_a(C)+n+d-p_a(C)-n-d\geq 0$: by the Brill-Noether theory fact \ref{factBrillNoether}, we have
    \[
        \Pic^{n+d}_{C/k(y)} \setminus W^{n+d-p_a(C) +1}_{n+d}(C/k(y))
    \]
    is non-empty, open and hence integral (because $\Pic^{n+d}_{C_y/k(y)}$ is integral). 
    
    The $K$-points of $\Pic^{n+d}_{C/k(y)} \setminus W^{n+d-p_a(C) +1}_{n+d}(C/k(y))$ are exactly those classes of line bundles $\mathcal{O}_{C_K}(Z+Q_K) \in \Pic^{n+d}_{C/k(y)}(K)$ such that $H^0(C_K, \mathcal{O}_{C_K}(Z+Q_K))= n+d-p_a(C)+1$ or equivalently $H^1(C_K, \mathcal{O}_{C_K}(Z+Q_K))=0$. 
            
    The preimage of $\Pic^{n+d}_{C/k(y)} \setminus W^{n+d-p_a(C) +1}_{n+d}(C/k(y))$ in $V_y$ is a non-empty open: by lemma \ref{lemmasufficientconditiontostayinVn} $V_n$ is non-empty. \\

    The proof for the open $V'_{n+d}$ is the same: in this case we set $m=n$ and $r=n-p_a(C)$ (still non negative by assumption \hyperlink{assumptions}{(5)}). 
    Again by fact \ref{factBrillNoether}, the open $\Pic^n_{C/k(y')}\setminus W^{n-p_a(C)+1}_n(C/k(y'))$ is non-empty which gives a non-empty open of $V'_{y'}$ via the map
    \[
         V'_{y'} \xhookrightarrow{} \Hilb^{n+d}_{C/k(y')}\longrightarrow \Pic^{n+d}_{C/k(y')} \xrightarrow[]{-\otimes \mathcal{O}_C(-Q_{k(y')})} \Pic^n_{C/k(y')}.
    \] 
    Lemma \ref{lemmasufficientconditiontostayinVn} concludes the proof as before.
\end{proof}
    
\section{Stable Birationality between Hilbert schemes}\label{6}

As checked in the previous section, we have four non-empty integral opens
\[
    U_n,\ U_{n+d},\ V_n \textup{ and } V'_{n+d} 
\] 
such that they fit in the cartesian diagram:
\[
    \begin{tikzcd}
    &
    &
    \mathbb{P}(\mathcal{F}'_{n+d})
    \arrow[r, "{p_{n+d}'}"]
    \arrow[dd, open]
    &
    V'_{n+d}
    \arrow[d, open]
    &
        \\
    &
    &
    &
    \mathbb{P}(\mathcal{E}_{n+d})
    \arrow[r, two heads]
    \arrow[d, open]
    &
    U_{n+d}
    \arrow[d, open]
        \\
    \mathbb{P}(\mathcal{F}_{n})
    \arrow[rr, open]
    \arrow[d, "{p_n}"']
    &
    &
    \Inc^Q_{X,\mathcal{L},n,n'}
    \arrow[d]
    \arrow[r]
    &
    \Inc^Q_{X,\mathcal{L},n'}
    \arrow[r]
    &
    \Hilb^{n+d}_X
        \\
    V_n
    \arrow[r, open]
    &
    \mathbb{P}(\mathcal{E}_n)
    \arrow[r, open]
    \arrow[d, two heads]
    &
    \Inc^Q_{X,\mathcal{L},n}
    \arrow[d]
    &
    &
        \\
    &
    U_n
    \arrow[r, open]
    &
    \Hilb^n_X.
    &
    &
    \end{tikzcd}
\]

\begin{remark}
    The projections $p_n:\mathbb{P}(\mathcal{F}_n)\longrightarrow V_n$ and $p_{n+d}': \mathbb{P}(\mathcal{F}'_{n+d})\longrightarrow V'_{n+d}$ are surjective as well.
    Indeed the rank of $\mathcal{F}_n$ at $y\in V_n$ is 
    \[
        \dim_{k(y)} H^0(\mathcal{C}_y, \mathcal{O}_{\mathcal{C}_y}(\mathcal{Z}_y+Q_y) ) \geq \deg (\mathcal{Z}_y+Q_y) - p_a(C_y) + 1 = n+d -p_a(C)+1\geq 1, 
    \]
    and the rank of $\mathcal{F}'_{n+d}$ at $y\in V'_{n+d}$ is 
    \[
        \dim_{k(y)} H^0(\mathcal{C}_y, \mathcal{O}_{\mathcal{C}_y}(\mathcal{Z}'_y-Q_y) ) \geq \deg (\mathcal{Z}'_y-Q_y) - p_a(C_y) + 1 = n -p_a(C)+1\geq 1, 
    \]
\end{remark}
Now we want to check that the two (integral) opens $\mathbb{P}(\mathcal{F}_n)$ and $\mathbb{P}(\mathcal{F}'_{n+d})$ intersect.

\begin{proposition}
    The two non-empty open subschemes $\mathbb{P}(\mathcal{F}_n)$ and $\mathbb{P}(\mathcal{F}_{n+d}')$ intersect in $\Inc^Q_{X,\mathcal{L},n,n+d}$.
\end{proposition}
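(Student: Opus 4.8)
The plan is to study both opens through the ``record the curve'' morphism
\[
    q\colon \Inc^Q_{X,\mathcal{L},n,n+d}\longrightarrow \CDiv_X^{\mathcal{L}},\qquad (Z,Z',C)\longmapsto C,
\]
and to reduce the problem to a single fibre. By construction $\mathbb{P}(\mathcal{F}_n)$ and $\mathbb{P}(\mathcal{F}'_{n+d})$ are non-empty integral open subschemes of $\Inc^Q_{X,\mathcal{L},n,n+d}$ on which $C$ is smooth and contains $Q$, so $q$ maps both into the locus $\textup{Sm}\CDiv_X^{Q\subset\mathcal{L}}$ of Proposition \ref{propositionVnnonempty}. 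Since two open subschemes meet as soon as they share one point, it suffices to exhibit a single geometric point, i.e.\ a smooth curve $C_0\in|\mathcal{L}|$ through $Q$ over an algebraically closed field, such that the fibres $\mathbb{P}(\mathcal{F}_n)_{C_0}$ and $\mathbb{P}(\mathcal{F}'_{n+d})_{C_0}$ are both non-empty and already meet inside $q^{-1}(C_0)$.

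First I would produce such a curve $C_0$. The scheme $\mathbb{P}(\mathcal{E}_{n+d})$ is integral (Lemma \ref{lemmaPEnintegralanddimoffibers}), and inside it both the locus where $C$ is smooth (non-empty by Proposition \ref{bertiniwithclosedfixed}) and the locus where $Z'$ is reduced (non-empty by Lemma \ref{Lemmasections}, since any such $Z'$ lies on a member of $|\mathcal{L}|$ by assumption \hyperlink{assumptions}{(2)}) are open; hence they meet, yielding a pair $(Z',C_0)$ with $C_0$ smooth through $Q$ and $Z'$ a reduced degree-$(n+d)$ member of $U_{n+d}$. The point of using a \emph{reduced} $Z'$ is that for any length-$n$ subscheme $Z\subseteq Z'$ the restriction $H^0(X,\mathcal{L})\to H^0(\mathcal{L}|_{Z\sqcup Q})$ is the composite of the surjection onto $H^0(\mathcal{L}|_{Z'\sqcup Q})$ with a coordinate projection, hence again surjective; so $C_0$ carries good subschemes of \emph{both} degrees $n$ and $n+d$. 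Running the Brill--Noether refinement of Proposition \ref{propositionVnnonempty} on the fixed curve $C_0$ (legitimate because $p_a(C_0)\leq n$ by assumption \hyperlink{assumptions}{(5)}) then shows that the fibres $(V_n)_{C_0}\subseteq \Hilb^n_{C_0}$ and $(V'_{n+d})_{C_0}\subseteq \Hilb^{n+d}_{C_0}$ are non-empty dense opens.

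It remains to see that the two fibres meet, and this is the heart of the argument. Over $C_0$ the fibre $q^{-1}(C_0)$ consists of pairs $(Z,Z')$ with $\mathcal{O}_{C_0}(Z')\cong\mathcal{O}_{C_0}(Z+Q)$; inside it $\mathbb{P}(\mathcal{F}_n)_{C_0}$ is the locus with $Z\in(V_n)_{C_0}$ and $\mathbb{P}(\mathcal{F}'_{n+d})_{C_0}$ the locus with $Z'\in(V'_{n+d})_{C_0}$. Sending $(Z,Z')$ to the class $L=\mathcal{O}_{C_0}(Z')=\mathcal{O}_{C_0}(Z+Q)$ sends $(V_n)_{C_0}$ and $(V'_{n+d})_{C_0}$ into $\Pic^{n+d}_{C_0}$, and because the Abel--Jacobi map is surjective under assumption \hyperlink{assumptions}{(5)} (exactly as in Proposition \ref{propositionVnnonempty}), both images are dense and therefore contain dense opens of the \emph{irreducible} variety $\Pic^{n+d}_{C_0}$. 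Choosing $L$ in the intersection of these two dense opens gives $Z\in(V_n)_{C_0}$ with $\mathcal{O}_{C_0}(Z+Q)\cong L$ and $Z'\in(V'_{n+d})_{C_0}$ with $\mathcal{O}_{C_0}(Z')\cong L$; the pair $(Z,Z')$ then satisfies the incidence condition and lies simultaneously in $\mathbb{P}(\mathcal{F}_n)_{C_0}$ and $\mathbb{P}(\mathcal{F}'_{n+d})_{C_0}$, so the two opens meet. The step I expect to be the main obstacle is precisely this last one: forcing the two linear-system-type conditions onto a common line bundle, which is what the irreducibility of $\Pic^{n+d}_{C_0}$ together with the surjectivity of Abel--Jacobi delivers; a Riemann--Roch bookkeeping confirms the construction is not dimensionally obstructed, since $\dim\mathbb{P}(\mathcal{F}_n)=\dim\mathbb{P}(\mathcal{F}'_{n+d})=2n+\dim_k H^0(X,\mathcal{L})-1-p_a(C_0)$.
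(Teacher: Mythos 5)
Your proof is correct, but it follows a genuinely different route from the paper's. The paper argues pointwise: it fixes a geometric point $(\mathcal{Z}'_y,\mathcal{C}_y)$ of $V'_{n+d}$ with $H^1(\mathcal{C}_y,\mathcal{O}_{\mathcal{C}_y}(\mathcal{Z}'_y-Q_y))=0$, picks a point $(Z,\mathcal{Z}'_y,\mathcal{C}_y)$ in the fibre of $p'_{n+d}$ over it, and shows that this \emph{same triple} already lies in $\mathbb{P}(\mathcal{F}_n)$: the vanishing of $H^1(\mathcal{C}_y,\mathcal{O}_{\mathcal{C}_y}(Z+Q_y))$ comes from the exact sequence $0\to\mathcal{O}_{\mathcal{C}_y}(\mathcal{Z}'_y-Q_y)\to\mathcal{O}_{\mathcal{C}_y}(\mathcal{Z}'_y)\to\mathcal{O}_{\mathcal{C}_y}(\mathcal{Z}'_y)|_{Q_y}\to 0$, while the condition $Z\in U_n$ is verified by lifting sections from $\mathcal{C}_y$ to $X_y$ (this is exactly where the standing hypothesis $H^1(X,\mathcal{O}_X)=0$ enters) together with an $h^1$-comparison on the curve. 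You never transport these cohomological conditions from $Z'$ to its partner $Z$; instead you manufacture a single smooth curve $C_0$ through $Q$ carrying points of both $U_n$ and $U_{n+d}$ (your reduced-$Z'$ trick --- extra work your route needs, and which the paper gets for free because the partner divisor automatically lies on the same curve), and then match the two dense loci $(V_n)_{C_0}$ and $(V'_{n+d})_{C_0}$ inside the irreducible $\Pic^{n+d}_{C_0}$. The paper's argument yields slightly more (over every $H^1$-trivial point of $V'_{n+d}$ the whole $p'_{n+d}$-fibre is contained in $\mathbb{P}(\mathcal{F}_n)$) and uses $q(X)=0$ directly; yours is softer --- Chevalley constructibility and irreducibility of the Picard scheme replace the cohomological lifting --- and it incidentally sidesteps a point the paper glosses over: the paper asserts its partner divisor $Z$ is ``by definition'' disjoint from $Q_y$, which is not automatic for an arbitrary member of the linear system $|\mathcal{Z}'_y-Q_y|$, whereas in your argument disjointness from $Q$ is built into membership of $(V_n)_{C_0}$ and $(V'_{n+d})_{C_0}$. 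One step you should spell out in a final write-up: ``dense image'' gives ``contains a dense open'' only via constructibility of the image (Chevalley), combined with the fact that the image of a dense open under the surjective Abel--Jacobi map is dense; that is the precise justification of your last intersection step.
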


\begin{proof}
    Consider a geometric point $y\in V'_{n+d}$ for which the group $H^1(\mathcal{C}_y,\mathcal{O}_{\mathcal{C}_y}(\mathcal{Z}_y'-Q_y))$ is trivial.
    Pick a point $(Z,\mathcal{Z}_y',\mathcal{C}_y)\in \mathbb{P}(\mathcal{F}'_{n+d})(k(y))$ (it exists because $p_{n+d}'$ is surjective).
    By definition, $Z$ is an effective divisor on $\mathcal{C}_y$ equivalent to $\mathcal{Z}_y'-Q_y$ and disjoint from $Q_y$.
    We claim that $(Z,\mathcal{Z}_y,\mathcal{C}_y)\in \mathbb{P}(\mathcal{F}_{n})(k(y))$: equivalently $(Z,\mathcal{C}_y)\in V_n(k(y))$.
    
    Let us take the cohomology sequence associated to the short exact sequence
    \[
        0\longrightarrow
        \mathcal{O}_{\mathcal{C}_y}(\mathcal{Z}_y'-Q_y)
        \longrightarrow
        \mathcal{O}_{\mathcal{C}_y}(\mathcal{Z}_y')
        \longrightarrow
        \mathcal{O}_{\mathcal{C}_y}(\mathcal{Z}_y')|_{Q_y}
        \longrightarrow 0.
    \] 
    We obtain
    \[
        H^0(\mathcal{C}_y, \mathcal{O}_{\mathcal{C}_y}(\mathcal{Z}_y')|_{\mathcal{Q}_y} )
        \longrightarrow 0 \longrightarrow 
        H^1(\mathcal{C}_y, \mathcal{O}_{\mathcal{C}_y}(\mathcal{Z}_y') )
        \longrightarrow  0.
    \]
    Therefore $H^1(\mathcal{C}_y, \mathcal{O}_{\mathcal{C}_y}( Z + Q_y) )=0$. 
    In particular Cohomology and Base change holds true for $\mathcal{O}_{\mathcal{C}_y}(Z+Q_y)$. \\

    \noindent It remains to prove that
    \[
        H^0(X_y, \mathcal{L}_y)\longrightarrow
        H^0(X_y, \mathcal{L}_y|_{Z \sqcup Q_y})
    \]
    is surjective.
    In order to show this, we consider the diagram 
    \[
        \begin{tikzcd}[column sep=0.1cm]
            H^0(X_y,\mathcal{L}_y)
            \arrow[r]
            &
            H^0(\mathcal{C}_y,\mathcal{L}_y|_{\mathcal{C}_y})
            \arrow[r]
            \arrow[d]
            &
            H^0(Z \sqcup Q_y,\mathcal{L}_y|_{Z \sqcup Q_y})
            \\
            &
            H^0(\mathcal{Z}'_y\sqcup Q_y,\mathcal{L}_y|_{\mathcal{Z}'_y\sqcup Q_y}).
            &
        \end{tikzcd}
    \] 
    Since the restriction from $X_y$ to $\mathcal{Z}_y'\sqcup Q_y$ is surjective, then the restriction from $\mathcal{C}_y$ 
    \[
        H^0(\mathcal{C}_y,\mathcal{L}_y|_{\mathcal{C}_y})
        \longrightarrow 
        H^0(\mathcal{Z}_y'\sqcup \mathcal{Q}_y,\mathcal{L}_y|_{\mathcal{Z}_y'\sqcup \mathcal{Q}_y})
    \] 
    is surjective as well. 
    Moreover the map 
    \[
        H^0(X_y,\mathcal{L}_y)
        \longrightarrow
        H^0(\mathcal{C}_y,\mathcal{L}_y|_{\mathcal{C}_y})
    \] 
    is always surjective because it is induced by the short exact sequence
    \[
        0\longrightarrow 
        \mathcal{I}_{\mathcal{C}_y}\otimes \mathcal{L}_y
        \longrightarrow
        \mathcal{L}_y
        \longrightarrow
        \mathcal{L}_y|_{\mathcal{C}_y}
        \longrightarrow 0
    \] 
    and $H^1(X_y, \mathcal{I}_{\mathcal{C}_y}\otimes \mathcal{L}_y)\cong H^1(X_y,\mathcal{O}_{X_y})=0$ by hypothesis on $X$.
    Therefore we just have to prove that
    \[
        H^0(\mathcal{C}_y,\mathcal{L}_y|_{\mathcal{C}_y})
        \longrightarrow
        H^0(Z \sqcup Q_y,\mathcal{L}_y|_{Z \sqcup Q_y})
    \] 
    is surjective.
    Equivalently we have to prove that
    \begin{equation}\label{surjectivityoncurveforpoints}
        H^1(\mathcal{C}_y, \mathcal{L}_y|_{\mathcal{C}_y}(-
        Z-Q_y) )
        \longrightarrow
        H^1(\mathcal{C}_y, \mathcal{L}_y|_{\mathcal{C}_y})
        \longrightarrow 0
    \end{equation} 
    is injective. 
    Since this map is always surjective, injectivity is equivalent to have the following inequality:
    \[
        \dim_{k(y)} H^1(\mathcal{C}_y, \mathcal{L}_y|_{\mathcal{C}_y}) 
        \geq
        \dim_{k(y)}
        H^1(\mathcal{C}_y, \mathcal{L}_y|_{\mathcal{C}_y}(-
        Z-Q_y) ).
    \]
    Since the map $H^0(\mathcal{C}_y,\mathcal{L}_y|_{\mathcal{C}_y}) \longrightarrow H^0(\mathcal{Z}_y'\sqcup Q_y,\mathcal{L}_y|_{\mathcal{Z}_y'\sqcup Q_y})$ is surjective, then the left-hand side is equal to
    \[
        \dim_{k(y)} H^1(\mathcal{C}_y, \mathcal{L}_y|_{\mathcal{C}_y}(-\mathcal{Z}_y'-
        Q_y) ).
    \] 
    Using the short exact sequence
    \[
        0\longrightarrow
        \mathcal{O}_{\mathcal{C}_y}(-Q_y)
        \longrightarrow
        \mathcal{O}_{\mathcal{C}_y}
        \longrightarrow
        \mathcal{O}_{Q_y}
        \longrightarrow 0
    \] 
    tensored by $\mathcal{L}_y|_{\mathcal{C}_y}(-\mathcal{Z}_y')$,
    we get at the cohomology level the following exact sequence
    \begin{align*}
        0\longrightarrow
        H^0(\mathcal{C}_y, \mathcal{L}_y|_{\mathcal{C}_y}(-\mathcal{Z}_y'-Q_y))
        \longrightarrow
        H^0(\mathcal{C}_y, \mathcal{L}_y|_{\mathcal{C}_y}(-\mathcal{Z}_y'))
        \longrightarrow
        H^0(Q_y, \mathcal{O}_{Q_y} )
        \longrightarrow
        \\
        \longrightarrow
        H^1(\mathcal{C}_y, \mathcal{L}_y|_{\mathcal{C}_y}(-\mathcal{Z}_y'-Q_y))
        \longrightarrow
        H^1(\mathcal{C}_y, \mathcal{L}_y|_{\mathcal{C}_y}(-\mathcal{Z}_y'))
        \longrightarrow 0.
    \end{align*} 
    In particular we get that
    \[
        \dim_{k(y)} H^1(\mathcal{C}_y, \mathcal{L}_y|_{\mathcal{C}_y}(-\mathcal{Z}_y'-
        Q_y))
        \geq 
        \dim_{k(y)} H^1(\mathcal{C}_y, \mathcal{L}_y|_{\mathcal{C}_y}(-Z-Q_y)),
    \] 
    which is the wanted inequality. 
    Therefore the point $(Z, C_y)\in V_n(k(y))$ and the two opens intersect.
\end{proof}

\begin{corollary}[Theorem \ref{theorem1}]\label{corollary=thm1}
If a suitable triple $(X,\mathcal{L},Q)$ satisfies assumptions \hyperlink{asumptions}{(1)} to \hyperlink{asumptions}{(5)} with respect to the integer $n$, then $\Hilb^n_{X}$ is stably birational to $\Hilb^{n+d}_X$.
\end{corollary}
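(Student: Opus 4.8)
The plan is to assemble the geometric input accumulated in Sections \ref{3}--\ref{5} into a single chain of stable birational equivalences linking $\Hilb^n_X$ and $\Hilb^{n+d}_X$, with the correspondence $\Inc^Q_{X,\mathcal{L},n,n'}$ (recall $n'=n+d$) serving as the common bridge. Two elementary observations drive everything. First, an open immersion between integral schemes restricts to a birational equivalence onto a dense open, so every non-empty open produced in Section \ref{5} may be exchanged for the scheme containing it without altering the stable birational class. Second, for a projective bundle $\mathbb{P}(\mathcal{E})\to S$ attached to a locally free sheaf $\mathcal{E}$ of rank $r+1$ over an integral base $S$, there is a dense open $S_0\subseteq S$ trivialising $\mathcal{E}$, over which the bundle becomes $\mathbb{P}^r\times S_0$; hence $\mathbb{P}(\mathcal{E})$ is birational to $\mathbb{P}^r\times S_0$ and thus stably birational to $S$.

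Granting these, I would first run the chain on the $n$-side. By proposition \ref{propositionaboutWnnonempty} the open $U_n\subseteq\Hilb^n_X$ is non-empty, hence dense since $\Hilb^n_X$ is integral, so $\Hilb^n_X$ is stably birational to $U_n$. By proposition \ref{definitionofUn} the restriction of $\Inc^Q_{X,\mathcal{L},n}$ over $U_n$ is the projective bundle $\mathbb{P}(\mathcal{E}_n)\to U_n$, which is integral by lemma \ref{lemmaPEnintegralanddimoffibers}; thus $\mathbb{P}(\mathcal{E}_n)$ is stably birational to $U_n$. Next, proposition \ref{propositionVnnonempty} gives that $V_n\subseteq\mathbb{P}(\mathcal{E}_n)$ is a non-empty, hence dense, open, and proposition \ref{existenceopenforPE} realises $\Inc^Q_{X,\mathcal{L},n,n'}|_{U_n}$ over $V_n$ as the projective bundle $\mathbb{P}(\mathcal{F}_n)\to V_n$. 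Stringing these four steps together yields that $\mathbb{P}(\mathcal{F}_n)$ is stably birational to $\Hilb^n_X$; the identical argument on the $(n+d)$-side (using $U_{n+d}$, $\mathbb{P}(\mathcal{E}_{n+d})$, $V'_{n+d}$ and $\mathbb{P}(\mathcal{F}'_{n+d})$) shows that $\mathbb{P}(\mathcal{F}'_{n+d})$ is stably birational to $\Hilb^{n+d}_X$.

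It remains to connect the two endpoints. Both $\mathbb{P}(\mathcal{F}_n)$ and $\mathbb{P}(\mathcal{F}'_{n+d})$ are integral, each being a projective bundle over an integral base, and by the cartesian diagram opening Section \ref{6} each is an open subscheme of $\Inc^Q_{X,\mathcal{L},n,n'}$. The preceding proposition shows their intersection is non-empty; being a non-empty open of each irreducible scheme, it is dense in both, so $\mathbb{P}(\mathcal{F}_n)$ and $\mathbb{P}(\mathcal{F}'_{n+d})$ share a common dense open and are therefore birational. Since birational schemes are a fortiori stably birational and stable birationality is an equivalence relation, transitivity gives
\[
    \Hilb^n_X \ \sim\ \mathbb{P}(\mathcal{F}_n) \ \sim\ \mathbb{P}(\mathcal{F}'_{n+d}) \ \sim\ \Hilb^{n+d}_X ,
\]
where $\sim$ denotes stable birational equivalence, which is the claim.

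The one point demanding care, rather than a genuine obstacle, is the bookkeeping of irreducibility at every node of the chain, so that each non-empty open is automatically dense and each open immersion is a birational equivalence. All of this irreducibility has in fact already been secured upstream: the smoothness and geometric connectedness of $\Hilb^n_X$ furnish integrality of the Hilbert schemes, while lemma \ref{lemmaPEnintegralanddimoffibers} together with the projective-bundle structure of propositions \ref{definitionofUn} and \ref{existenceopenforPE} propagate integrality to $\mathbb{P}(\mathcal{E}_n)$, $\mathbb{P}(\mathcal{F}_n)$ and their $(n+d)$-counterparts. Consequently the corollary follows by pure assembly of the results proved in the previous three sections.
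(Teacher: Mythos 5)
Your proposal is correct and follows essentially the same route as the paper: both descend through the tower $\Hilb^n_X \supseteq U_n \leftarrow \mathbb{P}(\mathcal{E}_n) \supseteq V_n \leftarrow \mathbb{P}(\mathcal{F}_n)$ (and its $(n+d)$-counterpart), exploit integrality so that non-empty opens are dense and projective bundles trivialize generically, and then glue the two sides through the non-empty intersection $\mathbb{P}(\mathcal{F}_n)\cap\mathbb{P}(\mathcal{F}'_{n+d})$ inside $\Inc^Q_{X,\mathcal{L},n,n'}$. The only difference is presentational: the paper fixes a point $y$ in the intersection and works with explicit trivializing neighbourhoods and products with projective spaces, whereas you package the same trivialization argument into the general fact that a projective bundle over an integral base is stably birational to its base and invoke transitivity.
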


\begin{proof}
    Pick a point $y=(Z,Z',C)$ in the intersection of $\mathbb{P}(\mathcal{F}_n)$ and $\mathbb{P}(\mathcal{F}'_{n+d})$.
    Consider the images $p_n(y)=(Z,C)$ and $p_{n+d}'(y)=(Z',C)$ in $V_n$ and $V'_{n+d}$.
    Let $U_Z$ be an integral open neighbourhood of $Z\in U_n$, which trivializes the projective bundle $\mathbb{P}(\mathcal{E}_n)$:
    \[
        \begin{tikzcd}
            U_Z \times \mathbb{P}^{\textup{rk}_Z(\mathcal{E}_n)-1}_k
            \arrow[r,open]
            \arrow[d]
            &
            \mathbb{P}(\mathcal{E}_n)
            \arrow[d]
            \\
            U_Z
            \arrow[r, open]
            &
            U_n.
        \end{tikzcd}
    \]
    Since $\mathbb{P}(\mathcal{E}_n)$ is integral, the open $U_Z \times \mathbb{P}^{\textup{rk}_Z(\mathcal{E}_n)-1}_k$ is integral as well.
    Define $U_{Z'}$ in the same way.
    
    Let $V_{(Z,C)}$ be an integral open neighbourhood of $p_n(y)$ contained in $V_n\cap U_Z \times \mathbb{P}^{\textup{rk}_Z(\mathcal{E}_n)-1}_k$, which trivializes the projective bundle $\mathbb{P}(\mathcal{F}_n)$:
    \[
    \begin{tikzcd}
        V_{(Z,C)} \times \mathbb{P}^{\textup{rk}_{(Z,C)}(\mathcal{F}_n)-1}_k
        \arrow[d, "{p_n|_{V_{(Z,C)}}}"']
        \arrow[rr, open]
        &
        &
        \mathbb{P}(\mathcal{F}_n)
        \arrow[d, "{p_n}"]
        \\
        V_{(Z,C)}
        \arrow[r, open]
        &
        V_n\cap U_Z \times \mathbb{P}^{\textup{rk}_Z(\mathcal{E}_n)-1}_k
        \arrow[r, open]
        &
        V_n 
    \end{tikzcd}
    \]
    Since $V_{(Z,C)}$ and $U_Z \times \mathbb{P}^{\textup{rk}_Z(\mathcal{E}_n)-1}_k$ are both integral neighbourhoods of $p_n(y)$, they are birational.
    Define $V'_{(Z',C)}$ in the same way.
    Therefore, we have that
    \[
        U_Z \times \mathbb{P}^{\textup{rk}_Z(\mathcal{E}_n)+ \textup{rk}_{(Z,C)}(\mathcal{F}_n)-2}_k
        \sim_{\textup{bir}}
        V_{(Z,C)}\times \mathbb{P}^{\textup{rk}_{(Z,C)}(\mathcal{F}_n)-1}_k 
    \]
    and 
    \[
        U_{Z'} \times \mathbb{P}^{\textup{rk}_Z(\mathcal{E}'_{n+d})+\textup{rk}_{(Z',C)}(\mathcal{F}'_{n+d})-2}_k
        \sim_{\textup{bir}}
        V'_{(Z',C)} \times \mathbb{P}^{\textup{rk}_{(Z',C)}(\mathcal{F}'_{n+d})-1}_k.
    \]

    Since $V_{(Z,C)}, V'_{(Z',C')}$ are integral, then $V_{(Z,C)}\times \mathbb{P}^{\textup{rk}_{(Z,C)}(\mathcal{F}_n)-1}_k$ and $V'_{(Z',C)} \times \mathbb{P}^{\textup{rk}_{(Z',C)}(\mathcal{F}'_{n+d})-1}_k$ are integral as well.
    Since they are both neighbourhoods of $y$ they are birational.
    
    Therefore, the two opens $U_Z$ and $U_{Z'}$ are stably-birational, and so are $\Hilb^n_X$ and $\Hilb^{n+d}_X$.
\end{proof}

\section{Definitive behaviour of Stable Birational classes of $\Hilb^n_X$}\label{7}

In this section we are interested in finding sufficient and necessary criteria on $n$ and on $X$ so that we find suitable triples satisfying the \hyperlink{assumptions}{assumptions} \ref{thefiveassumptions} with respect to $n$ (in order to apply corollary \ref{corollary=thm1}).\\

Given a suitable triple $(X,\mathcal{L}, Q)$ as in the introduction, we want to check for which positive integers $(e,n)$ the triple $(X,\mathcal{L}^{\otimes e},Q)$ satisfies the \hyperlink{assumptions}{assumptions} with respect to $n$.

\begin{theorem}[Theorem \ref{theorem2}]\label{theoremgap1}
    Let $(X,\mathcal{L},Q)$ be a suitable triple. 
    Then there exists an integer $e_0=e_0(X,\mathcal{L},Q)$ such that for any $e\geq e_0$ the triple $(X,\mathcal{L}^{\otimes e},Q)$ satisfies the \hyperlink{assumptions}{assumptions} \ref{thefiveassumptions} with respect to $n$ if and only if $n$ is in the interval
    \[
        I_e\coloneqq \left[\frac{e^2 c_1(\mathcal{L})^2 +  e\, c_1(\mathcal{L})\cdot K_X}{2}+1, \frac{ e^2 c_1(\mathcal{L})^2-e\, c_1(\mathcal{L}) \cdot K_X }{2} + \dim_k H^2(X, \mathcal{O}_X) -2\deg(Q) \right] ,
    \]
    in which case the variety $\Hilb^n_X$ is stably birational to $\Hilb^{n+\deg(Q)}_X$.
\end{theorem}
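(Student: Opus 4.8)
The plan is to run through the five conditions of \ref{thefiveassumptions} for the twisted triple $(X,\mathcal{L}^{\otimes e},Q)$ one at a time, separating the three that do not mention $n$ (namely (1), (3), (4)) from the two that do (namely (2) and (5)), and to show that for $e$ large the former hold automatically while the latter cut out exactly the two endpoints of $I_e$. The heart of the argument is a single Riemann--Roch computation combined with Serre vanishing; everything else is bookkeeping with the two inequalities.

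First I would dispose of the $n$-independent conditions. Since $\mathcal{L}$ is ample, Serre vanishing applied to the coherent sheaf $\mathcal{I}_Q$ gives an integer beyond which $\mathcal{L}^{\otimes e}\otimes\mathcal{I}_Q$ is globally generated (assumption (3)) and $H^1(X,\mathcal{L}^{\otimes e}\otimes\mathcal{I}_Q)=0$; the latter forces surjectivity of the restriction map in assumption (1) via the sequence $0\to\mathcal{L}^{\otimes e}\otimes\mathcal{I}_Q\to\mathcal{L}^{\otimes e}\to\mathcal{L}^{\otimes e}|_Q\to 0$. Assumption (4) is the delicate one, because it must hold \emph{uniformly} over all geometric points $x\notin Q$: I would deduce it from the fact that a sufficiently high power of an ample line bundle separates jets uniformly (it is $k$-very ample for any fixed $k$ once $e\gg 0$), applied to the length-$(3+\deg Q)$ subscheme $2x\sqcup Q$, which simultaneously prescribes the $1$-jet $\mathcal{L}_x/m_x^2\mathcal{L}_x$ at $x$ and the vanishing along $Q$. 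Making this separation uniform in $x$ is the main technical obstacle; everything else reduces to Serre-type vanishing.

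Next I would translate (2) and (5) into the two endpoints. By Riemann--Roch on the surface, $\chi(\mathcal{L}^{\otimes e})=\chi(\mathcal{O}_X)+\tfrac12\bigl(e^2c_1(\mathcal{L})^2-e\,c_1(\mathcal{L})\cdot K_X\bigr)$, and since $q(X)=0$ and $\dim_k H^0(X,\mathcal{O}_X)=1$ we have $\chi(\mathcal{O}_X)=1+\dim_k H^2(X,\mathcal{O}_X)$. Enlarging the threshold to some $e_0$ so that $H^1(X,\mathcal{L}^{\otimes e})=H^2(X,\mathcal{L}^{\otimes e})=0$ for $e\geq e_0$ (again Serre vanishing), we obtain $\dim_k H^0(X,\mathcal{L}^{\otimes e})=\chi(\mathcal{L}^{\otimes e})$. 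Then assumption (2), $\dim_k H^0(X,\mathcal{L}^{\otimes e})\geq n+1+2\deg(Q)$, is equivalent to the upper bound $n\leq \tfrac{e^2c_1(\mathcal{L})^2-e\,c_1(\mathcal{L})\cdot K_X}{2}+\dim_k H^2(X,\mathcal{O}_X)-2\deg(Q)$, while remark \ref{rmkaboutitem5} rewrites assumption (5) for $\mathcal{L}^{\otimes e}$ as the lower bound $\tfrac{e^2c_1(\mathcal{L})^2+e\,c_1(\mathcal{L})\cdot K_X}{2}+1\leq n$. These are precisely the right and left endpoints of $I_e$.

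Combining both directions yields the equivalence: for $e\geq e_0$ assumptions (1), (3), (4) always hold, so the full list holds if and only if both (2) and (5) do, i.e. if and only if $n\in I_e$. The ``only if'' is immediate, since (2) and (5) are themselves required assumptions, so violating either endpoint breaks the list. Finally, for such $n$ the triple $(X,\mathcal{L}^{\otimes e},Q)$ satisfies all five assumptions with respect to $n$, and feeding it into Theorem \ref{theorem1} (that is, corollary \ref{corollary=thm1}) gives the stable birationality of $\Hilb^n_X$ and $\Hilb^{n+\deg(Q)}_X$, completing the proof.
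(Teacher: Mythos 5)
Your proposal is correct and follows essentially the same strategy as the paper: treat the five assumptions one at a time, dispose of (1) and (3) by Serre vanishing, and convert (2) and (5) into the right and left endpoints of $I_e$ via Riemann--Roch (using $q(X)=0$, so $\chi(\mathcal{O}_X)=1+\dim_k H^2(X,\mathcal{O}_X)$, and the vanishing $H^i(X,\mathcal{L}^{\otimes e})=0$ for $i>0$, $e\gg 0$), then feed the resulting triple into Theorem \ref{theorem1}. The only point of divergence is assumption (4). The paper sidesteps any uniformity-in-$x$ issue by factoring $e\geq e_g+e_{va}$ with $\mathcal{L}^{\otimes e_g}\otimes\mathcal{I}_Q$ globally generated and $\mathcal{L}^{\otimes e_{va}}$ very ample; as in the proof of \cite[Tag 0FD5]{deJ} this yields a locally closed immersion $X\setminus Q\hookrightarrow \mathbb{P}(H^0(X,\mathcal{L}^{\otimes e}\otimes \mathcal{I}_Q)^{\vee})$, and an immersion separates tangent vectors, which is exactly assumption (4) in the form of remark \ref{remarksurjectivity}. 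You instead invoke uniform jet separation ($k$-very ampleness of $\mathcal{L}^{\otimes e}$ for $e\gg 0$) applied to the length-$(3+\deg Q)$ subschemes $2x\sqcup Q$; your reduction from this to assumption (4) is sound (a section hitting a prescribed $1$-jet at $x$ and vanishing on the reduced $Q$ lies in $H^0(X,\mathcal{L}^{\otimes e}\otimes\mathcal{I}_Q)$), and the uniform statement you appeal to is a true, citable fact, but it is the one step in your write-up still requiring an argument: the standard proof runs relative Serre vanishing for the ideal sheaf of the universal subscheme over the projective (hence quasi-compact) Hilbert scheme $\Hilb^{3+\deg(Q)}_X$, together with cohomology and base change. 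Either route produces the threshold $e_0$; the paper's trick simply trades that boundedness argument for global generation plus very ampleness.
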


\begin{proof}
Let us consider one assumption at a time.\\

\textbf{Assumption 1.} By Serre vanishing theorem, the group $H^1(X,\mathcal{L}^{\otimes e}\otimes \mathcal{I}_Q)=0$ for any $e>>0$.
Therefore, the restriction map 
\[
    H^0(X,\mathcal{L}^{\otimes e} )\longrightarrow H^0(X,\mathcal{L}^{\otimes e}|_Q)\longrightarrow 0
\]
is surjective for any $e>>0$.\\

\textbf{Assumption 2.} Since $\mathcal{L}$ is ample, again by Serre vanishing theorem $H^i(X,\mathcal{L}^{\otimes e} )=0$ for any $i>0$ and $e>>0$. 
Then, by Riemann Roch, we have that for any $e>>0$
\[
    \dim_k H^0(X, \mathcal{L}^{\otimes e} )=
    \chi (X, \mathcal{L}^{\otimes e} ) = 
    \frac{ e^2 c_1(\mathcal{L})^2-e\,  c_1(\mathcal{L}) \cdot K_X }{2} 
    + 1 +\dim_k H^2(X, \mathcal{O}_X).
\]

\textbf{Assumption 3.} Since $\mathcal{L}$ is ample, the sheaf $\mathcal{L}^{\otimes e} \otimes \mathcal{I}_Q$ is generated by global sections for any $e>>0$.\\

\textbf{Assumption 4.} 
We need to check that for each $\overline{k}$-point $x\in X_{\overline{k}}\setminus Q_{\overline{k}}$ the set 
\[
    \{ s\in H^0(X,\mathcal{L}\otimes \mathcal{I}_Q)_{\overline{k}}: s_x\in m_x\mathcal{L}_x\}
\] 
spans the $\overline{k}$-vector space $m_x\mathcal{L}_x/m^2_x\mathcal{L}_x$.
As seen in the proof of \cite[Tag 0FD5]{deJ}, if we pick $e \geq e_g+e_{va}$ where $\mathcal{L}^{e_g}\otimes \mathcal{I}_Q$ is globally generated and $\mathcal{L}^{e_{va}}$ is very ample then we have 
a locally closed immersion 
\[
X\setminus Q\longrightarrow \mathbb{P}(H^0(X,\mathcal{L}^{\otimes e}\otimes \mathcal{I}_Q)^{\vee} ).
\]
This is enough to ensure that assumption \hyperlink{assumptions}{(4)} is satisfied (see \cite[Prop. II.7.3]{Harty}).\\

\textbf{Assumption 5.} Finally, the arithmetic genus $p_a(C)$ of any curve $C\in |\mathcal{L}^{-\otimes e}|$ is equal to
\[
    \frac{c_1(\mathcal{L}^{\otimes e} )^2 +(c_1(\mathcal{L}^{\otimes e} )\cdot K_X )}{2} +1 = \frac{e^2 c_1(\mathcal{L})^2 +  e \, c_1(\mathcal{L})\cdot K_X}{2}+1.
\]

Therefore for $e>>0$ the only restrictions given by the \hyperlink{assumptions}{assumptions} are the inequalities about $n$.
\end{proof}

Notice that the theorem has a meaning only after one checks that some predicted intervals $I_e$ are non-empty (see subsection \ref{geometricallyrat}).

\subsection{Blow-ups of surfaces}

Let $\widetilde{X}$ be the blow up of $X$ along some geometrically reduced effective $0$-cycle $Q'$.
Then by fact \ref{factonstablebira} we have that $\Hilb^n_X$ is stably birational to $\Hilb^n_{\widetilde{X}}$.

We aim to exploit theorem \ref{theoremgap1} for $\widetilde{X}$ instead of $X$, and gain information on the stable birationality classes of $\Hilb^n_X$.
Let us fix the following general framework.

\begin{framework*}\label{framework}
    Let $(X,\mathcal{L},Q)$ be a suitable triple.
    Let us furthermore fix a geometrically reduced effective $0$-cycle $Q'$ of degree $d'$, disjoint from $Q$.

    Consider the blow-up $\widetilde{X}=Bl_{Q'}X$ of $X$ at $Q'$.
    Call $E$ the exceptional divisor above $Q'$.
    Then we have the cartesian diagram
    \[
         \begin{tikzcd}
             E
             \arrow[r, "{i_E}", hook]
             \arrow[d, "{b_E}"]
             &
             \widetilde{X}
             \arrow[d, "{b}"]
             \\
             Q'
             \arrow[r, hook, "{i'}"]
             &
             X.
         \end{tikzcd}
    \]
    Denote by $\widetilde{\mathcal{L}}_e$ the line bundle $\widetilde{\mathcal{L}}_e=b^*(\mathcal{L}^{\otimes e})\otimes \mathcal{O}_{\widetilde{X}}(-2E)$ on $\widetilde{X}$. 
    Finally, let $\widetilde{Q}$ be the pullback $b^{-1}(Q)$ in $\widetilde{X}$.

    Now we want to study the triples $(\widetilde{X}, \widetilde{\mathcal{L}}_e, \widetilde{Q})$.
    These notations will be used consistently throughout this subsection.
\end{framework*}

Before proceeding further, let us now see that given an effective $0$-cycle $Z$ on any (smooth, projective, geometrically connected) surface $X$ there is always a smooth effective $0$-cycle $Q'$ of degree $d'\geq 2\deg(Q)$ disjoint from $Z$.
Recall first the usual avoidance lemma.

\begin{lemma}\label{LemmaavoidQ}
Given an effective $0$-cycle $Z$ in $\mathbb{P}^m_k$, there exists a non-empty (and hence dense) open $U \subseteq (\mathbb{P}^m_k)^{*}$ of hyperplanes of $\mathbb{P}^m_k$ avoiding $Z$. 

If $k$ is infinite, this means that there exist infinitely many $k$-hyperplanes in $\mathbb{P}^m_k$ avoiding $Z$.
\end{lemma}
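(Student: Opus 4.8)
The plan is to identify the locus of hyperplanes that \emph{meet} $Z$ as the image of an incidence correspondence and to show by a dimension count that this locus is a proper closed subset. Since the condition of avoiding $Z$ depends only on the underlying points, I would first replace the $0$-cycle $Z$ by its support $|Z|$, a finite closed subscheme of $\mathbb{P}^m_k$; a hyperplane avoids $Z$ exactly when it avoids each point of $|Z|$.

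First I would introduce the incidence correspondence
\[
    I = \{ (z,H) \in |Z| \times_k (\mathbb{P}^m_k)^{*} : z \in H \} \subseteq |Z| \times_k (\mathbb{P}^m_k)^{*},
\]
a closed subscheme cut out by the universal incidence relation between points and hyperplanes. The projection $I \to |Z|$ has, over each geometric point, fiber equal to the set of hyperplanes through that point, which under projective duality is a hyperplane — hence an $(m-1)$-dimensional linear subspace — of the dual space $(\mathbb{P}^m_k)^{*}$. As $|Z|$ is $0$-dimensional, this yields $\dim I = m-1$.

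Next I would consider the second projection $p \colon I \to (\mathbb{P}^m_k)^{*}$. Because $|Z|$ is finite over $k$, the map $|Z| \times_k (\mathbb{P}^m_k)^{*} \to (\mathbb{P}^m_k)^{*}$ is finite, so $p$ is proper and its image $B \coloneqq p(I)$ is closed with $\dim B \leq \dim I = m-1 < m = \dim (\mathbb{P}^m_k)^{*}$. A hyperplane $H$ meets $Z$ precisely when $H \in B$, so $U \coloneqq (\mathbb{P}^m_k)^{*} \setminus B$ is exactly the locus of hyperplanes avoiding $Z$. Since $B$ is a proper closed subset of the irreducible variety $(\mathbb{P}^m_k)^{*}$, the open $U$ is non-empty and therefore dense.

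Finally, for the last assertion I would invoke the standard fact that over an infinite field the $k$-points of $\mathbb{P}^m_k$ are Zariski dense, so the non-empty open $U$ contains infinitely many $k$-rational points; each is a $k$-hyperplane avoiding $Z$. The only place requiring care — and the mild obstacle — is the fiber dimension of $I \to |Z|$ when $Z$ has closed points with non-trivial residue field: there one checks over $\overline{k}$, or directly from the incidence relation, that the hyperplanes through a closed point still form a linear subspace of codimension one, so the bound $\dim I = m-1$ is unaffected. Beyond this, the argument is the usual avoidance count.
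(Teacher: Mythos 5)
Your proof is correct, and it takes a genuinely different route from the paper's. You form the incidence correspondence $I \subseteq |Z| \times_k (\mathbb{P}^m_k)^{*}$, bound $\dim I \leq m-1$ via the fibers of the first projection (handling non-rational points correctly: over the residue field $k(z)$ the point becomes rational and the hyperplanes through it form a $\mathbb{P}^{m-1}_{k(z)}$ in the dual space), and then use finiteness of $|Z|$ over $k$ to see that the locus $B$ of hyperplanes meeting $Z$ is the closed image of a proper map, of dimension at most $m-1 < m$; openness and non-emptiness of $U = (\mathbb{P}^m_k)^{*}\setminus B$ then come out simultaneously. The paper argues differently: it reduces to $Z$ reduced, observes that a hyperplane meeting an integral component $Z_i$ must contain it, so that the bad locus is a finite union of linear subspaces of the dual space (giving openness), and then proves non-emptiness by passing to $\overline{k}$ and running a prime-avoidance argument --- if every linear form vanished somewhere on $Z$, then $k[x_0,\dots,x_m]_1$ would be a finite union of the proper subspaces $k[x_0,\dots,x_m]_1 \cap P_i$, forcing all linear forms into a single homogeneous prime $P_i$, contradicting that $P_i$ does not contain the irrelevant ideal. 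Your dimension count buys a uniform, purely geometric argument that never needs to leave the base field $k$ and avoids the ``a vector space over an infinite field is not a finite union of proper subspaces'' fact, which is precisely why the paper must first base change to $\overline{k}$; the paper's argument buys elementarity, using only graded commutative algebra with no dimension theory or properness of morphisms.
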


\begin{proof}
Avoiding $Z$ is the same as avoiding its reduction $Z_{\textup{red}}$: then we can assume $Z$ to be reduced.
Write $Z$ as the disjoint union of its integral (disjoint) components $\sqcup Z_i$.
If an hyperplane $H$ intersect $Z_i$ then $Z_i\subset H$: hence the locus $U$ avoiding $Z$ is open.

Notice that $U$ is non-empty if and only if $U_{\overline{k}}$ is non-empty. 
Therefore, without loss of generality we can work on the algebraic closure $\overline{k}$.

Let $P_i$ the homogeneous prime ideal associated to $Z_i$.
Assume by contradiction that $U$ is empty: that means that any homogeneous polynomial $F$ of degree $1$ is contained in at least one of the $P_i$. 
Namely $k[x_0,\dots, x_m]_1\subseteq \cup_i P_i$.
Hence 
\[
    k[x_0,\dots, x_m]_1 = \cup_i ( k[x_0,\dots, x_m]_1 \cap P_i ) = \cup_i \textup{Span}_k \langle  k[x_0,\dots, x_m]_1 \cap P_i \rangle .
\]
Since $k[x_0,\dots, x_m]_1$ is an infinite dimensional vector space and $k$ is infinite (because $k$ is algebraically closed), it cannot be a finite union of proper subspaces of itself.

Then there is a $P_i$ such that 
\[
    k[x_0,\dots, x_m]_1 = \textup{Span}_k \langle  k[x_0,\dots, x_m]_1 \cap P_i \rangle \subseteq P_i.
\]
Therefore, $k[x_0,\dots, x_m]_+\subseteq P_i$, which is not possible.
So we have a dense open $U\subseteq (\mathbb{P}^m_k)^*$: if $k$ is infinite, the open $U$ has infinitely many $k$-rational points.
\end{proof}

\begin{proposition}\label{prop:existsmoothcycleavoidQ}
    Let $X$ be a (smooth, projective and geometrically connected) surface and let $Z$ be a $0$-cycle: then there exists a geometrically reduced effective $0$-cycle $Q'$ of degree $d'\geq 2\deg(Z)$ disjoint from $Z$.
\end{proposition}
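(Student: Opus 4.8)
The plan is to realise $Q'$ as a transverse intersection of the projectively embedded $X$ with hypersurfaces chosen to avoid $Z$, reducing the statement to combining a Bertini-type smoothness result with the avoidance Lemma \ref{LemmaavoidQ}. First I would fix a closed immersion $X \hookrightarrow \mathbb{P}^m_k$ given by a very ample line bundle. The key reduction is that it suffices to produce a smooth projective curve $C \subseteq X$ disjoint from $Z$: indeed, if $Z \cap C = \varnothing$, then every effective $0$-cycle supported on $C$ is automatically disjoint from $Z$, so the problem becomes that of finding a geometrically reduced effective $0$-cycle of degree $\geq 2\deg(Z)$ on the smooth projective curve $C$, which can be arranged by intersecting $C$ with a hypersurface of sufficiently high degree.

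When $k$ is infinite I would argue as follows. By the avoidance Lemma \ref{LemmaavoidQ} the set of hyperplanes $H \subseteq \mathbb{P}^m_k$ with $H \cap Z = \varnothing$ is a dense open of $(\mathbb{P}^m_k)^{*}$, while by classical Bertini (\cite[Thm. II.8.18]{Harty}, or \cite[Tag 0FD6]{deJ}) the set of $H$ with $X \cap H$ smooth is also dense open. Since $k$ is infinite these two dense opens share a $k$-point, yielding a hyperplane $H_1$ for which $C := X \cap H_1$ is a smooth curve with $Z \cap C \subseteq Z \cap H_1 = \varnothing$. Applying Lemma \ref{LemmaavoidQ} and Bertini once more on $C$, a general hypersurface $H_2$ of degree $e$ meets $C$ in a geometrically reduced $0$-cycle $Q' := C \cap H_2$ of degree $e\deg(C)$; choosing $e$ large enough makes $\deg(Q') \geq 2\deg(Z)$, and $Q'$ is disjoint from $Z$ because $C$ is.

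When $k$ is finite the same outline applies, but the density arguments must be replaced by Poonen's Bertini theorem (\cite[Thm. 1.1]{Poonen}), since a single dense open of $(\mathbb{P}^m_k)^{*}$ need not contain a $k$-rational hyperplane. Poonen's theorem produces, among hypersurfaces of sufficiently large degree, one whose intersection with $X$ is smooth, and the method accommodates the finitely many local conditions ``$H$ avoids $x$'' for $x$ in the support of $Z$, so one still obtains a smooth curve $C$ disjoint from $Z$ and then a reduced $0$-cycle on $C$ of the required degree. Alternatively, over a finite field one may bypass Bertini entirely: $X \setminus Z$ has closed points of arbitrarily large degree whose residue fields, being finite, are separable over $k$, so collecting distinct such points until their total degree reaches $2\deg(Z)$ already gives a geometrically reduced $Q'$. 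The main obstacle is precisely this finite-field case, namely guaranteeing through Poonen's theorem that the high-degree section is simultaneously smooth \emph{and} avoids $Z$, so that the clean hyperplane picture available over infinite fields survives.
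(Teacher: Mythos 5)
Your proposal is correct and follows essentially the same route as the paper: both arguments combine the avoidance Lemma \ref{LemmaavoidQ} with Bertini applied twice (a hyperplane section curve of $X$, then a further section of that curve) when $k$ is infinite, and invoke Poonen's Bertini theorem \cite[Thm. 1.1]{Poonen} when $k$ is finite; the only cosmetic difference is that you impose the avoidance of $Z$ on the first hyperplane, whereas the paper imposes it on the second one $H'$, chosen in $U_Z \cap V_{X\cap H}$. Your alternative finite-field argument --- taking finitely many distinct closed points of $X\setminus Z$ of large degree, which are automatically geometrically reduced since finite fields are perfect --- is a clean simplification that bypasses Poonen entirely and is worth noting.
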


\begin{proof}    
Consider a very ample divisor $H_X$ and the closed embeddings $X \xhookrightarrow{} \mathbb{P}^{N_m}_k$ induced by its positive multiples $m H_X$.
Call $d_X$ the degree of the embedding defined by $H_X$.

If $k$ is infinite, let $U_Z$ be the open dense subset of hyperplanes avoiding $Z$ given by lemma \ref{LemmaavoidQ}.
By Bertini's theorem \cite[Thm. II.8.18]{Harty} there is an hyperplane $H\subset \mathbb{P}^{N}_k$ defined over $k$ such that $X\cap H$ is a smooth curve.
Let $V_{X\cap H}$ be the dense open set of hyperplanes in $\mathbb{P}^{N_m}_k$ whose intersection with $X\cap H$ is smooth (provided again by Bertini's).
Then the intersection $U_Z\cap V_{X\cap H}$ is still an open dense subset and (since $k$ is infinite) it contains infinitely many $k$-points.
Pick then $H'\in U_Z\cap V_{X\cap H}(k)$.
This gives a geometrically reduced effective $0$-cycle $Q'=X\cap H\cap H'$ of degree $m^2d_X$ disjoint from $Z$.\\

If $k$ is finite, by Poonen's version of Bertini's for smooth quasi-projective varieties over finite fields \cite[Thm. 1.1]{Poonen} we can pick two hypersurfaces $D_m$ and $D'_m$ of degree $d_m$ and $d_m'$ such that $(X\setminus Z)\cap D_m \cap D_m'$ is a smooth effective $0$-cycle $Q'$ of degree $(m^2\cdot d_m \cdot d_m')d_X$.
\end{proof}

Now, back to framework \ref{framework}, let us check first whether a triple $(\widetilde{X}, \widetilde{\mathcal{L}}_e, \widetilde{Q})$ is indeed suitable.

\begin{lemma}
    For $e>>0$ the triple $(\widetilde{X}, \widetilde{\mathcal{L}}_e, \widetilde{Q})$ is suitable.
\end{lemma}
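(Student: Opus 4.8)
The plan is to verify the three defining conditions of a suitable triple one at a time, noting at the outset that only the ampleness of $\widetilde{\mathcal{L}}_e$ forces $e \gg 0$: the surface $\widetilde{X}$ and the cycle $\widetilde{Q}$ are independent of $e$. First I would check that $\widetilde{X}$ is a suitable surface. Since $Q'$ is a geometrically reduced effective $0$-cycle, it is a smooth center of codimension two in the smooth surface $X$, so the blow-up $b\colon \widetilde{X} \to X$ is again smooth and projective (smoothness being checked after base change to $\overline{k}$, and projectivity because $b$ is projective and $X$ is). As $b$ is proper and birational and $X$ is normal, one has $b_*\mathcal{O}_{\widetilde{X}} = \mathcal{O}_X$, a formation which commutes with base change to $\overline{k}$; hence $H^0(\widetilde{X}_{\overline{k}}, \mathcal{O}) = H^0(X_{\overline{k}}, \mathcal{O}) = \overline{k}$ and $\widetilde{X}$ is geometrically connected. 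For the irregularity I would use that $R^i b_*\mathcal{O}_{\widetilde{X}} = 0$ for $i>0$ (blow-up of a smooth center) together with the Leray spectral sequence to obtain $H^1(\widetilde{X}, \mathcal{O}_{\widetilde{X}}) \cong H^1(X, \mathcal{O}_X) = 0$; this is nothing but the birational invariance of $h^1(\mathcal{O})$ for smooth projective surfaces (\cite[Prop.\ V.3.4]{Harty}) applied over $\overline{k}$. Thus $q(\widetilde{X}) = 0$.

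Second, since $Q$ and $Q'$ are disjoint, $Q$ lies in the open locus of $X$ over which $b$ is an isomorphism, so the scheme-theoretic preimage $\widetilde{Q} = b^{-1}(Q)$ is isomorphic to $Q$. It is therefore a geometrically reduced effective $0$-cycle of the same degree $d = \deg(Q)$.

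Third, and this is the only place where $e \gg 0$ is needed, I would prove that $\widetilde{\mathcal{L}}_e = b^*(\mathcal{L}^{\otimes e}) \otimes \mathcal{O}_{\widetilde{X}}(-2E)$ is ample for $e$ large. Writing $\widetilde{X} = \underline{\textup{Proj}}\big(\bigoplus_{m\geq 0}\mathcal{I}_{Q'}^{\,m}\big)$, the line bundle $\mathcal{O}_{\widetilde{X}}(-E)$ is the relative $\mathcal{O}(1)$ of this presentation, hence $b$-ample, and so its square $\mathcal{O}_{\widetilde{X}}(-2E)$ is $b$-ample as well. Since $\mathcal{L}$ is ample on $X$, the standard fact that a $b$-ample invertible sheaf twisted by a sufficiently large pullback power of an ample sheaf on the base becomes ample (see e.g.\ \cite[\S II.7]{Harty}) yields an integer $e_0 = e_0(X, \mathcal{L}, Q')$ such that $\widetilde{\mathcal{L}}_e$ is ample for all $e \geq e_0$. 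Combining the three steps shows that $(\widetilde{X}, \widetilde{\mathcal{L}}_e, \widetilde{Q})$ is suitable for every $e \gg 0$.

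The real content sits in this last step, which alone uses the large-$e$ hypothesis and rests on the relative ampleness of $-E$; the first two steps are formal consequences of standard blow-up theory and of the disjointness $Q \cap Q' = \varnothing$.
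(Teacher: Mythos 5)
Your proof is correct, and its overall strategy matches the paper's: verify that $\widetilde{X}$ is a suitable surface, observe that $\widetilde{Q}\cong Q$ because $Q\cap Q'=\varnothing$, and reduce the ampleness of $\widetilde{\mathcal{L}}_e$ to the relative ampleness of $-E$ over $X$. The difference lies in how that last (and only $e$-dependent) step is carried out. The paper argues concretely: it chooses $e$ so that $\mathcal{I}_{Q'}\otimes\mathcal{L}^{\otimes e}$ is globally generated, uses the resulting surjection $\mathcal{O}_X^{r_e+1}\twoheadrightarrow \mathcal{I}_{Q'}\otimes\mathcal{L}^{\otimes e}$ to produce a closed immersion $\widetilde{X}\hookrightarrow\mathbb{P}^{r_e}_X$, composes with an embedding of $X$ by a very ample power $\mathcal{L}^{\otimes e_{va}}$ and a Segre embedding, and reads off that $\mathcal{O}_{\widetilde{X}}(-E)\otimes b^*\mathcal{L}^{\otimes e+e_{va}}$ is very ample. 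You instead invoke the general theorem (EGA~II~4.6.13, or the exercise in Hartshorne \S II.7 you allude to): a $b$-ample invertible sheaf twisted by a sufficiently large pullback power of an ample sheaf on the base becomes ample. Your route is shorter and in one respect cleaner: since $\mathcal{O}_{\widetilde{X}}(-2E)$ is itself $b$-ample (being a positive power of the relative $\mathcal{O}(1)$, namely $\mathcal{O}_{\widetilde{X}}(-E)$), the factor $-2E$ appearing in $\widetilde{\mathcal{L}}_e$ is handled directly, whereas the paper's explicit embedding yields a very ample bundle involving only $-E$, and the passage to $-2E$ (squaring, or tensoring with a globally generated pullback) is left implicit. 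Conversely, the paper's construction is self-contained and shows $\widetilde{\mathcal{L}}_e$ is in fact \emph{very} ample, not merely ample. Your treatment of the first step is also more detailed than the paper's terse one ($q(\widetilde{X})=0$ via $R^ib_*\mathcal{O}_{\widetilde{X}}=0$ and Leray, geometric connectedness via $b_*\mathcal{O}_{\widetilde{X}}=\mathcal{O}_X$, versus the paper's citation of the birational invariance of $H^1(\cdot,\mathcal{O})$ and of the Stacks Project for geometric integrality); both are sound.
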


\begin{proof}
The surface $\widetilde{X}$ is a suitable surface (it is geometrically integral since it is the blow-up of a geometrically integral scheme, see \cite[Tag 02ND]{deJ}) 
Indeed its $H^1(\widetilde{X},\mathcal{O}_{\widetilde{X}})$ is still trivial (see \cite[Prop. V.3.4]{Harty}).

Since $Q$ is disjoint from $Q'$, the scheme $\widetilde{Q}$ is isomorphic to $Q$ and in particular it is a geometrically reduced effective $0$-cycle of degree $d$ on $\widetilde{X}$.

Therefore, to conclude that $(\widetilde{X}, \widetilde{\mathcal{L}}_e, \widetilde{Q})$ is suitable it is enough to check that for $e>>0$ the line bundle $\widetilde{\mathcal{L}}_e$ is ample.\\

Let $e$ be big enough so that $\mathcal{I}_{Q'}\otimes \mathcal{L}^{\otimes e}$ is globally generated. 
Denote by $\mathcal{O}_{\widetilde{X}/X}(1)$\footnote{Usually it is denoted by $\mathcal{O}_{\widetilde{X}}(1)$. See \cite[Construction p. 160]{Harty}.} the invertible sheaf on the relative projective scheme $\widetilde{X}$ (over $X$).
Let $r_e+1$ be a set of global sections $\{ s_0,\dots,s_{r_e}\}$ which generate the sheaf.
They define a surjective map
\[
    \mathcal{O}_X^{r_e+1}
    \longrightarrow
    \mathcal{I}_{Q'}\otimes \mathcal{L}^{\otimes e}\longrightarrow 0.
\]
This in turns induces a surjective morphism
\[
    \mathcal{O}_X[s_0,\dots,s_{r_e}] 
    \longrightarrow 
    \Sym^* (  \mathcal{I}_{Q'}\otimes \mathcal{L}^{\otimes e} )
    \longrightarrow 0
\]
which means a closed immersion $\widetilde{X}\xhookrightarrow{} \mathbb{P}^{r_e}_X$.
Let $e_{va}$ be an integer such that $\mathcal{L}^{\otimes e_{va}}$ is very ample.
Then we have the sequence of closed embeddings
\begin{equation*}
    \widetilde{X}\xhookrightarrow{} \mathbb{P}^{r_e}_X = \mathbb{P}^{r_e}_k \times X \xhookrightarrow{ \id \times |\mathcal{L}^{\otimes e_{va}}|} \mathbb{P}^{r_e}_k \times \mathbb{P}( H^0(X, \mathcal{L}^{\otimes e_{va}})^{\vee} ) \xhookrightarrow{} \mathbb{P}^N_k
\end{equation*}
(the last one is the Segre embedding).
The (very) ample line bundle associated to the composite embedding is
\[
    \mathcal{O}_{\widetilde{X}/X}(1)\otimes b^*\mathcal{L}^{\otimes e} \otimes b^*\mathcal{L}^{\otimes e_{va}} \cong \mathcal{O}_{\widetilde{X}}(-E)\otimes b^*\mathcal{L}^{\otimes e + e_{va}}.
\]
Therefore for $e>>0$ the line bundle  $\widetilde{\mathcal{L}}_e\cong b^*\mathcal{L}^{\otimes e} \otimes \mathcal{O}_{\widetilde{X}}(-2E)$ is (very) ample.
\end{proof}

\begin{theorem}\label{theoremgap2}
Within the notation of framework \ref{framework}, for $e>>0$ the suitable triple $(\widetilde{X}, \widetilde{\mathcal{L}}_e,\widetilde{Q})$ satisfies the \hyperlink{assumptions}{assumptions} \ref{thefiveassumptions} with respect to $n$ if and only if $n$ belongs to the following interval
\[
    \widetilde{I}_e\coloneqq \left[ \frac{e^2 c_1(\mathcal{L})^2 +  e\, c_1(\mathcal{L})\cdot K_X}{2} +1 - d', \frac{ e^2 c_1(\mathcal{L})^2-e\, c_1(\mathcal{L}) \cdot K_X }{2} + \dim_k H^2(X, \mathcal{O}_X) - 2d - 3d' \right],
\]
in which case $\Hilb^n_X$ is stably birational to $\Hilb^{n+\deg(Q)}_X$.
\end{theorem}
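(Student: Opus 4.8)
The plan is to run the proof of Theorem~\ref{theoremgap1} essentially verbatim, now for the suitable triple $(\widetilde{X}, \widetilde{\mathcal{L}}_e, \widetilde{Q})$ in place of $(X, \mathcal{L}^{\otimes e}, Q)$, and simply recompute the two numerical bounds that actually cut out the interval. As in that proof, assumptions~\hyperlink{assumptions}{(1)}, \hyperlink{assumptions}{(3)} and~\hyperlink{assumptions}{(4)} impose no constraint on $n$: writing $\widetilde{\mathcal{L}}_e = \widetilde{\mathcal{L}}_{e_0} \otimes (b^*\mathcal{L})^{\otimes(e-e_0)}$ with $\widetilde{\mathcal{L}}_{e_0}$ ample (by the previous lemma) and $b^*\mathcal{L}$ nef, the bundle $\widetilde{\mathcal{L}}_e$ is very ample for $e\gg 0$, so Serre vanishing gives $H^1(\widetilde{X}, \widetilde{\mathcal{L}}_e \otimes \mathcal{I}_{\widetilde{Q}}) = 0$ (assumption~\hyperlink{assumptions}{(1)}), global generation of $\widetilde{\mathcal{L}}_e \otimes \mathcal{I}_{\widetilde{Q}}$ (assumption~\hyperlink{assumptions}{(3)}), and the very ample embedding of $\widetilde{X}\setminus \widetilde{Q}$ needed for assumption~\hyperlink{assumptions}{(4)}, exactly as before; here $\deg(\widetilde{Q}) = \deg(Q) = d$ since $Q$ is disjoint from $Q'$. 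Thus the entire content is the two inequalities coming from assumptions~\hyperlink{assumptions}{(5)} and~\hyperlink{assumptions}{(2)}.

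The key input is the intersection theory of the blow-up. Since $Q'$ is a geometrically reduced $0$-cycle of degree $d'$, the exceptional divisor satisfies $E^2 = -d'$, while $b^*D\cdot E = 0$ and $(b^*D_1)\cdot(b^*D_2) = D_1\cdot D_2$ for divisors on $X$, and $K_{\widetilde{X}} = b^*K_X + E$. From $c_1(\widetilde{\mathcal{L}}_e) = e\,b^*c_1(\mathcal{L}) - 2E$ I would compute
\[
  c_1(\widetilde{\mathcal{L}}_e)^2 = e^2 c_1(\mathcal{L})^2 - 4d',
  \qquad
  c_1(\widetilde{\mathcal{L}}_e)\cdot K_{\widetilde{X}} = e\,c_1(\mathcal{L})\cdot K_X + 2d'.
\]

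For assumption~\hyperlink{assumptions}{(5)} the adjunction computation of remark~\ref{rmkaboutitem5}, now on $\widetilde{X}$, gives for any $\widetilde{C}\in|\widetilde{\mathcal{L}}_e|$
\[
  p_a(\widetilde{C}) = \frac{c_1(\widetilde{\mathcal{L}}_e)^2 + c_1(\widetilde{\mathcal{L}}_e)\cdot K_{\widetilde{X}}}{2} + 1
  = \frac{e^2 c_1(\mathcal{L})^2 + e\,c_1(\mathcal{L})\cdot K_X}{2} + 1 - d',
\]
which is exactly the left endpoint of $\widetilde{I}_e$. For assumption~\hyperlink{assumptions}{(2)}, Serre vanishing kills the higher cohomology of $\widetilde{\mathcal{L}}_e$ for $e\gg 0$, so by Riemann--Roch on the surface $\widetilde{X}$,
\[
  \dim_k H^0(\widetilde{X}, \widetilde{\mathcal{L}}_e)
  = \chi(\mathcal{O}_{\widetilde{X}}) + \frac{c_1(\widetilde{\mathcal{L}}_e)^2 - c_1(\widetilde{\mathcal{L}}_e)\cdot K_{\widetilde{X}}}{2},
\]
where $\chi(\mathcal{O}_{\widetilde{X}}) = \chi(\mathcal{O}_X) = 1 + \dim_k H^2(X,\mathcal{O}_X)$, using invariance of $\chi(\mathcal{O})$ under blow-up at smooth points together with $q(X)=0$. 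Substituting the intersection numbers above turns the bound $n \leq \dim_k H^0(\widetilde{X}, \widetilde{\mathcal{L}}_e) - 1 - 2d$ into precisely the right endpoint of $\widetilde{I}_e$, so for $e\gg 0$ the triple satisfies the assumptions with respect to $n$ exactly when $n\in\widetilde{I}_e$.

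Finally, corollary~\ref{corollary=thm1} applied to $(\widetilde{X},\widetilde{\mathcal{L}}_e,\widetilde{Q})$ yields that $\Hilb^n_{\widetilde{X}}$ is stably birational to $\Hilb^{n+d}_{\widetilde{X}}$ for such $n$; since $\widetilde{X} = Bl_{Q'}X$ is birational, hence stably birational, to $X$, Fact~\ref{factonstablebira} together with the birationality of $\Hilb^m$ with $\Sym^m$ shows $\Hilb^m_X$ is stably birational to $\Hilb^m_{\widetilde{X}}$ for all $m$, giving that $\Hilb^n_X$ is stably birational to $\Hilb^{n+\deg(Q)}_X$. I expect the only place requiring care is the blow-up bookkeeping: the asymmetric $-2E$ twist is what shifts the left endpoint by $d'$ but the right endpoint by $3d'$, and a single sign error in $E^2$ or in $K_{\widetilde{X}}$ would corrupt the interval; everything else is a transcription of Theorem~\ref{theoremgap1}.
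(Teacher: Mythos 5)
Your numerical bookkeeping is correct and matches the paper's: $E^2=-d'$, $K_{\widetilde{X}}=b^*K_X+E$, hence $c_1(\widetilde{\mathcal{L}}_e)^2=e^2c_1(\mathcal{L})^2-4d'$ and $c_1(\widetilde{\mathcal{L}}_e)\cdot K_{\widetilde{X}}=e\,c_1(\mathcal{L})\cdot K_X+2d'$, which give exactly the two endpoints of $\widetilde{I}_e$; the final transfer from $\widetilde{X}$ back to $X$ via Fact \ref{factonstablebira} is also fine. The genuine gap is in the cohomological core, where you dispose of assumptions \hyperlink{assumptions}{(1)}, \hyperlink{assumptions}{(3)}, \hyperlink{assumptions}{(4)} and of the identification $h^0=\chi$ needed in \hyperlink{assumptions}{(2)} by invoking ``Serre vanishing, exactly as before.'' Serre's theorems (vanishing and global generation) apply to families $\mathcal{F}\otimes\mathcal{A}^{\otimes e}$ with $\mathcal{A}$ a \emph{fixed ample} bundle. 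Here
\[
\widetilde{\mathcal{L}}_e\otimes\mathcal{I}_{\widetilde{Q}}\cong \bigl(\mathcal{I}_{\widetilde{Q}}\otimes\mathcal{O}_{\widetilde{X}}(-2E)\bigr)\otimes(b^*\mathcal{L})^{\otimes e},
\]
and the twisting bundle $b^*\mathcal{L}$ is nef but \emph{not} ample: it has degree zero on every exceptional curve. The fact that each individual $\widetilde{\mathcal{L}}_e$ is ample (ample tensor nef) does not give any uniform statement as $e$ grows, because the positivity of $\widetilde{\mathcal{L}}_e$ along $E$ is \emph{constant} in $e$ (it restricts to $\mathcal{O}_{E_j}(2)$ on each component of $E$); increasing $e$ adds positivity only in directions pulled back from $X$. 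Even Fujita-type vanishing cannot be quoted here: writing $\widetilde{\mathcal{L}}_e=\mathcal{A}^{\otimes m}\otimes\mathcal{N}$ with $\mathcal{A}$ ample on $\widetilde{X}$ and $m\geq 2$ forces the $E$-coefficient of $\mathcal{N}$ to be positive, hence $\mathcal{N}$ is not nef.

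This obstruction is exactly what the paper's proof is organized around, and it is the part your proposal omits. The paper first proves Lemma \ref{lemmacohomologycomputations}, computing $R^ib_*\mathcal{O}_{\widetilde{X}}(-2E)$ and $R^1b_*\bigl(\mathcal{O}_{\widetilde{X}}(-2E)\otimes\mathcal{I}_{\widetilde{Q}}\bigr)=0$ (these vanishings encode $H^1(\mathbb{P}^1,\mathcal{O}(2))=0$ along the fibers of $b$). Then the Leray spectral sequence and the projection formula convert assumptions \hyperlink{assumptions}{(1)} and \hyperlink{assumptions}{(2)} into statements about $\mathcal{L}^{\otimes e}$ tensored with a fixed coherent sheaf on $X$, where $\mathcal{L}$ genuinely is ample and Serre vanishing applies; assumption \hyperlink{assumptions}{(3)} is likewise handled by descending along $b$ (the counit argument, using $b^*\mathcal{I}_Q\cong\mathcal{I}_{\widetilde{Q}}$ since $Q\cap Q'=\varnothing$), and assumption \hyperlink{assumptions}{(4)} by splitting the twist $-2E$ as $(-E)+(-E)$ so that one factor can be made globally generated and the other very ample. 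Everything you assert is true, but without some version of this descent along $b$ your argument establishes only the numerology of $\widetilde{I}_e$, not that the five assumptions actually hold for $e\gg 0$; the key idea of the paper's proof is the missing ingredient.
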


The proof of theorem \ref{theoremgap2} will be conceptually the same as the proof of theorem \ref{theoremgap1}.
But before checking the \hyperlink{assumptions}{assumptions} one by one, we compute some higher direct images of sheaves and cohomology groups, which will be needed in proof of \ref{theoremgap2}. 

The following are standard computations.

\begin{lemma}\label{lemmacohomologycomputations}
    In the blow-up framework \ref{framework} we have:
    \begin{enumerate}
        \item $R^ib_*(i_{E\, *}\mathcal{O}_E) 
        = \left\{\begin{matrix}
        i'_*\mathcal{O}_{Q'} & \textup{if } i= 0 \\
        0 & \textup{if } i\geq 1.
        \end{matrix}\right.$ ;
        \item $R^ib_*(i_{E\, *}\mathcal{O}_E(1) ) 
        = \left\{\begin{matrix}
        \mathcal{I}_{Q'}/\mathcal{I}^2_{Q'} & \textup{if } i= 0 \\
        0 & \textup{if } i\geq 1
        \end{matrix}\right.$
        \item $R^ib_*\mathcal{O}_{\widetilde{X}}(-E) = \left\{\begin{matrix}
        \mathcal{I}_{Q'} & \textup{if } i= 0 \\
        0 & \textup{if } i\geq 1.
        \end{matrix}\right.$ ;
        \item $R^ib_*\mathcal{O}_{\widetilde{X}}(-2E) = \left\{\begin{matrix}
        \mathcal{I}^2_{Q'} & \textup{if } i= 0 \\
        0 & \textup{if } i\geq 1.
        \end{matrix}\right.$
        \item $R^1b_* (\mathcal{O}_{\widetilde{X}}(-2E)\otimes \mathcal{I}_{\widetilde{Q}})=0$;
        \item $ \dim_k H^0(Q'(2), \mathcal{O}_{Q'(2)}) = 3 \deg(Q')$.
    \end{enumerate}
\end{lemma}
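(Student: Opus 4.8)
The plan is to reduce every statement to the local geometry of the blow-up over $Q'$. Since $b$ is an isomorphism over $X\setminus Q'$, all the higher direct images in question are supported on $Q'$, so it suffices to understand $b$ over a neighbourhood of $Q'$, where it is modelled on the blow-up of a reduced point in a smooth surface. I would record at the outset the two geometric inputs I intend to use repeatedly: first, that $b_*\mathcal{O}_{\widetilde{X}}=\mathcal{O}_X$ and $R^ib_*\mathcal{O}_{\widetilde{X}}=0$ for $i\geq 1$ (the blow-up of a regular codimension-two centre in a regular scheme, hence standard); and second, that $b_E\colon E\to Q'$ realises $E$ as the projective bundle $\mathbb{P}(\mathcal{I}_{Q'}/\mathcal{I}^2_{Q'})$ in the paper's quotient convention for $\mathbb{P}(-)$, with $\mathcal{O}_{\widetilde{X}}(-E)|_E\cong \mathcal{O}_E(1)$.

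Items (1) and (2) then follow directly from the structure of the $\mathbb{P}^1$-bundle $b_E$. Using $b\circ i_E=i'\circ b_E$ and the exactness of $i_{E*}$, I have $R^ib_*(i_{E*}\mathcal{F})=i'_*R^ib_{E*}\mathcal{F}$ for $\mathcal{F}=\mathcal{O}_E$ or $\mathcal{O}_E(1)$. Since the fibres are $\mathbb{P}^1$'s, $b_{E*}\mathcal{O}_E=\mathcal{O}_{Q'}$ and $b_{E*}\mathcal{O}_E(1)=\mathcal{I}_{Q'}/\mathcal{I}^2_{Q'}$ (the latter because $\mathcal{O}_E(1)$ is the universal quotient of $b_E^*(\mathcal{I}_{Q'}/\mathcal{I}^2_{Q'})$), while the first higher direct images vanish because $H^1(\mathbb{P}^1,\mathcal{O})=H^1(\mathbb{P}^1,\mathcal{O}(1))=0$. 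This gives (1) and (2).

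For (3) and (4) I would bootstrap along the exceptional divisor. Pushing forward the structure sequence $0\to\mathcal{O}_{\widetilde{X}}(-E)\to\mathcal{O}_{\widetilde{X}}\to i_{E*}\mathcal{O}_E\to 0$ and feeding in (1) together with $b_*\mathcal{O}_{\widetilde{X}}=\mathcal{O}_X$, $R^ib_*\mathcal{O}_{\widetilde{X}}=0$, the long exact sequence collapses to identify $b_*\mathcal{O}_{\widetilde{X}}(-E)$ with the kernel of the canonical restriction $\mathcal{O}_X\to\mathcal{O}_{Q'}$, namely $\mathcal{I}_{Q'}$, and forces $R^ib_*\mathcal{O}_{\widetilde{X}}(-E)=0$ for $i\geq 1$; this is (3). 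Tensoring the identification $\mathcal{O}_{\widetilde{X}}(-E)|_E\cong\mathcal{O}_E(1)$ into the structure sequence of $E$ yields $0\to\mathcal{O}_{\widetilde{X}}(-2E)\to\mathcal{O}_{\widetilde{X}}(-E)\to i_{E*}\mathcal{O}_E(1)\to 0$; pushing forward and substituting (2) and (3) gives $b_*\mathcal{O}_{\widetilde{X}}(-2E)=\ker(\mathcal{I}_{Q'}\to\mathcal{I}_{Q'}/\mathcal{I}^2_{Q'})=\mathcal{I}^2_{Q'}$ together with the vanishing of the higher images, which is (4). Item (5) is then immediate: tensoring the ideal sequence $0\to\mathcal{I}_{\widetilde{Q}}\to\mathcal{O}_{\widetilde{X}}\to\mathcal{O}_{\widetilde{Q}}\to 0$ by $\mathcal{O}_{\widetilde{X}}(-2E)$ and using that $\widetilde{Q}$ is disjoint from $E$ (so $b$ is an isomorphism near $\widetilde{Q}$ and $R^1b_*$ of a sheaf supported on $\widetilde{Q}$ vanishes), the vanishing $R^1b_*\mathcal{O}_{\widetilde{X}}(-2E)=0$ from (4) propagates to $R^1b_*(\mathcal{O}_{\widetilde{X}}(-2E)\otimes\mathcal{I}_{\widetilde{Q}})=0$.

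Finally, (6) is a length count. Writing $Q'(2)$ for the thickening $\Spec(\mathcal{O}_X/\mathcal{I}^2_{Q'})$, the conormal sequence $0\to\mathcal{I}_{Q'}/\mathcal{I}^2_{Q'}\to\mathcal{O}_{Q'(2)}\to\mathcal{O}_{Q'}\to 0$ of sheaves on the finite scheme $Q'(2)$ gives $\dim_k H^0(\mathcal{O}_{Q'(2)})=\dim_k H^0(\mathcal{I}_{Q'}/\mathcal{I}^2_{Q'})+\dim_k H^0(\mathcal{O}_{Q'})$. Because $Q'$ is reduced and $X$ is smooth of dimension two, $\mathcal{I}_{Q'}/\mathcal{I}^2_{Q'}$ is locally free of rank two over $\mathcal{O}_{Q'}$, so the two terms equal $2\deg(Q')$ and $\deg(Q')$, yielding $3\deg(Q')$. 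I expect the only genuinely delicate point to be the bookkeeping in (4): one must check that the map induced on pushforwards by the canonical restriction $\mathcal{O}_{\widetilde{X}}(-E)\to i_{E*}(\mathcal{O}_{\widetilde{X}}(-E)|_E)$ is the canonical surjection $\mathcal{I}_{Q'}\to\mathcal{I}_{Q'}/\mathcal{I}^2_{Q'}$, so that its kernel is exactly $\mathcal{I}^2_{Q'}$ and not merely a subsheaf of the expected colength; this, together with keeping the quotient convention for $\mathbb{P}(-)$ straight so that $b_{E*}\mathcal{O}_E(1)$ is $\mathcal{I}_{Q'}/\mathcal{I}^2_{Q'}$ and not its dual, is where care is needed.
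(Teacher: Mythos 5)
Your proposal is correct and follows essentially the same route as the paper: identify $b_E\colon E\to Q'$ as a $\mathbb{P}^1$-bundle with $\mathcal{O}_{\widetilde{X}}(-E)|_E\cong\mathcal{O}_E(1)$ to get (1)--(2), push forward the sequences $0\to\mathcal{O}_{\widetilde{X}}(-E)\to\mathcal{O}_{\widetilde{X}}\to i_{E*}\mathcal{O}_E\to 0$ and $0\to\mathcal{O}_{\widetilde{X}}(-2E)\to\mathcal{O}_{\widetilde{X}}(-E)\to i_{E*}\mathcal{O}_E(1)\to 0$ for (3)--(4), and deduce (5)--(6) from the disjointness of $Q$ and $Q'$ together with a local length count. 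The only deviations are cosmetic: the paper identifies $b_{E*}\mathcal{O}_E(1)$ by a direct stalk computation rather than by the universal-quotient property of $\mathcal{O}(1)$, proves (5) by pushing forward the structure sequence of the thickening $E(2)\sqcup\widetilde{Q}$ instead of your twist of the ideal sequence of $\widetilde{Q}$, and performs the count in (6) after base change to $\overline{k}$ rather than via the conormal sequence, so all three variants lead to the same conclusions.
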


    \begin{proof}
        For any $i\ge 0$ 
        we have
        \[
            i'_* R^i(b_E)_*\mathcal{O}_E
            \cong R^i(b\circ i_E)_*\mathcal{O}_E
            \cong R^ib_* (i_{E\, *}\mathcal{O}_E).
        \]
        This, combined with the following isomorphisms
        \[
            R^i(b_E)_*\mathcal{O}_E
            \cong 
            H^i(E,\mathcal{O}_E)^{\sim} 
            \cong
            H^i(\mathbb{P}^1_{Q'},\mathcal{O}_{\mathbb{P}^1_{Q'}})^{\sim} ,
        \]
        implies that
        \[
            R^ib_*(i_{E\, *}\mathcal{O}_E) 
            = \left\{\begin{matrix}
            i'_*\mathcal{O}_{Q'} & \textup{if } i= 0 \\
            0 & \textup{if } i\geq 1.
            \end{matrix}\right.
        \]
        Similarly for any $i\ge 0$ we have
        \[
            i'_* R^i(b_E)_*\mathcal{O}_E(1)
            \cong R^i(b\circ i_E)_*\mathcal{O}_E(1)
            \cong R^ib_* (i_{E\, *}\mathcal{O}_E(1)),
        \]
        which, combined with
        \[
            R^i(b_E)_*\mathcal{O}_E(1)) 
            \cong 
            H^i(E,\mathcal{O}_{E}(1))^{\sim}
            \cong
            H^i(\mathbb{P}^1_{Q'},\mathcal{O}_{\mathbb{P}^1_{Q'}}(1))^{\sim},
        \]
        gives 
        \[ 
            R^ib_*(i_{E\, *}\mathcal{O}_E(1) ) 
            = \left\{\begin{matrix}
            \mathcal{I}_{Q'}/\mathcal{I}^2_{Q'} & \textup{if } i= 0 \\
            0 & \textup{if } i\geq 1
            \end{matrix}\right.
        \]
        (the result for $i=0$ follows from the direct computation of the stalks at each point of $X$). \\

        Consider the push-forward of the exact sequence defining $E$:
        thanks to what we have just proven about $R^ib_*\mathcal{O}_E$, it looks as follows:
        \[
            0\longrightarrow b_*\mathcal{O}_{\widetilde{X}}(-E)\longrightarrow \mathcal{O}_{X} \longrightarrow i'_*\mathcal{O}_{Q'}
            \longrightarrow R^1b_*\mathcal{O}_{\widetilde{X}}(-E)\longrightarrow 0 \longrightarrow 0 \longrightarrow R^2b_*\mathcal{O}_{\widetilde{X}}(-E) \longrightarrow 0
        \]
        (the vanishing of $R^2b_*\mathcal{O}_{\widetilde{X}}=0$ is proved in \cite[Thm. 1.1]{ChR}).
        Since $\mathcal{O}_X\longrightarrow i'_*\mathcal{O}_{Q'}$ is surjective we have
        \[
            R^ib_*\mathcal{O}_{\widetilde{X}}(-E) = \left\{\begin{matrix}
            \mathcal{I}_{Q'} & \textup{if } i= 0 \\
            0 & \textup{if } i\geq 1.
            \end{matrix}\right.
        \]
        Finally, consider the exact sequence
        \[
            0\longrightarrow 
            \mathcal{O}_{\widetilde{X}}(-2E)
            \longrightarrow 
            \mathcal{O}_{\widetilde{X}}(-E)
            \longrightarrow 
            \mathcal{O}_{\widetilde{X}}(-E)|_E
            \longrightarrow 0.
        \]
        Consider the connected (and hence integral) components $Q_j' = \Spec (k_j')$ of $Q'$: it follows that $E=\sqcup E_j \cong \sqcup \mathbb{P}^1_{k_j'}$.
        Since $E_j^2 = -\deg (k_j')$ and $\Pic(E)\cong \deg (k_j') \mathbb{Z}$ we have that $\mathcal{O}_{\widetilde{X}}(-E)|_E\cong \mathcal{O}_E(1)$.
        Therefore we get that 
        \[
            0\longrightarrow b_*\mathcal{O}_{\widetilde{X}}(-2E)\longrightarrow b_*\mathcal{O}_{\widetilde{X}}(-E) \longrightarrow b_*i_{E\, *}\mathcal{O}_{E}(1)
            \longrightarrow R^1b_*\mathcal{O}_{\widetilde{X}}(-2E)\longrightarrow 0
        \]
        and that $R^2b_*\mathcal{O}_{\widetilde{X}}(-2E)$ is trivial.
        Therefore 
        \[
            R^ib_*\mathcal{O}_{\widetilde{X}}(-2E) = \left\{\begin{matrix}
            \mathcal{I}^2_{Q'} & \textup{if } i= 0 \\
            0 & \textup{if } i\geq 1.
            \end{matrix}\right.
        \]
        Let $E(2)$ be the thickening of $E$ defined by $\mathcal{O}_{\widetilde{X}}(-2E)$ and let $Q'(2)$ be the corresponding thickening of $Q'$ defined by the ideal $\mathcal{I}^2_{Q'}$.
            
        From the discussion above we have that the sheaf $b_*\mathcal{O}_{E(2)}$ coincides $\mathcal{O}_{Q'(2)}$.
        Therefore, since $Q$ is disjoint from $Q'$ we get that $b_*\mathcal{O}_{E(2) \sqcup \widetilde{Q}} \cong \mathcal{O}_{Q'(2) \sqcup Q}$ as well. 
    
        In particular we get the following exact sequences
        \begin{align*}
            0\longrightarrow
            b_*(\mathcal{O}_{\widetilde{X}}(-2E) \otimes \mathcal{I}_{\widetilde{Q}})
            \longrightarrow
            \mathcal{O}_{X} \longrightarrow
            \mathcal{O}_{Q'(2)\sqcup Q}
            \longrightarrow
            R^1b_*(\mathcal{O}_{\widetilde{X}}(-2E) \otimes \mathcal{I}_{\widetilde{Q}})
            \longrightarrow0.
        \end{align*}
        Since the map $\mathcal{O}_{X} \longrightarrow
        \mathcal{O}_{Q'(2)\sqcup Q}$ is surjective, the sheaf $R^1b_* ( \mathcal{O}_{\widetilde{X}}(-2E)\otimes \mathcal{I}_{\widetilde{Q}} )=0$.\\

        Let us now compute the dimension of $H^0(Q'(2), \mathcal{O}_{Q_2'(2)})$: in order to do so we can pass to the algebraic closure $\overline{k}$. Over $\overline{k}$ the scheme $Q'(2)$ is the disjoint union of $\deg(Q')$-copies of $\Spec(\overline{k)}$: call $m_i$ the maximal ideal corresponding to the $i$-th connected component. 
        Then the thickening is given by $m_i^2$.
        So for each of them, the dimension of the global sections is
        \[
            \dim_{\overline{k}} (A_i/m_i^2) = \dim_{\overline{k}} A_i/m_i \oplus m_i/m_i^2 =3,
        \]
        where $\Spec(A_i)$ is an open of $X_{\overline{k}}$ around the $i$-th connected component of $Q'$.
        Therefore
        \[
            \dim_k H^0(Q'(2), \mathcal{O}_{Q'(2)}) = 3 \deg(Q').
        \]
    \end{proof}

    \begin{proof}[Proof of Theorem \ref{theoremgap2}]
        As in the proof of theorem \ref{theoremgap1}, let us check the \hyperlink{assumptions}{assumptions} one at a time.\\

        \textbf{Assumption 1.} It is enough to prove that $H^1(\widetilde{X},
        \widetilde{\mathcal{L}}_e \otimes \mathcal{I}_{\widetilde{Q}} )=0$ for $e>>0$.
        The Leray Spectral Sequence for the sheaf $\widetilde{\mathcal{L}}_e \otimes \mathcal{I}_{\widetilde{Q}}$ with respect to the map $b$ tells us 
            \[
                H^1(X,b_*(\widetilde{\mathcal{L}}_e \otimes \mathcal{I}_{\widetilde{Q}}))
                \longrightarrow 
                H^1(\widetilde{X},\widetilde{\mathcal{L}}_e \otimes \mathcal{I}_{\widetilde{Q}})
                \longrightarrow
                H^0(X,R^1b_*(\widetilde{\mathcal{L}}_e \otimes \mathcal{I}_{\widetilde{Q}})).
            \]
        Using the projection formula
        \begin{align*}
            R^ib_*( \widetilde{\mathcal{L}}_e \otimes \mathcal{I}_{\widetilde{Q}} ) \cong 
            \mathcal{L}^{\otimes e}  \otimes R^ib_* (\mathcal{O}_{\widetilde{X}}(-2E)\otimes \mathcal{I}_{\widetilde{Q}}),
        \end{align*}
        and Serre vanishing theorem, we get that the cohomology group $H^1(X,b_*(\widetilde{\mathcal{L}}_e \otimes \mathcal{I}_{\widetilde{Q}}))$ vanish for $e>>0$.
        Since the sheaf $R^1b_* (\mathcal{O}_{\widetilde{X}}(-2E)\otimes \mathcal{I}_{\widetilde{Q}})$ is already trivial (by lemma \ref{lemmacohomologycomputations}), the cohomology group $H^0(X,R^1b_*(\widetilde{\mathcal{L}}_e \otimes \mathcal{I}_{\widetilde{Q}}))$ is trivial as well.
        Hence we get the vanishing of $H^1(\widetilde{X},
        \widetilde{\mathcal{L}}_e \otimes \mathcal{I}_{\widetilde{Q}} )=0$ for $e>>0$.\\

         \textbf{Assumption 2.} By lemma \ref{lemmacohomologycomputations} the sheaves $R^ib_*(\mathcal{O}_{\widetilde{X}}(-2E) )$ are trivial for $i\geq 1$.
         Hence, the cohomology groups $H^j(X,\mathcal{L}^{\otimes e}\otimes R^ib_* ( \mathcal{O}_{\widetilde{X}}(-2E) ))$ are all zeroes (for $i\geq 1$).
         Therefore, by Leray Spectral sequence
        \[
            H^i(X,b_*\widetilde{\mathcal{L}}_e)\cong H^i(\widetilde{X}, \widetilde{\mathcal{L}}_e),\quad \forall i\geq 0.
        \]
        By projection formula, they are all trivial for $i\geq 1$ and $e>>0$.
        So for $e>>0$ we get $\dim_k H^0(\widetilde{X}, \widetilde{\mathcal{L}}_e) = \chi(\widetilde{X}, \widetilde{\mathcal{L}}_e)$.
        By Riemann-Roch we then have
        \[
            \dim_k H^0(\widetilde{X}, \widetilde{\mathcal{L}}_e) = \frac{c_1(\widetilde{\mathcal{L}}_e)^2-c_1(\widetilde{\mathcal{L}}_e)\cdot K_{\widetilde{X}}}{2} +\chi(\widetilde{X}, \mathcal{O}_{\widetilde{X}}).
        \]
        By \cite[Prop. V.3.4]{Harty} we have $\chi(\widetilde{X}, \mathcal{O}_{\widetilde{X}} = 1+ \dim_k H^2(X,\mathcal{O}_X)$.
        Using the structure of the intersection theory on the blow-up (see \cite[Prop. V.3.2]{Harty}), we get
        \[
            \dim_k H^0(\widetilde{X}, \widetilde{\mathcal{L}}_e) = \frac{e^2c_1(\mathcal{L})^2 -e\, c_1(\mathcal{L})\cdot K_X)}{2} - 3d' + 1 + \dim_k H^2(X,\mathcal{O}_X).
        \]
        So for $e>>0$ assumption \hyperlink{assumptions}{(2)} is true with respect to $n$ if and only if 
        \[
              n \leq \frac{ e^2 c_1(\mathcal{L})^2-e\, c_1(\mathcal{L}) \cdot K_X }{2} + \dim_k H^2(X, \mathcal{O}_X) - 2d - 3d'.
        \] 
        
        \textbf{Assumption 3.} In order to show that the sheaf $\widetilde{\mathcal{L}}_e\otimes \mathcal{I}_{\widetilde{Q}}$ is globally generated for $e>>0$, we write it as
        \[
            \widetilde{\mathcal{L}}_e\otimes \mathcal{I}_{\widetilde{Q}} \cong b^*\mathcal{L}^{\otimes e_2} \otimes \mathcal{I}_{\widetilde{Q}} \otimes b^*\mathcal{L}^{\otimes e_1}\otimes \mathcal{O}_{\widetilde{X}}(-2E),
        \]
        where $e=e_1+e_2$ and $e_1$ is an integer such that $b^*\mathcal{L}^{\otimes e_1}\otimes \mathcal{O}_{\widetilde{X}}(-2E)$ is very ample.
        In this way it is sufficient to check that $b^*\mathcal{L}^{\otimes e_2} \otimes \mathcal{I}_{\widetilde{Q}}$ is globally generated.
        
        The counit map $b^*b_*( b^*\mathcal{L}^{\otimes e_2} \otimes \mathcal{I}_{\widetilde{Q}}  ) \longrightarrow b^*\mathcal{L}^{\otimes e_2} \otimes \mathcal{I}_{\widetilde{Q}}$ gives the following commutative diagram
        \begin{equation}\label{diagramconitmap}
            \begin{tikzcd}
                H^0(\widetilde{X}, b^*\mathcal{L}^{\otimes e_2} \otimes \mathcal{I}_{\widetilde{Q}}) \otimes b^*b_*\mathcal{O}_{\widetilde{X}}
                \arrow[r]
                \arrow[d]
                &
                H^0(\widetilde{X}, b^*\mathcal{L}^{\otimes e_2} \otimes \mathcal{I}_{\widetilde{Q}}) \otimes \mathcal{O}_{\widetilde{X}}
                \arrow[d]
                \\
                b^*b_*( b^*\mathcal{L}^{\otimes e_2} \otimes \mathcal{I}_{\widetilde{Q}}  )
                \arrow[r]
                &
                b^*\mathcal{L}^{\otimes e_2} \otimes \mathcal{I}_{\widetilde{Q}}.
            \end{tikzcd}
        \end{equation}
        Recall that, since $Q$ is disjoint from $Q'$, then
        \[
            b_*\mathcal{I}_{\widetilde{Q}}\cong \mathcal{I}_Q \quad \textup{ and } \quad b^*\mathcal{I}_Q \cong \mathcal{I}_{\widetilde{Q}}.
        \]
        Therefore, by projection formula we get $b^*b_*( b^*\mathcal{L}^{\otimes e_2} \otimes \mathcal{I}_{\widetilde{Q}}  ) \cong b^*\mathcal{L}^{\otimes e_2} \otimes \mathcal{I}_{\widetilde{Q}}$.
        So the bottom arrow in \ref{diagramconitmap} is an isomorphism.

        Now, since $b_*\mathcal{O}_{\widetilde{X}}\cong \mathcal{O}_X$ (see \cite[Prop. V.3.4]{Harty}), the left vertical arrow in \ref{diagramconitmap} coincides with
        \[
            b^*\left(  H^0(X, \mathcal{L}^{e_2}\otimes \mathcal{I}_Q) \otimes \mathcal{O}_X \longrightarrow  \mathcal{L}^{\otimes e_2}\otimes \mathcal{I}_{Q} \right).
        \]
        Since $\mathcal{L}$ is ample, the sheaf $\mathcal{L}^{\otimes e-e_1} \otimes \mathcal{I}_Q$ is globally generated for $e>>0$.
        Therefore the left vertical arrow in \ref{diagramconitmap} is surjective (for $e>>0$), which implies that $b^*\mathcal{L}^{\otimes e-e_1} \otimes \mathcal{I}_{\widetilde{Q}}$ is generated by global sections as well for $e>>0$.\\

        \textbf{Assumption 4.} 
        As done in the proof of theorem \ref{theoremgap1} (when we were checking assumption \hyperlink{assumption}{(4)}), it is enough to pick $e\geq e_{g}+e_{va}$ where $b^*\mathcal{L}^{\otimes e_g} \otimes \mathcal{O}_{\widetilde{X}}(-E)\otimes \mathcal{I}_{\widetilde{Q}}$ is globally generated and $e_{va}$ is such that $b^*\mathcal{L}^{\otimes e_{va}} \otimes \mathcal{O}_{\widetilde{X}}(-E)$ is very ample.\\

        \textbf{Assumption 5.} Using again the intersection theory on $\widetilde{X}$ we can compute 
        \[ 
        \frac{c_1(\widetilde{\mathcal{L}}_e)^2 + c_1(\widetilde{\mathcal{L}}_e)\cdot K_{\widetilde{X}} }{2} +1.
        \]
        It is equal to
        \begin{align*}
            \frac{e^2c_1(\mathcal{L})^2 + e\, c_1(\mathcal{L}_e)\cdot K_{X} }{2} + 1 -d'.
        \end{align*}
        Hence the statement of the theorem.
    \end{proof}

\subsection{Necessary and sufficient conditions for Theorems \ref{theoremgap1} and \ref{theoremgap2}}\label{geometricallyrat}\, \\

Theorems \ref{theoremgap1} and \ref{theoremgap2} define intervals $I_e$ and $\widetilde{I_e}$ for $e>>0$.
However, as mentioned previously, these theorems have a meaning only when we check that those intervals are non-empty.
In this subsection, we study when $I_e$ and $\widetilde{I}_e$ are non-empty for infinitely many values of $e$.

\begin{proposition}\label{propositionc1(L)KXnegative}
    Let $(X,\mathcal{L}, Q)$ be a suitable triple. 
    The interval $I_e$ is non-empty for infinitely many $e$ if and only if $c_1(\mathcal{L})\cdot K_X<0$.
    The same holds true for $\widetilde{I}_e$.

    If this happens, the surface $X$ has Kodaria dimension $\kappa(X)=-\infty$. In particular $H^2(X,\mathcal{O}_X)=0$.
    Since we are assuming $H^1(X,\mathcal{O}_X)=0$, the surface $X$ is geometrically rational.
\end{proposition}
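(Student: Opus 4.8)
The plan is to reduce the non-emptiness of $I_e$ to a single inequality whose dependence on $e$ is linear, and then read off the sign of $c_1(\mathcal{L})\cdot K_X$ from the asymptotics. Writing $I_e=[A_e,B_e]$ for the two endpoints appearing in theorem \ref{theoremgap1}, the length of the interval is
\[
    B_e-A_e = -e\,\big(c_1(\mathcal{L})\cdot K_X\big) + \dim_k H^2(X,\mathcal{O}_X) - 2\deg(Q) - 1,
\]
so $I_e\neq\varnothing$ if and only if $-e\,(c_1(\mathcal{L})\cdot K_X)\geq C$, where $C\coloneqq 2\deg(Q)+1-\dim_k H^2(X,\mathcal{O}_X)$ is a constant independent of $e$. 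If $c_1(\mathcal{L})\cdot K_X<0$ the left-hand side tends to $+\infty$, so $I_e\neq\varnothing$ for all $e\gg 0$; conversely, if $c_1(\mathcal{L})\cdot K_X>0$ it tends to $-\infty$ and $I_e=\varnothing$ for $e\gg 0$. This settles the equivalence except in the borderline case $c_1(\mathcal{L})\cdot K_X=0$.

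The borderline case is where I expect the only genuine work. When $c_1(\mathcal{L})\cdot K_X=0$ the condition becomes $0\geq C$, independent of $e$, so either every $I_e$ is empty or every $I_e$ is non-empty; to keep consistency with ``infinitely many'' meaning exactly $c_1(\mathcal{L})\cdot K_X<0$, I must rule out $C\leq 0$, i.e. $\dim_k H^2(X,\mathcal{O}_X)\geq 2\deg(Q)+1\geq 3$. Here I would exploit ampleness of $\mathcal{L}$: by Serre duality $\dim_k H^2(X,\mathcal{O}_X)=\dim_k H^0(X,\omega_X)$, and if this were positive there would exist an effective divisor $K$ with $\mathcal{O}_X(K)\cong\omega_X$. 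Then $\mathcal{L}\cdot K=c_1(\mathcal{L})\cdot K_X=0$ forces $K=0$, because an ample class pairs strictly positively with every nonzero effective divisor; hence $\omega_X\cong\mathcal{O}_X$ and $\dim_k H^0(X,\omega_X)=1<3$, a contradiction. Thus $C>0$ and $I_e=\varnothing$ for every $e$, completing the equivalence. The identical computation, with $C$ replaced by $\widetilde{C}\coloneqq 2\deg(Q)+2d'+1-\dim_k H^2(X,\mathcal{O}_X)$ (again $\geq 3$ in the borderline case), handles $\widetilde{I}_e$ verbatim.

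For the final assertions, suppose $c_1(\mathcal{L})\cdot K_X<0$. If $\kappa(X)\geq 0$, then $H^0(X,\omega_X^{\otimes m})\neq 0$ for some $m\geq 1$, yielding an effective divisor $D$ with $\mathcal{O}_X(D)\cong\omega_X^{\otimes m}$ and hence $m\,(c_1(\mathcal{L})\cdot K_X)=\mathcal{L}\cdot D\geq 0$, a contradiction; therefore $\kappa(X)=-\infty$. In particular all the groups $H^0(X,\omega_X^{\otimes m})$ vanish, so $\dim_k H^0(X,\omega_X)=0$ and $H^2(X,\mathcal{O}_X)=0$ by Serre duality. Finally, combining $H^1(X,\mathcal{O}_X)=0$ (our standing hypothesis) with $\kappa(X_{\overline{k}})=-\infty$ leaves, by the classification of surfaces recalled in the introduction, only the geometrically rational case: a surface ruled over a curve of genus $g$ has irregularity $q=g$, so $q(X)=0$ forces $g=0$ and $X_{\overline{k}}$ is rational.
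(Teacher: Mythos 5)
Your proof is correct and follows essentially the same route as the paper's: reduce non-emptiness of $I_e$ to non-negativity of its width, which is linear in $e$ with slope $-c_1(\mathcal{L})\cdot K_X$, and use the fact that an ample class pairs strictly positively with every nonzero effective divisor to control the constant term $\dim_k H^2(X,\mathcal{O}_X)$.

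One local difference deserves mention. In the borderline case $c_1(\mathcal{L})\cdot K_X=0$ you split off the subcase $\omega_X\cong\mathcal{O}_X$ (effective canonical divisor equal to zero) and dispose of it by computing $\dim_k H^0(X,\omega_X)=1<2\deg(Q)+1$. The paper instead asserts that $c_1(\mathcal{L})\cdot K_X\le 0$ forces $H^0(X,K_X)=0$, arguing that otherwise $K_X$ would be linearly equivalent to an effective divisor $E$ with $c_1(\mathcal{L})\cdot K_X>0$; this tacitly assumes $E\neq 0$ and, as literally stated, fails when $\omega_X\cong\mathcal{O}_X$ (e.g.\ a K3 surface), so your version closes a small imprecision in the paper's own argument --- the proposition itself is unaffected, since in that subcase the width equals $-2\deg(Q)<0$ and the intervals are empty anyway. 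Finally, for geometric rationality you invoke the surface classification together with the equality of irregularity and base-curve genus for ruled surfaces, whereas the paper applies Castelnuovo's rationality criterion ($q=0$ and $H^0(X_{\overline{k}},2K_{X_{\overline{k}}})=0$) directly; both are valid, and the mathematical content is the same since the relevant part of the classification rests on Castelnuovo's criterion.
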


\begin{proof}
    The interval defined in theorem
    \ref{theoremgap1} is non-empty if and only if the difference
    \begin{align*}
        \frac{ e^2 c_1(\mathcal{L})^2-e\, c_1(\mathcal{L}) \cdot K_X }{2} + \dim_k H^2(X, \mathcal{O}_X) -2\deg(Q) - \frac{e^2 c_1(\mathcal{L})^2 +  e \, c_1(\mathcal{L}) \cdot K_X}{2}-1 =
        \\
        = -e \, c_1(\mathcal{L}) \cdot K_X  + \dim_k H^2(X, \mathcal{O}_X) -2\deg(Q) -1
    \end{align*}
    is non-negative.
    If $c_1(\mathcal{L})\cdot K_X > 0$, then for $e>>0$ the difference is negative.
    If $c_1(\mathcal{L})\cdot K_X \leq 0$, then by Nakai-Moishezon Criterion of ampleness, the divisor $K_X$ has no global section (otherwise $K_X$ would be equivalent to an effective divisor $E$ and $c_1(\mathcal{L})\cdot K_X >0$). 
    Therefore by Serre duality $\dim_k H^2(X,\mathcal{O}_X) = \dim_k H^0(X, K_X)=0$ and the difference is $-e\, c_1(\mathcal{L})\cdot K_X -2\deg(Q)-1$, which is non-negative for $e>>0$ if and only if $c_1(\mathcal{L})\cdot K_X<0$.

    By the same argument we get that $H^0(X,mK_X)=0$ for all $m\geq 1$. 
    Therefore $\kappa(X)=\kappa(X_{\overline{k}})=-\infty$.
    Since $H^1(X_{\overline{k}}, \mathcal{O}_{X_{\overline{k}}})$ and $H^0(X_{\overline{k}},
    2K_{X_{\overline{k}}})$ 
    are both trivial, by Castelnuovo's rationality criterion $X_{\overline{k}}$ is rational (see \cite[Thm.\, 1.53]{Pie}). 
\end{proof}

\begin{remark}
For instance, if $X$ is a K3 surface, then $K_X$ is trivial and therefore $c_1(\mathcal{L})\cdot K_X=0$.
So in this case, we cannot apply theorem \ref{theoremgap1} for large $e$.
This was already predicted by Litt's theorem \cite[Thm. 19]{Li}.
\end{remark}

Given proposition \ref{propositionc1(L)KXnegative}, let us restrict our attention to those surfaces with an ample line bundle $\mathcal{L}$ for which the intersection $c_1(\mathcal{L})\cdot K_X$ is negative.

The intervals $I_e = [\min_e, \max_e]$ leave gaps $\textup{gap}_e\coloneqq [\max_e +1 ,  \min_{e+1}-1]$: they have width 
\[
     {\min}_{e+1} -1 - {\max}_e =\frac{(e+1)^2 c_1(\mathcal{L})^2 +  (e+1) c_1(\mathcal{L}) \cdot K_X}{2}  - 
     \frac{ e^2 c_1(\mathcal{L})^2-e\, c_1(\mathcal{L}) \cdot K_X }{2} + 2\deg(Q)
\]
\begin{equation}\label{widthofintervalIe}
    {\min}_{e+1} -1 - {\max}_e =\, (2e+1) \frac{c_1(\mathcal{L})^2 + c_1(\mathcal{L}) \cdot K_X}{2} + 2\deg(Q).
\end{equation}
Therefore the growth of the gaps depend on the sign of $c_1(\mathcal{L})^2 + c_1(\mathcal{L}) \cdot K_X$.

\begin{remark}\label{remarkonthegrowth}
Since $\mathcal{L}$ is ample, by Nakai-Moishezon Criterion, $c_1(\mathcal{L})^2>0$: so  if $c_1(\mathcal{L})^2+c_1(\mathcal{L})\cdot K_X \leq 0$ then the intersection $c_1(\mathcal{L})\cdot K_X$ is negative.
So the intervals $I_e$ and $\widetilde{I}_e$ are growing in width.

Moreover, by equation \ref{widthofintervalIe}, if $c_1(\mathcal{L})^2 + c_1(\mathcal{L}) \cdot K_X<0$ then for $e>>0$ we have that $\min_{e+1}-1\leq \max_e+1$.
Namely, the intervals $I_e$'s contain all the integers $n>>0$.
If $c_1(\mathcal{L})^2 + c_1(\mathcal{L}) \cdot K_X=0$ then the gaps have fixed width equal to $2d$.
If $c_1(\mathcal{L})^2 + c_1(\mathcal{L}) \cdot K_X>0$ then the gaps are growing in width.
\end{remark}

Therefore, for $e>>0$ the gaps happen only in the case $c_1(\mathcal{L})^2 + c_1(\mathcal{L}) \cdot K_X \geq 0$: let us see when the intervals $\widetilde{I}_e$ can help fill these gaps.

\begin{lemma}\label{lemmacovergaps}
    Let $(X,\mathcal{L},Q)$ be a suitable triple where $c_1(\mathcal{L})\cdot K_X<0$  and 
    $c_1(\mathcal{L})^2 + c_1(\mathcal{L}) \cdot K_X \geq 0$.
    Let $Q'$ be as in \ref{framework}.
    Then the following are equivalent:
    \begin{enumerate}
        \item for any $e>>0$ there exists an $f=f(e)$ such that $\textup{gap}_e \subseteq \widetilde{I}_{f}$;
        \item we have $c_1(\mathcal{L})^2 +  c_1(\mathcal{L})\cdot K_X=0$ and $d'\geq 2d$.
    \end{enumerate}
\end{lemma}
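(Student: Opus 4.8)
The plan is to translate the inclusion $\mathrm{gap}_e\subseteq\widetilde I_f$ into two numerical inequalities and then exploit the quadratic spacing of the intervals to pin down $f$. Write $a=c_1(\mathcal{L})^2$ and $b=c_1(\mathcal{L})\cdot K_X$, so that $a>0$ by Nakai--Moishezon, while the hypotheses give $b<0$ and $a+b\geq 0$; since $b<0$, proposition \ref{propositionc1(L)KXnegative} gives $H^2(X,\mathcal{O}_X)=0$. Setting $d=\deg(Q)$, the intervals of theorems \ref{theoremgap1} and \ref{theoremgap2} become $I_e=\bigl[\tfrac{e^2a+eb}{2}+1,\ \tfrac{e^2a-eb}{2}-2d\bigr]$ and $\widetilde I_f=\bigl[\tfrac{f^2a+fb}{2}+1-d',\ \tfrac{f^2a-fb}{2}-2d-3d'\bigr]$, all of whose endpoints are integers (by adjunction $\tfrac{e^2a+eb}{2}=p_a-1\in\mathbb{Z}$). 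Introducing the two functions $\phi(t)=t^2a+t|b|$ and $\psi(t)=t^2a-t|b|$, which are increasing for $t\geq 1$ because $a>0$, a direct computation shows that $\mathrm{gap}_e=[\max_e+1,\ \min_{e+1}-1]$ is contained in $\widetilde I_f$ if and only if both $(\ast)$: $\psi(f)\leq \phi(e)-4d+2d'$ and $(\ast\ast)$: $\phi(f)\geq \psi(e+1)+4d+6d'$ hold. The whole argument rests on the single identity $\psi(e+1)-\phi(e)=(2e+1)(a+b)$.

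For the implication $(2)\Rightarrow(1)$ I would simply take $f=e+1$. Then $(\ast)$ reads $\psi(e+1)-\phi(e)=(2e+1)(a+b)=0\leq -4d+2d'$, which holds precisely because $a+b=0$ and $d'\geq 2d$; and $(\ast\ast)$ reads $\phi(e+1)-\psi(e+1)=2(e+1)|b|\geq 4d+6d'$, which holds for all $e\gg 0$. Hence $\mathrm{gap}_e\subseteq\widetilde I_{e+1}$ for every large $e$.

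For $(1)\Rightarrow(2)$ I would first show $f\geq e+1$ always: if $f\leq e$ then monotonicity of $\phi$ gives $\phi(f)\leq\phi(e)$, so $(\ast\ast)$ forces $\phi(e)-\psi(e+1)=-(2e+1)(a+b)\geq 4d+6d'>0$, impossible since $a+b\geq 0$. Next, feeding $f\geq e+1$ into $(\ast)$ via $\psi(f)\geq\psi(e+1)$ yields $(2e+1)(a+b)\leq -4d+2d'$; if $a+b>0$ this fails for $e\gg 0$, so condition (1) already forces $a+b=0$. Once $a+b=0$ (so $a=|b|$), applying $(\ast)$ to $f\geq e+2$ via $\psi(e+2)-\phi(e)=2a(e+1)$ gives $2a(e+1)\leq -4d+2d'$, which fails for large $e$; thus $f=e+1$ is the only possibility, and substituting $f=e+1$ into $(\ast)$ returns $0\leq -4d+2d'$, i.e.\ $d'\geq 2d$. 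This yields $(1)\Rightarrow(2)$.

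The main obstacle, and the conceptual heart of the lemma, is precisely this forcing of $f$. Because the left endpoints of the $I_e$ grow quadratically, the gap sitting immediately after $I_e$ can only be caught by the single interval $\widetilde I_{e+1}$, and whether it actually fits is governed entirely by the sign of $a+b=c_1(\mathcal{L})^2+c_1(\mathcal{L})\cdot K_X$ through the identity $\psi(e+1)-\phi(e)=(2e+1)(a+b)$. Making this two-sided pinning rigorous, namely ruling out both $f\leq e$ and $f\geq e+2$ for large $e$ using monotonicity of $\phi$ and $\psi$, is the delicate step; everything else is routine bookkeeping with the endpoints.
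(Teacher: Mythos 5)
Your proposal is correct and follows essentially the same route as the paper: both translate $\textup{gap}_e\subseteq\widetilde I_f$ into the two endpoint inequalities, use the quadratic growth of the endpoints together with $c_1(\mathcal{L})^2+c_1(\mathcal{L})\cdot K_X\geq 0$ to pin $f$ to $e+1$ for $e\gg 0$, and then read off $c_1(\mathcal{L})^2+c_1(\mathcal{L})\cdot K_X=0$ and $d'\geq 2d$ from the identity $(2e+1)(c_1(\mathcal{L})^2+c_1(\mathcal{L})\cdot K_X)\leq 2(d'-2d)$. The only differences are cosmetic (the $\phi,\psi$ bookkeeping, and deducing $a+b=0$ from $f\geq e+1$ before fully forcing $f=e+1$); note only that the monotonicity of $\psi$ on $t\geq 1$ uses $a+b\geq 0$ (i.e.\ $|b|\leq a$), not merely $a>0$, which your hypotheses do supply.
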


\begin{proof}
    The interval $\textup{gap}_e$ is contained in $\widetilde{I}_f = [\widetilde{\min}_f, \widetilde{\max}_f]$ if and only if     
    \[
        \widetilde{\min}_f\leq \max_e +1 \textup{ and } \min_{e+1}-1\leq \widetilde{\max}_f
    \]
    or equivalently
    \begin{align*}
        \frac{f^2c_1(\mathcal{L})^2 + f\, c_1(\mathcal{L})\cdot K_X}{2} + 1 -d' \leq & \, \frac{e^2c_1(\mathcal{L})^2 - e\, c_1(\mathcal{L})\cdot K_X}{2} +1-2d,
        \\
        \frac{(e+1)^2c_1(\mathcal{L})^2 + (e+1) c_1(\mathcal{L})\cdot K_X}{2} \leq & \, \frac{f^2c_1(\mathcal{L})^2 - f\, c_1(\mathcal{L})\cdot K_X}{2} -2d -3d' 
    \end{align*}
    Using $c_1(\mathcal{L})\cdot K_X \geq -c_1(\mathcal{L})^2$: then the above inequalities imply (for $e>>0$)
    \begin{align*}
        \frac{(f^2-f)}{2} c_1(\mathcal{L})^2 \leq & \, \frac{(e^2+e)}{2} c_1(\mathcal{L})^2 +d'-2d \implies f\leq e+1
        \\
        \frac{(e^2+e)}{2}c_1(\mathcal{L})^2 \leq & \, \frac{(f^2+f)}{2} c_1(\mathcal{L})^2 -2d -3d' \implies f\geq e+1.
    \end{align*}
    Therefore for $e>>0$, $\textup{gap}_e \subseteq \widetilde{I}_f$ forces $f$ to be equal to $e+1$.
    Therefore, writing the initial inequalities for $\textup{gap}_e\subseteq \widetilde{I}_{e+1}$, we get 
    \begin{align*}
        (2e+1)\frac{c_1(\mathcal{L})^2 +  c_1(\mathcal{L})\cdot K_X}{2} \leq d' -2d
        \ \textup{ and }\
        (e+1) (-c_1(\mathcal{L})\cdot K_X) \geq  2d + 3d',
    \end{align*}
    which implies $c_1(\mathcal{L})^2 +  c_1(\mathcal{L})\cdot K_X=0$ and $d'\geq 2d$.\\
    
    The same computations show that if $c_1(\mathcal{L})^2 +  c_1(\mathcal{L})\cdot K_X=0$ and $d'\geq 2d$ then for $e>>0$ $\textup{gap}_e\subseteq \widetilde{I}_{e+1}$.
\end{proof}

This comparison between $\mathcal{I}_e$ and $\widetilde{I}_e$ leads to the following corollary of theorems \ref{theoremgap1} and \ref{theoremgap2}.
    
\begin{corollary}\label{corollarydefinitestable}
Let $(X,\mathcal{L},Q)$ be a suitable triple where $\mathcal{L}$ is such that 
\begin{equation}\label{conditiononL}
c_1(\mathcal{L})^2+c_1(\mathcal{L})\cdot K_X \leq 0.
\end{equation}
Then $X$ is geometrically rational.
Furthermore, for $n>>0$ the Hilbert schemes $\Hilb^n_X$ is stably birational to $\Hilb^{n+d}_X$.
\end{corollary}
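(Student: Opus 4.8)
The plan is to derive geometric rationality directly and then to show that, for $n\gg0$, the integer $n$ always lands either in one of the intervals $I_e$ of theorem \ref{theoremgap1} or, after passing to an auxiliary blow-up, in one of the intervals $\widetilde I_f$ of theorem \ref{theoremgap2}; in either case the stable birationality of $\Hilb^n_X$ and $\Hilb^{n+d}_X$ (with $d=\deg(Q)$) follows at once from those theorems.

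First I would dispose of geometric rationality, which is essentially free. Since $\mathcal{L}$ is ample, Nakai--Moishezon gives $c_1(\mathcal{L})^2>0$, so the hypothesis \eqref{conditiononL} forces $c_1(\mathcal{L})\cdot K_X\le -c_1(\mathcal{L})^2<0$. Proposition \ref{propositionc1(L)KXnegative} then applies verbatim and yields that $X$ is geometrically rational (and incidentally that $H^2(X,\mathcal{O}_X)=0$).

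For the birationality statement I would split on the sign of $c_1(\mathcal{L})^2+c_1(\mathcal{L})\cdot K_X$. If it is strictly negative, remark \ref{remarkonthegrowth} shows that for $e\gg0$ consecutive intervals overlap, i.e. $\min_{e+1}-1\le\max_e+1$; since $\min_e\to\infty$ as $e\to\infty$, the union $\bigcup_e I_e$ contains every sufficiently large integer. Thus for $n\gg0$ there is an $e\ge e_0$ with $n\in I_e$, and theorem \ref{theoremgap1} gives the conclusion directly.

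The boundary case $c_1(\mathcal{L})^2+c_1(\mathcal{L})\cdot K_X=0$ is the only real work, since here the $I_e$ leave gaps $\textup{gap}_e$ of fixed width $2d$, which I must fill with the blow-up intervals. Applying proposition \ref{prop:existsmoothcycleavoidQ} to $Z=Q$ produces a geometrically reduced effective $0$-cycle $Q'$ of degree $d'\ge 2d$ disjoint from $Q$; with this $Q'$ I set up framework \ref{framework}. Lemma \ref{lemmacovergaps} then guarantees that for $e\gg0$ one has $\textup{gap}_e\subseteq\widetilde I_{e+1}$, so that $\bigcup_e I_e\cup\bigcup_e\widetilde I_{e+1}$ covers all integers past some threshold $n_0$. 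For each such $n$, either theorem \ref{theoremgap1} (when $n\in I_e$) or theorem \ref{theoremgap2} applied to the suitable triple $(\widetilde X,\widetilde{\mathcal{L}}_{e+1},\widetilde Q)$ (when $n\in\widetilde I_{e+1}$) yields the stable birationality of $\Hilb^n_X$ and $\Hilb^{n+d}_X$. The point demanding care is amalgamating the several ``$e\gg0$'' thresholds coming from theorems \ref{theoremgap1}, \ref{theoremgap2} and lemma \ref{lemmacovergaps} into a single $n_0$, together with the degree bookkeeping $\deg(\widetilde Q)=\deg(Q)=d$ (valid because $Q'$ is disjoint from $Q$), which is exactly what makes theorem \ref{theoremgap2} conclude about $\Hilb^{n+d}_X$ and not about some other shift.
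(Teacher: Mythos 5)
Your proposal is correct and follows essentially the same route as the paper: geometric rationality via remark \ref{remarkonthegrowth} plus proposition \ref{propositionc1(L)KXnegative}, the intervals $I_e$ of theorem \ref{theoremgap1} covering all large $n$ when the inequality is strict, and in the boundary case filling the fixed-width gaps with $\widetilde{I}_{e+1}$ via proposition \ref{prop:existsmoothcycleavoidQ}, lemma \ref{lemmacovergaps}, and theorem \ref{theoremgap2}. The only difference is cosmetic: you make the case split on the sign of $c_1(\mathcal{L})^2+c_1(\mathcal{L})\cdot K_X$ explicit, whereas the paper phrases it as "if some gaps are left."
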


\begin{proof}
By remark \ref{remarkonthegrowth}, if $c_1(\mathcal{L})^2+c_1(\mathcal{L})\cdot K_X \leq 0$ then $c_1(\mathcal{L})\cdot K_X$ is negative.
By proposition \ref{propositionc1(L)KXnegative} $X$ is geometrically rational.
Meanwhile, theorem \ref{theoremgap1} tells us that $\Hilb^n_X$ is stably birational to $\Hilb^{n+d}_X$ for any $n\in I_e$.
If some gaps $\textup{gap}_e$ are left, then pick a smooth effective $0$-cycle $Q'$ disjoint from $Q$ and of degree $d'\geq 2\deg(Q)$ (we can do it by proposition \ref{prop:existsmoothcycleavoidQ}). 
Then for $e>>0$, those eventual gaps are covered by the intervals $\widetilde{I}_{e+1}$ as seen in lemma \ref{lemmacovergaps}. 
We conclude using theorem \ref{theoremgap2}.       
\end{proof}

\section{Geometrically rational surfaces}\label{8}

As seen in the previous section, theorems \ref{theoremgap1} and \ref{theoremgap2} work best for geometrically rational surfaces.
In particular, corollary \ref{corollarydefinitestable} tells us sufficient conditions on $X$ in order to have $\Hilb^n_X$ stably birational to $\Hilb^{n+d}_X$.
In this section we want to complete the study of the stable birational classes of $\Hilb^n_X$ for all geometrically rational surfaces.

Since the stable birational class of $\Hilb^n_X$ depends only on the stable birational class of $X$ (see fact \ref{factonstablebira}), we can assume without loss of generality that $X$ is minimal (relatively over $k$).

\begin{definition}
    Recall that $X$ is said to be \emph{minimal over} $k$ if any birational morphism $X\longrightarrow X'$ to a smooth, geometrically connected, projective surface $X'$ over $k$ is an isomorphism. 
\end{definition}

Iskovskih in \cite{Isk} classified minimal geometrically rational surfaces over arbitrary fields.

\begin{theorem}[Thm. 1 of \cite{Isk}]\label{classificationofgeomratsurfaces}
    Let $X$ be a geometrically rational, suitable surface over an arbitrary field $k$.

    If $X$ is minimal over $k$ then it is either: 
    \begin{enumerate}
        \item isomorphic to the projective plane $\mathbb{P}^2_k$;
        \item isomorphic to a quadric $Q\subset \mathbb{P}^3_k$ such that  $\Pic(Q)\cong \mathbb{Z}$;
        \item isomorphic to a del Pezzo surface; or
        \item the Picard group is isomorphic to $\Z \oplus \Z$ and there exists a smooth, geometrically connected, projective, genus zero curve $C$ and a surjective map $f:X\longrightarrow C$ whose generic fiber $F_{\eta(C)}$ is a smooth curve of genus zero.
    \end{enumerate}
\end{theorem}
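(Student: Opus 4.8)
The statement is the deep classification theorem of Iskovskih, so rather than reconstruct the full argument from scratch I will describe the structure of the proof via the minimal model program over the base field $k$. The starting observation is that geometric rationality forces $\kappa(X_{\overline{k}})=-\infty$ together with $H^1(X_{\overline{k}},\mathcal{O})=H^2(X_{\overline{k}},\mathcal{O})=0$; in particular no positive power of $K_X$ has sections and $-K_X$ is positive in the geometric generic direction. The central object is the arithmetic Picard group $\Pic(X)$, which for a geometrically rational surface is a finitely generated free abelian group carrying a Galois action, identified up to the Brauer obstruction with the invariants $\Pic(X_{\overline{k}})^{\mathrm{Gal}}$. The entire classification is governed by its rank $\rho(X)$.

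The plan is to run Mori theory over $k$. Since $K_X$ is not nef (as $X$ is geometrically rational), the closed cone of curves $\overline{NE}(X)$ admits a $K_X$-negative extremal ray, and by Galois descent this ray can be taken to be defined over $k$. The contraction theorem for surfaces over an arbitrary field then produces a morphism $\phi:X\longrightarrow Y$ of one of three types: a birational blow-down to a smooth projective $k$-surface, a fibration onto a curve, or the structure map to $\Spec(k)$. The minimality hypothesis is exactly the statement that no birational contraction to a smooth projective $k$-surface exists, so the blow-down case is excluded, and we are left with the two fiber-type contractions.

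The two surviving cases produce the dichotomy in the theorem. If $\phi$ contracts $X$ to $\Spec(k)$, then $\rho(X)=1$ and $-K_X$ is ample, so $X$ is a del Pezzo surface with $\Pic(X)\cong\Z$; among these, the high-degree examples $\mathbb{P}^2_k$ and the anisotropic smooth quadric $Q\subset\mathbb{P}^3_k$ with $\Pic(Q)\cong\Z$ are isolated as cases (1) and (2), the remaining rank-one del Pezzo surfaces forming case (3). If instead $\phi:X\longrightarrow C$ is a fibration onto a curve, then the base is itself geometrically rational, hence a smooth genus-zero curve, the relative Picard rank is one so $\Pic(X)\cong\Z\oplus\Z$, and the generic fiber is a smooth conic, i.e.\ a smooth curve of genus zero; this is case (4).

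The main obstacle is twofold. First, one must set up the cone theorem and the contraction theorem for surfaces over a non-closed $k$ and verify that $K_X$-negative extremal rays descend, which requires controlling the Galois action on $\overline{NE}(X_{\overline{k}})$ and checking that contractions of Galois-stable faces are representable by morphisms over $k$. Second, one must carry out the arithmetic bookkeeping that separates the rank-one del Pezzo surfaces into the sub-cases $\mathbb{P}^2_k$, the minimal quadric, and the remaining del Pezzo surfaces, and confirm that in the fiber-type case the relative Picard number is genuinely one so that $\rho(X)=2$ — both points where the structure of the Galois module $\Pic(X_{\overline{k}})$ does the real work.
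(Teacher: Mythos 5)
The first thing to note is that the paper does not prove this statement at all: it is imported verbatim as Theorem 1 of Iskovskih's paper \cite{Isk}, so there is no internal proof to compare your proposal against. Your sketch is therefore competing with Iskovskih's original argument, which is a classical, pre-Mori analysis (building on Manin's work on del Pezzo surfaces and conic bundles, via adjunction and the study of anticanonical and other linear systems), not the Mori-theoretic route you describe. The MMP-over-$k$ strategy you outline — $K_X$ not nef, cone theorem, Galois-stable $K_X$-negative extremal ray, contraction of fiber type since minimality excludes birational contractions, then the dichotomy $\rho(X)=1$ (del Pezzo, with $\mathbb{P}^2_k$ and the rank-one quadric split off) versus $\rho(X)=2$ with a conic-bundle structure over a genus-zero curve — is indeed how modern treatments recover Iskovskih's classification, so the route is viable and genuinely different from the cited source.

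However, as a proof it has a real gap: the two items you list at the end as ``the main obstacle'' are not side issues but the entire mathematical content of the theorem. Establishing the cone and contraction theorems for surfaces over a non-closed field, showing that a Galois orbit of extremal rays can be contracted over $k$ to a smooth projective target, verifying that a fiber-type contraction to a point forces $-K_X$ ample with $\Pic(X)\cong\Z$, and that a fibration onto a curve forces $\Pic(X)\cong\Z\oplus\Z$ with smooth genus-zero generic fiber — none of this is carried out, only named. One smaller inaccuracy: you present the four cases as a dichotomy governed by $\rho(X)$, but as the paper itself remarks immediately after the statement, the cases overlap (a smooth quadric with $\Pic\cong\Z$ is a del Pezzo surface, and some rational conic bundles are del Pezzo), so the theorem is a list of possible structures rather than a partition, and a proof must only show that at least one of the structures occurs. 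As it stands your text is a correct roadmap to a known alternative proof, not a proof.
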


\begin{definition}
    If $X$ is of the fourth type, we say that $X$ is a \emph{rational conic bundle}.
\end{definition}

Notice that theorem \ref{classificationofgeomratsurfaces} is not a classification up to birational maps: for instance, some del Pezzo surfaces are already rational over $k$, a smooth quadric $Q$ in $\mathbb{P}^3_k$ is a del Pezzo (by adjunction formula the canonical $\omega_Q$ is isomorphic to $\mathcal{O}_Q(-2)$ and hence anti-ample) and some rational conic bundles are del Pezzo (see \cite[Thm. 5]{Isk}). 

Therefore, since we are interested in $X$ up to birational equivalence, we have two families to consider.

\begin{corollary}
    Let $X$ be a geometrically rational surface minimal over $k$.
    Then either:
    \begin{enumerate}
        \item[] \textup{Type I.} $X$ is birational to a del Pezzo surface, or
        \item[] \textup{Type II.} $X$ is a rational conic bundle as in theorem \ref{classificationofgeomratsurfaces}.
    \end{enumerate}
\end{corollary}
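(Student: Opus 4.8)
The plan is to read the corollary off directly from Iskovskih's classification, Theorem \ref{classificationofgeomratsurfaces}, by checking that each of the four listed cases is subsumed by Type I or Type II. Since Type I is a condition up to birational equivalence and Type II matches case (4) verbatim, the entire argument reduces to recognizing the anticanonical geometry of the surfaces appearing in cases (1)--(3).

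First I would dispatch the del Pezzo cases. In case (3) there is nothing to prove: $X$ is a del Pezzo surface, hence trivially birational to one, so $X$ is of Type I. In case (1), $X \cong \mathbb{P}^2_k$, whose anticanonical sheaf is $\omega_{\mathbb{P}^2_k}^{\vee} \cong \mathcal{O}_{\mathbb{P}^2_k}(3)$, which is ample; thus $\mathbb{P}^2_k$ is itself a del Pezzo surface and $X$ is of Type I. In case (2), $X$ is a smooth quadric $Q \subset \mathbb{P}^3_k$ with $\Pic(Q) \cong \mathbb{Z}$; by the adjunction formula $\omega_Q \cong \mathcal{O}_Q(-2)$, so $\omega_Q^{\vee} \cong \mathcal{O}_Q(2)$ is ample and $Q$ is a del Pezzo surface (of degree $8$). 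Hence $X$ is again of Type I. These are precisely the observations already recorded in the remark following Theorem \ref{classificationofgeomratsurfaces}.

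It then remains to treat case (4), which is immediate: a surface with $\Pic(X) \cong \Z \oplus \Z$ admitting a surjection $f : X \longrightarrow C$ onto a smooth geometrically connected genus zero curve with genus zero generic fiber is, by definition, a rational conic bundle, i.e.\ of Type II. Combining the four cases shows that every minimal geometrically rational surface is of Type I or Type II, as claimed.

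I do not expect a genuine obstacle here: the content of the corollary is entirely Theorem \ref{classificationofgeomratsurfaces}, and the only verification required is the standard computation that $\mathbb{P}^2_k$ and the smooth quadric are del Pezzo surfaces. The one conceptual point worth stressing is that the dichotomy is \emph{not} exclusive -- as the remark observes, some del Pezzo surfaces are rational conic bundles -- so the corollary asserts only that at least one of the two descriptions applies, which is exactly what the case-by-case check delivers.
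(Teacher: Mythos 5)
Your proposal is correct and is essentially the paper's own argument: the paper derives the corollary from the remark immediately preceding it, which records exactly your observations that $\mathbb{P}^2_k$ and the smooth quadric (via adjunction, $\omega_Q\cong\mathcal{O}_Q(-2)$) are del Pezzo surfaces, so Iskovskih's cases (1)--(3) fall under Type I while case (4) is Type II by definition. Your closing caveat that the two types are not mutually exclusive also matches the paper's remark.
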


\subsection{Del Pezzo Surfaces} \, \\

Let us first focus on del Pezzo surfaces $X$ of degree $K_X^2=d_X$. 
The aim of this subsection is to prove the following.

\begin{theorem}\label{theoremdelPezzo}
    Let $X$ be a del Pezzo surface and let $\textup{ind}(X)$ be the index of $X$:  
    then for $n>>0$ the Hilbert scheme $\Hilb^n_X$ is stably birational to $\Hilb^{n+\textup{ind}(X)}_X$.
\end{theorem}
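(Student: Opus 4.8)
The plan is to deduce Theorem \ref{theoremdelPezzo} from corollary \ref{corollarydefinitestable} by exhibiting a single ample line bundle on $X$ satisfying condition \eqref{conditiononL}, and then realising $\textup{ind}(X)$ as a difference of two admissible step sizes. First I would take $\mathcal{L} = \omega_X^{-1}$, which is ample precisely because $X$ is del Pezzo. Since $c_1(\mathcal{L}) = -K_X$, one has $c_1(\mathcal{L})^2 = K_X^2 = d_X$ and $c_1(\mathcal{L})\cdot K_X = -d_X$, so that
\[
    c_1(\mathcal{L})^2 + c_1(\mathcal{L})\cdot K_X = d_X - d_X = 0,
\]
and \eqref{conditiononL} holds with equality. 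As $X$ is smooth, projective, geometrically connected and geometrically rational (hence $q(X)=0$), the triple $(X, \omega_X^{-1}, Q)$ is suitable for \emph{every} geometrically reduced effective $0$-cycle $Q$, and satisfies \eqref{conditiononL}. Corollary \ref{corollarydefinitestable} then produces, for each such $Q$, an integer $N(Q)$ such that $\Hilb^n_X$ is stably birational to $\Hilb^{n+\deg Q}_X$ for all $n \geq N(Q)$. The crucial feature is that $\mathcal{L}$ is kept fixed while $Q$ is allowed to vary, so a whole range of step sizes $\deg Q$ becomes available.

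Next I would realise $\textup{ind}(X)$ as a difference of two such step sizes. Let $S \subseteq \mathbb{Z}_{\geq 1}$ be the set of degrees of geometrically reduced effective $0$-cycles on $X$; choosing cycles with pairwise disjoint supports shows that $S$ is stable under addition, and I claim that $\gcd(S) = \textup{ind}(X)$. Granting this, a sub-semigroup of $\mathbb{Z}_{\geq 1}$ whose $\gcd$ equals $\textup{ind}(X)$ contains every sufficiently large multiple of $\textup{ind}(X)$, so there is an integer $m$ with both $m$ and $m + \textup{ind}(X)$ lying in $S$. I would then fix geometrically reduced effective $0$-cycles $Q_1, Q_2$ of degrees $m$ and $m + \textup{ind}(X)$ respectively.

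Finally, setting $N = \max\{N(Q_1), N(Q_2)\}$, for every $n \geq N$ the two instances of corollary \ref{corollarydefinitestable} give that $\Hilb^{n+m}_X$ and $\Hilb^{n+m+\textup{ind}(X)}_X$ are both stably birational to $\Hilb^n_X$, hence to each other (stable birational equivalence being transitive). Writing $\ell = n+m$ and letting $n$ run over $[N,\infty)$, this shows that $\Hilb^\ell_X$ is stably birational to $\Hilb^{\ell + \textup{ind}(X)}_X$ for all $\ell \gg 0$, which is exactly the assertion of the theorem. I expect the only genuine obstacle to be the identity $\gcd(S) = \textup{ind}(X)$: over perfect fields it is immediate, since every residue field of a closed point is separable and hence every closed point is geometrically reduced, so that $S$ already contains the degrees of all closed points; in general it reduces to the statement that on a smooth variety the index is computed by separable closed points, and this is the one place where the smoothness of $X$ is essential.
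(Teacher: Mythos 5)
Your outline agrees with the paper's proof in its two main moves: take $\mathcal{L}=\omega_X^{-1}$, note that $c_1(\mathcal{L})^2+c_1(\mathcal{L})\cdot K_X=K_X^2-K_X^2=0$ so corollary \ref{corollarydefinitestable} applies to $(X,\omega_X^{-1},Q)$ for any geometrically reduced $Q$, and then realise $\textup{ind}(X)$ as a difference of admissible step sizes, using that the index of a smooth variety is the gcd of the degrees of its separable (hence geometrically reduced) closed points --- this last identification is supplied by the Gabber--Liu--Lorenzini result the paper cites, so it is not the obstacle you single out. The genuine gap is elsewhere: your claim that the set $S$ of degrees of geometrically reduced effective $0$-cycles is stable under addition ``by choosing cycles with pairwise disjoint supports.'' Such a cycle is a finite set of \emph{distinct} separable closed points, so to realise $d_1+d_2$ you must produce representatives of $d_1$ and of $d_2$ with disjoint supports, and nothing guarantees these exist: a given degree may be realised by very few closed points (nothing in the hypotheses rules out, say, a surface with a unique rational point, so that $1\in S$ while $2\in S$ is unclear), and the density of separable closed points on a smooth variety gives no control on their degrees. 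Even the paper's own avoidance statement, proposition \ref{prop:existsmoothcycleavoidQ}, produces a disjoint cycle only of some large, Bertini-determined degree, not of a prescribed one. So the existence of $m$ and $m+\textup{ind}(X)$ both in $S$, on which your whole reduction rests, is unjustified.

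The fix --- and it is exactly what the paper does --- is to transfer the semigroup structure from cycles to stable-birationality steps, where it holds for free. Choose finitely many distinct geometrically reduced closed points $Q_i$ of degrees $d_i$ with $\gcd\{d_i\}=\textup{ind}(X)$, and write, by Bezout,
\[
    \textup{ind}(X)=\sum_{i=1}^{s} a_i d_i-\sum_{i=s+1}^{r} a_i d_i,\qquad a_i>0 .
\]
Each triple $(X,\omega_X^{-1},Q_i)$ gives an $N_i$ with $\Hilb^n_X$ stably birational to $\Hilb^{n+d_i}_X$ for all $n\geq N_i$, and these relations can be \emph{chained}, reusing the same $Q_i$ as many times as needed, since the indices only increase along the chain: for $n\geq \max_i N_i$ one gets that $\Hilb^n_X$ is stably birational to $\Hilb^{n+\sum_{i\leq s}a_id_i}_X$, and that $\Hilb^{n+\textup{ind}(X)}_X$ is stably birational to $\Hilb^{n+\textup{ind}(X)+\sum_{i>s}a_id_i}_X=\Hilb^{n+\sum_{i\leq s}a_id_i}_X$, whence the theorem by transitivity. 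Here ``applying the step $d_i$ twice'' means invoking corollary \ref{corollarydefinitestable} twice with the same point, not finding two disjoint points of degree $d_i$; this is precisely what your version is missing.
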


\begin{proof}
As line bundle $\mathcal{L}$, pick the anticanonical $\omega_X^{\vee}$: it is ample ($X$ is del Pezzo) and satisfies condition \ref{conditiononL} of corollary \ref{corollarydefinitestable}
\[
    c_1(\mathcal{L})^2+c_1(\mathcal{L})\cdot K_X = K_X^2-K_X^2=0.
\]
Now recall that for a smooth variety $X$ the index is equal to the greatest common divisor of closed points $x\in X$ such that $k(x)$ is a separable extension of $k$ (see \cite[Thm. 9.2]{Gabber}).
If $k(x)/k$ separable, then $\Spec(k(x))$ is geometrically reduced (see \cite[tag. 030W]{deJ}: then there exist geometrically reduced closed points $Q_1, \dots, Q_r$ on $X$ such that $\gcd \{ \deg(Q_i) \}_{i=1}^r =\textup{ind}(X)$.
Setting $\deg(Q_i)=d_i$ we can assume
\[
    \textup{ind}(X) = \sum_{i=1}^s a_i d_i - \sum_{i=s+1}^r a_i d_i
\]
where the $a_i$'s are positive integers.

Then $(X,\mathcal{L},Q_i)$ satisfies the hypotheses of corollary \ref{corollarydefinitestable}.
Therefore, for $n\geq N_i$ the variety $\Hilb^n_X$ is stably birational to $\Hilb^{n+d_i}_X$.

This means that for any $n\geq \max_i N_i$ we have that $\Hilb^n_X$ is stably birational to $\Hilb^{n + \sum_{i=1}^s a_id_i}_X$ and that $\Hilb^{n+\textup{ind}(X)}_X$ is stably birational to $\Hilb^{n +\textup{ind}(X) + \sum_{i=s+1}^r a_id_i}_X = \Hilb^{n + \sum_{i=1}^s a_id_i}_X$.

Therefore, for $n>>0$ we always have $\Hilb^n_X$ stably birational to $\Hilb^{n+\textup{ind}(X)}_X$.
\end{proof}

\subsection{Minimal rational conic bundles}\, \\

We focus now on Type II surfaces: namely, minimal rational conic bundles.
In \cite{Isk}, Iskovskih gives a detailed description of rational conic bundles over arbitrary fields.

\begin{theorem}[Thm. 3 of \cite{Isk}]\label{descriptionofconicbundles}
Let $f:X\longrightarrow C$ be a rational conic bundle as in theorem \ref{classificationofgeomratsurfaces}. 
The following assertions hold:
\begin{enumerate}
    \item The sheaf $f_*\omega_X^{-1}$ is locally free of rank $3$ and induces a closed embedding
    \[
        \begin{tikzcd}
            X
            \arrow[rd, "f"']
            \arrow[r, hook]
            &
            \mathbb{P}_C(f_*\omega_X^{-1})
            \arrow[d]
            \\
            &
            C
        \end{tikzcd}
    \]
    mapping each fiber $F_t$, $t\in C$, into a (not necessarily smooth) conic of $\mathbb{P}^2_{k(t)}$.
    \item If $f$ is smooth then either $X$ is isomorphic to a product of curves $C\times C'$, where $C'$ is a smooth curve of genus $0$ without $k$-points, or $X$ is isomorphic to $\mathbb{P}_C(\mathcal{E})$ for some rank $2$ vector bundle $\mathcal{E}$ on $C$.
    \item if $f$ is not smooth, then
    \[
        \textup{Pic}(X)=f^*\textup{Pic}(C)+\Z[-K_X].
    \]
    Let $r$ be the number of non-smooth fibers: then $K_X^2=8-r$.
\end{enumerate}
\end{theorem}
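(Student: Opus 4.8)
The statement is Iskovskih's \cite[Thm.\ 3]{Isk}, so in practice one simply cites it; I sketch here the geometric content that recovers the three assertions, using the relative anticanonical morphism as the main tool. For assertion (1), the plan is to compute $\omega_X^{-1}$ on fibers and then apply base change. Since each fiber $F_t$ of the fibration satisfies $F_t^2=0$, its normal bundle is trivial and adjunction gives $\omega_X^{-1}|_{F_t}\cong \omega_{F_t}^{-1}$. Every fiber of a conic bundle is a genus-zero curve—a smooth conic or a degenerate pair of lines—so $\omega_{F_t}^{-1}$ has degree $2$, with $h^0(F_t,\omega_{F_t}^{-1})=3$ and $h^1=0$ uniformly in $t$. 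I would then invoke Cohomology and base change \cite[Thm.\ III.12.11]{Harty} to conclude that $f_*\omega_X^{-1}$ is locally free of rank $3$ and that its formation commutes with base change. Fiberwise the induced $C$-morphism to $\mathbb{P}_C(f_*\omega_X^{-1})$ is the degree-two map $F_t\hookrightarrow \mathbb{P}^2_{k(t)}$ attached to $\omega_{F_t}^{-1}$, whose image is a conic; a standard relative argument upgrades this to a closed embedding over $C$.

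For assertion (2), if $f$ is smooth then every fiber is a smooth conic, so (1) realizes $X$ as a smooth conic bundle inside a $\mathbb{P}^2$-bundle, and the dichotomy is governed by the generic fiber, a smooth conic over $k(C)$. If $f$ admits a section then $X$ is a Zariski-locally trivial $\mathbb{P}^1$-bundle and hence $X\cong\mathbb{P}_C(\mathcal{E})$ for a rank-two bundle $\mathcal{E}$ on $C$; if it does not, the generic conic has no $k(C)$-point, and in the minimal situation the defining Brauer class is constant, so $X$ is cut out by a conic $C'$ over $k$ without $k$-points, giving $X\cong C\times C'$. For assertion (3), I would pass to $\overline{k}$, where $X_{\overline{k}}$ is a smooth surface fibered in conics over $\mathbb{P}^1$ with exactly $r$ singular fibers, each a transverse pair of $(-1)$-curves. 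Blowing down one component of each singular fiber exhibits $X_{\overline{k}}$ as the blow-up of a minimal ruled (Hirzebruch) surface, for which $K^2=8$, at $r$ points; hence $K_X^2=K_{X_{\overline{k}}}^2=8-r$. The geometric Picard group is spanned by the fiber class, a section, and the exceptional $(-1)$-curves, and the Galois action together with minimality over $k$ cuts the $k$-rational classes down to $f^*\Pic(C)+\Z[-K_X]$.

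The hard part is assertion (2): making the section/Brauer dichotomy precise over an arbitrary base field, and in particular isolating exactly when the sectionless case forces the product $C\times C'$ rather than some intermediate form. Assertion (3)'s descent step is the second obstacle, since one must use minimality over $k$—not merely rationality over $\overline{k}$—to reduce the rank-$(2+r)$ geometric Picard lattice to the asserted rank-two group. These are exactly the points at which Iskovskih's classification does the substantive work, which is why one cites \cite[Thm.\ 3]{Isk} rather than reproving it.
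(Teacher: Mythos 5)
The paper does not prove this statement at all---it is quoted directly as Theorem 3 of Iskovskih's classification paper \cite{Isk}---so your decision to cite it rather than reprove it is exactly what the paper does. Your supplementary sketch (relative anticanonical embedding via cohomology and base change for (1), blow-down of one component of each singular fiber to a Hirzebruch surface over $\overline{k}$ for (3)) is accurate, and you correctly identify the section/Brauer dichotomy in (2) and the Galois descent of the geometric Picard lattice in (3) as the substantive points that Iskovskih's proof supplies.
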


In this subsection we want to use the above properties to prove an analogous theorem \ref{theoremdelPezzo} but for minimal rational conic bundles.

\begin{theorem}\label{thmdefinitestablebirofconicbundles}
    Let $X$ be a rational conic bundle as in theorem \ref{classificationofgeomratsurfaces}.
    Let $\textup{ind}(X)$ be the index of $X$.
    Then for $n >>0$ the variety $\Hilb^n_X$ is stably birational to $\Hilb^{n+\textup{ind}(X)}_X$.
\end{theorem}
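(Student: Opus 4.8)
The plan is to run the argument of Theorem~\ref{theoremdelPezzo}, the only --- but essential --- change being the choice of ample bundle. On a conic bundle the anticanonical class $\omega_X^{\vee}$ is merely $f$-ample, and a direct computation with the intersection form of Theorem~\ref{descriptionofconicbundles}(3) shows that, as soon as $f$ has degenerate fibres, \emph{no} ample class on $X$ satisfies condition~\ref{conditiononL}; hence Corollary~\ref{corollarydefinitestable} is unavailable and we cannot fill the gaps between the intervals $I_e$ by a single blow-up. Instead I would exploit the rank-two Picard group directly. Fix an ample generator $\mathcal{A}$ of $\Pic(C)$, of degree $\delta\in\{1,2\}$, set $\Phi=f^{*}\mathcal{A}$, and consider the one-parameter family
\[
    \mathcal{L}_a \;=\; \omega_X^{\vee}\otimes f^{*}\mathcal{A}^{\otimes a},\qquad c_1(\mathcal{L}_a)=-K_X+a\Phi .
\]
Since $-K_X$ is $f$-ample and $\Phi$ is the pullback of an ample class on $C$, the bundle $\mathcal{L}_a$ is ample for all $a\gg 0$, so $(X,\mathcal{L}_a,Q)$ is a suitable triple for every suitable $Q$.

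Next I would verify, for $a\gg 0$, the five assumptions \ref{thefiveassumptions} for $(X,\mathcal{L}_a,Q)$ with $d=\deg(Q)$, exactly as in the proof of Theorem~\ref{theoremgap1}: assumptions (1), (3), (4) follow from Serre vanishing, global generation, and (very) ampleness of $\mathcal{L}_a$ for $a\gg 0$, while (2) and (5) become the numerical constraints on $n$. Using $\Phi^2=0$, $\Phi\cdot K_X=-2\delta$ and Riemann--Roch one finds that every $C\in|\mathcal{L}_a|$ has arithmetic genus $p_a(C)=a\delta+1$, and that $\dim_k H^0(X,\mathcal{L}_a)=\chi(X,\mathcal{L}_a)=K_X^{2}+3a\delta+1$ for $a\gg 0$ (here $\chi(\mathcal{O}_X)=1$, as $X$ is geometrically rational). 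Thus, by Theorem~\ref{theorem1} (Corollary~\ref{corollary=thm1}), the triple yields that $\Hilb^{n}_X$ is stably birational to $\Hilb^{n+d}_X$ for every $n$ in
\[
    J_a \;=\; \bigl[\,a\delta+1,\; K_X^{2}+3a\delta-2d\,\bigr].
\]

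The decisive observation is that, because $\dim_kH^0(\mathcal{L}_a)$ grows like $3a\delta$ while $p_a(C)$ grows only like $a\delta$, the intervals $J_a$ slide to the right more slowly than they lengthen: comparing endpoints gives $\min(J_{a+1})-\max(J_a)=-2a\delta+\delta+1+2d-K_X^{2}$, which is strictly negative for $a\gg 0$. Hence consecutive intervals overlap, and since $\max(J_a)\to\infty$ we obtain $\bigcup_{a\ge a_0}J_a\supseteq[N,\infty)$ for some $N=N(Q)$. Consequently, for \emph{every} suitable $0$-cycle $Q$ of degree $d$ and every $n\ge N(Q)$, the variety $\Hilb^{n}_X$ is stably birational to $\Hilb^{n+d}_X$. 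This statement is the conic-bundle replacement for Corollary~\ref{corollarydefinitestable}, and it is exactly where the rank-two Picard group (i.e.\ the freedom to vary the slope of $\mathcal{L}_a$) is used in place of condition~\ref{conditiononL}.

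Finally I would upgrade the step $d$ to $\textup{ind}(X)$ verbatim as in Theorem~\ref{theoremdelPezzo}. By \cite[Thm.~9.2]{Gabber} the index equals the gcd of the degrees of the separable --- hence geometrically reduced, \cite[Tag~030W]{deJ} --- closed points, so one may choose geometrically reduced $0$-cycles $Q_1,\dots,Q_r$ and positive integers $a_i$ with $\sum_{i\le s}a_i\deg(Q_i)-\sum_{i>s}a_i\deg(Q_i)=\textup{ind}(X)$. Applying the previous paragraph to each $Q_i$ and chaining the resulting stable birationalities for $n\ge\max_i N(Q_i)$ shows that $\Hilb^{n}_X$ is stably birational to $\Hilb^{n+\textup{ind}(X)}_X$ for all $n\gg 0$. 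The main obstacle lies entirely in the second and third steps: one must recognise that condition~\ref{conditiononL} is unattainable and that the cure is to let $a$ vary so that the admissible ranges $J_a$ overlap; once the family $\mathcal{L}_a$ is in place, every remaining verification reduces to those already performed for Theorem~\ref{theoremgap1} and Theorem~\ref{theoremdelPezzo}.
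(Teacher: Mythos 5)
Your overall mechanism is in fact the same as the paper's (fix the coefficient of $-K_X$, let the fiber coefficient grow, and note that the admissible intervals lengthen faster than they slide, so they eventually overlap and cover a neighbourhood of $+\infty$), and your numerical computations of $p_a(C)$, of $\chi(X,\mathcal{L}_a)$ and of $\min(J_{a+1})-\max(J_a)$ are correct. The genuine gap is the step where you assert that assumptions (1), (3), (4) of \ref{thefiveassumptions} hold for $(X,\mathcal{L}_a,Q)$ once $a\gg 0$, ``exactly as in Theorem~\ref{theoremgap1}'' by Serre vanishing. Serre's theorems concern high powers of a \emph{fixed} ample bundle, whereas $\mathcal{L}_a=\omega_X^{\vee}\otimes f^{*}\mathcal{A}^{\otimes a}$ is not such a family, and the assertion is actually false in general: since $\mathcal{L}_a|_{F}\cong\omega_F^{\vee}$ for every fiber $F$ and every $a$, the vertical positivity of your family never grows. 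Concretely, if the closed point $Q$ has length $d'=[k(Q):k(f(Q))]\geq 3$ inside its fiber $F_0$, then every global section of $\mathcal{L}_a\otimes\mathcal{I}_Q$ restricts on $F_0$ to a section of $\omega_{F_0}^{\vee}\otimes\mathcal{I}_{Q,F_0}$, which is zero (negative degree on a conic); hence all such sections vanish identically on $F_0$ and assumptions (3) and (4) fail at points of $F_0\setminus Q$ for \emph{every} $a$. If $d'\geq 4$, assumption (1) fails as well, because $H^0(X,\mathcal{L}_a)\to H^0(Q,\mathcal{L}_a|_Q)$ factors through the $3$-dimensional space $H^0(F_0,\omega_{F_0}^{\vee})$. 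Since the points $Q_i$ realizing $\textup{ind}(X)$ are whatever they are --- you cannot prescribe their degrees over their image points --- your claimed replacement for Corollary~\ref{corollarydefinitestable} (``for \emph{every} suitable $Q$ and every $n\geq N(Q)$'') does not apply to them.

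This is precisely why the paper does not take the anticanonical coefficient equal to $1$: it fixes an ample $\mathcal{L}=\mathcal{O}_X(-mK_X+aF)$, raises it to a power $e=e(Q)$ chosen \emph{after} $Q$ (so Serre vanishing is legitimate and the vertical degree $2em$ is large enough for that $Q$), and only then varies the fiber coefficient in $\mathcal{L}^{\otimes e}\otimes\mathcal{O}_X(bF)$, bootstrapping the assumptions from $b$ to $b+1$ (Proposition~\ref{propinductionona}) before running the overlap argument in $b$. Your proposal could be repaired either this way, or by adding two missing ingredients: a proof that $\textup{ind}(X)$ is realized by separable points of vertical degree $\leq 2$ (each fiber is a conic, hence carries a point of degree $\leq 2$ over its residue field, and a gcd argument then recovers the index), together with honest relative arguments (Leray along $f$ plus Serre vanishing on $C$) in place of ``Serre vanishing on $X$''. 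Finally, a side remark: your motivating claim that no ample class satisfies condition~\ref{conditiononL} as soon as $f$ has degenerate fibres is false --- a minimal conic bundle that is also del Pezzo (these exist, by Theorem 5 of \cite{Isk} as cited in the paper) satisfies it with $\mathcal{L}=\omega_X^{\vee}$; this does not affect your construction, but the statement as written is incorrect.
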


We split the proof in two parts, depending on whether the map $f$ is smooth.

\begin{proof}[Case $f$ smooth]
    The projective bundle $\mathbb{P}_C(\mathcal{E})$ is birational to $C\times \mathbb{P}^1_k$.
    Since the index is a birational invariant for projective smooth varieties (see \cite[Prop. 1.4]{Merku} or \cite[Prop. 6.8]{Gabber}), without loss of generality we can therefore study the case when $X$ is the product of two smooth curves of genus $0$.
    However, any surface of this type is a del Pezzo surface since the surface $X_{\overline{k}}\cong \mathbb{P}^1_{\overline{k}}\times \mathbb{P}^1_{\overline{k}}$ is.
    In particular, theorem \ref{theoremdelPezzo} applies.
\end{proof}

Therefore, we are left with the case $f$ singular. 

\begin{remark} 
    In this case, we cannot automatically apply the previous strategy since there are rational conic bundles which are not birational to any del Pezzo surface.
    
    For instance, let $k$ be an (infinite perfect) field and $X$ be a minimal rational conic bundle of negative degree $K_X^2<0$.  
    Assume by contradiction that $X$ is birational to a del Pezzo $X'$.
    Then consider a birational morphism $f:X'\longrightarrow X'_{\textup{min}}$ where $X'_{\textup{min}}$ is minimal over $k$.
    Expressing the morphism $f$ as a sequence of blow-ups, by \cite[Cor. 2.8]{Hassett} we have that $X'_{\textup{min}}$ is still del Pezzo.
    By \cite[Prop. 2.4]{Sko} the degree $K_{X'_{\textup{min}}}^2$ is equal to $K_X^2$, so in particular it is negative, which is not possible.
\end{remark}

For del Pezzo's we have used blow-ups of the surface in order to conclude.
Now we can use the information on the structure of $X$ and on its Picard group $\Pic(X)$ given by Iskovskih theorem. \\

We aim to find a family of line bundles $\{ \mathcal{L}_m \}_{m}$ on $X$ (and a $0$-cycle $Q$) such that for any $n>>0$ there exists an $m(n)$ such that $(X, \mathcal{L}_{m(n)}, Q)$ is a suitable triple satisfying the \hyperlink{assumptions}{assumptions}  with respect to that $n$.

\begin{remark}
    As for del Pezzo surfaces, the index of a rational conic bundle $X$ is the greatest common divisor of its geometrically reduced closed points $Q_i$ (see again \cite[Thm. 9.2]{Gabber}).
    In what follows, $Q$ will be any geometrically reduced closed point on $X$, so that the pair $(X,Q)$ is suitable.
\end{remark}
Let us find suitable line bundles, namely ample line bundles on $X$ in the case when $f$ is singular.
\begin{proposition}\label{propalotofamplelinbundleonconicbundles}
    Let $X$ be a rational conic bundle with singular $f$ as in theorem \ref{classificationofgeomratsurfaces}.
    Let $F$ be a smooth fiber over a closed point $t\in C$ of minimal degree.
    There exists a positive integer $m$ such that for any $a>>0$ the divisor $-mK_X+aF$ is ample.
\end{proposition}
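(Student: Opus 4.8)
The plan is to apply the Nakai-Moishezon criterion for ampleness on the surface $X$. Recall that a divisor $D$ on $X$ is ample if and only if $D^2>0$ and $D\cdot \Gamma>0$ for every irreducible curve $\Gamma\subset X_{\overline{k}}$. So I would produce an integer $m$ and a bound on $a$ guaranteeing these two numerical conditions for $D=-mK_X+aF$, exploiting the structure of the conic bundle recorded in theorem \ref{descriptionofconicbundles}.

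First I would assemble the relevant intersection numbers. Since $F$ is a smooth fiber of the conic bundle $f:X\longrightarrow C$, distinct fibers are disjoint, so $F^2=0$; moreover by adjunction on the genus-zero fiber one has $-K_X\cdot F=2\deg(t)>0$, where $\deg(t)=[k(t):k]$ is the (minimal) degree of the chosen closed point. From these I compute
\[
    D^2=(-mK_X+aF)^2=m^2K_X^2-2am(K_X\cdot F)=m^2K_X^2+4am\deg(t),
\]
and
\[
    D\cdot F=(-mK_X+aF)\cdot F=-m(K_X\cdot F)=2m\deg(t)>0.
\]
Because $K_X^2=8-r$ can be negative (the degenerate fibers), $D^2$ need not be positive for small $a$; but the coefficient $4m\deg(t)$ of $a$ is strictly positive, so for any fixed $m\geq 1$ the quantity $D^2$ becomes positive once $a$ is large enough. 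This already gives $D^2>0$ and $D\cdot F>0$ for $a\gg 0$.

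The main work is the remaining Nakai-Moishezon condition: positivity against \emph{every} irreducible curve $\Gamma$ on $X_{\overline{k}}$, not just the fibers. Here is where I would use that $-K_X$ is relatively ample for $f$ (it embeds each fiber as a conic, by item (1) of theorem \ref{descriptionofconicbundles}) together with the fact that $\Pic(X)$ is generated by $f^*\Pic(C)$ and $[-K_X]$ (item (3)). The plan is to split curves $\Gamma$ into two classes according to whether $f(\Gamma)$ is a point or all of $C$. If $\Gamma$ is contained in a fiber, then $-K_X\cdot \Gamma>0$ by relative ampleness and $F\cdot\Gamma=0$ (disjoint fibers), so $D\cdot\Gamma=m(-K_X\cdot\Gamma)>0$ for every $m\geq 1$. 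If instead $\Gamma$ dominates $C$, then $F\cdot\Gamma=\deg(\Gamma/C)\cdot\deg(t)>0$, so the term $aF\cdot\Gamma$ grows with $a$ and dominates any fixed contribution from $-mK_X\cdot\Gamma$; the subtlety is that $-mK_X\cdot\Gamma$ could a priori be very negative, so I would first choose $m$ large enough that $-mK_X$ is itself relatively ample and bound $-K_X\cdot\Gamma$ from below in terms of $F\cdot\Gamma$ using that the two classes $[-K_X]$ and $[F]$ span the relevant part of $\Pic(X)_{\mathbb{R}}$.

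The hard part will be making the positivity against horizontal curves uniform in $\Gamma$: one must rule out a sequence of curves $\Gamma$ along which $-K_X\cdot\Gamma$ stays bounded while the negative contribution overwhelms $aF\cdot\Gamma$. I expect this to be handled cleanly by passing to the nef/ample cone: since $\Pic(X)_{\mathbb{R}}$ is $2$-dimensional (rank two by theorem \ref{classificationofgeomratsurfaces}), the effective/nef cone is spanned by $[F]$ and one other extremal ray, and it suffices to check positivity of $D$ on the two generators of the closed cone of curves. Choosing $m$ so that $-mK_X$ lies in the interior relative to the fiber class, and then taking $a\gg 0$, places $D=-mK_X+aF$ in the interior of the ample cone, which gives ampleness. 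This reduces the whole statement to a finite numerical check on the two boundary rays, which is exactly the kind of computation I would defer rather than grind through here.
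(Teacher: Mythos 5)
Your overall strategy (Nakai--Moishezon/Kleiman on a surface of Picard rank two) can be made to work, but as written the proposal stops exactly where the proposition actually needs proving. The fiber-class and self-intersection computations are the easy part; the entire content is uniform positivity against horizontal curves, and you explicitly defer it (``a finite numerical check on the two boundary rays\dots I would defer''). Nothing in the proposal identifies the second boundary ray of $\overline{NE}(X)$ or bounds $-K_X\cdot\Gamma$ against $F\cdot\Gamma$, so no threshold for $a$ is ever produced. Moreover, the sentence ``choosing $m$ so that $-mK_X$ lies in the interior relative to the fiber class'' is vacuous: rescaling by $m$ does not move the ray $\mathbb{R}_{>0}(-K_X)$, so $m$ cannot be ``chosen'' this way --- a sign the cone picture has not been fully thought through. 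The missing step is, however, short. Since the intersection form on $N^1(X)_{\mathbb{R}}\cong \mathbb{R}^2$ is nondegenerate and $F^2=0$, one has $F^{\perp}=\mathbb{R}F$; hence if $\gamma$ spans the extremal ray of $\overline{NE}(X)$ other than $\mathbb{R}_{\geq 0}[F]$, then $F\cdot \gamma>0$. Consequently, for any fixed $m\geq 1$,
\[
    (-mK_X+aF)\cdot \gamma \;=\; -m\,K_X\cdot \gamma + a\, F\cdot \gamma \;>\;0 \quad \text{for } a\gg 0,
\]
while $(-mK_X+aF)\cdot F = 2m\deg k(t)>0$; positivity on the two rays spanning $\overline{NE}(X)$ gives positivity on $\overline{NE}(X)\setminus\{0\}$, and Kleiman's criterion yields ampleness. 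Note that, once completed, your argument proves more than the statement: every positive integer $m$ works.

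For comparison, the paper's proof avoids curve-by-curve positivity altogether. Since $X$ is projective it carries some ample $\mathcal{L}$; by item $(3)$ of theorem \ref{descriptionofconicbundles} together with minimality of $\deg k(t)$ (so that $F$ generates $f^*\Pic(C)$), one may write $c_1(\mathcal{L})=-mK_X+aF$; intersecting with $F$ and using adjunction on the conic $F$ forces $m>0$; and then lemma \ref{lemmaDamplethenDF} (ample plus $F$ stays ample, because $\mathcal{O}_X(F)$ is globally generated, which uses $H^1(X,\mathcal{O}_X)=0$) bootstraps from one ample value $a_0$ to all $a\geq a_0$. That route is shorter and requires no knowledge of the cone of curves; yours, once the gap above is filled, is more explicitly numerical and yields the stronger conclusion that all $m\geq 1$ work.
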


Before dealing with the proof, let us prove the following lemma.

\begin{lemma}\label{lemmaDamplethenDF}
Let $X$ be a rational conic bundle as in theorem \ref{classificationofgeomratsurfaces}.
If $D$ is an ample divisor, then $D+F$ is ample as well.
\end{lemma}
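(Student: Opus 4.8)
The plan is to verify ampleness of $D+F$ directly through the Nakai--Moishezon criterion, which holds for projective surfaces over an arbitrary field $k$: a divisor $A$ on $X$ is ample if and only if $A^{2}>0$ and $A\cdot\Gamma>0$ for every integral curve $\Gamma\subset X$. Thus it suffices to check these two numerical conditions for $A=D+F$, exploiting that $D$ is ample while $F$ is a fiber of the conic bundle $f:X\longrightarrow C$.

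The essential input is that the fiber class $F=f^{*}(t)$ is nef and nonzero effective; the latter is clear since $F$ is a genuine fiber. For nefness, let $\Gamma\subset X$ be an integral curve and note that
\[
    F\cdot\Gamma=\deg\nolimits_{\Gamma}\bigl(f^{*}\mathcal{O}_{C}(t)\,|_{\Gamma}\bigr).
\]
If $\Gamma$ is contracted by $f$ (equivalently, contained in a fiber), then $f|_{\Gamma}$ is constant, so $f^{*}\mathcal{O}_{C}(t)|_{\Gamma}$ is the pullback of a line bundle along a constant map, hence trivial, and $F\cdot\Gamma=0$. If instead $\Gamma$ dominates $C$, then $f|_{\Gamma}\colon\Gamma\to C$ is finite and $F\cdot\Gamma=\deg(f|_{\Gamma})\cdot\deg(t)>0$. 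In either case $F\cdot\Gamma\geq 0$, so $F$ is nef, and in particular $F^{2}\geq 0$.

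The two Nakai--Moishezon conditions now follow at once. For the self-intersection,
\[
    (D+F)^{2}=D^{2}+2\,D\cdot F+F^{2}>0,
\]
since $D^{2}>0$ because $D$ is ample, $D\cdot F>0$ because an ample divisor meets the nonzero effective divisor $F$ positively, and $F^{2}\geq 0$ by nefness. For an arbitrary integral curve $\Gamma\subset X$,
\[
    (D+F)\cdot\Gamma=D\cdot\Gamma+F\cdot\Gamma>0,
\]
because $D\cdot\Gamma>0$ by ampleness of $D$ and $F\cdot\Gamma\geq 0$ by nefness of $F$. By the Nakai--Moishezon criterion, $D+F$ is ample.

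The only point requiring some care is that the intersection-theoretic manipulations and the Nakai--Moishezon criterion must be invoked in the form valid over a possibly non-closed field $k$, where integral curves need not have rational points; this does not affect the key geometric fact, namely nefness of the fiber class $F$, which follows from the fibration structure $f:X\to C$ alone. Alternatively, one may simply appeal to the general principle that the sum of an ample and a nef divisor is again ample, which packages exactly the computation above.
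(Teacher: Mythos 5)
Your proof is correct, but it takes a genuinely different route from the paper. You verify ampleness of $D+F$ numerically: you show the fiber class $F$ is nef directly from the fibration structure (curves contracted by $f$ meet $F$ in degree zero, curves dominating $C$ meet it positively), and then conclude by Nakai--Moishezon, or equivalently by the general principle that ample plus nef is ample. The paper instead shows that $\mathcal{O}_X(F)$ is \emph{globally generated}: it uses $F^2=0$ together with $\Pic(F)\cong \mathbb{Z}$ (here $F$ is a smooth conic) to see that $\mathcal{O}_X(F)|_F$ is trivial, then uses the vanishing $H^1(X,\mathcal{O}_X)=0$ and the exact sequence $0\to \mathcal{O}_X\to \mathcal{O}_X(F)\to \mathcal{O}_X(F)|_F\to 0$ to lift sections, and concludes since ample plus globally generated is ample. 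Your argument is more elementary and more general: it needs neither the irregularity hypothesis $q(X)=0$, nor smoothness of $F$, nor the conic-bundle structure, and it would apply verbatim to any fiber of any fibration from a projective surface to a curve. What the paper's argument buys, on the other hand, are two byproducts that are reused later: the triviality of $\mathcal{O}_X(F)|_F$ and the global generation of $\mathcal{O}_X(bF)$ are both cited in the proof of proposition \ref{propinductionona} (assumptions (3) and (4) and the cohomology claim there), so the paper's proof is doing double duty that a purely numerical argument would not. Your caution about invoking Nakai--Moishezon over a non-closed field is easily discharged: ampleness may be checked after base change to $\overline{k}$, or one may use Kleiman's version of the criterion, which holds over an arbitrary field.
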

        
\begin{proof}
The sum of an ample divisor $D$ with a globally generated divisor is still ample: therefore it is enough to prove that $\mathcal{O}_X(F)$ is generated by global sections.

The degree of $\mathcal{O}_X(F) |_F$ is $F^2$ which is $0$ being a fiber of $f$.
Therefore, since $\Pic(F)\cong \mathbb{Z}$ (because $F$ is a plane conic), the sheaf $\mathcal{O}_X(F) |_F$ is trivial.

From the exact sequence
\[
    0\longrightarrow \mathcal{O}_X
    \longrightarrow \mathcal{O}_X(F)
    \longrightarrow i_{F, *}(\mathcal{O}_X(F)|_F )
    \longrightarrow 0
\]
we have
\[
    H^0(X, \mathcal{O}_X(F))\cong H^0(X,\mathcal{O}_X)\oplus H^0(F, \mathcal{O}_X(F)|_F)\cong H^0(X,\mathcal{O}_X)\oplus H^0(F, \mathcal{O}_F)
\]
(recall that $H^1(X,\mathcal{O}_X)=0$).
Since both $\mathcal{O}_X$ and $\mathcal{O}_F$ are generated by global sections, the sheaf $\mathcal{O}_X(F)$ is globally generated as well.
\end{proof}

    \begin{proof}[Proof of Proposition \ref{propalotofamplelinbundleonconicbundles}]
        Since $X$ is projective, there exists an ample line bundle $\mathcal{L}$ on it. 
        Since any closed point of minimal degree generates $\Pic(C)$, by item $(3)$ of theorem \ref{descriptionofconicbundles} we have that $F$ and $-K_X$ generates $\Pic(X)$.
        Therefore $c_1(\mathcal{L})$ has the form $-mK_X+ aF$ with $m$ and $a$ integers.\\

        By Nakai-Moishezon Criterion (see \cite[Thm.\, V.1.10]{Harty}), a necessary condition for $\mathcal{L}$ to be ample is that $c_1(\mathcal{L}) \cdot F >0$.
        Since $F^2=0$ then this is equivalent to have 
        \[
            (-mK_X+aF) \cdot F = -m K_X \cdot F > 0.
        \]
        By adjunction formula for $F\subset X$ 
        \[
             \deg (K_F) = K_X \cdot F + F^2 = K_X\cdot F .
        \]
        Because $F$ is a conic (over $k(t)$) the degree of $K_F$ (over $k$) is $-2\deg k(t)$.
        Thus we must have 
        \[
            -m K_X \cdot F = 2 m \deg k(t) > 0.
        \]
        Hence the integer $m$ must be positive.\\

        Therefore there exists an $a_0$ such that $-mK_X+a_0F$ is ample and $m>0$.
        By lemma \ref{lemmaDamplethenDF}, we get that for any $a\geq a_0$ the divisor $-mK_X+aF$ is ample as well.
    \end{proof}

    Now that we have many ample line bundles, let us check which of them can lead to a triple which satisfies the \hyperlink{assumptions}{assumptions}.
    
    \begin{corollary}
       Let $\mathcal{L}$ be an ample line bundle $\mathcal{O}_X(-mK_X+aF)$ such that $a > \frac{m(r-8)}{2 \deg k(t)}$.
       Then for $e>>0$ the triple $(X, \mathcal{O}_X(-mK_X+aF)^{\otimes e}, Q)$ satisfies the \hyperlink{assumptions}{assumptions} with respect to any $n\in I_e$ (defined in theorem \ref{theoremgap1}).
    \end{corollary}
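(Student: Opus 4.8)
The plan is to recognise this corollary as an essentially immediate consequence of Theorem \ref{theoremgap1}, the only genuine content being that the arithmetic hypothesis on $a$ is precisely what forces the intervals $I_e$ to be non-empty. First I would verify that $(X,\mathcal{L},Q)$ is a suitable triple: $X$ is geometrically rational, hence $q(X)=0$, so it is a suitable surface; $\mathcal{L}=\mathcal{O}_X(-mK_X+aF)$ is ample by hypothesis; and $Q$ is a geometrically reduced closed point, so $(X,Q)$ is suitable. This places us squarely in the setting where Theorem \ref{theoremgap1} applies.

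The key step is the intersection-number computation. On a singular conic bundle we have $K_X^2=8-r$ by Theorem \ref{descriptionofconicbundles}(3), together with $F^2=0$ and, by the adjunction computation already carried out in the proof of Proposition \ref{propalotofamplelinbundleonconicbundles}, $K_X\cdot F=-2\deg k(t)$. Substituting $c_1(\mathcal{L})=-mK_X+aF$ gives
\[
    c_1(\mathcal{L})\cdot K_X = (-mK_X+aF)\cdot K_X = -m(8-r) - 2a\deg k(t) = m(r-8)-2a\deg k(t).
\]
Hence the hypothesis $a>\frac{m(r-8)}{2\deg k(t)}$ is exactly equivalent to the sign condition $c_1(\mathcal{L})\cdot K_X<0$. (I would also note in passing that this is a genuinely stronger requirement than ampleness alone when $r>8$, since $c_1(\mathcal{L})^2>0$ only yields $a>\frac{m(r-8)}{4\deg k(t)}$; this is the reason the bound must be imposed separately.)

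With this identification in hand I would invoke Proposition \ref{propositionc1(L)KXnegative}: because $c_1(\mathcal{L})\cdot K_X<0$, the quantity $-e\,c_1(\mathcal{L})\cdot K_X + \dim_k H^2(X,\mathcal{O}_X)-2\deg(Q)-1$ controlling the width of $I_e$ tends to $+\infty$, so $I_e$ is non-empty for all $e\gg 0$. Finally, Theorem \ref{theoremgap1} applied to the suitable triple $(X,\mathcal{L},Q)$ produces an integer $e_0$ such that for every $e\geq e_0$ the triple $(X,\mathcal{L}^{\otimes e},Q)$ satisfies the \hyperlink{assumptions}{assumptions} with respect to $n$ if and only if $n\in I_e$. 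Taking $e$ larger than both $e_0$ and the threshold guaranteeing $I_e\neq\varnothing$, the corollary follows. I do not expect any serious obstacle here: the proof is pure bookkeeping, and the one point requiring care is matching the explicit bound on $a$ to the sign of $c_1(\mathcal{L})\cdot K_X$, after which the two cited results do all the work.
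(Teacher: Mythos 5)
Your proposal is correct and matches the paper's own proof: both reduce the statement to Proposition \ref{propositionc1(L)KXnegative} together with Theorem \ref{theoremgap1}, and both verify the hypothesis $c_1(\mathcal{L})\cdot K_X<0$ via the same intersection computation $(-mK_X+aF)\cdot K_X = -m(8-r)-2a\deg k(t)$ using $K_X^2=8-r$ and $K_X\cdot F=-2\deg k(t)$. Your extra remarks (suitability of the triple, comparison with the Nakai--Moishezon bound $a>\frac{m(r-8)}{4\deg k(t)}$) are accurate but not needed.
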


    \begin{proof}
        By proposition \ref{propositionc1(L)KXnegative}, it is enough to find an ample line bundle $\mathcal{L}$ on $X$ such that $c_1(\mathcal{L})\cdot K_X<0$.
        By item $(3)$ in theorem \ref{descriptionofconicbundles} we have $K_X^2=(8-r)$.
        Therefore
        \[
        c_1(\mathcal{L})\cdot K_X = (-mK_X+aF)\cdot K_X = -m(8-r)-2a \deg k(t)
        \]
        This is negative if and only if $a  > \frac{m(r-8)}{2 \deg k(t)}$.
    \end{proof}

    \begin{proposition}\label{propinductionona}
        Let $\mathcal{L}$ be an ample line bundle $\mathcal{O}_X(-mK_X+aF)$ such that $a > \frac{m(r-8)}{2 \deg k(t)}$.
        Assume that
        \begin{enumerate}
            \item the triple $(X,\mathcal{L}^{\otimes e},Q)$ satisfies the \hyperlink{assumptions}{assumptions} with respect to $n_0\in I_e$,
            \item $e\geq e_g+e_{va}$ where $\mathcal{L}^{\otimes e_g}\otimes \mathcal{I}_Q$ is globally generated and $\mathcal{L}^{\otimes e_{va}}$ very ample, and
            \item $e>>0$ so that $H^i(X, \mathcal{L}^{\otimes e})=0$  $\forall i>0$ and $-e\, c_1(\mathcal{L})\cdot K_X-1-2\deg(Q) \geq 0$.
        \end{enumerate}
        Then for any $b\in \mathbb{N}$ the triple $(X,\mathcal{L}^{\otimes e}\otimes \mathcal{O}_X(bF),Q)$ satisfies the \hyperlink{assumptions}{assumptions} with respect to any integer $n_b$ inside the non-empty interval
        \[
            I_{(e,b)}\coloneqq \left[\frac{(e\, c_1(\mathcal{L})+bF)^2 + (e\, c_1(\mathcal{L}) +bF )\cdot K_X}{2}+1,  \dim_k H^0(X, \mathcal{L}^{\otimes e} \otimes \mathcal{O}_X(bF) ) - 1 -2\deg(Q) \right] .
        \]
    \end{proposition}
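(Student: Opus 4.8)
The plan is to argue by induction on $b$, taking as base case $b=0$, which is exactly hypothesis (1) (note that with $H^2(X,\mathcal{O}_X)=0$ the interval $I_{(e,0)}$ coincides with the interval $I_e$ of theorem \ref{theoremgap1}). Throughout write $\mathcal{M}_b\coloneqq \mathcal{L}^{\otimes e}\otimes\mathcal{O}_X(bF)$, so that $\mathcal{M}_0=\mathcal{L}^{\otimes e}$ and $\mathcal{M}_{b+1}=\mathcal{M}_b\otimes\mathcal{O}_X(F)$. Since $\mathcal{L}^{\otimes e}$ is ample and $\mathcal{O}_X(F)$ is globally generated, lemma \ref{lemmaDamplethenDF} shows by iteration that every $\mathcal{M}_b$ is ample, so each $(X,\mathcal{M}_b,Q)$ is a suitable triple. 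The engine of the induction is the sequence of lemma \ref{lemmaDamplethenDF} tensored by the line bundle $\mathcal{M}_b$,
\[
0 \longrightarrow \mathcal{M}_b \longrightarrow \mathcal{M}_{b+1} \longrightarrow i_{F\,*}(\mathcal{M}_b|_F) \longrightarrow 0,
\]
where I use that $\mathcal{O}_X(F)|_F$ is trivial (as established in the proof of lemma \ref{lemmaDamplethenDF}), so that $\mathcal{M}_b|_F\cong\mathcal{L}^{\otimes e}|_F$ is independent of $b$ and of degree $c_1(\mathcal{L}^{\otimes e})\cdot F=2em\deg k(t)>0$ on the genus-zero curve $F$; in particular $H^1(F,\mathcal{M}_b|_F)=0$, while $H^2(F,\mathcal{M}_b|_F)=0$ because $F$ is a curve.

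First I would observe that assumptions (2) and (5) are precisely the two inequalities cutting out $I_{(e,b)}$. By remark \ref{rmkaboutitem5}, assumption (5) for $(X,\mathcal{M}_b,Q)$ reads $n_b\geq \tfrac{c_1(\mathcal{M}_b)^2+c_1(\mathcal{M}_b)\cdot K_X}{2}+1$, the left endpoint, and assumption (2) reads $n_b\leq \dim_k H^0(X,\mathcal{M}_b)-1-2\deg(Q)$, the right endpoint. To make the latter explicit I need $\dim_k H^0(X,\mathcal{M}_b)=\chi(X,\mathcal{M}_b)$, i.e.\ $H^i(X,\mathcal{M}_b)=0$ for $i>0$; this propagates along the displayed sequence, since $H^2(F,\mathcal{M}_b|_F)=0$ forces $H^2(X,\mathcal{M}_b)\cong H^2(X,\mathcal{M}_{b+1})$ and $H^1(F,\mathcal{M}_b|_F)=0$ makes $H^1(X,\mathcal{M}_{b+1})$ a quotient of $H^1(X,\mathcal{M}_b)$, the base vanishing $H^{>0}(X,\mathcal{L}^{\otimes e})=0$ being hypothesis (3). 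Riemann--Roch together with $\chi(\mathcal{O}_X)=1$ (as $H^1(X,\mathcal{O}_X)=0$ and $H^2(X,\mathcal{O}_X)=0$ by proposition \ref{propositionc1(L)KXnegative}) then gives $\dim_k H^0(X,\mathcal{M}_b)=\tfrac{c_1(\mathcal{M}_b)^2-c_1(\mathcal{M}_b)\cdot K_X}{2}+1$, identifying the right endpoint.

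Next I would dispatch the remaining, $n_b$-independent, assumptions. For assumption (3), write $\mathcal{M}_b\otimes\mathcal{I}_Q=(\mathcal{L}^{\otimes e}\otimes\mathcal{I}_Q)\otimes\mathcal{O}_X(bF)$, a tensor product of globally generated sheaves (hypothesis (1) gives the first factor, lemma \ref{lemmaDamplethenDF} the second), hence globally generated. For assumption (4), by hypothesis (2) I may split $\mathcal{M}_b\otimes\mathcal{I}_Q=\big(\mathcal{L}^{\otimes e_g}\otimes\mathcal{I}_Q\otimes\mathcal{O}_X(bF)\big)\otimes\mathcal{L}^{\otimes(e-e_g)}$ as (globally generated) $\otimes$ (very ample), which produces the locally closed immersion of $X\setminus Q$ used in the proof of theorem \ref{theoremgap1} to verify assumption (4). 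For assumption (1) it suffices to show $H^1(X,\mathcal{M}_b\otimes\mathcal{I}_Q)=0$; choosing the smooth fiber $F$ over a minimal-degree point of $C$ disjoint from $Q$, the displayed sequence stays exact after tensoring by $\mathcal{I}_Q$, reading $0\to\mathcal{M}_b\otimes\mathcal{I}_Q\to\mathcal{M}_{b+1}\otimes\mathcal{I}_Q\to i_{F\,*}(\mathcal{M}_b|_F)\to 0$, so that $H^1(X,\mathcal{M}_{b+1}\otimes\mathcal{I}_Q)$ is squeezed between $H^1(X,\mathcal{M}_b\otimes\mathcal{I}_Q)=0$ (induction, base case hypothesis (1)) and $H^1(F,\mathcal{M}_b|_F)=0$.

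Finally, non-emptiness of $I_{(e,b)}$ follows by subtracting the endpoints: the formula for $\dim_k H^0(X,\mathcal{M}_b)$ gives
\[
\max I_{(e,b)}-\min I_{(e,b)}=-\,c_1(\mathcal{M}_b)\cdot K_X-2\deg(Q)-1,
\]
and since $c_1(\mathcal{M}_b)\cdot K_X=e\,c_1(\mathcal{L})\cdot K_X+b\,(F\cdot K_X)$ with $F\cdot K_X=-2\deg k(t)\leq 0$ and $-e\,c_1(\mathcal{L})\cdot K_X\geq 1+2\deg(Q)$ by hypothesis (3), this difference is at least $2b\deg k(t)\geq 0$, so the interval is non-empty (indeed it widens with $b$). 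I expect the main obstacle to be the bookkeeping around assumption (1): ensuring the interaction between $\mathcal{I}_Q$ and the fiber $F$ is harmless, that is, arranging $Q\cap F=\varnothing$ (possible because only finitely many fibers are singular or meet $Q$) or otherwise controlling the $\mathcal{T}or_1$-term when they meet, while keeping $F$ of minimal degree so that it still generates $\Pic(C)$ and $\mathcal{L}$ retains the assumed form $\mathcal{O}_X(-mK_X+aF)$.
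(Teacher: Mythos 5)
Your proof is correct, and its core coincides with the paper's: the same exact sequence $0\to\mathcal{L}^{\otimes e}\otimes\mathcal{O}_X(bF)\to\mathcal{L}^{\otimes e}\otimes\mathcal{O}_X((b+1)F)\to i_{F\,*}\bigl(\mathcal{L}^{\otimes e}|_F\bigr)\to 0$ (using $\mathcal{O}_X(F)|_F\cong\mathcal{O}_F$), the same vanishing $H^1(F,\mathcal{L}^{\otimes e}|_F)=0$ from positivity on the genus-zero fiber, the same inductive propagation of $H^{>0}(X,\cdot)=0$ (the paper's Claim), Riemann--Roch with $\chi(\mathcal{O}_X)=1$ to compute the right endpoint, the identification of assumptions (2) and (5) with the two endpoints of $I_{(e,b)}$, the identical treatment of assumptions (3) and (4), and the same width computation for non-emptiness. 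The one place you genuinely diverge is assumption (1). The paper needs no new cohomological input there: the surjection $H^0(X,\mathcal{L}^{\otimes e})\to H^0(Q,\mathcal{O}_Q)$, available by hypothesis (1), factors through $H^0(X,\mathcal{L}^{\otimes e}\otimes\mathcal{O}_X(bF))$ via multiplication by the canonical section of $\mathcal{O}_X(bF)$, so the restriction map from the larger space is surjective too. You instead prove the stronger statement $H^1(X,\mathcal{L}^{\otimes e}\otimes\mathcal{O}_X(bF)\otimes\mathcal{I}_Q)=0$ by induction on $b$. This works, with two caveats. First, your base case is not a consequence of hypothesis (1) alone: assumption (1) for $(X,\mathcal{L}^{\otimes e},Q)$ only gives, via the long exact sequence, an isomorphism $H^1(X,\mathcal{L}^{\otimes e}\otimes\mathcal{I}_Q)\cong H^1(X,\mathcal{L}^{\otimes e})$, and you must also invoke hypothesis (3) to make the right-hand side vanish; since you use (3) for the untwisted vanishing anyway, this is a one-line fix, but it should be stated. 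Second, your induction requires $F\cap Q=\varnothing$, which you rightly flag; note that the paper's composite-map argument silently requires the same thing (if $Q\in F$ the canonical section of $\mathcal{O}_X(bF)$ vanishes along $Q$ and the composite is zero, not surjective), and your remedy is the correct one: since $\Pic(C)\cong\mathbb{Z}$, all fibers over closed points of minimal degree are linearly equivalent, so replacing $F$ by a smooth such fiber avoiding $f(Q)$ (when one exists, e.g.\ for $k$ infinite) changes neither $\mathcal{L}$, nor $I_{(e,b)}$, nor the exact sequences. In short, the paper's route to assumption (1) is more economical; yours costs the disjointness bookkeeping but delivers a stronger vanishing that re-proves assumption (1) for all twists at once.
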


    \begin{proof}
        Let us check the \hyperlink{assumptions}{assumptions} one by one.\\
        
        \textbf{Assumption 1.} Since the composite map 
            \[
                H^0(X, \mathcal{L}^{\otimes e})\xhookrightarrow{} H^0(X,\mathcal{L}^{\otimes e}\otimes \mathcal{O}_X(bF))\longrightarrow H^0(Q, \mathcal{O}_Q)
            \] 
            is surjective, then 
            \[
                H^0(X,\mathcal{L}^{\otimes e}\otimes \mathcal{O}_X(bF))\longrightarrow H^0(Q, \mathcal{O}_Q)
            \]
            is surjective as well.\\

        \textbf{Assumption 2.} \begin{equation}\label{assumpt2forbF}
            n_b \leq  \dim_k H^0(X, \mathcal{L}^{\otimes e} \otimes \mathcal{O}_X(bF) ) - 1 -2\deg(Q).
        \end{equation}
    
        \textbf{Assumption 3.} Since both $\mathcal{L}^{\otimes e}\otimes \mathcal{I}_Q$ and $\mathcal{O}_X(bF)$ are generated by global sections (see proof of lemma \ref{lemmaDamplethenDF}), the product $\mathcal{L}^{\otimes e}\otimes \mathcal{O}_X(bF)\otimes \mathcal{I}_Q$ is globally generated as well.\\
            
        \textbf{Assumption 4.}  Since $\mathcal{L}^{\otimes e_{va}}$ is very ample and $\mathcal{O}_X(bF)$ is globally generated, then $\mathcal{L}^{\otimes e_{va}}\otimes \mathcal{O}_X(bF)$ is very ample as well. 
        Since $\mathcal{L}^{\otimes e_g}\otimes \mathcal{O}_X(bF)\otimes \mathcal{I}_Q$ is globally generated, we can do as done in proof of theorem \ref{theoremgap1} to infer that assumption \hyperlink{assumptions}{(4)} is satisfied as well.\\
            
        \textbf{Assumption 5.} 
        \begin{equation}\label{assump5forbF}
            \frac{(e\, c_1(\mathcal{L})+bF)^2 + (e\, c_1(\mathcal{L}) +bF )\cdot K_X}{2}+1\leq n_b.
        \end{equation}
        
        Therefore it remains to check that the intervals $I_{(e,b)}$ are non-empty. 
        Equivalently, we need to prove that the difference
        \begin{align}\label{differencefornb}
           \dim_k H^0(X, \mathcal{L}^{\otimes e} \otimes \mathcal{O}_X(bF) ) - 1 -2\deg(Q) 
            - \frac{(e\, c_1(\mathcal{L})+bF)^2 + (e\, c_1(\mathcal{L}) +bF )\cdot K_X}{2} -1
        \end{align}
        is non-negative.
        
        In order to do so, we want to express $\dim_k H^0(X, \mathcal{L}^{\otimes e} \otimes \mathcal{O}_X(bF) )$ in terms of the Euler-characteristic $\chi( \mathcal{L}^{\otimes e} \otimes \mathcal{O}_X(bF) )$.

        \begin{claim*}
        Let $b\in \mathbb{N}$.
        If the groups $H^i(X, \mathcal{L}^{\otimes e}\otimes \mathcal{O}_X(bF))$ are trivial for any $i\geq 1$, then $H^i(X, \mathcal{L}^{\otimes e}\otimes \mathcal{O}_X((b+1)F) )$ are trivial as well.
        \end{claim*}
            
        \begin{proof}[Proof of the Claim]
        Thanks to the exact sequence
        \[
            0\longrightarrow \mathcal{L}^{\otimes e} \otimes \mathcal{O}_X(bF)
            \longrightarrow \mathcal{L}^{\otimes e} \otimes \mathcal{O}_X( (b+1)F)
            \longrightarrow \mathcal{L}^{\otimes e}|_F \otimes \mathcal{O}_F((b+1)F)
            \longrightarrow 0
        \]
        we get (for $i\geq 2$)
        \begin{align*}
            0
            \longrightarrow H^1(X, \mathcal{L}^{\otimes e} \otimes \mathcal{O}_X((b+1)F) )
            \longrightarrow & H^1(F, \mathcal{L}^{\otimes e}|_F \otimes \mathcal{O}_F((b+1)F) ) \longrightarrow 0 \dots
            \\
            \cdots \longrightarrow 0
            \longrightarrow & H^i(X, \mathcal{L}^{\otimes e} \otimes \mathcal{O}_X((b+1)F) ) \longrightarrow 0 \longrightarrow
            \cdots .
        \end{align*}
        In particular, $H^i(X, \mathcal{L}^{\otimes e}\otimes \mathcal{O}_X((b+1)F) )$ is trivial for $i\geq 2$.

        The sheaf $\mathcal{L}^{\otimes e}|_F$ is isomorphic to
        \[
            \mathcal{O}_X(-emK_X+eaF) |_F \cong (\omega_X^{-1} |_F)^{\otimes me}\otimes (\mathcal{O}_X(F)|_F)^{\otimes ea} \cong \omega_F^{-\otimes me} \cong \mathcal{O}_F(2me)
        \]
        (because $\mathcal{O}_F(F)$ is trivial: see proof of lemma \ref{lemmaDamplethenDF}).
        Then the cohomology group $H^1(F, \mathcal{L}^{\otimes e}|_F\otimes \mathcal{O}_F((b+1)F))$ is trivial as well.
        \end{proof}

        Thanks to the claim, for any positive $b\in \mathbb{N}$ we have
        \[
            \dim_k H^0(X, \mathcal{L}^{\otimes e} \otimes \mathcal{O}_X(bF) )= \chi(\mathcal{L}^{\otimes e} \otimes \mathcal{O}_X(bF) ) 
            =
            \frac{(e\, c_1(\mathcal{L})+bF)^2-(e\, c_1(\mathcal{L})+bF)\cdot K_X}{2} +1.
        \]
        Hence the difference \ref{differencefornb} becomes
        \[
            \frac{(e\, c_1(\mathcal{L})+bF)^2-(e\, c_1(\mathcal{L})+bF)\cdot K_X}{2} - 2\deg Q  - \frac{(e\, c_1(\mathcal{L})+bF)^2 + (e\, c_1(\mathcal{L})+bF)\cdot K_X}{2} -1 
        \]
        which is equal to
        \[
            -(e\, c_1(\mathcal{L})+bF)\cdot K_X -1- 2\deg Q = -e\, c_1(\mathcal{L})\cdot K_X + 2b \deg k(t) -1 -2\deg(Q).
        \]
        Therefore as soon as $-e\, c_1(\mathcal{L})\cdot K_X-1-2\deg(Q)$ is non-negative, then the difference is always non-negative.
    \end{proof}

    \begin{proof}[Proof of Theorem \ref{thmdefinitestablebirofconicbundles} with $f$ singular]
    Fix $(m,a)$ a pair of integers such that $m$ is positive, $a> \frac{m(r-8)}{2 \deg k(t)}$ and $\mathcal{L}=\mathcal{O}_X(-mK_X+aF)$ is ample.
    Let $e$ be any integer big enough such that the triple $(X,\mathcal{O}_X(-emK_X+eaF),Q)$ satisfies the \hyperlink{assumptions}{assumptions} (with respect to some $n_0$) and such that $e$ satisfies the hypotheses of proposition \ref{propinductionona}.
    
    In order to conclude the proof, it is enough to check that the intervals $I_{(e,b)}=[\min_{(e,b)},\max_{(e,b)}]$'s for $\mathcal{L}^{\otimes e}\otimes \mathcal{O}_X(bF)$
    \[
    \left[\frac{(e\, c_1(\mathcal{L})+bF)^2 + (e\, c_1(\mathcal{L}) +bF )\cdot K_X}{2}+1,  \frac{(e\, c_1(\mathcal{L})+bF)^2 - (e\, c_1(\mathcal{L}) +bF )\cdot K_X}{2} -2\deg(Q) \right],
    \]
    provided by proposition \ref{propinductionona}, contain all the integers bigger than some integer $n_Q$.
    The width of the interval $I_{(e,b)}$
    \[
        -e\, c_1(\mathcal{L})\cdot K_X + 2b \deg k(t) -1 -2\deg(Q)
    \]
    grows when $b$ grows.
    Therefore, it is enough to check that for $b$ big enough we have $\max_{(e,b)}+1 \in I_{(e,b+1)}$. 

    For what concerns the inequality $\max_{(e,b)}+1\leq  \max_{(e,b+1)}$, it is always true. 
    Indeed it reads as follows:
    \[
        \frac{(e\, c_1(\mathcal{L})+bF)^2 - (e\, c_1(\mathcal{L}) +bF )\cdot K_X}{2} +1 \leq \frac{(e\, c_1(\mathcal{L})+(b+1)F)^2 - (e\, c_1(\mathcal{L}) +(b+1) F )\cdot K_X}{2}
    \]
    which is equivalent to
    \[
        e\, c_1(\mathcal{L})\cdot F - e \frac{K_X\cdot F}{2} - 1 = 2em \deg k(t) + e\deg k(t) -1 \geq 0.
    \]
    The other inequality, $\min_{(e,b+1)} \leq \max_{(e,b)}+1$, is equivalent to
    \[
        \frac{(e\, c_1(\mathcal{L})+(b+1)F)^2 + (e\, c_1(\mathcal{L}) + (b+1)F )\cdot K_X}{2} \leq \frac{(e\, c_1(\mathcal{L})+bF)^2 - (e\, c_1(\mathcal{L}) +bF )\cdot K_X}{2} -2\deg(Q) .
    \]
    After carrying on the computations on both sides, we get
    \begin{align*}
        0 \geq e\, (-mK_X +aF ) \cdot  ( K_X + F ) + \frac{2b+1}{2} F\cdot K_X + 2 \deg(Q) =
        \\
        = e( m(r-8) -2a \deg k(t) +2m \deg k(t)  ) - (2b+1)\deg k(t) +2 \deg(Q).
    \end{align*}
    Therefore it is enough to pick $b$ greater or equal than
    \[
        b \geq  \frac{ \deg (Q)}{\deg k(t)} + e \left( \frac{m(r-8)}{2\deg k(t)} -a +m \right) - \frac{1}{2}.
    \]
    So we get that for any $n\geq n_Q$, we have $\Hilb^{n}_X$ stably birational to $\Hilb^{n+\deg(Q)}_X$.

    Let $Q_1,\dots, Q_r$ be geometrically reduced closed points of $X$ such that 
    \[
        \textup{ind}(X)=\sum_{i=1}^s c_i \deg(Q_i) - \sum_{i=s+1}^r c_i\deg(Q_i)
    \]
    and $c_i>0$.
    Picking $n$ greater or equal than $\max_i n_{Q_i}$, we conclude the proof like in theorem \ref{theoremdelPezzo}.
    \end{proof}

    In particular we have treated all the minimal geometrically rational surfaces.
    This can be summarised in the following corollary.
    
    \begin{corollary}[Theorem \ref{theoremrationalsurfintro}]\label{corollarydefinitestablebir}
        Let $X$ be a geometrically rational suitable surface.
        Let $\textup{ind}(X)$ be the index of $X$.
        Then there exists an integer $n_0$ such that for any $n\geq n_0$ the variety $\Hilb^n_X$ is stably birational to $\Hilb^{n+\textup{ind}(X)}_X$.
    \end{corollary}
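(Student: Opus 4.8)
The plan is to reduce the statement to the two cases already settled, via the classification of minimal geometrically rational surfaces, so that the corollary becomes a bookkeeping argument assembling theorems \ref{theoremdelPezzo} and \ref{thmdefinitestablebirofconicbundles}.

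First I would observe that every quantity appearing in the statement depends only on the stable birational class of $X$. Indeed, by fact \ref{factonstablebira} together with the birationality between $\Hilb^n_X$ and $\Sym^n_X$, if $X$ is stably birational to $X'$ then $\Hilb^n_X$ is stably birational to $\Hilb^n_{X'}$ for every $n$. Moreover the index is a birational invariant of smooth projective varieties (\cite[Prop. 1.4]{Merku} or \cite[Prop. 6.8]{Gabber}), so $\textup{ind}(X)=\textup{ind}(X')$ whenever $X$ and $X'$ are birational; and triviality of $H^1(X,\mathcal{O}_X)$ is a birational invariant (\cite[Prop. V.3.4]{Harty}), so being suitable is preserved. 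Consequently I may replace $X$ by any model that is minimal over $k$ without altering the hypothesis or the conclusion.

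Next I would invoke the dichotomy recorded after theorem \ref{classificationofgeomratsurfaces}: a geometrically rational surface minimal over $k$ is either of Type I, i.e.\ birational to a del Pezzo surface, or of Type II, i.e.\ a rational conic bundle as in theorem \ref{classificationofgeomratsurfaces}. In the Type II case the conclusion is exactly theorem \ref{thmdefinitestablebirofconicbundles}, which directly furnishes an integer $n_0$ with $\Hilb^n_X$ stably birational to $\Hilb^{n+\textup{ind}(X)}_X$ for all $n\geq n_0$. In the Type I case I would apply birational invariance once more to pass to a del Pezzo surface $Y$ birational to $X$ (still suitable, being geometrically rational); theorem \ref{theoremdelPezzo} then yields an $n_0$ such that $\Hilb^n_Y$ is stably birational to $\Hilb^{n+\textup{ind}(Y)}_Y$ for $n\geq n_0$, and transporting back through the stable birational equivalences $\Hilb^n_X\sim_{\textup{bir}}\Hilb^n_Y$, $\Hilb^{n+\textup{ind}(X)}_X\sim_{\textup{bir}}\Hilb^{n+\textup{ind}(X)}_Y$ together with the equality $\textup{ind}(Y)=\textup{ind}(X)$ gives the claim for $X$.

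Since the two theorems invoked are precisely the substantive inputs, I do not expect a genuine obstacle here; the only point demanding care is verifying that all three quantities in play, namely the stable birational class of $\Hilb^n_X$, the index $\textup{ind}(X)$, and the suitability of $X$, are honestly birational invariants, so that the reductions to a minimal model and, in Type I, to a del Pezzo model are legitimate. This is the consistency check on which the whole assembly rests.
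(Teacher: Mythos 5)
Your proposal is correct and follows essentially the same route as the paper: the paper reduces to a minimal model at the start of Section \ref{8} via fact \ref{factonstablebira}, invokes Iskovskih's classification to split into the del Pezzo and conic bundle cases, and then cites theorems \ref{theoremdelPezzo} and \ref{thmdefinitestablebirofconicbundles}, exactly as you do. Your explicit verification that the index and suitability are birational invariants is a point the paper leaves implicit (though it cites the same references for invariance of the index inside the proof of theorem \ref{thmdefinitestablebirofconicbundles}), so it is a sound but not substantively different argument.
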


\section{Rationality of the Motivic Zeta function}\label{9}
     
Let us now focus on $K_0(Var/k)$, the Grothendieck ring of varieties over $k$: it is the commutative ring generated by isomorphism classes $[Y]$ of separated, reduced schemes of finite type over $k$ modulo the \emph{cut-and-paste} relations
\[
[Y] = [Y\setminus Z] + [Z_{\textup{red}}] \quad \textup{and } Z\xhookrightarrow{} Y \textup{ closed immersion}.
\]
The product operation is the one induced by the product of schemes
\[
    [Y] \cdot [W] = [(Y\times W)_{\textup{red}}].
\] 
Let $R$ be a commutative ring. 

\begin{definition}
    An $R$-valued \emph{motivic measure} on $K_0(Var/k)$ is a ring homomorphism
    \[
        \mu: K_0(Var/k)\longrightarrow R.
    \]
    If $R=K_0(Var/k)$, $\mu=id$ is called the \emph{universal motivic measure}.
\end{definition}

Given a motivic measure $\mu:K_0(Var/k)\longrightarrow R$,  Kapranov defined in \cite{Ka} the $\mu$-\emph{motivic zeta function} of $Y$ to be the formal series
\[
    \zeta_{\mu}(Y,t)= 1+ \sum_{n=1}^{\infty} \mu([\Sym^n_Y])t^n \in R[[t]].
\]
If $\mu$ is the universal motivic measure, we get the so-called \emph{motivic zeta function} $\zeta_{\id}(Y,t)$, denoted by $\zeta_{\textup{mot}}(Y,t)$.
It is a generalization of the Hasse-Weil zeta function.

\begin{proposition}[Prop. 2.9, \cite{Mu}]
    Let $k=\mathbb{F}_q$ be a finite field.
    Consider the motivic measure
    \[
        |\cdot |:K_0(Var/k)\longrightarrow \Z, \quad [Y]\, \mapsto\, |Y(\mathbb{F}_{q})|.
    \]
    Then $\zeta_{|\cdot |}(Y,t)$ coincides with the Hasse-Weil zeta function
    \[
        Z(Y,t)=\exp \left ( \sum_{m\geq 1} \frac{|Y(\mathbb{F}_{q^m})|}{m}t^m \right ).
    \]
\end{proposition}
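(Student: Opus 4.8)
The plan is to identify both sides with the same Euler product indexed by the closed points of $Y$, and then to reconcile that product with the exponential expression through a routine logarithmic computation. Throughout I work over $k=\mathbb{F}_q$ and write $P$ for a closed point of $Y$, of degree $\deg(P)=[k(P):\mathbb{F}_q]$.

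First I would pin down the coefficients of $\zeta_{|\cdot|}(Y,t)=1+\sum_{n\ge 1}|\Sym^n_Y(\mathbb{F}_q)|\,t^n$. The essential input is that $\Sym^n_Y(\mathbb{F}_q)$ is in bijection with the effective $0$-cycles of degree $n$ on $Y$ defined over $\mathbb{F}_q$; equivalently, with the Frobenius-stable effective $0$-cycles of degree $n$ on $Y_{\overline{\mathbb{F}_q}}$. Each such cycle is uniquely a non-negative combination $\sum_P m_P[P]$ over the closed points of $Y$ subject to $\sum_P m_P\deg(P)=n$, so $|\Sym^n_Y(\mathbb{F}_q)|$ counts exactly these combinations. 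Packaging the counts into a generating series, and using that each closed point $P$ contributes independently with multiplicity $m_P\ge 0$ weighted by $t^{m_P\deg(P)}$, I would obtain the Euler product
\[
    \zeta_{|\cdot|}(Y,t)=\prod_{P}\sum_{m\ge 0}t^{m\deg(P)}=\prod_{P}\frac{1}{1-t^{\deg P}}.
\]

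Next I would match this against $Z(Y,t)$. Letting $a_d$ denote the number of closed points of $Y$ of degree $d$, a closed point of degree $d$ splits into $d$ rational points over $\mathbb{F}_{q^m}$ precisely when $d\mid m$, which gives $|Y(\mathbb{F}_{q^m})|=\sum_{d\mid m}d\,a_d$. Substituting into $\log Z(Y,t)=\sum_{m\ge 1}\frac{|Y(\mathbb{F}_{q^m})|}{m}t^m$ and reindexing via $m=dk$ yields
\[
    \log Z(Y,t)=\sum_{d\ge 1}a_d\sum_{k\ge 1}\frac{t^{dk}}{k}=-\sum_{d\ge 1}a_d\log(1-t^d)=\log\prod_{P}\frac{1}{1-t^{\deg P}}.
\]
Comparing with the Euler product for $\zeta_{|\cdot|}(Y,t)$ then gives $Z(Y,t)=\zeta_{|\cdot|}(Y,t)$, as claimed.

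The one delicate point, and the step I expect to require the most care, is the identification of $\Sym^n_Y(\mathbb{F}_q)$ with the effective $0$-cycles of degree $n$ over $\mathbb{F}_q$. Here I would appeal to Galois descent for the quotient $Y^n/S_n$: a Frobenius-fixed geometric point of $\Sym^n_Y$ is an unordered $n$-tuple of $\overline{\mathbb{F}_q}$-points that is stable under $\mathrm{Gal}(\overline{\mathbb{F}_q}/\mathbb{F}_q)$, and such a tuple is Galois-stable exactly when the underlying $0$-cycle is defined over $\mathbb{F}_q$; conversely every $\mathbb{F}_q$-rational point of $\Sym^n_Y$ arises this way. Once this bijection is in place, the remainder is purely formal manipulation of power series, so no genuine obstruction remains beyond the descent argument (which is standard, and is precisely the content of the cited result of Musta\c t\u a).
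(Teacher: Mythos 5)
Your proposal is correct and takes essentially the same approach as the source the paper cites: the paper states this proposition without proof, referring to Musta\c t\u a \cite{Mu}, whose argument is exactly yours — identify $\Sym^n_Y(\mathbb{F}_q)$ with Frobenius-stable (hence $\mathbb{F}_q$-rational) effective $0$-cycles of degree $n$, deduce the Euler product $\prod_P (1-t^{\deg P})^{-1}$, and match it with $Z(Y,t)$ by the standard logarithmic manipulation. Nothing further to compare.
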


The Hasse-Weil zeta function $Z(Y,t)$ is known to be a rational function (see \cite{Dwo}).
Therefore, it is natural to ask for which motivic measures $\mu$ and variety $Y$, the zeta function $\zeta_{\mu}(Y,t)$ is rational as well.

\begin{definition}
    Let $R$ be a commutative ring: we say that $f \in R[[t]]$ is rational if there exist polynomials $p(t), q(t) \in R[t]$ such that $q(t)$ is invertible in $R[[t]]$ and $f(t) = \frac{p(t)}{q(t)}$.
\end{definition}

For curves we have the following  rationality result. 

\begin{theorem}
    \cite[Thm. 1.1.9]{Ka}
    If $C$ is a smooth, geometrically connected, projective curve, then $\zeta_{\textup{mot}}(C,t)$ is a rational function.
\end{theorem}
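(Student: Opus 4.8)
The plan is to compare the symmetric products with the Jacobian via the Abel--Jacobi morphism and to exploit that, for large $n$, the former becomes a projective bundle over the latter. Since $C$ is a smooth curve, $\Hilb^n_C\cong \Sym^n_C$, and $\Sym^n_C$ carries the Abel--Jacobi map $\sigma_n:\Sym^n_C\longrightarrow \Pic^n_C$ sending an effective divisor $D$ to $[\mathcal{O}_C(D)]$, whose fiber over a class $[L]$ is the complete linear system $|L|=\mathbb{P}(H^0(C,L)^{\vee})$. Let $g$ be the genus of $C$. For $n\geq 2g-1$ every degree-$n$ line bundle $L$ has $h^1(C,L)=0$ by Serre duality (as $\deg(\omega_C\otimes L^{\vee})<0$), so Riemann--Roch gives $h^0(C,L)=n-g+1$ independent of $L$. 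Hence for $n\geq 2g-1$ the morphism $\sigma_n$ is a projective bundle with fibers $\mathbb{P}^{n-g}$.

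First I would record the resulting identity in the Grothendieck ring. Writing $\mathbb{L}:=[\mathbb{A}^1_k]$ and $[\mathbb{P}^r]=1+\mathbb{L}+\cdots+\mathbb{L}^r$, the projective bundle structure yields
\[
    [\Sym^n_C]=[\Pic^n_C]\cdot(1+\mathbb{L}+\cdots+\mathbb{L}^{n-g})\qquad (n\geq 2g-1),
\]
where $[\Pic^n_C]$ equals the class $[J]$ of the Jacobian, independent of $n$. Splitting the zeta function at $n=2g-1$,
\[
    \zeta_{\textup{mot}}(C,t)=\sum_{n=0}^{2g-2}[\Sym^n_C]\,t^n+[J]\sum_{n\geq 2g-1}\Big(\sum_{j=0}^{n-g}\mathbb{L}^j\Big)t^n,
\]
the first summand is a polynomial, and the tail is the generating series of partial sums of a geometric progression. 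Setting $a_n=\sum_{j=0}^{n-g}\mathbb{L}^j$ and using the recurrence $a_n=1+\mathbb{L}\,a_{n-1}$, one checks that $(1-t)(1-\mathbb{L}t)$ times the tail is a polynomial in $t$ with coefficients in $\mathbb{Z}[\mathbb{L}]$. Since $(1-t)(1-\mathbb{L}t)$ has constant term $1$, it is invertible in $K_0(Var/k)[[t]]$, so the tail, and hence $\zeta_{\textup{mot}}(C,t)$, is rational, mirroring the Hasse--Weil shape $P(t)/\big((1-t)(1-\mathbb{L}t)\big)$.

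The delicate point I expect to be the main obstacle is justifying the identity $[\Sym^n_C]=[\Pic^n_C]\cdot[\mathbb{P}^{n-g}]$ in $K_0(Var/k)$, i.e.\ that $\sigma_n$ is not merely a fibration with projective-space fibers but a Zariski-locally trivial projective bundle. The clean way is to realize $\Sym^n_C$ as the projectivization $\mathbb{P}\big((\mathrm{pr}_2)_*\mathcal{P}\big)$ of the pushforward of a Poincar\'e bundle $\mathcal{P}$ on $C\times \Pic^n_C$; for $n\geq 2g-1$ this pushforward is locally free of rank $n-g+1$ by cohomology and base change, which gives the product formula. The existence of such a Poincar\'e bundle (equivalently, the vanishing of the relevant Brauer obstruction) is automatic when $C$ has a rational point, and one reduces to this situation; similarly one must ensure $[\Pic^n_C]$ is genuinely independent of $n$, which is immediate once $\Pic^n_C\cong \Pic^0_C$ (for instance over a field where the torsor $\Pic^1_C$ is trivial). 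Once these base-change and local-triviality inputs are in place, the geometric-series computation above finishes the proof.
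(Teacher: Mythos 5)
The paper offers no proof of this statement---it simply cites Kapranov---and your argument is essentially a reconstruction of Kapranov's original one. However, as a proof of the statement \emph{as quoted} (an arbitrary smooth, geometrically connected, projective curve), it has a genuine gap, located exactly at the point you flag and then wave off: there is no ``reduction to the case where $C$ has a rational point.'' Classes in $K_0(Var/k)$ do not descend along field extensions, so you cannot pass to a splitting field, and for a curve of index $>1$ both inputs you need genuinely fail. The torsors $\Pic^n_C$ need not be isomorphic to one another (a genus-one curve over $\mathbb{Q}$ of period $2$ has $\Pic^1_C(k)=\varnothing$ while $\Pic^0_C(k)\neq\varnothing$), and the Abel--Jacobi map is only \'etale-locally trivial: over points of $\Pic^n_C$ that are not represented by an actual line bundle on $C$ (such points exist precisely when the index of $C$ does not divide $n$), the fiber is a nontrivial form of $\mathbb{P}^{n-g}$, so the identity $[\Sym^n_C]=[\Pic^n_C]\cdot(1+\mathbb{L}+\cdots+\mathbb{L}^{n-g})$ breaks down. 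A concrete counterexample: let $C$ be a conic over $\mathbb{Q}$ with no rational point, so $g=0$ and $n=1\geq 2g-1$; here $\Pic^1_C=\Spec(k)$ and your formula would give $[C]=[\mathbb{P}^1_k]$ in $K_0(Var/k)$, which is false---in characteristic zero, equality of these classes would (by Larsen--Lunts, as used elsewhere in this very paper) make $C$ stably birational to $\mathbb{P}^1_k$, hence rational, hence with a rational point.

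So your argument proves the theorem only under the additional hypothesis that $C$ admits a degree-one line bundle (e.g.\ $C(k)\neq\varnothing$), which is in fact the hypothesis under which Kapranov's projective-bundle proof operates; removing it is not a formality but the hard part of the unconditional statement, and it required a genuinely different argument, supplied later by Litt in \emph{Zeta functions of curves with no rational points}, which avoids the projective-bundle formula altogether. The generating-series manipulation in your second paragraph (multiplying the tail by $(1-t)(1-\mathbb{L}t)$ and invoking invertibility of power series with constant term $1$) is correct; the gap is entirely concentrated in the product formula for $[\Sym^n_C]$ and the claimed $n$-independence of $[\Pic^n_C]$.
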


In \cite{Ka} it was conjectured that the motivic zeta function was rational for any algebraic variety $Y$. 
However Larsen and Lunts provided a counterexample to this conjecture for $k=\C$ in \cite{LL} and they actually gave a sufficient and necessary condition for rationality in the case of complex surfaces (in \cite{LL1}).

\begin{theorem}\cite[Thm. 1.1]{LL1}
    Let $k=\C$ and let $X$ be a smooth connected projective complex surface.
    Then the motivic zeta function $\zeta_{\textup{mot}}(X,t)$ is rational if and only if $X$ has negative Kodaira dimension.
\end{theorem}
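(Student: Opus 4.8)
The plan is to prove the two implications separately: the backward one (``$\kappa(X)=-\infty\Rightarrow$ rational'') by explicit computation, and the forward one (``rational $\Rightarrow\kappa(X)=-\infty$'') by specializing to the stable-birational quotient of $K_0(Var/\C)$. For the forward implication I would argue by contraposition, so assume $\kappa(X)\ge 0$ and that $\zeta_{\textup{mot}}(X,t)$ is rational. The key input is Larsen and Lunts' structure theorem identifying $K_0(Var/\C)/([\mathbb{A}^1_\C])$ with the monoid ring $\Z[\mathrm{SB}]$ on stable-birational classes of smooth projective connected varieties, where a class $[Y]$ maps to the stable-birational class $\langle Y\rangle$. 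Since a ring homomorphism carries rational power series to rational power series (the denominator stays invertible because its value at $t=0$ remains a unit), rationality of $\zeta_{\textup{mot}}(X,t)$ forces rationality of its image
\[
\overline{\zeta}(X,t)=\sum_{n\ge 0}\langle \Sym^n_X\rangle\, t^n=\sum_{n\ge 0}\langle \Hilb^n_X\rangle\, t^n\in \Z[\mathrm{SB}][[t]],
\]
the two series agreeing because $\Sym^n_X$ and $\Hilb^n_X$ are birational, hence stably birational, by Fact \ref{factonstablebira}. Rationality of $\overline{\zeta}(X,t)$ means the coefficients obey a linear recurrence $\langle \Sym^n_X\rangle=\sum_{i=1}^{d}c_i\,\langle \Sym^{n-i}_X\rangle$ for $n\gg 0$ with fixed $c_i\in\Z[\mathrm{SB}]$, and the goal is to contradict its existence.

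The first step is to note that the classes $\langle \Hilb^n_X\rangle$ are pairwise distinct for infinitely many $n$ once $\kappa(X)\ge 0$: this is Wood's theorem \cite[Prop. 35]{Li} applied to the pluricanonical systems that $\kappa(X)\ge 0$ provides (for $\kappa=1,2$ the plurigenera grow; for K3 and abelian surfaces $\omega_X\cong\mathcal{O}_X$; for Enriques and bielliptic surfaces $\omega_X$ is torsion, so $H^0(X,\omega_X^{2n})\ne 0$ for suitable $n$), with Litt's theorem \cite[Thm. 19]{Li} handling the case $p_g(X)\ne 0$. Distinctness by itself does not yet preclude a recurrence, so the core of the argument is to upgrade it: expanding each $c_i=\sum_m c_{i,m}[m]$ in the monoid basis and comparing the coefficient of the basis vector $\langle \Sym^n_X\rangle$ on both sides, a recurrence would force, for infinitely many $n$, a \emph{multiplicative} relation $\Sym^n_X\sim_{\textup{sb}}W\times \Sym^{n-i}_X$ with a single fixed smooth projective $W$ and a fixed $i$ (obtained by pigeonhole from the finitely many monomials in the $c_i$). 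Ruling this out is where the main obstacle lies, and it is most delicate exactly when $\kappa(X)=0$ and $p_g(X)=0$, i.e.\ for Enriques and bielliptic surfaces: every purely numerical measure (point counts, the Hodge--Deligne $E$-polynomial) yields a rational series and so detects nothing, and each individual Hodge number $h^{p,0}(\Sym^n_X)$ is eventually constant in $n$ for fixed $p$, hence is also blind to the relation. The plan is therefore to invoke a stable-birational invariant finer than the separate $h^{p,0}$, namely the graded collection of spaces of holomorphic forms, equivalently the Hodge realization of $\langle \Sym^n_X\rangle$, and to show that its growth with $n$ is incompatible with the fixed factor $W$, producing the contradiction and thus the irrationality of $\zeta_{\textup{mot}}(X,t)$.

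For the backward implication I would use the classification of surfaces of negative Kodaira dimension over $\C$: such an $X$ is either rational or ruled over a smooth curve $C$. In the rational case, rationality of $\zeta_{\textup{mot}}(X,t)$ is the computation of Musta\c t\u a \cite[Prop. 4.3]{Mu}. In the ruled case I would propagate Kapranov's rationality for the base curve $C$ (\cite[Thm. 1.1.9]{Ka}): the $\mathbb{P}^1$-fibration together with the scissor relations in $K_0(Var/\C)$ expresses $\sum_n[\Sym^n_X]t^n$ through the symmetric products of $C$ and of affine cells, giving a rational function, while the passage between smooth projective birational models (blow-ups, which modify $[X]$ only by classes of projective spaces) preserves rationality, so one may reduce to a convenient model such as $\mathbb{P}^1_\C\times C$. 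The technical core of the whole statement remains the multiplicative non-relation in the forward direction, precisely the strengthening of Wood's distinctness result that constitutes the hard part of \cite{LL1}.
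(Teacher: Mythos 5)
The paper does not prove this statement at all: it is quoted verbatim as a citation of Larsen--Lunts \cite[Thm.\ 1.1]{LL1}, so the only meaningful comparison is with the argument of \cite{LL1} itself. Your framework for the forward direction is the correct one and matches theirs in outline: specialize through the isomorphism $K_0(Var/\C)/([\mathbb{A}^1_\C])\cong \Z[\mathrm{SB}]$, observe that rationality survives (the constant term of the denominator stays a unit), extract a linear recurrence, and use that each $\langle \Sym^n_X\rangle$ is a \emph{monomial} in the monoid ring to force, via pigeonhole, a relation $\Sym^n_X\sim_{\textup{sb}} W\times \Sym^{n-i}_X$ with fixed $(W,i)$ for infinitely many $n$. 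The backward direction (classification into rational and ruled, Musta\c t\u a for the rational case, Kapranov for the base curve in the ruled case) is also sound. However, there is a genuine gap at exactly the point you defer: the final contradiction. You leave it as a ``plan'' to use ``the graded collection of spaces of holomorphic forms, equivalently the Hodge realization,'' and this invariant \emph{provably cannot work} in the cases you yourself identify as hardest. For any surface with $q(X)=p_g(X)=0$ one has $H^0(\Sym^n_X,\Omega^p)=\bigl(H^0(X^n,\Omega^p)\bigr)^{S_n}$, built entirely out of $H^{1,0}(X)=H^{2,0}(X)=0$; hence $h^{p,0}(\Sym^n_X)=0$ for all $p>0$ and all $n$, and the same vanishing holds for every candidate $W\times\Sym^{n-i}_X$ whose factors could plausibly occur. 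The graded collection of holomorphic forms is therefore identically trivial on every variety in play and detects nothing; the Hodge realization modulo the Lefschetz class is equally blind because the relevant Hodge structures are all of Tate type. So the invariant you name cannot ``upgrade'' the distinctness statement to the needed multiplicative non-relation, and the proof is incomplete precisely at its core.

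A secondary inaccuracy compounds this: your taxonomy of the delicate cases is off. Bielliptic surfaces have $q=1$, so $h^{1,0}\neq 0$ and holomorphic forms \emph{do} see something there; the genuinely resistant class is all surfaces of non-negative Kodaira dimension with $p_g=q=0$, which besides Enriques surfaces includes properly elliptic surfaces (e.g.\ Dolgachev surfaces) and general-type surfaces (e.g.\ Godeaux, Campedelli) --- none of which your plan addresses. Note also that plurigenera, the natural invariants growing with $n$ here, are \emph{not} stable birational invariants (they vanish on $Y\times\mathbb{P}^k$), so even the route through Wood's theorem \cite[Prop.\ 35]{Li} that you invoke for distinctness does not transfer to the multiplicative relation without an additional reduction (e.g.\ through MRC fibrations to a genuine birational equivalence), and even then the case $P_m\equiv 1$ (Enriques) yields no contradiction. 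Supplying invariants that do close these cases is the hard content of \cite{LL1}, and your proposal does not reconstruct it.
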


It turns out that having negative Kodaira dimension is a sufficient condition on any algebraically closed fields.

\begin{proposition}
    \cite[Prop. 4.3]{Mu}
    Let $X$ be a smooth connected projective surface over an algebraically closed field $k$. 
    If $X$ has negative Kodaira dimension, then $\zeta_{\textup{mot}}(X, t)$ is a rational function.
\end{proposition}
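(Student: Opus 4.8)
The plan is to pass to the quotient ring $R\coloneqq K_0(Var/k)/([\mathbb{A}^1_k])$, in which the coefficients of $\zeta_{\textup{mot}}(X,t)$ become stable birational invariants, and then to feed in the periodicity supplied by Corollary \ref{corollarydefinitestablebir}. Recall that over a field of characteristic zero the theorem of Larsen and Lunts \cite{LL} identifies $R$ with the monoid ring $\Z[SB]$ on the set $SB$ of stable birational classes of smooth, geometrically connected, projective $k$-varieties, the isomorphism sending the class $[Y]$ of a smooth projective $Y$ to its stable birational class $\langle Y\rangle$. Under the quotient map $\mu\colon K_0(Var/k)\to R$ two smooth projective varieties that are stably birational have the same image; this is the easy half of the cited result and follows from the blow-up relation together with $[Y\times\mathbb{P}^a]=[Y]\cdot(1+[\mathbb{A}^1_k]+\cdots+[\mathbb{A}^1_k]^a)\equiv[Y]\pmod{[\mathbb{A}^1_k]}$.

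First I would rewrite the coefficients of $\zeta_{\textup{mot}}(X,t)$ in terms of the smooth projective Hilbert schemes. Although $\Sym^n_X$ is singular for $n\geq2$, the Hilbert--Chow morphism $\Hilb^n_X\to\Sym^n_X$ is a resolution whose fibres are products of punctual Hilbert schemes of the smooth surface $X$; each such punctual Hilbert scheme admits an affine cell decomposition with a single $0$-dimensional cell, so its class is a polynomial in $[\mathbb{A}^1_k]$ with constant term $1$, hence lies in $1+([\mathbb{A}^1_k])$. A stratification of $\Sym^n_X$ by multiplicity type, over each stratum of which $\rho_n$ is piecewise trivial, then yields
\[
    [\Sym^n_X]\equiv[\Hilb^n_X]\pmod{[\mathbb{A}^1_k]}.
\]
Consequently $\mu([\Sym^n_X])=\langle\Hilb^n_X\rangle$ for every $n\geq1$, so that the image of $\zeta_{\textup{mot}}(X,t)$ in $R[[t]]$ equals $1+\sum_{n\geq1}\langle\Hilb^n_X\rangle\,t^n$.

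Next I would invoke the main arithmetic input. By Corollary \ref{corollarydefinitestablebir} there is an integer $n_0$ with $\Hilb^n_X$ stably birational to $\Hilb^{n+\textup{ind}(X)}_X$ for all $n\geq n_0$; writing $p\coloneqq\textup{ind}(X)$, this gives $\langle\Hilb^{n+p}_X\rangle=\langle\Hilb^n_X\rangle$ in $R$ for every $n\geq n_0$, so the coefficient sequence $\bigl(\mu([\Sym^n_X])\bigr)_n$ is eventually periodic of period $p$. Finally, any series $\sum_{n\geq0}c_n t^n\in R[[t]]$ with $c_{n+p}=c_n$ for $n\geq n_0$ is rational: splitting off the first $n_0$ terms, the periodic tail equals $\bigl(\sum_{j=0}^{p-1}c_{n_0+j}\,t^{n_0+j}\bigr)/(1-t^p)$, and $1-t^p$ is invertible in $R[[t]]$ since its constant term is a unit. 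This exhibits the image of $\zeta_{\textup{mot}}(X,t)$ as a ratio of polynomials with denominator $1-t^{\textup{ind}(X)}$, which is the desired rationality in $R[[t]]$.

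All of the genuinely new content sits in the periodicity of $\langle\Hilb^n_X\rangle$, that is, in Corollary \ref{corollarydefinitestablebir}; the remaining steps are formal manipulations in $\Z[SB][[t]]$. The one point requiring real care—and the main obstacle—is the congruence $[\Sym^n_X]\equiv[\Hilb^n_X]\pmod{[\mathbb{A}^1_k]}$: as the nodal cubic already shows, the class in $R$ of a \emph{singular} proper variety is \emph{not} a stable birational invariant and cannot be replaced by that of an arbitrary smooth model, so one must genuinely exploit the explicit cellular structure of the Hilbert--Chow fibres, which is special to $\Hilb^n$ of a smooth surface.
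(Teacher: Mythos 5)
Your argument does not prove the stated proposition, and the mismatch is in the target ring. The proposition (which the paper does not prove itself but imports from \cite[Prop. 4.3]{Mu}) asserts that $\zeta_{\textup{mot}}(X,t)$ is rational in $K_0(Var/k)[[t]]$ itself, i.e.\ for the \emph{universal} motivic measure. Every step of your proof takes place after applying $\mu_{\mathbb{L}}\colon K_0(Var/k)\to K_0(Var/k)/([\mathbb{A}^1_k])$, and rationality of the image of a power series in a quotient ring does not lift back: the kernel is enormous, and this distinction is the entire point of Section \ref{9}, where theorem \ref{rationalityofstablmotiviczeta} is explicitly the \emph{weaker}, mod-$[\mathbb{A}^1_k]$ statement that the Hilbert-scheme periodicity can reach. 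What you wrote is in substance the paper's proof of proposition \ref{proprationalityformotivicHilb} combined with the G\"ottsche comparison $[\Sym^n_X]\equiv[\Hilb^n_X]\pmod{[\mathbb{A}^1_k]}$ --- that is, a proof of theorem \ref{rationalityofstablmotiviczeta} specialized to $k=\overline{k}$ --- not a proof of Musta\c{t}\u{a}'s proposition. (Your invariance step, via the Larsen--Lunts isomorphism $K_0(Var/k)/([\mathbb{A}^1_k])\cong\Z[SB]$, also silently imposes characteristic zero, while the proposition is stated over an arbitrary algebraically closed field.)

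There is a second, independent gap: your periodicity input, corollary \ref{corollarydefinitestablebir}, covers only \emph{geometrically rational} surfaces, but over $k=\overline{k}$ negative Kodaira dimension also allows $X$ birational to $C\times\mathbb{P}^1$ with $g(C)\geq 1$. Such an $X$ has $q(X)=g(C)>0$, violates the paper's standing hypothesis $H^1(X,\mathcal{O}_X)=0$, and nothing in the paper gives eventual periodicity of the stable birational classes of $\Hilb^n_X$ for it; so even the mod-$[\mathbb{A}^1_k]$ statement is not established by your argument in the generality claimed. The actual proof avoids Hilbert schemes entirely and works upstairs in $K_0(Var/k)$: since $\Sym^n(Y\sqcup W)=\bigsqcup_{i+j=n}\Sym^i_Y\times\Sym^j_W$, the Kapranov zeta function extends to a group homomorphism $[Y]\mapsto\zeta_{[Y]}(t)$ from $K_0(Var/k)$ to $1+tK_0(Var/k)[[t]]$; over $\overline{k}$ (in any characteristic) a birational map between smooth projective surfaces factors through blow-ups at points, each of which changes the class by $+\mathbb{L}$ where $\mathbb{L}=[\mathbb{A}^1_k]$, so the classification of surfaces gives $[X]=[C]\,(1+\mathbb{L})+m\mathbb{L}$ in $K_0(Var/k)$ for some smooth projective curve $C$ and some $m\in\Z$; Totaro's lemma $\zeta_{[Y]\cdot\mathbb{L}}(t)=\zeta_{[Y]}(\mathbb{L}t)$ together with Kapranov's rationality theorem for curves \cite[Thm. 1.1.9]{Ka} then yields $\zeta_{\textup{mot}}(X,t)=\zeta_{C}(t)\,\zeta_{C}(\mathbb{L}t)\,(1-\mathbb{L}t)^{-m}$, a finite product of rational functions. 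The periodicity of $\Hilb^n_X$ only ever sees the quotient ring, and cannot substitute for this argument.
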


We now aim to study what happens when we drop the assumption on the field to be algebraically closed.

In order to do so, let us first study the rationality of a closely related zeta function, the $\mu$-\emph{motivic Hilbert zeta function}.

\begin{definition}
    Let $\mu$ be an $R$-valued motivic measure.
    Let $X$ be a geometrically rational surface.
    The $\mu$-\emph{motivic Hilbert zeta function} of $X$ is the power series
    \[
        \zeta_{\mu}^{\Hilb}(X,t)=1+\sum_{n=1}^{\infty} \mu([\Hilb^n_X])t^n\ \in R[[t]].
    \]
\end{definition}

Denote by $\mathbb{L}$ the class of $[\mathbb{A}^1_k]$ in $K_0(Var/k)$. 
Consider the motivic measure defined modding out by the ideal generated by $\mathbb{L}$:
\[
    \mu_{\mathbb{L}}: K_0(Var/k)\longrightarrow K_0(Var/k)/( \mathbb{L} ).
\]

\begin{hypothesis}[Weak Factorization property]\label{hp2}
    Let $f$ be a birational map $Y \overset{f}{\dashrightarrow} Y'$ between smooth projective varieties: then it can be realized as a composition of blow-ups and blow-downs along smooth irreducible centres on smooth projective varieties.
\end{hypothesis}

\begin{remark}
    If char$(k)=0$ then this hypothesis is satisfied \cite[Thm. 0.0.1]{Wl}.
\end{remark}

Here onward, let us assume that the field $k$ satisfies the weak factorization property. 
The usefulness of assuming weak factorization property is that birational smooth projective varieties are identified in the quotient $K_0(Var/k)/(\mathbb{L})$.

\begin{remark}
    Let $f$ be a birational map $Y \overset{f}{\dashrightarrow} Y'$ between smooth projective varieties: then $[Y]=[Y']$ in $K_0(Var/k)/ (\mathbb{L})$.
\end{remark}

\begin{proof}
Decomposing $f$ into a sequence of blow-ups and downs, we can assume that $f$ is a blow-up along a smooth centre.
Since $\mathbb{L}=0$ then
\[
    [\mathbb{A}^n_k] = \mathbb{L}^n=0,\quad \ \textup{and} \ [\mathbb{P}^n_k] = 1 + \mathbb{L} + \dots + \mathbb{L}^n =1.
\]
Therefore if $\mathcal{E}$ is a vector bundle on $Y$ of rank $n$ then $[\mathbb{P}(\mathcal{E})]=[Y]$. 
If we blow-up a smooth connected projective variety $Y$ along a smooth connected closed subvariety $Z$ then 
\[
    [Bl_Z Y] - [Y] = [\mathbb{P}( N_Z Y)] - [Z] =0 \implies [Bl_Z Y] = [Y],
\]
where $N_Z Y$ is the normal bundle of $Z$ in $Y$.
\end{proof}

\begin{proposition}\label{proprationalityformotivicHilb}
    Let $k$ be a field for which weak resolution theorem holds true.
    Let $X$ be a geometrically rational, connected smooth projective surface.
    Then the $\mu_{\mathbb{L}}$-motivic Hilbert zeta function 
    \[
        \zeta_{\mu}^{\Hilb}(X,t)
        =
        1+\sum_{ n=1}^{\infty} \mu_{\mathbb{L}}( [ \Hilb^n_X ] ) t^n \in K_0(Var/k)/(\mathbb{L})[[t]]
    \]
    is a rational function.
\end{proposition}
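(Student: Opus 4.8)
The plan is to read off the rationality from the finiteness statement in Corollary \ref{corollarydefinitestablebir} together with the behaviour of stable birationality in the quotient $K_0(Var/k)/(\mathbb{L})$. First I would record the two things that make the machinery applicable: each $\Hilb^n_X$ is a smooth, geometrically connected, projective $k$-variety (as noted in the introduction), and a geometrically rational surface is automatically suitable, since $H^1(X_{\overline{k}},\mathcal{O}_{X_{\overline{k}}})=0$ for a rational surface and $H^1(X,\mathcal{O}_X)$ vanishes by flat base change. Thus Corollary \ref{corollarydefinitestablebir} applies to $X$.

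Next I would upgrade the remark preceding the proposition from birational to \emph{stably} birational invariance in $K_0(Var/k)/(\mathbb{L})$. Suppose $V$ and $W$ are smooth projective $k$-varieties that are stably birational, so that $V\times \mathbb{P}^a_k$ is birational to $W\times \mathbb{P}^b_k$ for some $a,b$. Both products are again smooth and projective, so by the cited remark (valid under the weak factorization property, Hypothesis \ref{hp2}) their classes agree in $K_0(Var/k)/(\mathbb{L})$. Since $[\mathbb{P}^n_k]=1$ in this quotient, we get $[V\times \mathbb{P}^a_k]=[V]$ and $[W\times \mathbb{P}^b_k]=[W]$, whence $[V]=[W]$ and therefore $\mu_{\mathbb{L}}([V])=\mu_{\mathbb{L}}([W])$.

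Now set $d=\textup{ind}(X)$ and $a_n=\mu_{\mathbb{L}}([\Hilb^n_X])$ (with $a_0=1$, since $\Hilb^0_X=\Spec k$). By Corollary \ref{corollarydefinitestablebir} there is an integer $n_0$ such that $\Hilb^n_X$ is stably birational to $\Hilb^{n+d}_X$ for all $n\geq n_0$, and the previous step turns this into $a_n=a_{n+d}$ for every $n\geq n_0$. Hence the coefficient sequence $(a_n)_{n\geq 0}$ is eventually periodic with period $d$.

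Finally, it remains to observe that a power series with eventually periodic coefficients is rational. Splitting off the polynomial head $\sum_{n=0}^{n_0-1}a_nt^n$ and writing each $n\geq n_0$ uniquely as $n=n_0+j+md$ with $0\leq j<d$ and $m\geq 0$, the relation $a_{n_0+j+md}=a_{n_0+j}$ gives
\[
    \sum_{n\geq n_0} a_n t^n = \sum_{j=0}^{d-1} a_{n_0+j}\, t^{n_0+j}\sum_{m\geq 0}(t^d)^m = \frac{1}{1-t^d}\sum_{j=0}^{d-1} a_{n_0+j}\, t^{n_0+j}.
\]
Because $1-t^d$ has constant term $1$, it is invertible in $K_0(Var/k)/(\mathbb{L})[[t]]$, and putting everything over the common denominator $1-t^d$ exhibits $\zeta_{\mu}^{\Hilb}(X,t)$ as a quotient $p(t)/(1-t^d)$ with $p(t)$ a polynomial, hence rational in the sense defined above. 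The only genuinely delicate point is the passage from birational to stably birational invariance in the quotient; once that is in place, the conclusion is a formal manipulation of eventually periodic power series.
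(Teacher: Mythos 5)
Your proposal is correct and follows essentially the same route as the paper: invoke Corollary \ref{corollarydefinitestablebir} to get eventual periodicity of the coefficients $\mu_{\mathbb{L}}([\Hilb^n_X])$ with period $\textup{ind}(X)$, then sum the geometric series to exhibit the zeta function as a polynomial over $1-t^{\textup{ind}(X)}$. Your explicit upgrade of the remark from birational to \emph{stably} birational invariance in $K_0(Var/k)/(\mathbb{L})$ (via $[V\times\mathbb{P}^a_k]=[V]$) is a point the paper leaves implicit, so making it is a welcome refinement rather than a departure.
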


\begin{proof}
    By corollary \ref{corollarydefinitestablebir}, since $X$ is geometrically rational there is an $n_0$ (which depends only on the birational type of $X$) such that for any $n\geq n_0$ we have $\Hilb^n_X $ stably birational to $\Hilb^{n+\textup{ind}(X)}_X$.
    Therefore the coefficients of $\zeta_{\mathbb{L}}^{\Hilb}(X,t)$ are eventually periodic of period a divisor of $\textup{ind}(X)$, which means that the series can be written as 
    \begin{align*}
        1+\sum_{ n=1}^{\infty} \mu_{\mathbb{Z}} ([ \Hilb^n_X ]) t^n & = 1 + \mu_{\mathbb{L}}([X]) t + \dots + \mu_{\mathbb{L}}( [ \Hilb^{n_0-1}_X ]) t^{n_0-1} + \sum_{j = 0 }^{\infty} \mu_{\mathbb{L}}([ \Hilb^{n_0+j}_X ]) t^{n_0+j}
        \\
        & = P_X(t) + t^{n_0} \sum_{j = 0 }^{\infty} 
        t^{j\cdot \textup{ind}(X)} \left( \mu_{\mathbb{L}}  ( [\Hilb^{n_0}_X] )+ \dots + \mu_{\mathbb{L}} ( [ \Hilb^{n_0+\textup{ind}(X)-1}_X ])  t^{\textup{ind}(X)-1} \right)
        \\
        & = P_X(t)
        + t^{n_0} \frac{ \mu_{\mathbb{L}} ([ \Hilb^{n_0}_X ]) + \dots + \mu_{\mathbb{L}}([ \Hilb^{n_0+\textup{ind}(X)-1}_X ]) t^{\textup{ind}(X)-1}  }{1 - t^{\textup{ind}(X)}}
    \end{align*}
    where $P_X(t)$ is the polynomial $1 + \mu_{\mathbb{L}}([X]) t + \dots + \mu_{\mathbb{L}} ( [\Hilb^{n_0-1}_X] ) t^{n_0-1}$. 
\end{proof}

In order to recover information on the $\mu_{\mathbb{L}}$-motivic zeta function, we now need to compare the classes $\mu_{\mathbb{L}}([\Sym^n_X])$ with $\mu_{\mathbb{L}}([\Hilb^n_X])$ in $K_0(Var/k)/(\mathbb{L})$.\\

The following theorem is due to G\"ottsche.
It was proven under the assumption that $k$ is a field of characteristic zero and algebraically closed, but the latter assumption is not used and can be dropped.

\begin{theorem}\cite[Thm. 1.1]{Go}
Let $k$ be a field of characteristic zero.
Let $\alpha=(m_1,\dots,m_r)$ be a partition of $n$.
Denote by $a_i$ is the number of times for which $m_j=i$.
Let $|\alpha|=\sum_{i=1}^n a_i$. 
Then the class $[\Hilb^n_X]$ in $K_0(Var/k)$ is equal to
\[
    \sum_{\alpha \dashv n} \, [ \Sym^{a_1}_X\times \dots \times \Sym^{a_n}_X \times \mathbb{A}^{n-|\alpha|}_k].
\]
\end{theorem}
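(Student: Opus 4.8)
The plan is to deduce the formula from the Hilbert--Chow morphism together with the (known) motivic class of the punctual Hilbert scheme of a smooth surface point. It is convenient to observe first that the asserted identity is equivalent to a product formula for the generating series. Writing a partition $\alpha\vdash n$ in multiplicity notation $1^{a_1}2^{a_2}\cdots$, so that $\sum_i i\,a_i=n$, $|\alpha|=\sum_i a_i$ and $n-|\alpha|=\sum_i (i-1)a_i$, a direct rearrangement of coefficients shows that the theorem is equivalent to
\[
\sum_{n\ge 0}[\Hilb^n_X]\,t^n=\prod_{i\ge 1} Z\!\left(X,\ \mathbb{L}^{i-1}t^i\right),\qquad Z(X,s):=\sum_{a\ge 0}[\Sym^a_X]\,s^a,
\]
since the coefficient of $t^n$ on the right is exactly $\sum_{\sum_i i a_i=n}\big[\prod_i\Sym^{a_i}_X\big]\,\mathbb{L}^{\sum_i(i-1)a_i}$. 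I would establish this product form.

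The main geometric tool is the Hilbert--Chow morphism $\rho\colon \Hilb^n_X\to\Sym^n_X$ sending a length-$n$ subscheme $Z$ to its support cycle $\sum_x\textup{length}_x(Z)\,[x]$ (characteristic zero guarantees its good behaviour). I would stratify the target by multiplicity type: for each $\alpha$ let $\Sym^n_{X,\alpha}\subseteq\Sym^n_X$ be the locally closed locus of cycles having exactly $a_i$ distinct points of multiplicity $i$ for every $i$. These strata partition $\Sym^n_X$, so by the cut-and-paste relations
\[
[\Hilb^n_X]=\sum_{\alpha\vdash n}\big[\rho^{-1}(\Sym^n_{X,\alpha})\big].
\]
Over a geometric point $\sum_j n_j[x_j]$ of $\Sym^n_{X,\alpha}$ the scheme-theoretic fibre of $\rho$ is the product $\prod_j H_{n_j}$ of punctual Hilbert schemes, where $H_m:=\Hilb^m_{\mathbb{A}^2,0}$ denotes length-$m$ subschemes supported at the origin of the plane; smoothness of $X$ makes this fibre independent of the chosen surface point. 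Fixing a local coordinate identification, $\rho^{-1}(\Sym^n_{X,\alpha})$ is a Zariski-locally trivial fibration over $\Sym^n_{X,\alpha}$ (after refining the stratification if necessary), with fibre determined by $\alpha$, so that the class multiplies in $K_0(Var/k)$ and the computation reduces to the single-point local series, with the permutations $S_{a_i}$ of equal-multiplicity points recorded by passing to $\Sym^{a_i}_X$.

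The remaining, and genuinely hard, input is the local computation. I would establish the punctual identity
\[
\sum_{m\ge 0}[H_m]\,t^m=\prod_{k\ge 1}\frac{1}{1-\mathbb{L}^{k-1}t^k},
\]
which is the $X=\Spec k$ case of the theorem; for instance its $t^2$-coefficient is $[H_2]=[\mathbb{P}^1]=1+\mathbb{L}$, and its $t^3$-coefficient is $1+\mathbb{L}+\mathbb{L}^2$. This is precisely the content of the Ellingsrud--Str\o mme cell decomposition of the punctual Hilbert scheme, whose affine cells produce the powers $\mathbb{L}^{k-1}$ and explain why the individually complicated classes $[H_m]$ assemble into clean powers of $\mathbb{L}$. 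Granting it, one organises the sum over $\alpha$ as an Euler product over points of $X$: summing the local series over all configurations of disjoint points turns the punctual factor $\prod_k(1-\mathbb{L}^{k-1}t^k)^{-1}$ into $\prod_k Z(X,\mathbb{L}^{k-1}t^k)$, because $Z(X,s)$ is exactly the series produced from the one-point series $(1-s)^{-1}$ by the symmetric-product operation $X\mapsto\bigsqcup_{a}\Sym^a_X$. This gives the product formula and hence the stated identity. The main obstacle is twofold: the punctual cell-decomposition computation above, and the careful bookkeeping of the disjointness conditions and the $\prod_i S_{a_i}$-quotients needed to justify that the stratawise contributions multiply into the Euler product; both are standard but not formal, which is why it is reasonable to cite \cite{Go} rather than reproduce the argument in full.
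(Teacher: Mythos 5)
The paper does not actually prove this statement: it is imported verbatim from G\"ottsche \cite{Go}, with only the remark that G\"ottsche's standing hypothesis that $k$ be algebraically closed is never used and can be dropped. So your sketch can only be compared with the cited literature, and in outline it is that proof: your reduction of the stated identity to the product formula $\sum_n[\Hilb^n_X]\,t^n=\prod_{i\ge 1}Z(X,\mathbb{L}^{i-1}t^i)$ is correct, the Hilbert--Chow stratification is the right starting point, and the punctual input $\sum_m[H_m]t^m=\prod_{k\ge1}(1-\mathbb{L}^{k-1}t^k)^{-1}$ from the Ellingsrud--Str\o mme cell decomposition is exactly the local ingredient G\"ottsche (and later Gusein-Zade--Luengo--Melle-Hern\'andez, via power structures) uses.

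There is, however, a genuine gap in the middle step: the claim that $\rho^{-1}(\Sym^n_{X,\alpha})\to\Sym^n_{X,\alpha}$ is a Zariski-locally trivial fibration ``after refining the stratification if necessary,'' so that its class is $[\Sym^n_{X,\alpha}]\cdot\prod_i[H_i]^{a_i}$, is false, and no refinement can repair it. Take $\alpha$ consisting of two points of multiplicity $2$ (so $n=4$): the stratum is $B=(X^2\setminus\Delta)/S_2$ and its preimage is the $S_2$-quotient of $(\mathbb{P}(TX)\times\mathbb{P}(TX))|_{X^2\setminus\Delta}$. The generic fibre over $B$ is the Weil restriction $R_{L/K}(\mathbb{P}^1_L)$ for the quadratic extension $L=k(X^2\setminus\Delta)$ of $K=k(B)$, a nonsplit form of $\mathbb{P}^1\times\mathbb{P}^1$; since every nonempty open of $B$ has the same function field, the fibration is not even generically trivial, whatever refinement you make. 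Worse, the class identity itself fails: already for the untwisted model $W\times\mathbb{P}^1\times\mathbb{P}^1$ with the diagonal $S_2$-action ($W=X^2\setminus\Delta$), cutting by the cells of $\mathbb{P}^1$ and descending the resulting vector bundles gives $[(W\times\mathbb{P}^1\times\mathbb{P}^1)/S_2]=[B](1+\mathbb{L}^2)+[W]\,\mathbb{L}$, which differs from $[B](1+\mathbb{L})^2$ by $\mathbb{L}([W]-2[B])$, a nonzero class (for $X=\mathbb{P}^2$ over $\mathbb{C}$ the $E$-polynomial of $[W]-2[B]$ is $uv-u^4v^4$; over a finite field the point counts differ). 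So the strata do \emph{not} contribute multiplicatively one at a time; G\"ottsche's formula, with the full symmetric products $\Sym^{a_i}_X$ rather than configuration spaces, only emerges after summing over all $\alpha$, and organizing that cross-stratum compensation is precisely the content of G\"ottsche's descent argument, respectively of the power-structure formalism (whose engine is Totaro's lemma $[(W\times\mathbb{A}^d)/G]=[W/G]\,\mathbb{L}^d$ for $G$ acting freely on $W$ and linearly on $\mathbb{A}^d$, combined with the affine cell decomposition of $H_m$). Your closing paragraph does flag the $\prod_iS_{a_i}$-bookkeeping as the remaining work, but the mechanism you propose for it is not repairable as stated; this is the real mathematical content of \cite{Go}, not a routine verification.
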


Therefore, here onwards, we will assume that $k$ has characteristic zero. In particular, $k$ satisfies the weak factorization theorem.

\begin{corollary}
    Let $k$ be a field of characteristic zero: then in the quotient ring $K_0(Var/k)/(\mathbb{L})$ we have that 
    \[
        \mu_{\mathbb{L}} ([\Sym^n_X] ) = \mu_{\mathbb{L}} ([\Hilb^n_X] ) .
    \]
\end{corollary}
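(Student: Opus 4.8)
The plan is to derive the identity directly from G\"ottsche's formula just stated, using only that the measure $\mu_{\mathbb{L}}$ annihilates the class $\mathbb{L}=[\mathbb{A}^1_k]$. First I would apply $\mu_{\mathbb{L}}$ to both sides of
\[
    [\Hilb^n_X]=\sum_{\alpha\dashv n}\, [\Sym^{a_1}_X\times\dots\times\Sym^{a_n}_X\times \mathbb{A}^{n-|\alpha|}_k],
\]
and, since $\mu_{\mathbb{L}}$ is a ring homomorphism and $[\mathbb{A}^m_k]=\mathbb{L}^m$ in $K_0(Var/k)$, rewrite each summand as $\mu_{\mathbb{L}}([\Sym^{a_1}_X\times\dots\times\Sym^{a_n}_X])\cdot \mu_{\mathbb{L}}(\mathbb{L})^{n-|\alpha|}$.

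The decisive observation is then purely combinatorial. Whenever $n-|\alpha|\geq 1$ the factor $\mu_{\mathbb{L}}(\mathbb{L})^{n-|\alpha|}$ vanishes in $K_0(Var/k)/(\mathbb{L})$, so the corresponding term dies; only the partitions $\alpha$ with $|\alpha|=n$ survive. Recalling that $|\alpha|=\sum_i a_i$ is the number of parts and that $n=\sum_i i\,a_i$, we have $n\geq |\alpha|$ with equality if and only if $a_i=0$ for every $i\geq 2$. Hence the unique surviving partition is $\alpha=(1,\dots,1)$, for which $a_1=n$ and all other $a_i$ vanish.

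For this partition the surviving summand is $[\Sym^n_X\times\Sym^0_X\times\dots\times\Sym^0_X]$. Since $\Sym^0_X=\Spec(k)$ has class $1$, this term equals $[\Sym^n_X]$, and we conclude
\[
    \mu_{\mathbb{L}}([\Hilb^n_X])=\mu_{\mathbb{L}}([\Sym^n_X]).
\]

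I do not expect a genuine obstacle here: the statement is a formal consequence of G\"ottsche's theorem once one passes to the quotient by $\mathbb{L}$. The only point that requires (a one-line) verification is the identification of the single partition contributing a unit, i.e. a zeroth power of $\mathbb{L}$, namely the all-ones partition; every other partition supplies a strictly positive power of $\mathbb{L}$ and therefore contributes $0$.
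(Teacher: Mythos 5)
Your proof is correct and is essentially the same as the paper's: both apply $\mu_{\mathbb{L}}$ to G\"ottsche's formula, kill every term with $n-|\alpha|\geq 1$, and observe that the unique partition with $|\alpha|=n$ is $(1,\dots,1)$, whose summand is $[\Sym^n_X]$. Your write-up just spells out the combinatorial check ($n\geq|\alpha|$ with equality iff all parts equal $1$) that the paper leaves implicit.
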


\begin{proof}
    Since $\mathbb{L}$ is sent to zero, then $\mu_{\mathbb{L}}( [\mathbb{A}^{n-|\alpha|}_k] )$ is zero if $|\alpha|$ is not $n$.
    So the only non-zero term of the G\"ottsche's formula in $K_0(Var/k)/(\mathbb{L})$ is the one associated to the partition $\alpha=(1,\dots,1)$ (so $a_1=n$). 
    Hence
    \[
        \mu_{\mathbb{L}} ([\Sym^{n}_X ]) = \mu_{\mathbb{L}} ([\Hilb^n_X] ) .
    \]
\end{proof}

In particular in $K_0(Var/k)/(\mathbb{L})$, the motivic Hilbert zeta function coincides with the motivic zeta function
\[
    1+\sum_{n=1}^{\infty} \mu_{\mathbb{L}}( [ \Sym^n_X ]) t^n.
\]

\begin{corollary}[Theorem \ref{rationalityofstablmotiviczeta}]
    Let $k$ be a field of characteristic zero: if $X$ is a geometrically rational connected smooth projective surface, then the $\mu_{\mathbb{L}}$-motivic zeta function is rational. 
\end{corollary}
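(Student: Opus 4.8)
The plan is to deduce this directly from the rationality of the Hilbert zeta function (Proposition \ref{proprationalityformotivicHilb}) by transporting it across the $\Sym$--$\Hilb$ comparison in $K_0(Var/k)/(\mathbb{L})$. First I would unwind the definition: applying the motivic measure $\mu_{\mathbb{L}}\colon K_0(Var/k)\to K_0(Var/k)/(\mathbb{L})$ to the motivic zeta function gives
\[
    \zeta_{\mu_{\mathbb{L}}}(X,t) = 1 + \sum_{n=1}^{\infty} \mu_{\mathbb{L}}\bigl([\Sym^n_X]\bigr)\, t^n \in K_0(Var/k)/(\mathbb{L})[[t]].
\]

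The key step is the corollary immediately preceding this statement, which---because it relies on G\"ottsche's formula---holds precisely in characteristic zero: for every $n\geq 1$ one has the identity $\mu_{\mathbb{L}}([\Sym^n_X]) = \mu_{\mathbb{L}}([\Hilb^n_X])$ in $K_0(Var/k)/(\mathbb{L})$. Since two elements of $R[[t]]$ coincide as soon as all their coefficients agree, this identifies $\zeta_{\mu_{\mathbb{L}}}(X,t)$ coefficientwise with the $\mu_{\mathbb{L}}$-motivic Hilbert zeta function $\zeta^{\Hilb}_{\mu_{\mathbb{L}}}(X,t)$.

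I would then invoke Proposition \ref{proprationalityformotivicHilb}. Its standing hypothesis---validity of weak factorization---is automatic over a field of characteristic zero (the Remark following Hypothesis \ref{hp2}, via \cite{Wl}), and $X$ is geometrically rational by assumption, so the proposition applies and $\zeta^{\Hilb}_{\mu_{\mathbb{L}}}(X,t)$ is rational. Rationality is a property of a power series and is inherited by any series equal to it; hence $\zeta_{\mu_{\mathbb{L}}}(X,t)$ is rational as well, which is the claim.

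I do not expect any genuine obstacle at this final stage: all the substantive work has already been carried out---the eventual $\textup{ind}(X)$-periodicity of the stable birational classes of $\Hilb^n_X$ in Corollary \ref{corollarydefinitestablebir}, the resulting rationality of the Hilbert zeta function, and G\"ottsche's decomposition. The only point deserving attention is the bookkeeping of hypotheses: the characteristic-zero assumption is what simultaneously secures weak factorization (needed both for Proposition \ref{proprationalityformotivicHilb} and for the $\Sym$--$\Hilb$ comparison) and the applicability of G\"ottsche's theorem, so it cannot be relaxed for free within this argument.
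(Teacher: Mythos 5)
Your proposal is correct and follows exactly the paper's own argument: the corollary is deduced by identifying $\zeta_{\mu_{\mathbb{L}}}(X,t)$ coefficientwise with the $\mu_{\mathbb{L}}$-motivic Hilbert zeta function via G\"ottsche's formula (valid in characteristic zero), and then invoking Proposition \ref{proprationalityformotivicHilb}, whose weak-factorization hypothesis holds in characteristic zero. Your bookkeeping of where the characteristic-zero assumption enters matches the paper's as well.
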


\begin{remark}
    By the proof above, notice that in order to conclude the rationality of $\zeta_{\mathbb{L}}(X,t)$ (for a geometrically rational surface $X$), it is sufficient for $k$ to have the weak factorization property and to be such that $\mu_{\mathbb{L}}([\Sym^n_X])=\mu_{\mathbb{L}}([\Hilb^n_X])$ for any $n>0$.
\end{remark}

\nocite{*}
\printbibliography

\end{document}